\newtheorem{theorem}{Theorem}[section]
\newtheorem{remark}{Remark}[section]
\newtheorem{definition}[theorem]{Definition}
\newtheorem{lemma}[theorem]{Lemma}
\newtheorem{observation}[theorem]{Observation}
\newtheorem{proposition}[theorem]{Proposition}
\newtheorem{problem}[theorem]{Problem}
\newtheorem{claim}{Claim}[section]
\numberwithin{equation}{section}
\begin{document}
\textwidth 150mm \textheight 225mm

\title{Edge-colored 3-uniform hypergraphs without rainbow paths of length 3 and its applications to Ramsey theory
}

\author{
Xihe Li\footnote{School of Mathematics and Statistics, Shaanxi Normal University, Xi'an, Shaanxi 710119, China.}~\footnote{Corresponding author.}~~~~~~
Runshan Wang\footnotemark[1]~~~~~~
}
\date{}
\maketitle
\newcommand\blfootnote[1]{%
\begingroup
\renewcommand\thefootnote{}\footnote{#1}%
\addtocounter{footnote}{-1}%
\endgroup
}
\blfootnote{E-mail addresses: xiheli@snnu.edu.cn, rswangmath@163.com.}
\begin{center}
\begin{minipage}{120mm}
\begin{center}
{\small {\bf Abstract}}
\end{center}
{\small
\hspace{2em}
Motivated by Ramsey theory problems, we consider edge-colorings of 3-uniform hypergraphs that contain no rainbow paths of length 3.
There are three 3-uniform paths of length 3: the tight path $\mathcal{T}=\{v_1v_2v_3, v_2v_3v_4, v_3v_4v_5\}$, the messy path $\mathcal{M}=\{v_1v_2v_3, v_2v_3v_4, v_4v_5v_6\}$ and the loose path $\mathcal{L}=\{v_1v_2v_3,$ $v_3v_4v_5, v_5v_6v_7\}$.
In this paper, we characterize the structures of edge-colored complete 3-uniform hypergraph $K_n^{(3)}$ without rainbow $\mathcal{T}$, $\mathcal{M}$ and $\mathcal{L}$, respectively.
This generalizes a result of Thomason-Wagner on edge-colored complete graph $K_n$ without rainbow paths of length 3.
We also obtain a multipartite generalization of these results.

\hspace{2em}As applications, we obtain several Ramsey-type results.
Given two $3$-uniform hypergraphs $H$ and $G$, the {\it constrained Ramsey number} $f(H,G)$ is defined as the minimum integer $n$ such that, in every edge-coloring of $K^{(3)}_n$ with any number of colors, there is either a monochromatic copy of $H$ or a rainbow copy of $G$.
For $G\in \{\mathcal{T}, \mathcal{M}, \mathcal{L}\}$ and infinitely many 3-uniform hypergraphs $H$, we reduce $f(H, G)$ to the 2-colored Ramsey number $R_2(H)$ of $H$, that is, $f(H, G)=R_2(H)$.
Given a $3$-uniform hypergraph $G$ and an integer $n\geq |V(G)|$, the {\it anti-Ramsey number} $ar(n, G)$ is the minimum integer $k$ such that, in every edge-coloring of $K^{(3)}_n$ with at least $k$ colors, there is a rainbow copy of $G$.
We show that $ar(n, \mathcal{T})=\left\lfloor\frac{n}{3}\right\rfloor+2$ for $n\geq 5$,
$ar(n, \mathcal{M})=3$ for $n\geq 7$,
and $ar(n, \mathcal{L})=n$ for $n\geq 7$.
Our newly obtained Ramsey-type results extend results of Gy\'{a}rf\'{a}s-Lehel-Schelp and Liu on constrained Ramsey numbers, and improve a result of Tang-Li-Yan on anti-Ramsey numbers.

\vskip 0.1in \noindent {\bf AMS Subject Classification (2020)}: \ 05C55, 05D10, 05C35
\vskip 0.1in \noindent {\bf Keywords}: \ Rainbow subgraph, hypergraph paths, constrained Ramsey number, rainbow Ramsey number, anti-Ramsey number
}
\end{minipage}
\end{center}

\section{Introduction}
\label{sec:introduction}

Structural partition results play an important role in the research of graph theory problems.
A typical example is Szemer\'{e}di's Regularity Lemma~\cite{Sze}, which informally states that the vertex set of every graph can be partitioned into a bounded number of parts, with pseudorandom edges between the parts.
The regularity lemma has wide applications in graph theory, additive combinatorics, discrete geometry and theoretical computer science.
Another example is Gallai's Partition Theorem~\cite{Gallai}, which states that the vertex set of every edge-colored complete graph without a rainbow $K_3$ can be partitioned into at least two parts such that, there is only one color on edges between each pair of distinct parts, and there are at most two colors on edges between all the parts.
Here an edge-colored graph is called {\it rainbow} if all edges are colored differently.
Gallai's Partition Theorem has many applications in Ramsey theory~\cite{FoGP,LiBW,LMSSS}, information theory~\cite{KoSi}, and in the study of perfect graphs~\cite{CaEL,KoST}, partially ordered sets~\cite{Gallai} and relation algebras~\cite{KrMa}.

Following the line of Gallai's research, Thomason and Wagner~\cite{ThWa} characterized the structures of edge-colored complete graphs with no rainbow short paths.
Before presenting the result of Thomason and Wagner, we first introduce some necessary notation.
For a positive integer $k$, let $[k]\colonequals \{1, 2, \ldots, k\}$.
Given a (hyper)graph $G$, we refer to a mapping $c: E(G) \to [k]$ as a {\it $k$-edge-coloring} (not necessarily a proper edge-coloring) of $G$.
Note that we do not require all the $k$ colors to be used in a $k$-edge-coloring, i.e., the mapping need not be surjective.
For an edge $e\in E(G)$, let $c(e)$ be the color assigned on $e$, and let $C(G)\colonequals \{c(e)\colon\, e\in E(G)\}$.
Note that $G$ is rainbow if and only if $|C(G)|=|E(G)|$.
If $|C(G)|=1$, then $G$ is called {\it monochromatic}.
An edge-colored (hyper)graph $G$ is called {\it rainbow $H$-free} if every copy of $H$ in $G$ receives at most $|E(H)|-1$ colors.
For any subset $U\subset V(G)$, let $C(U)\colonequals \{c(e)\colon\, e\subseteq U\}$, and
let $G-U$ be the subgraph of $G$ induced by $V(G)\setminus U$.
If $U$ consists of a single vertex $u$, then we simply write $G-\{u\}$ as $G-u$.
In 2007, Thomason and Wagner~\cite{ThWa} obtained the following result.\footnote{The initial assertions made by Thomason and Wagner are somewhat complicated (see \cite[Theorems~1 and 2]{ThWa}). Theorem~\ref{th:ThWa} serves as a refined reformulation of their results.}

\begin{theorem}{\normalfont (\cite{ThWa})}\label{th:ThWa}
For any integer $n\geq 5$, let $G$ be an edge-colored complete graph $K_n$.
\begin{itemize}
\item[{\rm (i)}] If $G$ contains no rainbow path of length 3, then $|C(G)|\leq 2$.

\item[{\rm (ii)}] If $G$ contains no rainbow path of length 4, then either $|C(G)|\leq 3$ or at least one of the following two statements holds $($after renumbering the colors if necessary$)$:
\begin{itemize}
\item[{\rm (a)}] there exists a subset $U\subseteq V(G)$ with $|U|\leq 3$ such that $G-U$ is monochromatic;

\item[{\rm (b)}] we can partition $V(G)$ into $|C(G)|-1$ parts\footnote{We remark that there is no $V_1$ in this partition.} $V_2, V_3, \ldots, V_{|C(G)|}$ satisfying $\{i\}\subseteq C(V_i)\subseteq \{1,i\}$ for every $i\in \{2, 3, \ldots, k\}$, and all the remaining edges are of color 1.
\end{itemize}
\end{itemize}
\end{theorem}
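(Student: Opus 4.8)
The plan is to prove each part in contrapositive form: assuming the stated structural conclusion fails, I will exhibit a rainbow path of the forbidden length. For brevity I write $P_4$ for a path of length $3$ (four vertices) and $P_5$ for a path of length $4$. Part~(i) is the engine for part~(ii) and is short; part~(ii) reduces to it together with a case analysis built around a fixed rainbow $P_4$.

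\textbf{Part (i).} Suppose $|C(G)|\ge 3$. I would first show no vertex is incident to three distinctly coloured edges: if $c(vu_1)=1$, $c(vu_2)=2$, $c(vu_3)=3$, then each edge $u_iu_j$ is pinned down by comparing suitable paths of length $3$ through $v$ that use it, giving $c(u_1u_2)=3$, $c(u_1u_3)=2$, $c(u_2u_3)=1$; a fifth vertex $w$ (available since $n\ge5$) is likewise forced to have $c(wu_i)=i$, and then $v\,u_1\,u_2\,w$ has colours $1,3,2$, a rainbow $P_4$. Hence each vertex sees at most two colours, so the colour sets at vertices form an intersecting family of sets of size at most $2$, which is either a star (some colour lies in all of them) or the full triangle $\{\{1,2\},\{1,3\},\{2,3\}\}$. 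In the triangle case, fix one vertex of each type and any fourth vertex $s$: the edges among the first three are forced, and whatever $s$'s type is, three of the edges on these four vertices form a rainbow $P_4$. In the star case, if the colour sets are not all equal there are $v,w$ with $C_v=\{1,2\}$ and $C_w=\{1,3\}$, so $c(vw)=1$; taking a colour-$2$ edge $vy$ and a colour-$3$ edge $wz$, one has $y\ne z$ (else that common vertex sees three colours, already ruled out), so $y\,v\,w\,z$ is a rainbow $P_4$. Therefore all colour sets agree and $|C(G)|\le2$. The hypothesis $n\ge5$ is genuinely needed: a proper $3$-edge-colouring of $K_4$ contains no rainbow $P_4$.

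\textbf{Part (ii).} If $G$ has no rainbow $P_4$ then $|C(G)|\le2$ by part~(i), so I may assume $G$ has a rainbow $P_4$ and that $|C(G)|\ge4$ (otherwise the first alternative holds). Fix a rainbow $P_4$, say $a\,b\,c\,d$ with $c(ab)=1$, $c(bc)=2$, $c(cd)=3$, and fix an edge of a fourth colour. Extending $a\,b\,c\,d$ at each end shows $c(va),c(vd)\in\{1,2,3\}$ for every $v\notin\{a,b,c,d\}$, and analogous extensions pin down the colours of all other edges meeting $\{a,b,c,d\}$ and of the ``connector'' edges between differently coloured edges. The target is that, after relabelling, one colour --- say $1$ --- is a global background colour: all but a bounded set of edges have colour $1$, and the remaining edges are organized either as in alternative~(a) or as in alternative~(b). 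The driving observation is that two edges carrying distinct non-background colours that are vertex-disjoint and joined by a background-coloured edge extend, through a fifth vertex, to a rainbow $P_5$; so non-background edges cannot be spread out. Iterating, the non-background edges of each colour $i\ge2$ span a vertex set $V_i$, the $V_i$ pairwise meet only in a bounded ``core'', and one of two things must happen: either that core absorbs all the irregularity, giving $U$ with $|U|\le3$ and $G-U$ monochromatic (alternative~(a)); or the $V_i$ are essentially disjoint, with only colour $1$ between them and only colours $\{1,i\}$ inside $V_i$, which is alternative~(b). It helps to make the initial choices extremally --- e.g.\ to fix a most popular colour first and then locate a rainbow $P_4$ positioned conveniently with respect to it --- so as to keep the number of subcases down.

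\textbf{Main obstacle.} Essentially all the work is in part~(ii): converting the individually trivial ``no rainbow $P_5$'' constraints into the clean trichotomy, and in particular pinning down the boundary between alternatives~(a) and~(b) --- showing that when the non-background edges cannot be confined to at most $3$ vertices, they must fall into the partition of~(b) with every remaining edge a single colour. I also anticipate a few degenerate situations needing separate, hands-on treatment: the small orders $n=5,6$; the case where one of the three colours on the chosen rainbow $P_4$ is itself ``rare'' (touching only a bounded vertex set), which is precisely why the extremal choice of the path and background colour matters; and the case where, after relabelling, two of the roles coincide.
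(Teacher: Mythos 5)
The paper does not actually prove this statement --- it is quoted from Thomason--Wagner with a citation, so there is no in-paper proof to compare against; I can only assess your argument on its own terms. Your part~(i) is complete and correct: the color-degree bound via the fifth vertex, the reduction to an intersecting family of color sets of size at most $2$, and the star/triangle dichotomy all check out, and you correctly isolate where $n\ge 5$ is used.

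Part~(ii), however, has a genuine gap, and moreover its stated key lemma is false. You write that the ``driving observation'' is: two vertex-disjoint edges of distinct non-background colors joined by a background-colored edge extend through a fifth vertex to a rainbow $P_5$. Alternative~(b) of the very theorem you are proving is a counterexample: take $xy$ of color $2$ inside $V_2$, $zw$ of color $3$ inside $V_3$, and $yz$ of color $1$; then $x\,y\,z\,w$ is a rainbow $P_4$ with colors $2,1,3$, but every edge at $x$ has color in $\{1,2\}$ and every edge at $w$ has color in $\{1,3\}$, so no rainbow $P_5$ arises, even though $|C(G)|$ can be arbitrarily large. So non-background edges of distinct colors \emph{can} be spread out, and any correct argument must locate the fourth color at an endpoint of the chain before it can force anything --- which is precisely the dichotomy between outcomes~(a) and~(b) that you defer. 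Beyond this, the remainder of part~(ii) consists of targets (``the target is that\ldots one colour is a global background colour'', ``one of two things must happen'') rather than derivations, and your own ``Main obstacle'' paragraph concedes that the boundary between~(a) and~(b), the small cases $n=5,6$, and the ``rare color on the chosen $P_4$'' case are all unresolved. As it stands, part~(ii) is a plan built on an incorrect central step, not a proof.
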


In this paper, we consider a hypergraph version of this problem.
A {\it hypergraph} $H$ is a pair $(V(H), E(H))$, where $V(H)$ is the vertex set and $E(H)\subseteq \{e\colon\, e\subseteq V(H)\}$ is the edge set.
If every edge $e\in E(H)$ satisfies $|e|=r$ ($r\geq 2$), then $H$ is called an {\it $r$-uniform hypergraph} (or {\it $r$-graph} for short).
In particular, every ordinary graph is a 2-graph.
The {\it complete $r$-uniform hypergraph} $K^{(r)}_n$ is an $r$-graph on $n$ vertices in which every $r$ vertices form an edge.
The $r$-uniform {\it tight path} $\mathbb{P}^{(r)}_{\ell}$ of length $\ell$ is an $r$-graph with vertex set $\{v_1, v_2, \ldots, v_{\ell+ r-1}\}$ and edge set $\{e_1, e_2, \ldots, e_{\ell}\}$ such that $e_i=\{v_{i+j}\colon\, 0\leq j\leq r-1\}$ for each $i\in [\ell]$.
The $r$-uniform {\it loose path} (sometimes called {\it linear path}) $P^{(r)}_{\ell}$ of length $\ell$ is an $r$-graph with vertex set $\{v_1, v_2, \ldots, v_{\ell(r-1)+1}\}$ and edge set $\{e_1, e_2, \ldots, e_{\ell}\}$ such that $e_i=\{v_{(i-1)(r-1)+j}\colon\, 1\leq j\leq r\}$ for each $i\in [\ell]$.
We will mainly consider 3-uniform hypergraphs.
There are three 3-uniform paths of length 3: the tight path $\mathcal{T}=\{v_1v_2v_3, v_2v_3v_4, v_3v_4v_5\}$, the messy path $\mathcal{M}=\{v_1v_2v_3, v_2v_3v_4, v_4v_5v_6\}$ and the loose path $\mathcal{L}=\{v_1v_2v_3, v_3v_4v_5, v_5v_6v_7\}$ (see Figure~\ref{fig:3-paths} for an illustration).
Note that $\mathcal{T}=\mathbb{P}^{(3)}_3$ and $\mathcal{L}=P_3^{(3)}$.

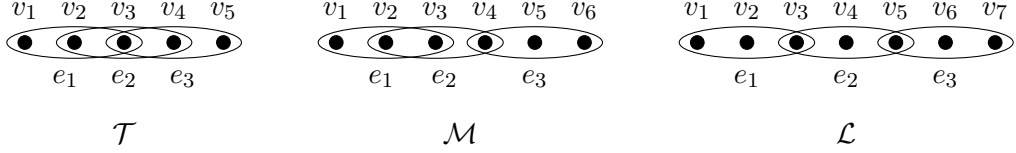
\begin{figure}[htbp]
\begin{center}
\begin{tikzpicture}[scale=0.06,auto,swap]
\tikzstyle{vertex}=[circle,draw=black,fill=black]
\draw (-77,-20) node {$\mathcal{T}$};
\node[vertex,scale=0.5] (Tv1) at (-99,0) {}; \draw (-99,7) node {$v_1$};
\node[vertex,scale=0.5] (Tv2) at (-88,0) {}; \draw (-88,7) node {$v_2$};
\node[vertex,scale=0.5] (Tv3) at (-77,0) {}; \draw (-77,7) node {$v_3$};
\node[vertex,scale=0.5] (Tv4) at (-66,0) {}; \draw (-66,7) node {$v_4$};
\node[vertex,scale=0.5] (Tv5) at (-55,0) {}; \draw (-55,7) node {$v_5$};
\draw[rotate=0](-66,0)ellipse(15 and 3.5); \draw (-64,-8) node {$e_3$};
\draw[rotate=0](-88,0)ellipse(15 and 3.5); \draw (-90,-8) node {$e_1$};
\draw[rotate=0](-77,0)ellipse(15 and 3.5); \draw (-77,-8) node {$e_2$};

\draw (-2.5,-20) node {$\mathcal{M}$};
\node[vertex,scale=0.5] (Mv1) at (-30,0) {}; \draw (-30,7) node {$v_1$};
\node[vertex,scale=0.5] (Mv2) at (-19,0) {}; \draw (-19,7) node {$v_2$};
\node[vertex,scale=0.5] (Mv3) at (-8,0) {}; \draw (-8,7) node {$v_3$};
\node[vertex,scale=0.5] (Mv4) at (3,0) {}; \draw (3,7) node {$v_4$};
\node[vertex,scale=0.5] (Mv5) at (14,0) {}; \draw (14,7) node {$v_5$};
\node[vertex,scale=0.5] (Mv6) at (25,0) {}; \draw (25,7) node {$v_6$};
\draw[rotate=0](14,0)ellipse(15 and 3.5); \draw (14,-8) node {$e_3$};
\draw[rotate=0](-19,0)ellipse(15 and 3.5); \draw (-20,-8) node {$e_1$};
\draw[rotate=0](-8,0)ellipse(15 and 3.5); \draw (-6,-8) node {$e_2$};

\draw (83,-20) node {$\mathcal{L}$};
\node[vertex,scale=0.5] (Lv1) at (50,0) {}; \draw (50,7) node {$v_1$};
\node[vertex,scale=0.5] (Lv2) at (61,0) {}; \draw (61,7) node {$v_2$};
\node[vertex,scale=0.5] (Lv3) at (72,0) {}; \draw (72,7) node {$v_3$};
\node[vertex,scale=0.5] (Lv4) at (83,0) {}; \draw (83,7) node {$v_4$};
\node[vertex,scale=0.5] (Lv5) at (94,0) {}; \draw (94,7) node {$v_5$};
\node[vertex,scale=0.5] (Lv6) at (105,0) {}; \draw (105,7) node {$v_6$};
\node[vertex,scale=0.5] (Lv7) at (116,0) {}; \draw (116,7) node {$v_7$};
\draw[rotate=0](105,0)ellipse(15 and 3.5); \draw (105,-8) node {$e_3$};
\draw[rotate=0](61,0)ellipse(15 and 3.5); \draw (61,-8) node {$e_1$};
\draw[rotate=0](83,0)ellipse(15 and 3.5); \draw (83,-8) node {$e_2$};

\end{tikzpicture}
\caption{The 3-uniform paths of length 3.}
\label{fig:3-paths}
\end{center}
\end{figure}

In 2024, Liu~\cite{Liu} obtained the following structural result on rainbow $\mathcal{L}$-free edge-colorings of $K^{(3)}_n$.

\begin{theorem}{\normalfont (\cite{Liu})}\label{th:Liu-stru}
For any integer $n\geq 10$, let $G$ be a rainbow $\mathcal{L}$-free edge-colored $K^{(3)}_n$ with $|C(G)|\geq 3$.
Then there exist three vertices $u, v, w$ such that $G- \{u,v,w\}$ is monochromatic.
\end{theorem}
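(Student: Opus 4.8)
The plan is to argue by contradiction and to leverage the structure of rainbow $\mathcal{L}$-free colorings locally, building up a global partition. Suppose $G$ is a rainbow $\mathcal{L}$-free edge-colored $K^{(3)}_n$ with $|C(G)|\geq 3$ but no three vertices whose removal makes $G$ monochromatic. The loose path $\mathcal{L}=\{v_1v_2v_3, v_3v_4v_5, v_5v_6v_7\}$ consists of three edges pairwise intersecting in at most one vertex, with $e_1\cap e_3=\emptyset$. So the rainbow $\mathcal{L}$-free condition says: whenever we have three edges $f_1,f_2,f_3$ with $|f_1\cap f_2|\le 1$, $|f_2\cap f_3|\le 1$, $f_1\cap f_3=\emptyset$, and $f_2$ meeting both, they cannot get three distinct colors. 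The first step is to extract a "core" substructure forcing many color repetitions: fix two edges $e,e'$ of distinct colors; since $n$ is large one can find a vertex configuration realizing these as $e_1,e_3$ of an $\mathcal{L}$, and then every edge $f$ playing the role of $e_2$ (i.e. $|f\cap e|=1$, $|f\cap e'|=1$, chosen suitably) must reuse $c(e)$ or $c(e')$. This should pin most edges that "link" two color classes.

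Next I would identify, for each color $i\in C(G)$, the set of edges colored $i$, and use the rainbow $\mathcal{L}$-free condition to show that at most two colors, say colors $1$ and $2$, can appear on "spread-out" edges, while every other color class is confined to a bounded vertex set. Concretely, if color $i$ ($i\ge 3$) appears on two disjoint edges $f,f'$, then since $|C(G)|\ge 3$ there is an edge $g$ of a third color; placing $f,g,f'$ (or a slight perturbation using the abundance of vertices when $n\ge 10$) as a loose path gives a rainbow $\mathcal{L}$ unless $g$'s color coincides with $c(f)=i$ — contradiction. Hence each color $i\ge 3$ is a "matching-free" color: all its edges pairwise intersect, so by the sunflower/Helly-type fact for intersecting $3$-graphs either they share a common vertex or they all lie within a fixed set of at most, say, $5$–$6$ vertices. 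Summing over all colors $i\ge 3$ then shows that all edges of colors $\ge 3$ are covered by a bounded vertex set $U_0$; the hard part is to drive $|U_0|$ down to exactly $3$. For this I expect to need a more careful case analysis: if two colors $i,j\ge 3$ have edges meeting the rest of the hypergraph in different vertices, one constructs a loose path through those two "pendant" edges plus a color-$1$ or color-$2$ bridging edge to get a rainbow $\mathcal{L}$; iterating forces all colors $\ge 3$ to be anchored at a common small set of at most $3$ vertices.

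After that, I would remove those $\le 3$ anchor vertices and argue the remainder is monochromatic. Let $W$ be the anchor set (with $|W|\le 3$); by construction every edge disjoint from $W$ has color in $\{1,2\}$. It remains to rule out both colors $1$ and $2$ occurring on $G-W$. If some edge $f$ disjoint from $W$ has color $1$ and another $f'$ disjoint from $W$ has color $2$, then — using $|V(G)\setminus W|\ge n-3\ge 7$ and $|C(G)|\ge 3$, so there is a third-colored edge $h$ somewhere, necessarily meeting $W$ — I construct a rainbow loose path whose middle edge is $h$ and whose end edges are (sub-perturbations of) $f$ and $f'$, or alternatively show directly that $K^{(3)}_{n-3}$ two-colored with both colors present and no monochromatic component structure already contains a rainbow $\mathcal{L}$ together with a $W$-edge of color $3$; this contradiction yields that $G-W$ is monochromatic, and padding $W$ with arbitrary vertices to size exactly $3$ finishes the proof. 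The main obstacle, as noted, is the bookkeeping that squeezes the anchor set down to three vertices: intersecting $3$-graphs can be supported on up to $6$ vertices (the structure of intersecting families without a common element), so one must use the interplay between several colors and the generous bound $n\ge 10$ to eliminate the larger configurations.
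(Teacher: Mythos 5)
Your proposal is a sketch with at least one step that fails as stated and two places where you explicitly defer the real work, so it does not constitute a proof. The clearest broken step is your argument that every color class $i\ge 3$ is intersecting: you take two \emph{disjoint} edges $f,f'$ \emph{both of color $i$} and a third edge $g$ of another color, and claim that arranging $f,g,f'$ as a loose path forces a rainbow $\mathcal{L}$. But that path carries only the two colors $i$ and $c(g)$, so it is never rainbow and no contradiction arises; the rainbow-$\mathcal{L}$-free hypothesis says nothing about this configuration. (The usable fact in this direction is different: there is no rainbow $S^{(3)}_2\cup S^{(3)}_1$, i.e.\ two edges of \emph{distinct} colors sharing a vertex together with a disjoint edge of a third color --- this is Lemma~\ref{le:loose-1}, and it is what actually confines the ``extra'' colors.) Beyond that, you yourself flag the two remaining essential steps as unresolved: squeezing the anchor set from the $5$--$6$ vertices allowed by an intersecting family down to $3$ (``the hard part,'' ``the main obstacle''), and ruling out both colors $1$ and $2$ on $G-W$, for which you offer two alternative strategies rather than an argument. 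A proof cannot leave these open.

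For comparison, the paper does not reprove Liu's theorem directly; it derives a stronger statement (Lemma~\ref{le:loose-2}: two deleted vertices suffice for $n\ge 7$) by a different route. It first proves that a rainbow-$\mathcal{M}$-free coloring of $K^{(3)}_n$ ($n\ge 7$) uses at most two colors (Theorem~\ref{th:messy}), so a coloring with $|C(G)|\ge 3$ must contain a rainbow messy path $\{v_1v_2v_3, v_2v_3v_4, v_4v_5v_6\}$; it then uses the forbidden rainbow $C^{(3)}_3$, $S^{(3)}_3$ and $S^{(3)}_2\cup S^{(3)}_1$ configurations to show that all edges avoiding $\{v_2,v_3\}$ share the color of $v_4v_5v_6$. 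Anchoring the analysis at a concrete rainbow substructure is what lets the argument avoid the intersecting-family bookkeeping that your sketch gets stuck on; if you want to salvage your approach, you would need to replace the broken ``same-color disjoint pair'' step with Lemma~\ref{le:loose-1}-type facts and then still supply the two missing arguments.
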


In this paper, we obtain the following results for rainbow $\mathcal{T}$, $\mathcal{M}$ and $\mathcal{L}$, which generalize the above mentioned results of Thomason and Wagner~\cite{ThWa} and Liu~\cite{Liu}.

\begin{theorem}\label{th:tight}
For any integer $n\geq 5$, let $G$ be a rainbow $\mathcal{T}$-free edge-colored $K^{(3)}_n$ with $|C(G)|\geq 3$.
Then we can partition $V(G)$ into $|C(G)|-1$ parts $V_2, V_3, \ldots, V_{|C(G)|}$ such that $($after renumbering the colors if necessary$)$: $\{i\}\subseteq C(V_i)\subseteq \{1,i\}$ for every $i\in \{2, 3, \ldots, |C(G)|\}$, and all the remaining edges are of color 1.
\end{theorem}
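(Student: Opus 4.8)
The plan is to reduce everything to the graph case, Theorem~\ref{th:ThWa}(i), by passing to vertex links, with the boundary case $n=5$ handled by a separate device. Throughout write $L_v:=\{c(e):v\in e\}$ and, for a color $i$, $W_i:=\{v:i\in L_v\}$, so that every color-$i$ edge is contained in $W_i$.

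For $n=5$ I would first exploit the fact that $K_5^{(3)}$ is isomorphic to the graph $K_5$ via complementation $e\mapsto V(G)\setminus e$. A tight path of length $3$ spans all five vertices, and its edges $v_1v_2v_3,v_2v_3v_4,v_3v_4v_5$ complement to $v_4v_5,v_5v_1,v_1v_2$, which form a path of length $3$ in $K_5$; conversely every length-$3$ path of $K_5$ omits exactly one vertex and arises this way. Since complementation preserves colors, $G$ has a rainbow $\mathcal{T}$ if and only if the complementary coloring of $K_5$ has a rainbow path of length $3$, so by Theorem~\ref{th:ThWa}(i) a rainbow $\mathcal{T}$-free coloring of $K_5^{(3)}$ uses at most $2$ colors; thus the statement is vacuous for $n=5$.

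For $n\ge 6$ I would work with the link of a vertex $v$, i.e., the edge-colored $K_{n-1}$ on $V(G)\setminus\{v\}$ in which $\{x,y\}$ receives color $c(\{v,x,y\})$. The first step is to observe that a path $x_1x_2x_3x_4$ of length $3$ in this link lifts to the tight path of $G$ with edges $\{v,x_1,x_2\},\{v,x_2,x_3\},\{v,x_3,x_4\}$ and the same three colors; hence, since $G$ is rainbow $\mathcal{T}$-free and $n-1\ge 5$, Theorem~\ref{th:ThWa}(i) forces $|L_v|\le 2$ for every $v$. The second step is to exhibit a color lying in every $L_v$. Some vertex has $|L_v|=2$ (if every link used at most one color, then the color of each edge $\{x,y,z\}$ would lie in $L_x\cap L_y\cap L_z$, forcing all $L_v$ equal and the whole coloring monochromatic); fix $y$ with $L_y=\{\alpha,\beta\}$. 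For any other vertex $u$ the edge $\{y,u,u'\}$ with $u'$ any further vertex has color in $L_y$, so $L_u$ meets $\{\alpha,\beta\}$; and if neither $\alpha$ nor $\beta$ were in every $L_v$, I could pick $u_1$ avoiding $\alpha$ and $u_2$ avoiding $\beta$, whence $\beta\in L_{u_1}$, $\alpha\in L_{u_2}$, the vertices $y,u_1,u_2$ are pairwise distinct, and the edge $\{y,u_1,u_2\}$ would admit no color --- a contradiction. After renumbering, color $1$ therefore lies in $L_v$ for all $v$.

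It then remains to assemble the partition, which I expect to be routine bookkeeping: since $|L_v|\le 2$ and $1\in L_v$ always, the sets $W_2,\dots,W_{|C(G)|}$ are pairwise disjoint (a common vertex would have $\{1,i,j\}\subseteq L_v$) and, together with the vertices satisfying $L_v=\{1\}$, they cover $V(G)$; setting $V_i:=W_i$ for $i\ge 2$ and distributing the remaining vertices arbitrarily among the parts, one checks from the disjointness that each edge inside $V_i$ has color in $\{1,i\}$, that color $i$ does occur inside $V_i$, and that any edge meeting two distinct parts is forced to have color $1$. I expect the main obstacle to be exactly the case $n=5$: there the links are copies of $K_4$, for which Theorem~\ref{th:ThWa}(i) genuinely fails (a proper $3$-edge-coloring of $K_4$ has no rainbow path of length $3$), so the link strategy collapses and one must instead rely on the self-complementary structure of $K_5^{(3)}$ noted above.
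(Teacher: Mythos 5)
Your proposal is correct, and it reaches the key structural fact by a genuinely different route from the paper. The paper never invokes Theorem~\ref{th:ThWa} in its proof of Theorem~\ref{th:tight}; instead it re-derives everything from scratch, first ruling out rainbow copies of $C^{(3)}_3$, $S^{(3)}_3$ and $\mathbb{S}^{(3)}_{3}$ by explicit case analysis (Lemma~\ref{le:tight-1}), then proving a technical lemma about edges meeting a rainbow $S^{(3)}_2$ (Lemma~\ref{le:tight-2}), and only then deducing $\Delta^c(G)\le 2$ (Claim~\ref{cl:tight-1}). You obtain the same bound $|L_v|\le 2$ in one stroke by observing that every vertex link is a rainbow-$P_4$-free edge-colored $K_{n-1}$ and quoting Theorem~\ref{th:ThWa}(i) --- a correct lift, since $\{v,x_1,x_2\},\{v,x_2,x_3\},\{v,x_3,x_4\}$ is indeed a tight path in the order $x_1,x_2,v,x_3,x_4$ --- and you correctly isolate the only place this fails, $n=5$, which you dispose of via the color-preserving complementation $e\mapsto V(G)\setminus e$ identifying rainbow tight paths in $K^{(3)}_5$ with rainbow $P_4$'s in $K_5$ (so the theorem is vacuous there, consistent with the paper's own conclusion, which forces at least two disjoint edge-containing parts and hence $n\ge 6$). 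Your subsequent argument that one color lies in every palette $L_v$, and the assembly of the partition from the sets $W_i$, are essentially the same bookkeeping as the paper's final step. What your approach buys is brevity and a conceptual reduction to the known graph case; what the paper's approach buys is self-containedness and reusable intermediate lemmas (Lemmas~\ref{le:tight-1} and \ref{le:tight-2} feed into later sections), which your shortcut does not produce.
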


\begin{theorem}\label{th:messy}
For any integer $n\geq 7$, let $G$ be a rainbow $\mathcal{M}$-free edge-colored $K^{(3)}_n$.
Then $|C(G)|\leq 2$.
\end{theorem}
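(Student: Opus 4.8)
We argue by contradiction: suppose $G$ is rainbow $\mathcal{M}$-free but $|C(G)|\ge 3$; we will exhibit a rainbow $\mathcal{M}$. Two observations drive the argument. The first is a \emph{local lemma}: if $A$ and $C$ are disjoint edges with $c(A)\ne c(C)$, then every one of the $\binom{6}{3}$ triples contained in the $6$-set $A\cup C$ is coloured with one of $c(A),c(C)$. Indeed, $\mathcal{M}$ is precisely a pair of disjoint edges together with a ``bridge'' meeting one of them in two vertices and the other in one; so a triple $D\subseteq A\cup C$ of a third colour, which necessarily meets one of $A,C$ in two vertices and the other in one, yields (with $A$ and $C$, in the appropriate order) a rainbow copy of $\mathcal{M}$, a contradiction.

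The second and main ingredient is an \emph{engine}: suppose $W$ is a $6$-set all of whose triples use exactly two colours, both occurring, say $1$ and $2$, and suppose $g$ is an edge with $c(g)\notin\{1,2\}$ and $|g\cap W|=2$; then $G$ has a rainbow $\mathcal{M}$. Write $g=\{w_1,w_2,u\}$ with $w_1,w_2\in W$, $u\notin W$, and set $W'=W\setminus\{w_1,w_2\}$ (four vertices). \emph{(a)} The four triples inside $W'$ are monochromatic: otherwise pick $D_1,D_2\subseteq W'$ of colours $1$ and $2$; each is disjoint from $g$, so by the local lemma $g\cup D_i$ uses only the colours $\{c(g),i\}$; hence for $d\in D_1$ the triple $\{w_1,w_2,d\}$, which lies in $W$ and in $g\cup D_1$, is coloured with a colour in $\{1,2\}\cap\{c(g),1\}=\{1\}$, and likewise $\{w_1,w_2,d'\}$ has colour $2$ for $d'\in D_2$; but $|D_1\cap D_2|\ge 3+3-4=2$, a contradiction. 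Say all four triples of $W'$ have colour $1$. \emph{(b)} If some triple $\{w_i,x,y\}$ with $\{x,y\}\subseteq W'$ has colour $2$, take a colour-$1$ triple $e_1\subseteq W'$ through $\{x,y\}$; then $e_1,\{w_i,x,y\},g$ form a rainbow $\mathcal{M}$ ($e_1$ is disjoint from $g$, and $\{w_i,x,y\}$ meets $e_1$ in $\{x,y\}$ and $g$ in $\{w_i\}$). \emph{(c)} Otherwise, by \emph{(a)}–\emph{(b)} the only triples of $W$ that can have colour $2$ contain $\{w_1,w_2\}$; since colour $2$ occurs in $W$ there is a triple $\{w_1,w_2,z_0\}$ of colour $2$ (note $z_0\in W'$), and then $g,\{w_1,w_2,z_0\},\{z_0,x,y\}$ for distinct $x,y\in W'\setminus\{z_0\}$ (colours $c(g),2,1$) form a rainbow $\mathcal{M}$.

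Now we assemble. By connectivity of the Kneser graph on $E(K^{(3)}_n)$ with ``disjointness'' adjacency, which holds for $n\ge 7$, and since $G$ is not monochromatic, there are disjoint edges $A,C$ of distinct colours, say $1$ and $2$; by the local lemma $W_0:=A\cup C$ satisfies the engine's hypotheses. If some edge $g$ with $c(g)\notin\{1,2\}$ has $|g\cap W_0|=2$, the engine finishes. Otherwise every edge with at least two vertices in $W_0$ is coloured $\{1,2\}$; let $W^+\supseteq W_0$ be maximal with this property. Then all triples inside $W^+$ are coloured $\{1,2\}$, both colours occurring, so if $W^+=V(G)$ then $|C(G)|\le 2$, a contradiction; hence pick $v\notin W^+$. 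By maximality some edge $g$ has $c(g)\notin\{1,2\}$ and at least two vertices in $W^+\cup\{v\}$; since edges with $\ge 2$ vertices in $W^+$ are coloured $\{1,2\}$, this forces $g$ to have exactly one vertex $p\in W^+$ and to contain $v$, with third vertex $v'\notin W^+\cup\{v\}$. Every triple inside $W^+\cup\{v\}$ has at least two vertices in $W^+$ and so is coloured $\{1,2\}$; thus the complete $3$-graph on the $(\ge 7)$-set $W^+\cup\{v\}$ is $2$-coloured with both colours, and by connectivity of its Johnson graph (``sharing two vertices'' adjacency) it contains a tight pair $e,e'$ of colours $1$ and $2$. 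Choose a $6$-subset $W^*$ of $W^+\cup\{v\}$ containing $\{p,v\}\cup e\cup e'$ (enlarging with further vertices of $W^+\cup\{v\}$ if this set has fewer than six vertices). Then all triples of $W^*$ are coloured $\{1,2\}$ with both colours occurring, $\{p,v\}\subseteq W^*$, and $v'\notin W^*$; hence $|g\cap W^*|=2$ and the engine applies to $(W^*,g)$, giving the desired rainbow $\mathcal{M}$.

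The step I expect to be the crux is the engine, and within it part \emph{(a)}: the key point is that a third colour sitting next to a $2$-coloured $6$-set is rigidified by iterated applications of the local lemma, which is what converts an ostensibly unconstrained third colour into a forced rainbow $\mathcal{M}$. The other delicate point is the reduction through $W^+$ and $W^*$—chiefly verifying that $W^*$ still exhibits both colours—for which the tight pair $e,e'$ furnished by Johnson connectivity is exactly the required device.
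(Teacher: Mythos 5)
Your proof is correct, and it takes a genuinely different route from the paper's. You pivot on a \emph{disjoint} pair of differently coloured edges: your ``local lemma'' (any two disjoint edges of distinct colours force their $6$-vertex union to be $2$-coloured, because the middle edge of $\mathcal{M}$ is exactly a triple inside such a union meeting one side twice and the other once) is sound, and your ``engine'' correctly converts a $2$-coloured $6$-set with both colours present plus a third-coloured edge meeting it in exactly two vertices into a rainbow $\mathcal{M}$ --- I checked parts (a)--(c) and each exhibited triple of edges really is a copy of $\mathcal{M}$ with three distinct colours, and the counting $|D_1\cap D_2|\ge 2$ in (a) is what makes the rigidification work. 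The assembly via a maximal $2$-coloured core $W^+$, Kneser-graph connectivity (valid for $n\ge 7$, and exactly where $n=6$ fails, consistently with the sharpness example in Remark~1.1(i)), and the Johnson-graph tight pair used to pack witnesses of both colours together with $\{p,v\}$ into a $6$-set $W^*$ is also correct. The paper instead starts from a rainbow $\mathbb{S}^{(3)}_2$ (two edges sharing \emph{two} vertices, guaranteed by Observation~2.1), shows there is no rainbow $\mathbb{S}^{(3)}_{2}\cup\mathbb{S}^{(3)}_{1}$ or $\mathbb{S}^{(3)}_{3}$, and then pins down exactly how a third colour can meet the $4$-set spanned by the $\mathbb{S}^{(3)}_2$ before reaching a contradiction; that argument is shorter and more ad hoc, while yours is longer but more systematic --- the maximality/connectivity framework isolates cleanly where the third colour must sit relative to a large $2$-coloured core, at the cost of invoking connectivity of Kneser and Johnson graphs where the paper just does direct case analysis on a handful of labelled vertices.
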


\begin{theorem}\label{th:loose}
For any integer $n\geq 7$, let $G$ be a rainbow $\mathcal{L}$-free edge-colored $K^{(3)}_n$ with $|C(G)|\geq 3$.
Then at least one of the following statements holds:
\begin{itemize}
\item[{\rm (i)}] there exists a vertex $u\in V(G)$ such that $G-u$ is monochromatic;

\item[{\rm (ii)}] there exists an edge $e\in E(G)$ and a color $i\in C(G)$ with $c(e)\neq i$ such that every edge $f\in E(G)\setminus \{e\}$ with $c(f)\neq i$ satisfies $|f\cap e|= 2$.
\end{itemize}
\end{theorem}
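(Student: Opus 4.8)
We prove the following contrapositive-flavoured assertion: if $G$ is a rainbow $\mathcal L$-free edge-coloured $K_n^{(3)}$ with $n\ge 7$ and $|C(G)|\ge 3$ that does not satisfy (i), then it satisfies (ii). The workhorse is an elementary \emph{forcing observation}: if $f$ and $g$ are edges with $|f\cap g|\le 1$ and $c(f)\ne c(g)$, and $h$ is any edge such that $\{f,g,h\}$ forms a copy of $\mathcal L$, then $c(h)\in\{c(f),c(g)\}$, for otherwise $\{f,g,h\}$ would be a rainbow $\mathcal L$. Since $n\ge 7$ leaves at least one vertex outside the union of any two disjoint edges, this forces in particular, for disjoint edges $f,g$ with $c(f)\ne c(g)$, that every edge having one vertex in $f$, one vertex in $g$ and one vertex outside $f\cup g$ is coloured $c(f)$ or $c(g)$; and similarly for edges meeting in a single vertex. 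These statements let us propagate the ``majority'' colour.

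The first substantial step is to extract a colour, call it $1$, and a vertex set $S$ with $|S|\le 3$ such that every edge of $G$ avoiding $S$ has colour $1$. For $n\ge 10$ this is precisely Theorem~\ref{th:Liu-stru}. For $7\le n\le 9$, where Theorem~\ref{th:Liu-stru} does not apply and a pair of disjoint edges leaves essentially no slack, the same conclusion has to be established directly; I expect this to be a self-contained finite analysis, started from three fixed edges of three distinct colours and driven by the forcing observation in its tightest form. Write $\mathcal B$ for the set of edges whose colour is not $1$; then $\mathcal B\ne\varnothing$, every edge of $\mathcal B$ meets $S$, and $\mathcal B$ uses at least two colours.

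Next I would determine the shape of $\mathcal B$. If $|S|\le 1$, then $G-s$ is monochromatic for the vertex $s\in S$, so (i) holds, a contradiction; and if all edges of $\mathcal B$ pass through one common vertex $v$, then $G-v$ is monochromatic and again (i) holds, a contradiction. So $|S|\ge 2$ and $\bigcap_{f\in\mathcal B}f=\varnothing$. The heart of the proof is to show, from rainbow $\mathcal L$-freeness together with these constraints, that $\mathcal B$ is concentrated on a single edge: there is $e\in\mathcal B$ with $|f\cap e|=2$ for every $f\in\mathcal B\setminus\{e\}$, which is exactly (ii) with colour $1$ in the role of $i$. To get there one first argues that $S$ can be chosen with $|S|\le 2$ (a vertex of $S$ meeting no edge of $\mathcal B$ being removable), hence $|S|=2$ here, say $S=\{s_1,s_2\}$, and then classifies the edges of $\mathcal B$ according to which of $s_1,s_2$ they contain and according to their pairwise intersections. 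The forcing observation, applied to two $\mathcal B$-edges of distinct colours meeting in at most one vertex together with a colour-$1$ ``far'' edge on the remaining vertices (available because $n\ge 7$), yields a rainbow $\mathcal L$ unless those two $\mathcal B$-edges in fact meet in two vertices; iterating this and using $|C(G)|\ge 3$, one is left only with configurations of the claimed form. Taking $e$ to be a $\mathcal B$-edge whose colour is realised by no other edge of $\mathcal B$ (the surviving configurations always contain such an edge) and checking $|f\cap e|=2$ for every remaining $f\in\mathcal B$ yields (ii).

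The main obstacle is the structural case analysis of $\mathcal B$ in the previous paragraph. Because $\mathcal B$ may contain many edges of one colour, on which the forcing observation says nothing, the three edges of a candidate rainbow $\mathcal L$ must be chosen with care so that they genuinely realise three distinct colours; one must also treat separately the ways two $\mathcal B$-edges can intersect (in $0$, $1$ or $2$ vertices) and how each meets $\{s_1,s_2\}$, all while tracking which vertices remain free for the colour-$1$ edge of the loose path. A second, unavoidable obstacle is the range $n\in\{7,8,9\}$: there Theorem~\ref{th:Liu-stru} is not available, the forcing observation has minimal room to operate, and the extraction of the colour $1$ and the set $S$ must be carried out by an explicit finite argument.
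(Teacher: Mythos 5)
Your overall plan (reduce to a near-monochromatic structure plus a small set of exceptional edges, then classify the exceptional edges) matches the paper's in spirit, but as written it has three unfilled holes, two of which are the actual content of the theorem. First, for $7\le n\le 9$ you explicitly defer the extraction of the color $1$ and the set $S$ to ``a self-contained finite analysis'' that you never perform; this is not a routine check, and the paper does not do it by brute force. The missing idea is to use the messy-path structure theorem: since $|C(G)|\ge 3$, Theorem~\ref{th:messy} guarantees a rainbow $\mathcal{M}$, say $\{v_1v_2v_3,\,v_2v_3v_4,\,v_4v_5v_6\}$, and one then shows (Lemma~\ref{le:loose-2}) that deleting the two degree-two vertices $v_2,v_3$ leaves a monochromatic hypergraph. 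This single argument works uniformly for all $n\ge 7$ and immediately gives $|S|\le 2$, so it simultaneously repairs your second gap: your proposed reduction from $|S|\le 3$ (via Theorem~\ref{th:Liu-stru}) down to $|S|\le 2$ rests on ``a vertex of $S$ meeting no edge of $\mathcal{B}$ being removable,'' but nothing guarantees such a vertex exists --- if all three vertices of $S$ are covered by edges of $\mathcal{B}$, your reduction simply does not apply, and you offer no alternative.

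Third, the ``heart of the proof'' --- showing that when (i) fails the exceptional edges concentrate on a single edge in the sense of (ii) --- is only sketched. In the paper this requires two further nontrivial lemmas on top of the two-vertex deletion structure: Lemma~\ref{le:loose-3} (any two exceptional edges of \emph{distinct} colors meet in exactly two vertices, proved by ruling out intersections of size $0$ and $1$ using the forbidden rainbow $S^{(3)}_3$ and $C^{(3)}_3$), and Lemma~\ref{le:loose-4} (assuming (i) and (ii) both fail, at least one of any two such edges contains both deleted vertices), followed by a short assembly. Your text gestures at ``iterating'' the forcing observation and asserts that ``the surviving configurations always contain such an edge,'' but none of these verifications is carried out, and you yourself identify this case analysis as the main obstacle. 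As it stands the proposal is a plausible outline rather than a proof; the concrete piece you are missing is the route through Theorem~\ref{th:messy} to Lemma~\ref{le:loose-2}, which is what makes the rest tractable.
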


\begin{remark}\label{re:theorems-1}
{\rm
(i) The lower bound $n\geq 7$ in Theorem~\ref{th:messy} is sharp.
To see this, note that $E(K^{(3)}_6)$ can be decomposed into $\frac{1}{2}{6\choose 3}=10$ copies of perfect matchings.
We color the edges of $K_6^{(3)}$ using 10 distinct colors such that each color induces a monochromatic perfect matching.
Since the messy path $M$ contains a perfect matching, such an edge-colored $K_6^{(3)}$ is rainbow $\mathcal{M}$-free.

(ii) Theorem~\ref{th:loose} implies that for $n\geq 7$, if $G$ is a rainbow $\mathcal{L}$-free edge-colored $K^{(3)}_n$ with $|C(G)|\geq 3$, then there exists a subset $U\subseteq V(G)$ with $|U|\leq 2$ such that $G- U$ is monochromatic.
This improves Theorem~\ref{th:Liu-stru}.
}
\end{remark}

\subsection{Applications to Ramsey theory}
\label{subsec:appl}

Given an $r$-graph $H$ and an integer $k\geq 2$, the {\it $k$-colored Ramsey number} $R_k(H)$ is defined as the minimum integer $n$ such that, in every $k$-edge-coloring of the complete $r$-graph $K^{(r)}_{n}$, there is a monochromatic copy of $H$.
Given two $r$-graphs $H$ and $G$, the {\it constrained Ramsey number} (sometimes called the {\it rainbow Ramsey number}) $f(H,G)$ is the minimum integer $n$ such that, in every edge-coloring of $K^{(r)}_n$ with any number of colors, there is either a monochromatic copy of $H$ or a rainbow copy of $G$.
The constrained Ramsey number for ordinary graphs (i.e., $r=2$) was introduced by Eroh~\cite{Eroh}, Jamison, Jiang and Ling~\cite{JaJL}, and Chen, Schelp and Wei~\cite{ChSW} independently in the early 2000s.
For hypergraphs of uniformity at least 3, this problem was first studied by Liu~\cite{Liu} in 2024.
In the special case when $G=K_{1,t}$ is a star, the constrained Ramsey number $f(H,K_{1,t})$ was first studied by Gy\'{a}rf\'{a}s, Lehel, Schelp and Tuza~\cite{GLST} in the language of local Ramsey numbers.
Nowadays, this problem has been widely researched, and we refer to \cite{AJMP,AxLe,BHHLM,GMSW,LiLiu,LoSu} for more information on this topic.

For the case when $G$ is a path, the constrained Ramsey problem has received specific attention.
Let $P_{t}$ be the path on $t$ vertices.
For any tree $S$ on $s$ edges, Jamison, Jiang and Ling~\cite{JaJL} proved that $f(S,P_t)=\Omega(st)$, and they conjectured that $f(S,P_t)=O(st)$.
In 2009, Loh and Sudakov~\cite{LoSu} showed that $f(S, P_t) = O(st \log t)$.
Very recently, Gishboliner, Milojevi\'{c}, Sudakov and Wigderson~\cite{GMSW} improved this to a nearly optimal upper bound which differs from the lower bound by a function of inverse-Ackermann type.
Jamison, Jiang and Ling~\cite{JaJL} also asked whether $f(S,T)$ is maximized by $f(P_{s+1}, P_{t+1})$ among all pairs of trees $S$ and $T$ with $s$ edges and $t$ edges, respectively.
In 2007, Gy\'{a}rf\'{a}s, Lehel and Schelp~\cite{GyLS} showed that for $t\in \{3,4\}$, the answer is negative.
Moreover, Gy\'{a}rf\'{a}s, Lehel and Schelp~\cite{GyLS} obtained the following result which exhibits a surprising connection between $f(H, P_t)$ and $R_{t-2}(H)$.

\begin{theorem}{\normalfont (\cite{GyLS})}\label{th:GyLS}
For any graph $H$ of order at least $5$, we have $f(H, P_4)=R_2(H)$.
If $H$ is a path, a cycle or a connected non-bipartite graph, then $f(H,P_5)=R_3(H)$.
\end{theorem}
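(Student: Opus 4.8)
\noindent\emph{Proof proposal.} The plan is to prove matching lower and upper bounds, drawing all structural information from Theorem~\ref{th:ThWa}. The lower bounds are immediate: a $2$-edge-coloring of $K_{R_2(H)-1}$ with no monochromatic $H$ uses at most two colors, hence has no rainbow $P_4$, so $f(H,P_4)\ge R_2(H)$; likewise a $3$-edge-coloring of $K_{R_3(H)-1}$ with no monochromatic $H$ has no rainbow $P_5$, so $f(H,P_5)\ge R_3(H)$. For the upper bound $f(H,P_4)\le R_2(H)$, take any edge-coloring of $K_n$ with $n=R_2(H)\ge|V(H)|\ge 5$ and no rainbow $P_4$: by Theorem~\ref{th:ThWa}(i) it uses at most two colors, so $n=R_2(H)$ produces a monochromatic $H$.

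For the upper bound $f(H,P_5)\le R_3(H)$, take an edge-coloring $G$ of $K_n$ with $n=R_3(H)$ and no rainbow $P_5$, and suppose for contradiction that $G$ has no monochromatic $H$; apply Theorem~\ref{th:ThWa}(ii). If $|C(G)|\le 3$ then $n=R_3(H)$ already forces a monochromatic $H$. In case~(a), $G-U$ is a monochromatic complete graph on $n-|U|\ge n-3$ vertices, and since $H$ is a path, a cycle, or a connected non-bipartite graph of order at least $5$ one has $R_3(H)\ge|V(H)|+3$ --- for non-bipartite $H$ from $R_2(H)\ge(\chi(H)-1)(|V(H)|-1)+1\ge 2|V(H)|-1$, for longer paths and cycles from the known formulas for $R_2(P_t)$ and $R_2(C_t)$, and for a few short paths and even cycles from their known multicolor Ramsey numbers --- so $K_{n-3}$ already contains $H$. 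In case~(b) I would recolor $G$ by keeping color $1$ and merging all other colors into one color, obtaining a $2$-edge-coloring $\widehat G$ of $K_n$; as $n=R_3(H)\ge R_2(H)$, $\widehat G$ has a monochromatic copy of $H$. If it is in color $1$, it is monochromatic in $G$; if it is in the merged color, then all of its edges are non-$1$ edges of $G$, hence lie inside the parts $V_2,\dots,V_k$ of the partition supplied by~(b), and since these parts are pairwise disjoint and $H$ is connected the copy lies inside a single $V_i$, where the only non-$1$ color is $i$, so it is again monochromatic in $G$. Each case contradicts the assumption, so $f(H,P_5)\le R_3(H)$.

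With Theorem~\ref{th:ThWa} available, the two genuinely load-bearing points are these. First, case~(a) needs $R_3(H)$ to exceed $|V(H)|$ by at least $3$; the generic inequalities are essentially tight for a handful of small graphs (such as $P_5,P_6,P_7,C_6$), which must be checked against known small Ramsey numbers, and this $+3$ margin is exactly what an arbitrary graph $H$ need not have --- this is why the statement is restricted to paths, cycles and connected non-bipartite graphs. Second, the recoloring step in case~(b) uses essentially that $H$ is connected (so a copy living among the non-base edges is confined to a single part) and that the Thomason--Wagner partition puts a unique extra color in each part. In the $3$-uniform setting of this paper, the role of Theorem~\ref{th:ThWa} is taken over by the structural Theorems~\ref{th:tight}, \ref{th:messy} and~\ref{th:loose}, and matching each of them against the appropriate families of $3$-graphs $H$ is where the main new work will lie.
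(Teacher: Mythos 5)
The paper does not prove Theorem~\ref{th:GyLS}: it is quoted from Gy\'{a}rf\'{a}s--Lehel--Schelp~\cite{GyLS}, so there is no in-paper proof to compare against. Your strategy is the natural one and is exactly the template the paper later uses for its $3$-uniform analogues (Theorems~\ref{th:Ram-tight}--\ref{th:Ram-loose}): lower bounds because $k$ colors cannot make a path with $k+1$ edges rainbow, and upper bounds by feeding the structure theorem (here Theorem~\ref{th:ThWa}) into a $2$-colored Ramsey argument, with connectivity used to collapse the partition of case~(b) to a single color class. The $P_4$ part and the case-(b) recoloring for $P_5$ are correct as written.

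The genuine gap is in case~(a) of the $P_5$ argument. You need $R_3(H)\ge |V(H)|+3$ so that the monochromatic $K_{n-3}$ contains $H$, and you propose to verify this for the finitely many small exceptions against known Ramsey numbers. But the exceptions you list ($P_5$, $P_6$, $P_7$, $C_6$) are precisely graphs for which the inequality does hold (e.g.\ $R_3(P_5)=9\ge 8$ and $R_3(C_6)=12\ge 9$), whereas the graphs for which it fails are $P_3$ and $P_4$: $R_3(P_3)=5<6$ and $R_3(P_4)=6<7$, and both are paths, hence covered by the statement. For $H=P_4$, case~(a) with $|U|=3$ on $K_6$ yields only a monochromatic $K_3$, which does not contain $P_4$, and nothing further can be extracted from Theorem~\ref{th:ThWa}(ii)(a) as formulated here, since that statement records no information about the colors of edges meeting $U$. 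To close this you must either invoke the finer original form of the Thomason--Wagner theorem (which does constrain the edges at $U$) or give a separate direct argument for $H\in\{P_3,P_4\}$ (the case of a single edge being trivial). Everything else in the proposal stands.
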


Theorem~\ref{th:GyLS} was generalized by Li, Besse, Magnant, Wang and Watts~\cite{LBMWW}, who showed that for all connected graphs $H$ and all bipartite graphs $H$, we have $f(H,P_5)=R_3(H)$ (see also~\cite{LiLiu}).
For the 3-uniform loose path of length 3, Liu~\cite{Liu} obtained the following result.

\begin{theorem}{\normalfont (\cite{Liu})}\label{th:Liu-Ram}
For every 3-graph $H$ with $R_2(H)\geq \max\{|V(H)|+3, 10\}$, we have $f(H, \mathcal{L})=R_2(H)$.
\end{theorem}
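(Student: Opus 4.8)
The plan is to prove the two inequalities $f(H,\mathcal{L})\ge R_2(H)$ and $f(H,\mathcal{L})\le R_2(H)$ separately, the second being the substantial direction and resting on the structural Theorem~\ref{th:loose}. For the lower bound I would fix a $2$-edge-coloring of $K^{(3)}_{R_2(H)-1}$ with no monochromatic copy of $H$, which exists by the definition of $R_2(H)$. Such a coloring uses at most two colors, hence contains no rainbow $\mathcal{L}$ (which would need three distinct colors), so $R_2(H)-1$ vertices do not force either outcome and $f(H,\mathcal{L})\ge R_2(H)$.

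For the upper bound, set $n=R_2(H)$ and let $c$ be an arbitrary edge-coloring of $G=K^{(3)}_n$ with any number of colors. If $|C(G)|\le 2$, then $c$ is (at most) a $2$-edge-coloring of $K^{(3)}_{R_2(H)}$ and yields a monochromatic $H$ by definition of $R_2(H)$, so I may assume $|C(G)|\ge 3$; I may likewise assume $G$ has no rainbow $\mathcal{L}$, else we are done. Since $n=R_2(H)\ge 10\ge 7$, Theorem~\ref{th:loose} applies, leaving two cases. In case~(i), $G-u$ is a monochromatic $K^{(3)}_{n-1}$, and because $n-1=R_2(H)-1\ge |V(H)|+2\ge|V(H)|$ it contains a (monochromatic) copy of $H$.

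The remaining work is case~(ii): there are an edge $e=\{x_1,x_2,x_3\}$ and a color $i\ne c(e)$ such that every $f\in E(G)\setminus\{e\}$ with $c(f)\ne i$ has $|f\cap e|=2$. Here I would delete the three vertices of $e$: every edge of $G-\{x_1,x_2,x_3\}$ is a $3$-set disjoint from $e$, hence is different from $e$ and meets $e$ in $0\ne 2$ vertices, and is therefore colored $i$. So $G-\{x_1,x_2,x_3\}$ is a monochromatic $K^{(3)}_{n-3}$, and since $n-3=R_2(H)-3\ge|V(H)|$ it contains a monochromatic $H$. In all cases $K^{(3)}_n$ contains a monochromatic $H$ or a rainbow $\mathcal{L}$, giving $f(H,\mathcal{L})\le R_2(H)$.

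The genuinely hard input is Theorem~\ref{th:loose} itself, which I am treating as given; granting it, the argument above is routine casework. The hypothesis on $R_2(H)$ enters only through the two vertex-count inequalities $R_2(H)-1\ge|V(H)|$ and $R_2(H)-3\ge|V(H)|$, both consequences of $R_2(H)\ge|V(H)|+3$, while $R_2(H)\ge 10$ is used solely to meet the $n\ge 7$ requirement of Theorem~\ref{th:loose}. The one point worth care is that case~(ii) of Theorem~\ref{th:loose} is phrased with $|f\cap e|=2$ exactly: it is this equality (rather than an inequality) that forces all edges avoiding $\{x_1,x_2,x_3\}$ to take the single color $i$, which is what makes the deletion $G-\{x_1,x_2,x_3\}$ monochromatic.
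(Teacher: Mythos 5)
Your proof is correct. Note first that the statement you were asked to prove is the cited result of Liu (Theorem~\ref{th:Liu-Ram}); the paper itself does not reprove it but instead establishes the strictly stronger Theorem~\ref{th:Ram-loose}, which relaxes the hypothesis to $R_2(H)\geq \max\{|V(H)|+1,7\}$. Your argument shares the same skeleton as the paper's proof of that stronger result: the lower bound via a $2$-coloring (three edges of $\mathcal{L}$ cannot be rainbow with two colors), and the upper bound via the casework of Theorem~\ref{th:loose}. The divergence is in case~(ii). You delete all three vertices of the exceptional edge $e$, correctly observing that every edge disjoint from $e$ meets it in $0\neq 2$ vertices and hence has color $i$, so $G-V(e)$ is a monochromatic $K^{(3)}_{n-3}$; this is what forces the hypothesis $R_2(H)\geq|V(H)|+3$ and essentially reproduces Liu's original route through a ``delete three vertices'' structural statement (Theorem~\ref{th:Liu-stru}). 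The paper instead keeps all $n$ vertices: it shows that the spanning subgraph $F_1$ of non-color-$i$ edges (all concentrated around $e$) embeds into the color-$i$ subgraph $F_2$, recolors $F_1$ with a single second color, applies $R_2(H)$ to the auxiliary $2$-coloring, and transfers a color-$2$ copy of $H$ back into $F_2$; this is what buys the improvement from $|V(H)|+3$ down to $|V(H)|+1$ (and from $10$ down to $7$). As a minor remark, even within your deletion strategy you could have removed only two vertices of $e$, since any edge avoiding two vertices of $e$ already fails $|f\cap e|=2$; but for the statement as given, with its $|V(H)|+3$ hypothesis, your three-vertex deletion is perfectly adequate.
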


As the first application of our structural results, we obtain the following results on hypergraph constrained Ramsey numbers.

\begin{theorem}\label{th:Ram-tight}
For every connected 3-graph $H$, we have $f(H, \mathcal{T})=R_2(H)$.
\end{theorem}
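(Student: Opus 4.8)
The plan is to establish the two bounds $f(H,\mathcal{T})\ge R_2(H)$ and $f(H,\mathcal{T})\le R_2(H)$ separately, letting the structural result Theorem~\ref{th:tight} do the heavy lifting for the upper bound. For the lower bound I would take a $2$-edge-coloring of $K^{(3)}_{R_2(H)-1}$ with no monochromatic copy of $H$, which exists by the definition of $R_2(H)$. Such a coloring uses at most $2$ colors, whereas a rainbow copy of $\mathcal{T}$ needs $3$ distinct colors on its three edges; hence it also contains no rainbow $\mathcal{T}$, giving $f(H,\mathcal{T})\ge R_2(H)$.

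For the upper bound, set $n=R_2(H)$ and let $G$ be any edge-coloring of $K^{(3)}_n$ with no rainbow $\mathcal{T}$; the goal is to produce a monochromatic copy of $H$. If $|C(G)|\le 2$, then $G$ is, after relabeling, a $2$-edge-coloring of $K^{(3)}_{R_2(H)}$ and we are done by definition. So assume $|C(G)|\ge 3$, and for now also $n\ge 5$ so that Theorem~\ref{th:tight} applies: it yields a partition $V(G)=V_2\cup\cdots\cup V_k$ with $k=|C(G)|$, where $C(V_i)\subseteq\{1,i\}$ for each $i$ and every edge not contained in a single part has color $1$. The key step is then to pass to the auxiliary $2$-edge-coloring $c'$ of $K^{(3)}_n$ defined by $c'(e)=1$ if $c(e)=1$ and $c'(e)=2$ otherwise. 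Since $n=R_2(H)$, there is a copy $H'$ of $H$ that is monochromatic under $c'$. If $H'$ has $c'$-color $1$, then every edge of $H'$ has color $1$ under $c$ and we are done. If $H'$ has $c'$-color $2$, then every edge of $H'$ has a color other than $1$ under $c$, so by Theorem~\ref{th:tight} every edge of $H'$ lies inside a single part of the partition; since $H$ is connected, this forces all vertices of $H'$ into one part $V_i$ (a connected hypergraph all of whose edges stay within the classes of a partition lies entirely in one class, as one sees by walking along a path of edges), and then every edge of $H'$ has color $i$, again a monochromatic $H$.

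It remains to dispose of the small-order range $n=R_2(H)\le 4$, where Theorem~\ref{th:tight} is not available. Here $\mathcal{T}$ has five vertices and does not embed into $K^{(3)}_n$, so the "no rainbow $\mathcal{T}$" hypothesis is vacuous; using $R_2(H)\le 4$ to bound $|V(H)|$, and hence $|E(H)|$, a direct inspection of the handful of colorings of $K^{(3)}_3$ and $K^{(3)}_4$ exhibits a monochromatic copy of $H$, completing the upper bound.

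I do not expect a genuine obstacle in this argument: the entire difficulty has already been absorbed into Theorem~\ref{th:tight}. The one point deserving care is the verification that a $c'$-color-$2$ copy of $H$ must live inside a single part $V_i$ — this is precisely where connectedness of $H$ is used, and it is the reason the hypothesis "$H$ connected" cannot be dropped (a disconnected $H$ could spread across two parts and pick up two different non-$1$ colors).
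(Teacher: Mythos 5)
Your lower bound and your main upper-bound argument are exactly the paper's proof: the paper also notes that a $2$-coloring cannot contain a rainbow $3$-edge graph, invokes Theorem~\ref{th:tight} to get the partition $V_2,\ldots,V_{|C(G)|}$, passes to the auxiliary $2$-coloring that merges all colors $\geq 2$, and uses connectedness of $H$ to pull a $c'$-color-$2$ monochromatic copy back into a single part $V_i$, where it is monochromatic in color $i$. That part of your write-up is correct and needs no changes.

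The gap is in your final paragraph. You are right that Theorem~\ref{th:tight} requires $n\geq 5$ and that the case $R_2(H)\leq 4$ needs separate treatment (the paper silently skips this), but your claimed resolution --- that a direct inspection of the colorings of $K^{(3)}_3$ and $K^{(3)}_4$ always exhibits a monochromatic $H$ --- is false. Take $H=\mathbb{S}^{(3)}_2$, i.e.\ two triples sharing two vertices. Any two edges of $K^{(3)}_4$ share exactly two vertices, so a $2$-coloring of the four edges of $K^{(3)}_4$ forces two edges of the same color and hence $R_2(\mathbb{S}^{(3)}_2)=4$. Yet the coloring of $K^{(3)}_4$ that gives all four edges distinct colors contains no monochromatic $\mathbb{S}^{(3)}_2$ and, vacuously, no rainbow $\mathcal{T}$, so $f(\mathbb{S}^{(3)}_2,\mathcal{T})\geq 5>4=R_2(\mathbb{S}^{(3)}_2)$. (In fact $f(\mathbb{S}^{(3)}_2,\mathcal{T})=5$, since by Theorem~\ref{th:tight} a rainbow $\mathcal{T}$-free coloring of $K^{(3)}_5$ with at least three colors would need two disjoint parts of size at least $3$.) So the statement itself fails in the range you were trying to patch; among connected $3$-graphs, $\mathbb{S}^{(3)}_2$ is precisely the one with $R_2(H)=4$, while $R_2(H)=3$ forces $H$ to be a single edge (for which the claim is trivial) and $R_2(H)\geq 5$ is covered by your main argument. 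The correct fix is therefore not a case check but an added hypothesis: assume $R_2(H)\geq 5$ (equivalently, exclude $H\cong\mathbb{S}^{(3)}_2$), after which your proof --- and the paper's --- is complete.
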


\begin{theorem}\label{th:Ram-messy}
For every 3-graph $H$ with $R_2(H)\geq 7$, we have $f(H, \mathcal{M})=R_2(H)$.
\end{theorem}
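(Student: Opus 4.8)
The plan is to derive this as an immediate corollary of the structural Theorem~\ref{th:messy}, in the same spirit as the classical reductions of Gy\'arf\'as--Lehel--Schelp from constrained Ramsey numbers to ordinary $2$-colored Ramsey numbers. I would establish the two inequalities $f(H,\mathcal{M})\ge R_2(H)$ and $f(H,\mathcal{M})\le R_2(H)$ separately.

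For the lower bound, I would start from an optimal $2$-edge-coloring of $K^{(3)}_{R_2(H)-1}$ that contains no monochromatic copy of $H$; such a coloring exists by the very definition of $R_2(H)$. Since this coloring uses at most two colors while every copy of $\mathcal{M}$ has three edges, it is automatically rainbow $\mathcal{M}$-free. Hence $K^{(3)}_{R_2(H)-1}$ admits an edge-coloring with neither a monochromatic $H$ nor a rainbow $\mathcal{M}$, so $f(H,\mathcal{M})\ge R_2(H)$.

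For the upper bound, set $n=R_2(H)$, so that $n\ge 7$ by hypothesis and Theorem~\ref{th:messy} is applicable. Let $c$ be an arbitrary edge-coloring of $K^{(3)}_n$. If $c$ produces a rainbow copy of $\mathcal{M}$ we are done; otherwise Theorem~\ref{th:messy} forces $|C(G)|\le 2$, i.e.\ $c$ is (after relabelling) a $2$-edge-coloring of $K^{(3)}_n$ with $n=R_2(H)$, and therefore contains a monochromatic copy of $H$. The degenerate case $|C(G)|=1$ also falls under this argument, since a $1$-edge-coloring is in particular a $2$-edge-coloring and $n=R_2(H)\ge |V(H)|$ guarantees a copy of $H$. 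Combining the two bounds yields the equality $f(H,\mathcal{M})=R_2(H)$.

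I do not expect any genuine obstacle in this argument: all of the difficulty has already been absorbed into Theorem~\ref{th:messy}, whose content is precisely that rainbow $\mathcal{M}$-free colorings of $K^{(3)}_n$ with $n\ge 7$ use at most two colors. The only points that merit a careful sentence are the bookkeeping identification of ``at most two colors'' with a $2$-edge-coloring in the definition of $R_2(H)$, and the observation that the hypothesis $R_2(H)\ge 7$ is exactly what makes the threshold in Theorem~\ref{th:messy} available. It is worth noting that, unlike the loose-path case in Theorem~\ref{th:Liu-Ram}, no extra assumption relating $R_2(H)$ to $|V(H)|$ is needed here, because the structural theorem for $\mathcal{M}$ leaves no exceptional families of colorings to handle.
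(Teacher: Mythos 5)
Your proposal is correct and follows essentially the same route as the paper: the lower bound comes from the fact that a $2$-coloring (in particular an extremal one for $R_2(H)$ on $R_2(H)-1$ vertices) cannot contain a rainbow copy of the three-edge graph $\mathcal{M}$, and the upper bound is an immediate application of Theorem~\ref{th:messy} with $n=R_2(H)\ge 7$. No gaps; the extra remarks about the degenerate one-color case and the absence of a $|V(H)|$ condition are accurate but not needed beyond what the paper already says.
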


\begin{theorem}\label{th:Ram-loose}
For every 3-graph $H$ with $R_2(H)\geq \max\{|V(H)|+1, 7\}$, we have $f(H, \mathcal{L})=R_2(H)$.
\end{theorem}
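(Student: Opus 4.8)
\noindent\emph{Proof strategy.}
The plan is to establish the two inequalities $f(H,\mathcal{L})\ge R_2(H)$ and $f(H,\mathcal{L})\le R_2(H)$ separately. The first is routine: fixing a $2$-edge-coloring of $K^{(3)}_{R_2(H)-1}$ with no monochromatic $H$ (which exists by the definition of $R_2(H)$), I note that it uses at most two colors while $\mathcal{L}$ has three edges, so it contains no rainbow $\mathcal{L}$; hence $f(H,\mathcal{L})>R_2(H)-1$.

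For the upper bound, set $n=R_2(H)$ and let $G$ be an arbitrary edge-colored $K^{(3)}_n$ containing no rainbow $\mathcal{L}$; I must exhibit a monochromatic copy of $H$. If $|C(G)|\le 2$ then $G$ is a $2$-edge-coloring of $K^{(3)}_n$, and a monochromatic $H$ exists by the definition of $R_2(H)$ (here $n\ge|V(H)|$). So assume $|C(G)|\ge 3$ and apply Theorem~\ref{th:loose}. If its conclusion~(i) holds, then $G-u$ is a monochromatic complete $3$-graph on $n-1=R_2(H)-1\ge|V(H)|$ vertices and hence contains a monochromatic $H$; we are done. The real work is conclusion~(ii): there are an edge $e=\{a,b,c\}$ and a color $i$ with $c(e)\ne i$ such that every edge $f\ne e$ with $c(f)\ne i$ satisfies $|f\cap e|=2$.

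In case~(ii) I would produce a monochromatic copy of $H$ in color $i$. The key remark is that any triple meeting $e$ in at most one vertex is automatically colored $i$; hence the color-$i$ hypergraph is obtained from $K^{(3)}_n$ by deleting only some triples that contain two vertices of $e$, and in particular $G$ restricted to $V(G)\setminus\{a,b\}$ is a monochromatic $K^{(3)}_{n-2}$. It therefore suffices to embed $H$ into $K^{(3)}_n$ so that no edge of $H$ is sent to a triple containing two vertices of $e$. Let $\partial H$ be the $2$-shadow of $H$, i.e.\ the graph on $V(H)$ in which two vertices are adjacent precisely when some edge of $H$ contains them both. If $|V(H)|\le n-2$, map all of $V(H)$ into $V(G)\setminus\{a,b\}$; then every image triple meets $e$ in at most one vertex. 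If instead $\partial H$ has a non-edge $\{x,y\}$, map $x,y$ to two vertices of $e$ and the remaining $|V(H)|-2\le n-3$ vertices into $V(G)\setminus e$; since no edge of $H$ contains both $x$ and $y$, no image triple contains two vertices of $e$. In either case $H$ appears in color $i$. Since $R_2(H)\ge|V(H)|+1$, one of these two alternatives is available unless we are simultaneously in the case $R_2(H)=|V(H)|+1$ and $\partial H=K_{|V(H)|}$, i.e.\ unless $H$ is a \emph{covering} $3$-graph in which every pair of vertices lies in an edge.

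Ruling out this last case is, I expect, the main obstacle. In it $|V(H)|=R_2(H)-1\ge 6$; moreover a $3$-graph with $\partial H=K_{|V(H)|}$ is connected, has minimum degree at least $\lceil(|V(H)|-1)/2\rceil$, and hence has at least $|V(H)|(|V(H)|-1)/6$ edges. I would argue that any such $H$ satisfies $R_2(H)\ge|V(H)|+2$, contradicting $R_2(H)=|V(H)|+1$; equivalently, I must exhibit a $2$-edge-coloring of $K^{(3)}_{|V(H)|+1}$ with no monochromatic $H$. For large $|V(H)|$ this follows from a first-moment estimate, since the number of copies of $H$ in $K^{(3)}_{|V(H)|+1}$ is at most $(|V(H)|+1)!$, which is dwarfed by $2^{|E(H)|-1}$ once $|E(H)|\ge|V(H)|(|V(H)|-1)/6$. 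The delicate point is the small values of $|V(H)|$, where the probabilistic bound is too weak and one must instead design an explicit balanced $2$-coloring of $K^{(3)}_{|V(H)|+1}$ — for instance one determined by the parity of $|T\cap S|$ for a fixed vertex set $S$ — and verify, crucially using that every pair of vertices of $H$ lies in an edge, that neither color class can contain $H$. Carrying this verification out uniformly over all covering $3$-graphs of a given order is the technical heart of the argument.
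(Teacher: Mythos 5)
Your lower bound and the reduction to Theorem~\ref{th:loose}~(ii) are fine, and your two embedding alternatives in case~(ii) are correct as far as they go. But the proof has a genuine gap exactly where you say the ``technical heart'' lies: the case $|V(H)|=R_2(H)-1$ with $\partial H$ complete is not resolved. The lemma you would need --- that every $3$-graph $H$ in which all pairs are covered satisfies $R_2(H)\geq |V(H)|+2$ --- is left unproved. Your first-moment sketch does not rescue it: with $|E(H)|\geq |V(H)|(|V(H)|-1)/6$ the bound $(|V(H)|+1)!\cdot 2^{1-|E(H)|}<1$ only takes effect for $|V(H)|$ in the twenties, so all orders from $6$ up to roughly $23$ would require ad hoc constructions that you do not supply, and it is not evident that the claim even holds for every covering $3$-graph in that range. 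As written, the argument proves the theorem only for $H$ with $R_2(H)\geq |V(H)|+2$ or with $\partial H\neq K_{|V(H)|}$, which is strictly weaker than the statement.

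The paper avoids this entirely by a different device in case~(ii). Writing $e=v_1v_2v_3$ and taking $i=1$, it observes that the set $F_1$ of edges of color $\neq 1$ is contained in $F_1'=\{f: |f\cap\{v_1,v_2,v_3\}|\geq 2\}$, while the color-$1$ class $F_2$ contains $F_2'=\{f: |f\cap\{v_4,v_5,v_6\}|\geq 2\}$ (every such $f$ meets $e$ in at most one vertex), and $F_1'\cong F_2'$ via the permutation swapping $\{v_1,v_2,v_3\}$ with $\{v_4,v_5,v_6\}$. Recoloring all of $F_1$ with a single color gives a $2$-coloring of $K^{(3)}_{R_2(H)}$, which contains a monochromatic $H$; if that copy lies in $F_1$, the embedding $F_1\subseteq F_1'\cong F_2'\subseteq F_2$ transplants it into the color-$1$ class. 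This handles covering $3$-graphs with no extra hypothesis (indeed it uses $R_2(H)\geq |V(H)|+1$ only in case~(i)). If you want to salvage your direct-embedding approach, you should replace the covering-graph lemma with some version of this transplanting trick.
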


\begin{remark}\label{re:theorems-2}
{\rm
The lower bound $R_2(H)\geq  7$ in Theorem~\ref{th:Ram-messy} is sharp.
Indeed, if $H$ is a 3-graph with at least two edges and $R_2(H)\leq 6$, then $H$ contains no matching of size 2, since the 2-colored Ramsey number for the 3-uniform matching of size 2 is 7 $($see~\cite{Lov}$)$.
Then the edge-coloring of $K^{(3)}_6$ constructed in Remark~\ref{re:theorems-1}~(i) contains neither a rainbow $M$ nor a monochromatic $H$.
This implies that $f(H, \mathcal{M})\geq 7>R_2(H)$.
Moreover, if an edge-colored $K^{(3)}_7$ contains no rainbow $\mathcal{M}$, then it is colored by at most two colors by Theorem~\ref{th:messy}.
Since $R_2(H)\leq 6<7$, there must be a monochromatic $H$ in every rainbow $\mathcal{M}$-free edge-colored $K^{(3)}_7$.
This implies that $f(H, \mathcal{M})= 7$ for any 3-graph $H$ with $|E(H)|\geq 2$ and $R_2(H)\leq 6$.
}
\end{remark}

Another rainbow generalization of the Ramsey number is the anti-Ramsey number, which was first introduced by Erd\H{o}s, Simonovits and S\'{o}s~\cite{ErSS} in 1970s.
Given an $r$-uniform hypergraph $G$ and an integer $n\geq |V(G)|$, the {\it anti-Ramsey number} $ar(n, G)$ is the minimum integer $k$ such that, in every edge-coloring of $K^{(r)}_n$ with at least $k$ colors, there is a rainbow copy of $G$.\footnote{We remark that in certain literature, the anti-Ramsey number is also defined as the maximum integer $k'$ such that, there exists a rainbow $G$-free edge-coloring of $K^{(r)}_n$ with exactly $k'$ colors.}
The anti-Ramsey number for hypergraph paths, cycles and matchings was studied in~\cite{GuLS,GuLP,Jin21DM,LTWY,OzYo,TaLY,XuSK}.
In particular, Tang, Li and Yan~\cite{TaLY} obtained the following results for $\mathcal{L}$.

\begin{theorem}{\normalfont (\cite{TaLY})}\label{th:TaLY}
For any integer $n\geq 32$, we have $ar(n,\mathcal{L})=n$.
\end{theorem}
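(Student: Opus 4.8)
The plan is to prove the two inequalities $ar(n,\mathcal{L})\ge n$ and $ar(n,\mathcal{L})\le n$ separately, the first by an explicit construction and the second by feeding the structural dichotomy of Theorem~\ref{th:loose} into a short counting argument. For the lower bound it suffices to exhibit a rainbow $\mathcal{L}$-free edge-coloring of $K^{(3)}_n$ using $n-1$ colors. Fix two vertices $x,y$; for each of the $n-2$ vertices $w\notin\{x,y\}$ give the edge $\{x,y,w\}$ its own private color, and color every remaining edge with one common color $c_0$. Any two privately colored edges share the pair $\{x,y\}$, whereas in a copy $\mathcal{L}=\{f_1,f_2,f_3\}$ consecutive edges meet in exactly one vertex and $f_1\cap f_3=\emptyset$; hence at most one $f_j$ can be privately colored, so every copy of $\mathcal{L}$ sees at most two colors. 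This gives $ar(n,\mathcal{L})\ge (n-1)+1=n$.

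For the upper bound, suppose toward a contradiction that $G$ is a rainbow $\mathcal{L}$-free edge-colored $K^{(3)}_n$ with $|C(G)|\ge n$. Since $n\ge 32>3$, Theorem~\ref{th:loose} applies. In case~(i) there is a vertex $u$ with $G-u$ monochromatic in a color $c_0$, so every color other than $c_0$ occurs only on edges through $u$. The key observation is that two edges $\{u,a,b\}$ and $\{u,c,d\}$ with distinct colors and $\{a,b\}\cap\{c,d\}=\emptyset$ cannot both appear: since $n\ge 7$ one could pick two fresh vertices and build a copy of $\mathcal{L}$ from $\{a,b,u\}$, $\{u,c,d\}$ and a third edge avoiding $u$ (hence of color $c_0$), which would be rainbow. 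Therefore, picking one edge per non-$c_0$ color, the pairs $\{a,b\}$ they contribute form an intersecting family of $2$-subsets of the $(n-1)$-set $V(G)\setminus\{u\}$, which has at most $n-2$ members; thus $|C(G)|\le (n-2)+1=n-1$, a contradiction.

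In case~(ii) fix the edge $e=\{x,y,z\}$ and the color $i$ such that every edge $f\ne e$ with $c(f)\ne i$ satisfies $|f\cap e|=2$; such an $f$ equals $\{y,z,w\}$, $\{x,z,w\}$ or $\{x,y,w\}$ for some $w\notin e$, and I group these edges by which of the pairs $\{y,z\},\{x,z\},\{x,y\}$ they contain. The crucial sub-lemma is: a non-$i$ edge over one pair and a non-$i$ edge over a different pair, with distinct apices $w$, must receive the same color. Indeed such edges meet in exactly one vertex of $e$, so (arguing as in case~(i), using $n\ge 7$) they extend to a copy of $\mathcal{L}$ whose third edge meets $e$ in a single vertex and is hence forced to have color $i$; if the two original colors were distinct and different from $i$ this copy would be rainbow. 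Consequently, if non-$i$ edges occur over at least two of the three pairs, then all non-$i$ edges over any fixed pair use at most two colors (once the few apex-collisions and the colour of $e$ itself are accounted for, harmless since $n\ge 32$), so $|C(G)|=O(1)$; and if non-$i$ edges occur over at most one pair then $|C(G)|\le 2+(n-3)=n-1$. Either way $|C(G)|\le n-1$, contradicting $|C(G)|\ge n$, and combining this with the construction gives $ar(n,\mathcal{L})=n$.

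The step I expect to be the main obstacle is case~(ii): a crude edge count is hopeless there, since roughly $3(n-3)$ edges meet $e$ in two vertices, so one genuinely needs the colour-coincidence sub-lemma and then has to dispose of the boundary situations where two of the three pair-classes are nonempty but small. All of this is comfortable for $n\ge 32$, and in fact the same scheme appears to work for much smaller $n$, which is presumably how the threshold $n\ge 7$ of our improved statement is obtained.
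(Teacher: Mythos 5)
Your proof is correct, and it is worth noting that the statement you were given is the one the paper merely cites from Tang--Li--Yan; the paper's own argument is its proof of the stronger Theorem~\ref{th:anti-Ram}~(iii) (the same equality for all $n\geq 7$). Your lower-bound construction is identical to the paper's (private colors on the $n-2$ edges through a fixed pair, one common color elsewhere). For the upper bound the routes diverge: the paper first proves the strengthened structure theorem, Theorem~\ref{th:loose+}, which says that a rainbow $\mathcal{L}$-free coloring with at least three colors either has all non-principal colors confined to edges containing a fixed \emph{pair} $\{u,v\}$ (giving $|C(G)|\leq n-1$ at once) or has $|C(G)|\leq 5$; the anti-Ramsey bound then falls out in two lines. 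You instead invoke only the weaker Theorem~\ref{th:loose} and supply the missing combinatorial content inline: in case~(i) your observation that the link pairs of distinctly colored edges through $u$ form an intersecting family of $2$-sets (hence at most $n-2$ of them) is essentially a re-derivation of the dichotomy behind Theorem~\ref{th:loose+}~(i), and in case~(ii) your color-coincidence sub-lemma (edges over different pairs of $e$ with distinct apices must share a color, since otherwise a third edge disjoint from $e$ is forced to have color $i$ and completes a rainbow $\mathcal{L}$) bounds $|C(G)|$ by an absolute constant (at most $6+2=8$ by your "two colors per pair-class" count), which is comfortably below $n-1$ for $n\geq 32$ and, with slightly more care, recovers the paper's threshold $n\geq 7$. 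Both approaches are sound; the paper's buys a clean reusable structure theorem, while yours is more self-contained and avoids proving Theorem~\ref{th:loose+} in full. The only soft spot is the phrase "harmless since $n\geq 32$, so $|C(G)|=O(1)$" in case~(ii), which you should replace by the explicit count just described, but this is bookkeeping rather than a gap.
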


As the second application of our structural results, we obtain the following results on hypergraph anti-Ramsey numbers.
In particular, for the loose path $\mathcal{L}$, our result improves Theorem~\ref{th:TaLY}.

\begin{theorem}\label{th:anti-Ram}
The following statements hold.
\begin{itemize}
\item[{\rm (i)}] For any integer $n\geq 5$, we have $ar(n,\mathcal{T})=\left\lfloor\frac{n}{3}\right\rfloor+2$.

\item[{\rm (ii)}] $ar(n,\mathcal{M})=
\left\{
   \begin{aligned}
    &11, & & \mbox{for $n=6$},\\
    &3, & & \mbox{for $n\geq 7$}.
   \end{aligned}
   \right.$

\item[{\rm (iii)}] For any integer $n\geq 7$, we have $ar(n,\mathcal{L})=n$.
\end{itemize}
\end{theorem}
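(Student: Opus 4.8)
The plan is to compute each of the three anti-Ramsey numbers by pairing an explicit extremal construction (for the lower bound) with the corresponding structure theorem (for the matching upper bound), using throughout that $ar(n,G)-1$ equals the maximum number of colors in a rainbow $G$-free edge-coloring of $K^{(3)}_n$. For part (i) I would partition $V(K^{(3)}_n)$ into $\lfloor n/3\rfloor$ parts each of size at least $3$, pick one edge inside each part, give these chosen edges the distinct private colors $2,3,\dots,\lfloor n/3\rfloor+1$, and color every other edge with color $1$. Since any two edges of a tight path meet in at least one vertex while two edges of distinct private colors lie in disjoint parts, and at most one edge of a rainbow $\mathcal{T}$ has color $1$, no copy of $\mathcal{T}$ is rainbow; hence $ar(n,\mathcal{T})\ge\lfloor n/3\rfloor+2$. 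Conversely, if $G$ is a rainbow $\mathcal{T}$-free $K^{(3)}_n$ with $|C(G)|\ge 3$, then Theorem~\ref{th:tight} gives a partition into $|C(G)|-1$ parts, each containing an edge of its own color and hence of size at least $3$, so $n\ge 3(|C(G)|-1)$ and $|C(G)|\le\lfloor n/3\rfloor+1$; thus every coloring with $\lfloor n/3\rfloor+2$ colors contains a rainbow $\mathcal{T}$, proving part (i).

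For part (ii) with $n\ge 7$, Theorem~\ref{th:messy} forces a rainbow $\mathcal{M}$-free coloring to use at most two colors, so every coloring with at least three colors contains a rainbow $\mathcal{M}$, while a two-coloring trivially has none; thus $ar(n,\mathcal{M})=3$. For $n=6$, the lower bound $ar(6,\mathcal{M})\ge 11$ is witnessed by the coloring of Remark~\ref{re:theorems-1}(i), in which the $20$ edges of $K^{(3)}_6$ are split into the $10$ perfect matchings, each receiving its own color: a copy of $\mathcal{M}$ occupies all six vertices, so its first and last edges form a perfect matching and therefore repeat a color. For the upper bound, given a coloring of $K^{(3)}_6$ with at least $11$ colors I would choose one edge of each color; since the $20$ edges split into the $10$ antipodal pairs $\{S,V\setminus S\}$ and at least $11$ edges were chosen, some such pair $\{A,B\}$ is entirely chosen, so $c(A)\ne c(B)$; each of the other $18$ edges meets one of $A,B$ in two vertices and the other in one, and some of them has a color outside $\{c(A),c(B)\}$, which together with $A$ and $B$ forms a rainbow $\mathcal{M}$. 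This gives $ar(6,\mathcal{M})=11$.

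For part (iii), I would prove $ar(n,\mathcal{L})\ge n$ by fixing two vertices $a,b$, giving the $n-2$ edges containing $\{a,b\}$ distinct colors and all other edges one common color: any two edges of distinct non-common colors share $\{a,b\}$, whereas no two edges of a loose path share two vertices and a rainbow $\mathcal{L}$ has at least two edges of non-common colors, so this $(n-1)$-coloring is rainbow $\mathcal{L}$-free. For the upper bound, suppose $G$ is a rainbow $\mathcal{L}$-free $K^{(3)}_n$ with $|C(G)|\ge n$; since $n\ge 7$, Theorem~\ref{th:loose} applies. In case (i), where $G-u$ is monochromatic in some base color, I would first check that two edges through $u$ of distinct non-base colors must share a vertex other than $u$: otherwise they serve as the first two edges of a loose path whose third edge can be chosen disjoint from $u$ (possible because $n\ge 7$) and is hence base-colored, giving a rainbow $\mathcal{L}$. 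Consequently the at least $n-1$ distinctly-colored non-base edges through $u$ project to an intersecting family of pairs on the $n-1$ vertices of $G-u$, a family of size at most $n-2$, a contradiction. In case (ii), with distinguished edge $e=\{a,b,c\}$ and color $i$, I would classify the non-$i$ edges other than $e$ by which of the three pairs of $e$ they contain; a parallel loose-path-closing argument shows that if one of these classes carries at least two colors then the other two are confined to those colors, and that if one class carries at least three colors then the other two are empty. Hence in every configuration either all non-$i$ edges lie on a common pair of vertices (so at most $n-1$ colors in all) or there are at most four colors in all, contradicting $|C(G)|\ge n$ since $n\ge 7$. Therefore $ar(n,\mathcal{L})=n$.

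I expect the case analysis in part (iii), and specifically case (ii) of Theorem~\ref{th:loose}, to be the main obstacle: one must verify in each configuration that the slack afforded by $n\ge 7$ leaves enough vertices to close up a rainbow loose path whose final edge is forced to the auxiliary color $i$, and must deal carefully with the degenerate overlaps among the pair-classes of $e$ — in particular the case where two non-$i$ edges share two vertices, where the closing argument must be run with a different choice of loose path. The remaining ingredients (the constructions, the counting for $\mathcal{T}$, the pigeonhole over perfect matchings for $\mathcal{M}$, and the intersecting-family bound for case (i) of $\mathcal{L}$) are routine once the structure theorems are in hand.
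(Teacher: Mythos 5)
Your proposal is correct, and for parts (i) and (ii) — constructions, the counting via Theorem~\ref{th:tight}, the pigeonhole over the ten complementary pairs of $K_6^{(3)}$, and the appeal to Theorem~\ref{th:messy} — it matches the paper's proof essentially step for step. Where you genuinely diverge is the upper bound in part (iii): the paper does not argue from Theorem~\ref{th:loose} at all, but instead proves a strengthened structure theorem (Theorem~\ref{th:loose+}) whose three outcomes immediately give $|C(G)|\le\max\{5,\,n-1\}=n-1$; you instead work directly from Theorem~\ref{th:loose} and rebuild the needed color bounds inline. Your case~(i) argument (two disjointly-meeting non-base edges through $u$ extend to a rainbow $\mathcal{L}$ via a base-colored third edge, forcing an intersecting family of more than $n-2$ pairs on $n-1$ vertices) and your case~(ii) classification of the non-$i$ edges by which pair of $e$ they contain both check out; the cross-class path-closing step works because two edges in different classes with distinct third vertices meet in one vertex and can be completed by an edge disjoint from $e$, which is forced to color $i$, and an edge in another class can coincide in third vertex with at most one edge of a two-colored class, so the "confinement" claims hold. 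The one slip is quantitative: in the subcase where each of the three pair-classes carries at most one color, the total is bounded by $3+1+1=5$ colors (three class colors, $c(e)$, and $i$), not four — the configuration with $c(v_1v_2v_3)=2$, $c(v_1v_2v_4)=3$, $c(v_1v_3v_4)=4$, $c(v_2v_3v_4)=5$ and all else color $1$ realizes five colors without all non-$i$ edges sharing a pair. Since $5<n$ for $n\ge 7$, this does not affect the contradiction, so your argument survives; the trade-off is that the paper's route, having invested in Theorem~\ref{th:loose+}, makes the anti-Ramsey computation a three-line corollary, whereas yours is self-contained but carries the delicate case analysis you anticipated.
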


\subsection{A multipartite generalization}
\label{subsec:multipartite}

In this subsection, we consider edge-colorings of complete 3-partite 3-graphs that contain no rainbow paths of length 3.
An $r$-graph $H$ is called {\it $r$-partite} if $V(H)$ can be partitioned into $r$ disjoint subsets $V_1, V_2, \ldots, V_r$ such that for every edge $e\in E(H)$, we have $|e\cap V_i|=1$ for each $i\in [r]$.
Note that a $2$-partite 2-graph is the ordinary bipartite graph.
In \cite{LiWL}, Li, Wang and Liu characterized the structures of edge-colored complete bipartite graphs that contain no rainbow paths of length 3 and length 4, respectively.
As applications, Li and Liu~\cite{LiLiu} obtained several constrained Ramsey-type results in the setting of bipartite graphs.
Let $V_1, V_2, \ldots, V_r$ be $r$ disjoint vertex sets with $|V_i|=n_i$ for each $i\in [r]$.
The {\it complete $r$-partite $r$-graph} $K_{n_1,\ldots,n_r}^{(r)}$ with partite sets $V_1, V_2, \ldots, V_r$ is defined as the $r$-partite $r$-graph whose edge set consists of all the $r$-element subsets $e$ of $V_1\cup V_2\cup \cdots \cup V_r$ with $|e\cap V_i|=1$ for each $i\in [r]$.
As a combined generalization of our Theorems~\ref{th:tight}, \ref{th:messy}, \ref{th:loose} and the results of Li, Wang and Liu~\cite{LiWL}, we obtain the following results.

\begin{theorem}\label{th:multi-tight}
For any integer $n\geq 3$, let $G$ be a rainbow $\mathcal{T}$-free edge-colored $K^{(3)}_{n,n,n}$ with $|C(G)|\geq 3$.
Let $V_1, V_2, V_3$ be the partite sets of $G$.
Then at least one of the following statements holds:
\begin{itemize}
\item[{\rm (i)}] we can partition one partite set $V_{\ell}$ into $|C(G)|$ parts $V_{\ell,1}, V_{\ell,2}, \ldots, V_{\ell,|C(G)|}$ such that for each $i\in [|C(G)|]$, all edges containing a vertex in $V_{\ell,i}$ are of color $i$.

\item[{\rm (ii)}] for every $\ell\in [3]$, we can partition $V_{\ell}$ into $|C(G)|-1$ parts $V_{\ell,2}, V_{\ell,3}, \ldots, V_{\ell,|C(G)|}$ such that $($after renumbering the colors if necessary$)$: all edges within $V_{1,i}\cup V_{2,i}\cup V_{3,i}$ are of color 1 or $i$ for every $i\in \{2, 3, \ldots, |C(G)|\}$, and all the remaining edges are of color 1.
\end{itemize}
\end{theorem}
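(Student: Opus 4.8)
The plan is to reduce the multipartite statement to the structure of a single rainbow $\mathcal{T}$-free edge-coloring by carefully exploiting the fact that a tight path of length $3$ in $K^{(3)}_{n,n,n}$ uses five consecutive vertices cycling through the three partite classes. First I would fix notation: write $V_1,V_2,V_3$ for the partite sets, and for a pair of vertices $x,y$ lying in two distinct classes, let $N(xy)$ denote the set of colors appearing on edges $\{x,y,z\}$ as $z$ ranges over the third class. The key local observation is that if two "adjacent" edges $e=\{a,b,c\}$ and $f=\{b,c,d\}$ (sharing a pair $\{b,c\}$) receive distinct colors, then for $\mathcal{T}$-freeness every edge $g=\{c,d,e'\}$ sharing the pair $\{c,d\}$ with $f$ — and symmetrically every edge sharing the pair $\{a,b\}$ with $e$ — must repeat one of the two colors $c(e),c(f)$; this is the direct analogue of the argument behind Theorem~\ref{th:tight} restricted to link pairs. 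I would record these "link constraints" as a lemma: for each vertex pair $xy$ across two classes, either all edges through $xy$ have the same color, or the color set $N(xy)$ together with the colors seen on all pairs adjacent to $xy$ is confined to a small set.

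Next, the structural dichotomy in the conclusion — alternative (i) versus alternative (ii) — should come from a case split on whether some partite class $V_\ell$ admits a vertex $v$ all of whose incident edges share a single color $i$, and more strongly whether $V_\ell$ can be partitioned with each part monochromatically dominating its incident edges. Concretely: for each $\ell\in[3]$ and each $v\in V_\ell$, let $c^\ast(v)$ be a color, if one exists, such that all edges through $v$ lie in $\{1,c^\ast(v)\}$ after we identify the "background" color $1$ as the most frequent one; I would first establish that such a background color exists using Theorem~\ref{th:tight} applied to a suitable auxiliary complete $3$-graph, or by directly re-running the counting argument in the $3$-partite setting. If, for some $\ell$, every vertex of $V_\ell$ has all incident edges of a \emph{single} color (no background needed), then grouping vertices of $V_\ell$ by that color yields conclusion (i). Otherwise I would show that the background color $1$ must be globally consistent across all three classes, and then that the partition into color classes $V_{\ell,i}$ (for $i\ge 2$, collecting vertices $v$ with incident edges in $\{1,i\}$) is well-defined and refines simultaneously in all three parts, giving conclusion (ii); here the cross-class link constraints force that an edge meeting $V_{1,i}\cup V_{2,i}\cup V_{3,i}$ but not entirely inside it can only carry color $1$.

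The main obstacle I anticipate is the \emph{bootstrapping of the background color} in the tripartite setting: unlike in $K^{(3)}_n$, where Theorem~\ref{th:tight} is already available, here one must either prove an analogous "almost all edges share one color" statement from scratch for $K^{(3)}_{n,n,n}$, or find a way to embed/amalgamate the three partite classes into a situation where Theorem~\ref{th:tight} applies — the latter is delicate because adding the missing (non-transversal) edges of $K^{(3)}_{3n}$ could create rainbow tight paths that the original coloring did not have, so one cannot naively extend the coloring. My preferred route is to redo the extremal counting directly: assume $|C(G)|\ge 3$ and a vertex pair $ab$ with $|N(ab)|\ge 2$, then chase the link constraints around the $2$-shadow graph (a bipartite-like incidence structure on pairs-across-classes) to show that all but a bounded amount of color information collapses onto one color, which becomes the background. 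Once that is in hand, the remaining case analysis separating (i) from (ii) is bookkeeping: one checks that the two alternatives are exhaustive by examining whether any class fails to have a consistent background, and in that degenerate sub-case one shows directly that that class splits into fully monochromatic stars, yielding (i). The small lower bound $n\ge 3$ should only be needed to guarantee enough room to form a tight path with vertices in prescribed classes, so I would verify at the outset that $K^{(3)}_{3,3,3}$ already contains a copy of $\mathcal{T}$ and that all the link arguments have enough vertices to run.
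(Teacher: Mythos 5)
Your local observation about adjacent edges is correct, and your description of the two target structures matches the theorem, but the core of the argument is missing at exactly the point you flag as the ``main obstacle.'' You never actually produce the background color. Your first suggestion (apply Theorem~\ref{th:tight} to an auxiliary complete $3$-graph) you correctly retract yourself, since extending the coloring to the non-transversal triples of $K^{(3)}_{3n}$ can create rainbow tight paths. Your fallback --- ``chase the link constraints around the $2$-shadow graph to show that all but a bounded amount of color information collapses onto one color'' --- is not a proof; it is a restatement of the goal. Note also that in case (ii) the background color need not be the most frequent one (take $V_{\ell,2}$ to be almost all of $V_\ell$ with color $2$ inside; then color $2$ occupies $\Theta(n^3)$ edges versus $\Theta(n^2)$ for color $1$), so even identifying the candidate background by frequency fails. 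A second gap is the exhaustiveness of your dichotomy: you split on whether some partite class consists entirely of vertices with monochromatic stars, but you give no argument that the failure of this condition forces a globally consistent background color in the sense of (ii).

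The paper resolves both gaps with one device you do not have: the dichotomy is decided by whether $G$ contains a rainbow copy of the messy path $\mathcal{M}$. If it does not, Theorem~\ref{th:multi-messy} delivers conclusion (i) immediately. If it does, say $\{e_1,e_2,e_3\}$ with middle edge $e_1$, then $c(e_1)$ \emph{is} the background color: one first shows every edge meeting both $e_2$ and $e_3$ has color $c(e_1)$, then that every edge meeting $e_2$ (resp.\ $e_3$) has color in $\{c(e_1),c(e_2)\}$ (resp.\ $\{c(e_1),c(e_3)\}$), and then bootstraps by an induction on the number of colors to show that any two edges of distinct non-background colors are disjoint and every edge meeting both has color $c(e_1)$. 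That last statement is precisely what defines the partition in (ii). Without this pivot (or an equivalent mechanism for manufacturing and propagating the background color), your outline does not close.
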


\begin{theorem}\label{th:multi-messy}
For any integer $n\geq 3$, let $G$ be a rainbow $\mathcal{M}$-free edge-colored $K^{(3)}_{n,n,n}$ with $|C(G)|\geq 3$.
Let $V_1, V_2, V_3$ be the partite sets of $G$.
Then we can partition one partite set $V_{\ell}$ into $|C(G)|$ parts $V_{\ell,1}, V_{\ell,2}, \ldots, V_{\ell,|C(G)|}$ such that for each $i\in [|C(G)|]$, all edges containing a vertex in $V_{\ell,i}$ are of color $i$.
\end{theorem}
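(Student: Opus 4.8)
The plan is to build on the shape of a rainbow $\mathcal{M}$: it is a tight pair of edges $f,f'$ with $|f\cap f'|=2$ and $c(f)\neq c(f')$, together with a third edge $f''$ of a new colour that contains the unique vertex of $f'\setminus f$ but is disjoint from $f$ (or the symmetric configuration with $f$ and $f'$ interchanged). Hence a rainbow‑$\mathcal{M}$‑free colouring satisfies: for every tight pair $\{f,f'\}$ with $c(f)\neq c(f')$, every edge through the vertex $f'\setminus f$ that is disjoint from $f$ has colour in $\{c(f),c(f')\}$ (and symmetrically for $f\setminus f'$). In the $3$‑partite setting the natural objects are the \emph{books}: for a co‑degree pair $\{a,b\}$ with $a\in V_i$, $b\in V_j$ and $\{i,j,k\}=[3]$, write $B_{ab}\colonequals\{abx\colon x\in V_k\}$. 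Two edges of $B_{ab}$ of distinct colours then force, for each of the two corresponding vertices $x\in V_k$, every edge through $x$ containing neither $a$ nor $b$ to use only those two colours; this local rigidity is the engine of the proof.

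First I would record two consequences. \emph{(Rich book.)} If some book $B_{ab}$, with third part $V_k$, uses at least three colours, then $V_k$ is the part we want: writing $\chi(x)\colonequals c(abx)$, one shows that every edge through $x$ has colour $\chi(x)$, so partitioning $V_k$ according to the value of $\chi$ yields the partition asserted in the theorem with $\ell=k$. This is a double application of the forbidden configuration — intersecting the two‑colour constraints coming from two book edges of colours different from $\chi(x)$ shows every edge through $x$ avoiding $\{a,b\}$ is monochromatic of colour $\chi(x)$; then comparing an edge $axb'$ (or $a'xb$) with an already‑determined edge $a''xb'$ upgrades this to \emph{all} edges through $x$, using that deleting one vertex of $V_k$ cannot kill all three colours of $B_{ab}$. \emph{(Deletion.)} If $B_{ab}$ is exactly $2$‑coloured, with colours $\{\mu,\nu\}$, then every edge of $G$ disjoint from $\{a,b\}$ has colour in $\{\mu,\nu\}$, i.e.\ $G-a-b$ is $\{\mu,\nu\}$‑coloured: for such an edge $e$, its vertex $p\in V_k$ satisfies $c(abp)\in\{\mu,\nu\}$, and pairing $abp$ with the book edge of the other colour bounds the colour of every edge through $p$ disjoint from $\{a,b\}$, in particular that of $e$.

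Now split into cases. If some book uses at least three colours, the Rich book claim finishes it. Otherwise every book is $\leq 2$‑coloured. Call the \emph{link} of a vertex $v$ the edge‑coloured $K_{n,n}$ on the other two parts whose edges are the pairs $\{y,z\}$ with $vyz\in E(G)$. If, for some $\ell$, every vertex of $V_\ell$ has a monochromatic link, then that vertex‑wise colour defines the desired partition of $V_\ell$; so assume instead that each $V_i$ contains a vertex $w_i$ with non‑monochromatic link. A non‑monochromatic $K_{n,n}$ has two adjacent edges of distinct colours, so the link of $w_1$ contains a $2$‑coloured tight pair whose shared pair is a co‑degree pair $\{w_1,r\}$; since $B_{w_1 r}$ is then exactly $2$‑coloured, the Deletion claim gives that $G-w_1-r$ is $\{\mu,\nu\}$‑coloured for two colours $\mu\neq\nu$. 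Because $|C(G)|\geq 3$, some colour $\gamma\notin\{\mu,\nu\}$ occurs, necessarily only on edges through $w_1$ or $r$. I would then trace this $\gamma$‑edge: as every book is $2$‑coloured, the Deletion claim applied to further books forced $2$‑coloured by the $\gamma$‑edge produces additional statements ``$G-w_1-s$ is $2$‑coloured'', and intersecting two of them pins every edge disjoint from a transversal triple $\{w_1,r,s\}$ to one fixed colour. Finally the links of the remaining witnesses $w_2\in V_2$, $w_3\in V_3$ — now essentially monochromatic off that triple — can be combined with a few well‑chosen edges to exhibit a rainbow $\mathcal{M}$, a contradiction; and since $r$ also has non‑monochromatic link (its link contains the $2$‑coloured book $B_{w_1 r}$), the symmetric version of the argument covers the case that the $\gamma$‑edge lies through $r$ rather than $w_1$.

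The main obstacle is exactly this last case: the degenerate configuration in which all ``extra'' colours sit on edges through a single transversal triple while the rest of $G$ is monochromatic. Ruling it out requires careful bookkeeping of which books are $2$‑coloured and with \emph{which} pair of colours each one carries, together with the fact that $n\geq 3$ leaves enough room in every part to place the six vertices of a rainbow $\mathcal{M}$ away from the few dangerous vertices. This is also precisely where one sees why the ``dominant colour'' alternative that survives for the tight path (Theorem~\ref{th:multi-tight}(ii)) fails here: any such colouring already contains a rainbow $\mathcal{M}$ (take a colour‑$i$ edge $a'b'c'$ inside block $i$, the colour‑$1$ edge $a'b'c''$ with $c''$ in block $j\neq i$, and a colour‑$j$ edge $c''a''b''$ inside block $j$), which is why Theorem~\ref{th:multi-messy} has only one structural outcome.
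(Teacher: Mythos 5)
Your setup is sound, and your ``Rich book'' step is essentially the second half of the paper's proof: the paper first produces a rainbow $\mathbb{S}^{(3)}_3$ (three edges $x_1y_1z_1$, $x_1y_1z_2$, $x_1y_1z_3$ of three distinct colours, i.e.\ a $3$-coloured book $B_{x_1y_1}$) and then shows, exactly as you describe, that every $z\in V_3$ has a monochromatic link of colour $c(x_1y_1z)$, which yields the asserted partition of $V_3$. Your ``Deletion'' observation is also correct.

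The gap is in your second case, where every book is at most $2$-coloured. Showing that this case cannot occur is precisely the content of the paper's first (and hardest) claim --- that a rainbow $\mathbb{S}^{(3)}_3$ always exists when $|C(G)|\geq 3$ --- but you only sketch a plan for it (``trace this $\gamma$-edge'', ``intersecting two of them pins every edge'', ``combined with a few well-chosen edges to exhibit a rainbow $\mathcal{M}$'') and you yourself flag it as ``the main obstacle'' requiring ``careful bookkeeping'' that you do not carry out. As written, no contradiction is actually derived in the degenerate configuration where all colours outside $\{\mu,\nu\}$ sit on edges through one or two fixed vertices. Note also a small internal inconsistency: under ``all books $\leq 2$-coloured'' and $|C(G)|\geq 3$, your sub-case ``every vertex of some $V_\ell$ has a monochromatic link'' cannot occur (it would immediately force a $3$-coloured book), so your entire Case~2 collapses to the unproven sub-case. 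The paper closes this gap concretely: starting from a rainbow $\mathbb{S}^{(3)}_2$ $\{x_1y_1z_1, x_1y_1z_2\}$, it first shows every edge avoiding $\{x_1,y_1,z_1,z_2\}$ uses only colours $1,2$, and then a short case analysis on $|f\cap\{x_1,y_1,z_1,z_2\}|\in\{1,2\}$ for a colour-$3$ edge $f$ produces a rainbow $\mathcal{M}$ in each sub-case. You would need to supply an argument of this kind to complete your proof.
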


\begin{theorem}\label{th:multi-loose}
For any integer $n\geq 3$, let $G$ be a rainbow $\mathcal{L}$-free edge-colored $K^{(3)}_{n,n,n}$ with $|C(G)|\geq 3$.
Let $V_1, V_2, V_3$ be the partite sets of $G$.
Then at least one of the following statements holds $($after renumbering the colors if necessary$)$:
\begin{itemize}
\item[{\rm (i)}] there exist two vertices $x_1, y_1$ and a color $i\in C(G)$ such that every edge $e\in E(G)$ with $c(e)\neq i$ satisfies $\{x_1, y_1\}\subseteq e$;

\item[{\rm (ii)}] $|C(G)|=3$, and there exists a unique edge $e$ of color 1 such that every edge $f$ of color 2 satisfies $|e\cap f|=2$, and all the remaining edges are of color 3;

\item[{\rm (iii)}] $|C(G)|=3$, and there exist five vertices $x_1, y_1, y_2, z_1, z_2$ such that $c(x_1y_1z_1)=c(x_1y_2z_2)=1$, $c(x_1y_1z_2)=c(x_1y_2z_1)=2$, and all the remaining edges are of color 3.
\end{itemize}
\end{theorem}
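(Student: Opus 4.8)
The plan is to recast the rainbow-$\mathcal{L}$-free hypothesis as a condition on pairwise intersection sizes of edges, to locate a short rainbow substructure, and then to run a structural case analysis paralleling the proof of Theorem~\ref{th:loose} but adapted to the tripartite setting. First I would observe that three edges of $K^{(3)}_{n,n,n}$ span a copy of $\mathcal{L}$ exactly when their pairwise intersection sizes form the multiset $\{0,1,1\}$: the edge meeting each of the other two in a single vertex is the central edge $\{v_3,v_4,v_5\}$ and the two disjoint edges are the ends. Hence $G$ is rainbow $\mathcal{L}$-free precisely when no three edges in three distinct colours have intersection pattern $\{0,1,1\}$. Next I would produce a rainbow loose path of length $2$: in the auxiliary graph on $E(G)$ with adjacency ``meeting in exactly one vertex'', two disjoint edges $\{a_1,a_2,a_3\}$ and $\{b_1,b_2,b_3\}$ are joined by the walk $\{b_1,b_2,b_3\},\{b_1,c_2,c_3\},\{a_1,b_2,c_3\},\{a_1,a_2,a_3\}$ with $c_2\in V_2\setminus\{a_2,b_2\}$ and $c_3\in V_3\setminus\{a_3,b_3\}$ (which exist since $n\ge 3$), and a similar walk connects any pair of edges; as this auxiliary graph is connected and $|C(G)|\ge 2$, some adjacent pair is bichromatic. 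So I may fix edges $e_1,e_2$ with $c(e_1)=1$, $c(e_2)=2$ and $e_1\cap e_2=\{v\}$, say $v\in V_1$, $e_1=\{v,a_2,a_3\}$, $e_2=\{v,b_2,b_3\}$.

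The core of the proof is to classify all edges relative to $e_1$ and $e_2$. Applying the pattern criterion to triples $\{e_1,e_2,f\}$ shows that every edge $f$ with $c(f)\notin\{1,2\}$ satisfies $(|e_1\cap f|,|e_2\cap f|)\notin\{(0,1),(1,0)\}$; equivalently $f$ contains a pair of $e_1$, or a pair of $e_2$, or meets each of $e_1,e_2$ in exactly one vertex, or is disjoint from $e_1\cup e_2$. The dichotomy I would set up is: \emph{either} there are a colour $i$ and a pair $\{x_1,y_1\}$ (necessarily in two distinct partite sets) so that every edge not coloured $i$ contains $\{x_1,y_1\}$ — which is exactly conclusion~(i) — \emph{or} no such pair exists, in which case I aim to force $|C(G)|=3$. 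For the latter I would first rule out a fourth colour under the failure of (i): with colour-$3$ and colour-$4$ edges $e_3,e_4$ in hand, repeatedly forbidding the pattern $\{0,1,1\}$ on triples drawn from $\{e_1,e_2,e_3,e_4\}$, while using that each edge meets every $V_\ell$ in one vertex, pins all non-$i$ edges onto a common pair for a suitable $i$ — a contradiction.

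It then remains to treat $|C(G)|=3$, with colour classes $E_1,E_2,E_3$. Here I would exploit that every cross triple $(f_1,f_2,f_3)\in E_1\times E_2\times E_3$ avoids pattern $\{0,1,1\}$ to show, after relabelling, one of the following. If some class, say $E_1$, consists of a single edge $e$ and every edge of $E_2$ meets $e$ in two vertices, then we are in conclusion~(ii). Otherwise one forces all edges outside the ``background'' colour to lie in a $2\times2$ grid $\{x_1\}\times\{y_1,y_2\}\times\{z_1,z_2\}$ through one vertex $x_1$ and to be coloured as in conclusion~(iii); any configuration slipping past both of these is seen to be an instance of conclusion~(i) (for example, when all of the relevant non-background edges share a common pair, so that the Helly-type fact ``three tripartite edges pairwise sharing a pair share a common pair'' applies).

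I expect the main obstacle to be the exhaustive bookkeeping in the last two steps. Many intersection-size patterns must be eliminated — edges sharing $v$, edges containing a pair of $e_1$ or of $e_2$, edges disjoint from $e_1\cup e_2$, and the mutual positions of the three colour classes — and at each turn one must use the $\{0,1,1\}$-avoidance together with the tripartite constraint that every edge has one vertex in each part; this extra constraint is precisely what makes three outcomes appear rather than the two of Theorem~\ref{th:loose}. Several sub-cases close only because $n\ge 3$ leaves essentially no room: with each part of size $3$, demanding one further ``independent'' vertex is often already impossible. Isolating the distinguished pair or vertex of conclusions (i)--(iii) first, and only afterwards describing the minority colour classes, is what should keep the list of surviving configurations down to exactly (i), (ii) and (iii).
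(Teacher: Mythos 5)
Your reduction of rainbow-$\mathcal{L}$-freeness to the avoidance of the pairwise-intersection pattern $\{0,1,1\}$ on trichromatic triples is correct, and your opening moves (finding a rainbow $S^{(3)}_2$, and the immediate constraints $(|e_1\cap f|,|e_2\cap f|)\notin\{(0,1),(1,0)\}$ for a third-colour edge $f$) are sound and match the paper's starting point. But the two steps that carry all the weight of the theorem are only announced, not proved, and the one concrete mechanism you do propose for the first of them does not work. You claim that, when conclusion (i) fails and a fourth colour is present, "repeatedly forbidding the pattern $\{0,1,1\}$ on triples drawn from $\{e_1,e_2,e_3,e_4\}$" pins all non-background edges onto a common pair. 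Consider $e_1=x_1y_1z_1$, $e_2=x_1y_2z_2$, $e_3=x_1y_1z_2$, $e_4=x_1y_2z_1$ in colours $1,2,3,4$: every pair among these four edges meets in one or two vertices, no triple has intersection pattern $\{0,1,1\}$, and no three of them share a common pair, so this configuration simultaneously avoids all forbidden triples internal to $\{e_1,e_2,e_3,e_4\}$ and witnesses the failure of (i). Ruling it out (it is exactly the configuration of conclusion (iii) with one colour too many) forces you to bring in \emph{connector} edges outside $e_1\cup e_2\cup e_3\cup e_4$, determine their colours, and only then exhibit a rainbow $\mathcal{L}$. Nothing in your sketch explains how those auxiliary edges are chosen or how their colours are forced.

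The same gap recurs, more severely, in the three-colour endgame: the passage from "no common pair for any choice of background colour" to "all minority edges lie in a $2\times 2$ grid through one vertex, coloured as in (iii), or form the single-edge configuration of (ii)" is precisely the content of the theorem and is left as "bookkeeping". For comparison, the paper gets there only after establishing a chain of intermediate structural facts — no rainbow $S^{(3)}_3$ or $S^{(3)}_2\cup S^{(3)}_1$, a vertex of colour degree at least $3$, a triple of edges with intersection sizes $2,1,1$, and then (Claims~\ref{cl:multi-loose-4}--\ref{cl:multi-loose-6}) a pair of edges $e_1,e_2$ with $|e_1\cap e_2|=2$ such that every edge meeting $e_1\cup e_2$ in at most one vertex carries a fixed background colour — and each of these requires forcing the colours of specifically chosen auxiliary edges. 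Your proposal identifies the right target statements but supplies neither these intermediate lemmas nor a workable substitute, so as it stands it is a plan rather than a proof.
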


As an application, we next consider constrained Ramsey-type problems for rainbow $\mathcal{T}$, $\mathcal{M}$ and $\mathcal{L}$ with respect to 3-partite 3-graphs.
Given an $r$-partite $r$-graph $H$ and an integer $k\geq 2$, the {\it $k$-colored $r$-partite Ramsey number} $R'_k(H)$ is defined as the minimum integer $n$ such that, in every $k$-edge-coloring of the complete $r$-partite $r$-graph $K^{(r)}_{n, \ldots, n}$, there is a monochromatic copy of $H$.
Given two $r$-partite $r$-graphs $H$ and $G$, the {\it $r$-partite constrained Ramsey number} $f'(H,G)$ is the minimum integer $n$ such that, in every edge-coloring of $K^{(r)}_{n, \ldots, n}$ with any number of colors, there is either a monochromatic copy of $H$ or a rainbow copy of $G$.

We next determine for which $r$-partite $r$-graphs $H$ and $G$ the $r$-partite constrained Ramsey number $f'(H,G)$ exists.
The existence result follows readily from a multipartite version of the Erd\H{o}s-Rado Canonical Ramsey Theorem~\cite{ErRa,Rado954},
which asserts that every edge-coloring of a sufficiently large $K^{(r)}_{n, \ldots, n}$ contains a large subgraph $K^{(r)}_{t, \ldots, t}$ on which the coloring is one of several canonical types.
We now introduce the definition of canonical colorings for $r$-partite $r$-graphs.

\begin{definition}{\normalfont (see~\cite[Definition~1.1]{CaSS})}\label{def:canonical-multipar}
{\rm
For an $r$-partite $r$-graph $H$ with partite sets $V_1, V_2, \ldots, V_r$, a set $J\subseteq [r]$ and an edge $e\in E(H)$, we write $e_J=e\cap (\bigcup_{j\in J}V_j)$.
We say an edge-coloring of $H$ is {\it $J$-canonical} if for all edges $e,e'\in E(H)$, we have $c(e)=c(e')$ if and only if $e_J=e'_J$.
}
\end{definition}

Note that a $\emptyset$-canonical coloring is in fact a monochromatic coloring, and a $[r]$-canonical coloring is in fact a rainbow coloring.
Moreover, if $H$ is the complete $r$-partite $r$-graph $K^{(r)}_{n, \ldots, n}$ and $|J|=|J'|$, then a $J$-canonical coloring of $H$ is isomorphic to a $J'$-canonical coloring of $H$ (after renumbering the colors if necessary).
We now introduce the multipartite version of the Erd\H{o}s-Rado Canonical Ramsey Theorem.

\begin{theorem}[Multipartite version of the Erd\H{o}s-Rado Canonical Ramsey Theorem~\cite{Rado954}]\label{th:canonical-multipar}
For any integers $r\geq 2$ and $t\geq 1$, there exists an integer $n$ such that in every edge-coloring of $K^{(r)}_{n, \ldots, n}$, there is a $J$-canonical edge-colored $K^{(r)}_{t, \ldots, t}$ for some $J\subseteq [r]$.
\end{theorem}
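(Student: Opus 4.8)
The plan is to deduce Theorem~\ref{th:canonical-multipar} from the classical Erd\H{o}s--Rado Canonical Ramsey Theorem for $r$-uniform hypergraphs on a single linearly ordered ground set~\cite{ErRa,Rado954}, which states that for all integers $r\geq 1$ and $s\geq 1$ there is an integer $N=N(r,s)$ such that, for every coloring $c$ of $\binom{[N]}{r}$ with an arbitrary number of colors, there exist a set $S\subseteq[N]$ with $|S|=s$ and a subset $I\subseteq[r]$ with the property that, for any two $r$-subsets $A,B$ of $S$, one has $c(A)=c(B)$ if and only if the $i$-th smallest element of $A$ equals the $i$-th smallest element of $B$ for every $i\in I$. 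I claim that $n\colonequals N(r,rt)$ works.

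First I would set up a pull-back. Given an edge-coloring $c$ of $K^{(r)}_{n,\ldots,n}$ with partite sets $V_1,\ldots,V_r$, fix an arbitrary bijection $\phi_\ell\colon V_\ell\to[n]$ for each $\ell\in[r]$, and define $c^\ast$ on $\binom{[n]}{r}$ by $c^\ast(\{a_1<\cdots<a_r\})\colonequals c(\{\phi_1^{-1}(a_1),\ldots,\phi_r^{-1}(a_r)\})$, i.e.\ match the $i$-th smallest element of an $r$-set to the $i$-th partite set. Applying the classical theorem to $c^\ast$ yields a set $S\subseteq[n]$ with $|S|=rt$ together with a canonical index set $I\subseteq[r]$ on $\binom{S}{r}$. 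The crucial step is then to \emph{stagger} the parts: split $S$ into $r$ consecutive blocks $S_1<S_2<\cdots<S_r$ of size $t$ each, and put $W_\ell\colonequals\phi_\ell^{-1}(S_\ell)\subseteq V_\ell$. Since $|W_\ell|=t$, the sets $W_1,\ldots,W_r$ span a copy of $K^{(r)}_{t,\ldots,t}$; and because $\phi_\ell(w_\ell)$ lies in the $\ell$-th block for every $w_\ell\in W_\ell$, each transversal edge $e=\{w_1,\ldots,w_r\}$ of this sub-hypergraph automatically satisfies $\phi_1(w_1)<\cdots<\phi_r(w_r)$. Hence $e$ is exactly the pull-back of the $r$-set $A_e\colonequals\{\phi_1(w_1),\ldots,\phi_r(w_r)\}\subseteq S$, and the $i$-th smallest element of $A_e$ equals $\phi_i(w_i)$.

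The remaining verification is routine bookkeeping. For transversal edges $e,e'$ of the $W_\ell$'s we have $c(e)=c^\ast(A_e)$, and by the canonicity of $I$ for $c^\ast$, $c^\ast(A_e)=c^\ast(A_{e'})$ if and only if $\phi_i(w_i)=\phi_i(w_i')$ for all $i\in I$, equivalently $w_i=w_i'$ for all $i\in I$, equivalently $e_I=e'_I$ in the sense of Definition~\ref{def:canonical-multipar}. Thus the induced coloring on this copy of $K^{(r)}_{t,\ldots,t}$ is $J$-canonical with $J\colonequals I$, which is precisely what is required.

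The one point that needs care — and the reason the plan asks for $|S|=rt$ rather than $|S|=t$ — is this block-staggering: the transversal edges of the chosen $K^{(r)}_{t,\ldots,t}$ must be \emph{exactly} the pull-backs of those $r$-subsets of $S$ that meet every block, so that ``position $i$ in the sorted order of $A_e$'', as used by the classical theorem, coincides with ``the vertex of $e$ in partite set $i$''. Everything else is transcription, and in particular the unboundedness of the number of colors is absorbed entirely into the black-box classical theorem. (Alternatively, one could reprove the statement from scratch by imitating the Erd\H{o}s--Rado argument inside the multipartite setting — introducing an auxiliary coloring with a bounded number of colors that records the equality pattern among the colors of the transversal ``sub-edges'' of a bounded-size configuration and then invoking the multicolor Ramsey theorem for complete multipartite hypergraphs — but the reduction above is shorter and avoids re-deriving that Ramsey theorem.)
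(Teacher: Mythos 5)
Your proof is correct. Note first that the paper itself does not prove Theorem~\ref{th:canonical-multipar}: it is stated with a citation to Rado~\cite{Rado954} and used as a black box, so there is no in-paper argument to compare against. What you supply is a clean reduction to the one-ground-set finite canonical Ramsey theorem (which the paper records separately as Theorem~\ref{th:canonical}, attributed to~\cite{ErRa}), and every step checks out. The two points that carry the argument are exactly the ones you flag: (1) taking $|S|=rt$ and splitting $S$ into $r$ consecutive blocks guarantees that for each transversal edge $e=\{w_1,\ldots,w_r\}$ of the chosen $K^{(r)}_{t,\ldots,t}$ one has $\phi_1(w_1)<\cdots<\phi_r(w_r)$, so ``the $i$-th smallest element of $A_e$'' in the classical theorem coincides with ``the vertex of $e$ in $V_i$,'' and $c(e)=c^\ast(A_e)$ by the definition of the pull-back; and (2) since the partite sets are disjoint, the set equality $e_I=e'_I$ of Definition~\ref{def:canonical-multipar} is equivalent to the coordinatewise equality $w_i=w_i'$ for all $i\in I$, which via the bijections $\phi_i$ is exactly the agreement condition delivered by the canonical index set $I$. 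Hence the induced coloring on $W_1,\ldots,W_r$ is $J$-canonical with $J=I$, as required. The only cosmetic remark is that the classical theorem gives canonicity on all of $\binom{S}{r}$, of which you use only the block-transversal $r$-sets; that is harmless since the multipartite hypergraph has no other edges. Your parenthetical alternative (re-running the Erd\H{o}s--Rado argument natively in the multipartite setting) would also work but is unnecessary; the reduction is shorter and matches the spirit in which the paper invokes the result.
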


By Theorem~\ref{th:canonical-multipar}, we can derive the following existence result for $f'(H,G)$.
For completeness, we will provide a proof in Section~\ref{sec:multipartite}.
An analogous existence result for $f(H,G)$ will be given in Section~\ref{sec:conclu} (see Proposition~\ref{prop:constrained}).

\begin{proposition}\label{prop:constrained-multipar}
The $r$-partite constrained Ramsey number $f'(H,G)$ exists if and only if for some positive integer $t$ and each subset $J\subseteq [r]$,
$H$ is isomorphic to a monochromatic subgraph in a $J$-canonical edge-coloring of $K^{(r)}_{t, \ldots, t}$
or $G$ is isomorphic to a rainbow subgraph in a $J$-canonical edge-coloring of $K^{(r)}_{t, \ldots, t}$.
\end{proposition}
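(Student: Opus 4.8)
The plan is to prove the two implications separately; the substantive input for one direction is Theorem~\ref{th:canonical-multipar}, and the other direction is essentially immediate from the definition of $f'$.

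\emph{Sufficiency of the condition.} Suppose there is a positive integer $t$ such that for every $J\subseteq[r]$, either $H$ is isomorphic to a monochromatic subgraph in a $J$-canonical edge-coloring of $K^{(r)}_{t,\ldots,t}$, or $G$ is isomorphic to a rainbow subgraph in one. I would apply Theorem~\ref{th:canonical-multipar} with this $t$ to obtain an integer $n=n(r,t)$ such that every edge-coloring of $K^{(r)}_{n,\ldots,n}$ contains a $J$-canonical edge-colored $K^{(r)}_{t,\ldots,t}$ for some $J\subseteq[r]$. Here it matters that any two $J$-canonical colorings of $K^{(r)}_{t,\ldots,t}$ induce the same partition into color classes (two edges get the same color exactly when they agree on $\bigcup_{j\in J}V_j$; see Definition~\ref{def:canonical-multipar}), so the properties ``contains a monochromatic $H$'' and ``contains a rainbow $G$'' do not depend on the particular $J$-canonical coloring. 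Hence the hypothesis forces that copy of $K^{(r)}_{t,\ldots,t}$, and therefore $K^{(r)}_{n,\ldots,n}$ itself, to contain a monochromatic $H$ or a rainbow $G$, giving $f'(H,G)\le n<\infty$.

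\emph{Necessity of the condition.} Suppose $f'(H,G)$ exists, and set $N:=f'(H,G)$. Fix an arbitrary $J\subseteq[r]$ and consider the $J$-canonical edge-coloring of $K^{(r)}_{N,\ldots,N}$, which uses exactly $N^{|J|}$ colors and is in particular a legitimate edge-coloring ``with any number of colors''. By the defining property of $N=f'(H,G)$, this coloring must contain a monochromatic copy of $H$ or a rainbow copy of $G$; equivalently, for this $J$, either $H$ is isomorphic to a monochromatic subgraph in a $J$-canonical edge-coloring of $K^{(r)}_{N,\ldots,N}$, or $G$ is isomorphic to a rainbow subgraph in one. Since $J$ was arbitrary, the condition of the proposition holds with $t=N$.

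\emph{Main obstacle.} There is no genuine difficulty here beyond Theorem~\ref{th:canonical-multipar}, which we are allowed to assume; the proposition is essentially a repackaging of it. The two places that require a little care are: first, verifying in the necessity direction that a canonical coloring is itself a valid edge-coloring, so that the bound $f'(H,G)=N$ applies to it; and second, observing in the sufficiency direction that being monochromatic or rainbow is invariant under relabeling colors, which is why it is harmless that Theorem~\ref{th:canonical-multipar} only supplies \emph{some} $J$-canonical coloring rather than a prescribed representative. I expect the final write-up to be short.
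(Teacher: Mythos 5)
Your proof is correct and follows essentially the same route as the paper: sufficiency by applying Theorem~\ref{th:canonical-multipar} with the given $t$, and the other direction by observing that a $J$-canonical coloring is itself a legitimate edge-coloring that must therefore exhibit a monochromatic $H$ or rainbow $G$ (the paper phrases this as the contrapositive, building a $J$-canonical counterexample coloring for every $n$ when the condition fails). Your extra remarks on the well-definedness of ``a $J$-canonical coloring'' are a harmless refinement of what the paper leaves implicit.
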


Now we consider $3$-partite constrained Ramsey numbers for rainbow $\mathcal{T}$, $\mathcal{M}$ and $\mathcal{L}$, respectively.
Note that for every $3$-partite 3-graphs $H$ and $G$, $H$ is isomorphic to a monochromatic subgraph in the $\emptyset$-canonical edge-coloring of $K^{(3)}_{|V(H)|, |V(H)|, |V(H)|}$,
and $G$ is isomorphic to a rainbow subgraph in the $[3]$-canonical edge-coloring of $K^{(3)}_{|V(G)|, |V(G)|, |V(G)|}$.
Moreover, one can easily check that for any $J\subseteq [3]$ with $|J|=1$, $\mathcal{T}$ and $\mathcal{M}$ are not isomorphic to any rainbow subgraphs in the $J$-canonical edge-coloring of $K^{(3)}_{t, t, t}$ for any $t$, but $\mathcal{L}$ is isomorphic to a rainbow subgraph in the $J$-canonical edge-coloring of $K^{(3)}_{3, 3, 3}$.
Furthermore, for any $J\subseteq [3]$ with $|J|=2$, all of $\mathcal{T}$, $\mathcal{M}$ and $\mathcal{L}$ are isomorphic to some rainbow subgraphs in the $J$-canonical edge-coloring of $K^{(3)}_{3, 3, 3}$.
Therefore, by Proposition~\ref{prop:constrained-multipar}, $f'(H,\mathcal{L})$ exists for all $3$-partite 3-graphs $H$, while $f'(H,\mathcal{T})$ and $f'(H,\mathcal{M})$ exist if and only if $H$ is isomorphic to a monochromatic subgraph in a $J$-canonical edge-coloring of some $K^{(3)}_{t, t, t}$ with $|J|=1$.
We have the following results as applications of Theorems~\ref{th:multi-tight}, \ref{th:multi-messy} and \ref{th:multi-loose}.

\begin{theorem}\label{th:Ram-tight-messy-multipar}
If $H$ is isomorphic to a monochromatic subgraph in a $J$-canonical edge-coloring of $K^{(3)}_{t, t, t}$ with $|J|=1$ for some positive integer $t$, then $f'(H, \mathcal{T})=f'(H, \mathcal{M})=R'_2(H)$.
\end{theorem}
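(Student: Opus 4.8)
The plan is to prove $f'(H,\mathcal{T})=f'(H,\mathcal{M})=R'_2(H)$ by establishing the lower and upper bounds separately, with both directions closely paralleling the single-host arguments behind Theorems~\ref{th:Ram-tight} and \ref{th:Ram-messy}. For the lower bound $f'(H,G)\geq R'_2(H)$ (for $G\in\{\mathcal{T},\mathcal{M}\}$), I would take a $2$-edge-coloring of $K^{(3)}_{m,m,m}$ with $m=R'_2(H)-1$ that contains no monochromatic $H$; since it uses only two colors, it trivially contains no rainbow $\mathcal{T}$ (which has three edges) and no rainbow $\mathcal{M}$. Hence $f'(H,G)>m$, i.e.\ $f'(H,G)\geq R'_2(H)$. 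This direction is immediate and uses nothing beyond $|E(\mathcal{T})|=|E(\mathcal{M})|=3$.

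For the upper bound, fix $n=R'_2(H)$ and let $c$ be any edge-coloring of $K^{(3)}_{n,n,n}$ with no rainbow copy of $G$, where $G\in\{\mathcal{T},\mathcal{M}\}$; I must produce a monochromatic $H$. If $|C(G)|\leq 2$, the coloring is a $2$-edge-coloring, so by definition of $R'_2(H)$ there is a monochromatic $H$ and we are done. So assume $|C(G)|\geq 3$ and invoke the relevant structural theorem: Theorem~\ref{th:multi-messy} for $G=\mathcal{M}$, and for $G=\mathcal{T}$ the case of Theorem~\ref{th:multi-tight} that applies. The key observation is that the hypothesis on $H$ — that $H$ embeds as a monochromatic subgraph in a $J$-canonical coloring of some $K^{(3)}_{t,t,t}$ with $|J|=1$ — means precisely that $H$ has the following property: there is a choice $\ell\in[3]$ of partite set and a partition of $V_\ell(H)$ refining the edge set so that edges sharing the same vertex in $V_\ell$ get the same color; equivalently, $H$ can be partitioned into "stars" $\{e\in E(H):e\cap V_\ell=\{v\}\}$, and it is monochromatic whenever all these stars are monochromatic with distinct colors on distinct stars. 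Both the outcome (i) of Theorem~\ref{th:multi-tight} and the conclusion of Theorem~\ref{th:multi-messy} give a partition $V_\ell=V_{\ell,1}\cup\cdots\cup V_{\ell,|C(G)|}$ where every edge meeting $V_{\ell,i}$ has color $i$; restricting attention to one part $V_{\ell,i}$ of maximum size together with all of the other two partite sets yields a monochromatic complete $3$-partite sub-$3$-graph whose $\ell$-th side has size $\geq n/|C(G)|$. The hard part — discussed below — is ensuring this monochromatic piece is large enough to contain $H$ in the case of Theorem~\ref{th:multi-tight}, and handling outcome (ii) of that theorem.

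To handle outcome (ii) of Theorem~\ref{th:multi-tight}: there we get, for every $\ell$, a partition $V_\ell=\bigcup_{i\geq 2}V_{\ell,i}$ with all edges inside $V_{1,i}\cup V_{2,i}\cup V_{3,i}$ colored $1$ or $i$, and all remaining edges colored $1$. Then the "remaining edges" — those with at least two indices distinct — together with the color-$1$ edges inside the blocks form a color-$1$ sub-$3$-graph, and I claim this monochromatic color-$1$ subgraph already contains $K^{(3)}_{n',n',n'}$ for $n'$ close to $n$ (delete from each side the largest block $V_{\ell,i}$), hence contains $H$ once $n$ is large — but we only know $n=R'_2(H)$, so the honest approach is instead: the color-$1$ edges plus, say, all edges not of color $1$ re-colored arbitrarily gives a $2$-coloring refinement, and apply $R'_2(H)$ directly. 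More precisely, in outcome (ii) every edge has color in $\{1\}\cup\{i:e\subseteq V_{1,i}\cup V_{2,i}\cup V_{3,i}\}$; merging all colors $\neq 1$ into a single color $2$ produces a legitimate $2$-edge-coloring of $K^{(3)}_{n,n,n}$ with $n=R'_2(H)$, which therefore contains a monochromatic $H$ — but a color-$2$ copy of $H$ under the merge need not be monochromatic in $c$. The main obstacle is thus genuinely this: in outcome (ii) of Theorem~\ref{th:multi-tight} one must show $H$ embeds monochromatically in $c$ itself, not merely in the coarsened coloring. I expect to resolve it by exploiting the $|J|=1$ structural hypothesis on $H$ in tandem with the block structure: either $H$ lands inside a single block $V_{1,i}\cup V_{2,i}\cup V_{3,i}$ — where the coloring is $2$-colored, so $R'_2(H)$ restricted to that block suffices provided some block is large, which one arranges by a counting/pigeonhole step possibly needing the mild extra hypothesis that $n=R'_2(H)$ is large relative to $|V(H)|$ (absorbed into the statement's hypothesis on $H$) — or $H$ uses the color-$1$ "cross" edges, in which case the color-$1$ subgraph contains a nearly complete $3$-partite $3$-graph and one checks directly that it contains $H$. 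Assembling these cases, together with the straightforward outcome (i) / Theorem~\ref{th:multi-messy} case, completes the upper bound and hence the theorem.
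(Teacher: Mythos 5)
There is a genuine gap, and it traces back to a single missed observation. The hypothesis that $H$ is a \emph{monochromatic} subgraph in a $J$-canonical coloring of $K^{(3)}_{t,t,t}$ with $|J|=1$ says much more than the ``partition into stars'' property you extract: in a $J$-canonical coloring with $J=\{\ell\}$, two edges get the same color if and only if they meet $V_\ell$ in the \emph{same} vertex, so a monochromatic subgraph has all of its edges passing through one common vertex of $V_\ell$. Hence (discarding isolated vertices) $H\subseteq K^{(3)}_{1,t,t}$, $H$ is connected, and one may take $t$ minimal with this property, so that $R'_2(H)\geq t$. This is the engine of the whole upper bound, and without it both of the places you flag as ``the hard part'' remain genuinely unresolved in your write-up.

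Concretely: in outcome (i) of Theorem~\ref{th:multi-tight} (and in the conclusion of Theorem~\ref{th:multi-messy}) you try to find a monochromatic $K^{(3)}_{n',n,n}$ with $n'\geq n/|C(G)|$ large, and you correctly worry this may not contain $H$; but since $H\subseteq K^{(3)}_{1,t,t}$ with $t\leq R'_2(H)=n$, a \emph{single} vertex of any class $V_{\ell,i}$ together with the other two partite sets already gives a monochromatic $K^{(3)}_{1,n,n}\supseteq H$, so there is nothing to optimize. In outcome (ii), your proposed fixes --- a pigeonhole step to find a large block, plus a ``mild extra hypothesis that $n=R'_2(H)$ is large relative to $|V(H)|$'' --- are not available (no such hypothesis appears in the statement, and it is not ``absorbed'' into the canonical-coloring condition) and are not needed. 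The correct finish is the one used for Theorem~\ref{th:Ram-tight}: merge all colors in $\{3,\ldots,|C(G)|\}$ into color $2$ to get a $2$-coloring $G'$, which contains a monochromatic $H$ since $n=R'_2(H)$; if it has color $1$ it is monochromatic in $G$, and if it has color $2$ then, because every edge of color $\geq 2$ lies inside a single block $V_{1,i}\cup V_{2,i}\cup V_{3,i}$ and distinct blocks are vertex-disjoint, the \emph{connectivity} of $H$ forces the whole copy into one block, where all its edges have the same color $i$ in $G$. Your lower-bound argument and the overall case structure are fine; the proof is incomplete only because this consequence of the $|J|=1$ hypothesis was never isolated and put to work.
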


Given a 3-partite 3-graph $H$, let $t(H)$ be the minimum integer $t$ such that $H\subseteq K^{(3)}_{t,t,t}$.

\begin{theorem}\label{th:Ram-loose-multipar}
For every 3-partite 3-graph $H$ with $R'_2(H)\geq t(H)+1$, we have $f'(H, \mathcal{L})=R'_2(H)$.
\end{theorem}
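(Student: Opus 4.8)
The plan is to prove the two inequalities $f'(H,\mathcal{L}) \geq R'_2(H)$ and $f'(H,\mathcal{L}) \leq R'_2(H)$; the second is where Theorem~\ref{th:multi-loose} enters, and the first is elementary. For the lower bound, set $m = R'_2(H)-1$; by the definition of $R'_2(H)$ there is a $2$-edge-coloring of $K^{(3)}_{m,m,m}$ with no monochromatic copy of $H$, and since a rainbow $\mathcal{L}$ has three edges and hence requires three distinct colors, this coloring contains no rainbow $\mathcal{L}$ either. Thus $f'(H,\mathcal{L}) > m$, i.e., $f'(H,\mathcal{L}) \geq R'_2(H)$.

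For the upper bound, write $n = R'_2(H)$ and $t = t(H)$, so that $H \subseteq K^{(3)}_{t,t,t}$ and, by hypothesis, $n \geq t+1$. Let $G$ be an arbitrary edge-colored $K^{(3)}_{n,n,n}$ with partite sets $V_1, V_2, V_3$ and suppose $G$ has no rainbow $\mathcal{L}$; the goal is to find a monochromatic copy of $H$. If $|C(G)| \leq 2$ this is immediate from the definition of $R'_2(H)$, so assume $|C(G)| \geq 3$ and apply Theorem~\ref{th:multi-loose}. It then suffices to show that each of the conclusions (i)--(iii) of that theorem yields a sub-hypergraph isomorphic to $K^{(3)}_{t,t,t}$ all of whose edges receive a single color, since such a sub-hypergraph contains a monochromatic copy of $H$.

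In each of the three cases the edges whose color differs from the relevant ``dominant'' color are concentrated on a bounded set of vertices, so that deleting at most one vertex from each partite set removes all of them; the surviving complete $3$-partite $3$-graph is then monochromatic and, having parts of size at least $n-1 \geq t$, contains $K^{(3)}_{t,t,t}$. In case (i) the vertices $x_1, y_1$ lie in distinct partite sets (if not, no edge could contain both, forcing $|C(G)|=1$), say $x_1 \in V_1$ and $y_1 \in V_2$, and every edge avoiding both of them has color $i$, so we delete $x_1$ from $V_1$ and $y_1$ from $V_2$. In case (ii), write $e=\{a_1,a_2,a_3\}$ with $a_j\in V_j$; every color-$2$ edge meets $e$ in two vertices while $e$ is the only color-$1$ edge, so an edge avoiding both $a_1$ and $a_2$ meets $e$ in at most one vertex and is not $e$, hence (as $|C(G)|=3$) has color $3$; delete $a_1$ from $V_1$ and $a_2$ from $V_2$. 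In case (iii) all four distinguished edges contain $x_1$, so deleting $x_1$ from its partite set leaves only color-$3$ edges. In every case at most one vertex is removed from each $V_j$, which is exactly what the hypothesis $R'_2(H)\geq t(H)+1$ makes affordable.

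I do not anticipate a substantial obstacle: the argument is a direct reading-off of Theorem~\ref{th:multi-loose}. The only points needing care are bookkeeping the partite-set membership of the distinguished vertices in each case (to confirm that no partite set loses more than one vertex) and the degenerate possibility in case (i) that $x_1,y_1$ could lie in a common part, which is ruled out by $|C(G)| \geq 3$.
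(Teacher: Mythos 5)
Your proof is correct and follows essentially the same route as the paper: the lower bound is immediate from $|E(\mathcal{L})|=3$, and the upper bound applies Theorem~\ref{th:multi-loose} to extract a monochromatic $K^{(3)}_{n-1,n-1,n-1}$ in each of the three cases. The paper simply asserts the existence of this monochromatic subgraph, whereas you verify it case by case; your bookkeeping (including the observation that the two distinguished vertices in case (i) must lie in distinct parts) is accurate.
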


We next consider a multipartite version of the anti-Ramsey number.
Given an $r$-partite $r$-graph $G$, the {\it anti-Ramsey number} $ar(K^{(r)}_{n, \ldots, n}, G)$ is the minimum integer $k$ such that, in every edge-coloring of $K^{(r)}_{n, \ldots, n}$ with at least $k$ colors, there is a rainbow copy of $G$.
To the best of our knowledge, the anti-Ramsey number $ar(K^{(r)}_{n, \ldots, n}, G)$ was only determined for rainbow matchings (see~\cite{Jin21DM,XuSK}).
Applying Theorems~\ref{th:multi-tight}, \ref{th:multi-messy} and \ref{th:multi-loose}, we can obtain the following result.

\begin{theorem}\label{th:anti-Ram-multipar}
For any integer $n\geq 3$, we have $ar(K^{(3)}_{n, n, n},\mathcal{M})=n+1$ and $ar(K^{(3)}_{n, n, n},\mathcal{T})=ar(K^{(3)}_{n, n, n},\mathcal{L})=n+2.$
\end{theorem}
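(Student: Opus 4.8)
The plan is to determine each of the three anti-Ramsey numbers $ar(K^{(3)}_{n,n,n}, G)$ for $G \in \{\mathcal{M}, \mathcal{T}, \mathcal{L}\}$ by combining a lower-bound construction (an explicit rainbow $G$-free coloring with many colors) with an upper bound that follows from the structural Theorems~\ref{th:multi-tight}, \ref{th:multi-messy} and \ref{th:multi-loose}. For each $G$, the strategy is the same: first exhibit a coloring of $K^{(3)}_{n,n,n}$ using exactly $ar(K^{(3)}_{n,n,n},G)-1$ colors that contains no rainbow $G$; then show that any coloring with $|C(G)|$ equal to the claimed value must, by the relevant structure theorem, still avoid the one additional color needed, or more precisely that no coloring with that many colors can be rainbow $G$-free.

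For the lower bounds I would use canonical-type colorings. For $\mathcal{M}$: take a $J$-canonical coloring with $|J|=1$, say color each edge $e$ by which vertex of $V_1$ it contains; this uses $n$ colors and, since $\mathcal{M}$ is not a rainbow subgraph of any $1$-canonical coloring (as noted in the excerpt), it is rainbow $\mathcal{M}$-free, giving $ar \geq n+1$. For $\mathcal{T}$: similarly a $1$-canonical coloring gives $n$ colors and no rainbow $\mathcal{T}$; to push to $n+1$ one can modify one color class (or add one extra color on a carefully chosen edge) so that no rainbow $\mathcal{T}$ is created — here I would check that recoloring a single edge whose two "other" vertices lie in a common part still cannot complete a rainbow tight path of length $3$, which yields $ar(\mathcal{T}) \geq n+2$. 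For $\mathcal{L}$: the extremal configuration is type (iii) of Theorem~\ref{th:multi-loose} — five special vertices with a fixed $2$-coloring pattern and everything else color $3$ — but that only uses $3$ colors; instead, to get $n+1$ colors I would use configuration (i): fix two vertices $x_1, y_1$ in two partite sets, color all edges through both of them with distinct colors (there are $n$ such edges, using $n$ colors) and all remaining edges with one more color, for a total of $n+1$; one then verifies no rainbow $\mathcal{L}$ appears since $\mathcal{L}$ has a matching of size $2$ among $e_1, e_3$ and these cannot both pass through $\{x_1,y_1\}$. This gives $ar(\mathcal{L}) \geq n+2$.

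For the upper bounds, suppose $G$ is colored with the claimed number of colors and is rainbow $G$-free; apply the corresponding structure theorem and count colors. For $\mathcal{M}$ with $|C(G)|\geq 3$, Theorem~\ref{th:multi-messy} forces one partite set $V_\ell$ to be split into color classes $V_{\ell,1},\dots,V_{\ell,|C(G)|}$ with every edge through $V_{\ell,i}$ of color $i$; since $|V_\ell|=n$, this gives $|C(G)|\leq n$, so any coloring with $n+1$ colors has a rainbow $\mathcal{M}$, i.e.\ $ar(\mathcal{M})\leq n+1$. For $\mathcal{T}$, Theorem~\ref{th:multi-tight} gives two cases: case (i) is again a split of some $V_\ell$ into $|C(G)|$ classes, forcing $|C(G)|\leq n$; case (ii) partitions all three parts into $|C(G)|-1$ classes with edges inside $V_{1,i}\cup V_{2,i}\cup V_{3,i}$ colored $1$ or $i$ and all other edges color $1$ — here one must argue that if $|C(G)|-1 > n$ something breaks, but more carefully that the number of colors is at most $n+1$ by a pigeonhole/size argument on the parts (at most one of the classes $V_{\ell,i}$ per part can be "large", and small classes force additional structural constraints). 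For $\mathcal{L}$, Theorem~\ref{th:multi-loose} gives three cases; cases (ii) and (iii) only use $3$ colors so they are irrelevant once $n\geq 2$; in case (i), every non-$i$ edge contains the fixed pair $\{x_1,y_1\}$, and the number of edges through a fixed pair in $K^{(3)}_{n,n,n}$ is exactly $n$, so $|C(G)|\leq n+1$, giving $ar(\mathcal{L})\leq n+2$.

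The main obstacle I anticipate is the bookkeeping in the $\mathcal{T}$ upper bound: Theorem~\ref{th:multi-tight}(ii) does not immediately cap the number of colors by $n+1$, because a priori one could imagine $|C(G)|-1$ as large as $3n$ if the classes $V_{\ell,i}$ were allowed to be spread arbitrarily across all three parts. The resolution must come from the fact that every edge meets all three parts, so a color $i\neq 1$ can only appear if $V_{1,i}, V_{2,i}, V_{3,i}$ are all nonempty; a careful count then shows $\sum_i 1 \leq \min_\ell |V_\ell| \cdot (\text{something})$ is not quite enough, and one genuinely needs to combine this with rainbow-$\mathcal{T}$-freeness to rule out too many "singleton-heavy" color classes — essentially showing that if there are more than $n+1$ colors one can greedily build a rainbow tight path $v_1v_2v_3, v_2v_3v_4, v_3v_4v_5$ using three edges of three distinct colors. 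I would handle this by picking, for three distinct colors $i,j,k\notin\{1\}$ (or using color $1$ for one of the three edges), representative vertices in the classes and checking the overlap pattern of a tight path can be realized across the three partite sets; the edge-versus-non-edge structure of $K^{(3)}_{n,n,n}$ makes this a finite case check once $n\geq 3$. The other cases, and all three lower bounds, should be routine verifications.
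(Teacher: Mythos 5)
Your treatment of $\mathcal{M}$ and $\mathcal{L}$ matches the paper's: the $1$-canonical coloring gives $ar(K^{(3)}_{n,n,n},\mathcal{M})\geq n+1$, the ``all edges through a fixed pair $\{x_1,y_1\}$ rainbow, the rest one extra color'' coloring gives $ar(K^{(3)}_{n,n,n},\mathcal{L})\geq n+2$ (though you should note that \emph{any} two edges of $\mathcal{L}$ meet in at most one vertex, so no two edges of a copy of $\mathcal{L}$ can both contain $\{x_1,y_1\}$ --- your argument only addresses the disjoint pair $e_1,e_3$), and the upper bounds are immediate color counts from Theorems~\ref{th:multi-messy} and~\ref{th:multi-loose}. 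The problem is the $\mathcal{T}$ case, on both sides. Your lower-bound construction --- the $1$-canonical coloring with one recolored edge or one modified color class --- does not work. If every edge is colored by its $V_1$-vertex and you recolor $x_1y_1z_1$ with a new color, then $\{y_1x_1z_1,\; x_1z_1y_2,\; z_1y_2x_2\}$ is a tight path whose edges have colors $n+1$, $1$ and $2$: a rainbow $\mathcal{T}$. The same path appears for any single-edge or single-``$x_1y_1$-fan'' modification, so no perturbation of the canonical coloring reaches $n+1$ colors. (The phrase ``an edge whose two other vertices lie in a common part'' has no meaning in $K^{(3)}_{n,n,n}$, where every edge meets each part exactly once.) The paper's construction is genuinely different: color the $n$ pairwise \emph{disjoint} diagonal edges $x_iy_iz_i$ with $n$ distinct colors and everything else with color $n+1$; since any two edges of $\mathcal{T}$ intersect, a copy of $\mathcal{T}$ contains at most one diagonal edge and hence two edges of color $n+1$.

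Your worry about the upper bound for $\mathcal{T}$ is also based on a misreading of Theorem~\ref{th:multi-tight}(ii): that statement partitions \emph{each} partite set $V_\ell$ (of size $n$) separately into $|C(G)|-1$ classes $V_{\ell,2},\dots,V_{\ell,|C(G)|}$, and for every color $i\geq 2$ actually used there is an edge of color $i$, which lies inside $V_{1,i}\cup V_{2,i}\cup V_{3,i}$ and meets all three parts, so each $V_{\ell,i}$ is nonempty. Hence $|C(G)|-1\leq n$ at once; there is no ``$3n$'' scenario and no need for the greedy construction of a rainbow tight path that you sketch but do not carry out. As written, the $\mathcal{T}$ half of the theorem is not established: the lower bound rests on a construction that fails, and the upper bound is left as an unexecuted (and unnecessary) case analysis.
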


\vspace{0.4cm}
\noindent{\bf Additional notation.}~
Given two hypergraphs $G$ and $H$, let $G\cup H$ be the disjoint union of $G$ and $H$.
For any integer $\ell\geq 3$, the $3$-uniform {\it loose cycle} $C^{(3)}_{\ell}$ of length $\ell$ is a $3$-graph with vertex set $\{v_1, v_2, \ldots, v_{2\ell}\}$ and edge set $\{e_1, e_2, \ldots, e_{\ell}\}$ such that $e_i=\{v_{2(i-1)+1}, v_{2(i-1)+2}, v_{2(i-1)+3}\}$ for each $i\in [\ell]$, where $v_{2\ell+1}\colonequals v_1$.
For any integer $\ell\geq 1$, the $3$-uniform {\it loose star} $S^{(3)}_{\ell}$ of size $\ell$ is a $3$-graph with vertex set $\{v_0, v_1, v_2, \ldots, v_{2\ell}\}$ and edge set $\{e_1, e_2, \ldots, e_{\ell}\}$ such that $e_i=\{v_0, v_{2i-1}, v_{2i}\}$ for each $i\in [\ell]$.
Let $\mathbb{S}^{(3)}_{\ell}$ be the 3-graph with vertex set $\{u, v, v_1, v_2, \ldots, v_{\ell}\}$ and edge set $\{e_1, e_2, \ldots, e_{\ell}\}$ such that $e_i=\{u, v, v_i\}$ for each $i\in [\ell]$.
Note that $S^{(3)}_1$ or $\mathbb{S}^{(3)}_{1}$ is an edge, $S^{(3)}_2=P^{(3)}_2$ and $\mathbb{S}^{(3)}_{2}=\mathbb{P}^{(3)}_2$.
See Figure~\ref{fig:3-stars} for an illustration of these 3-graphs.
Given an edge-colored graph $G$ and a vertex $v\in V(G)$, let $d^{c}(v)$ be the {\it color degree} of $v$ in $G$, that is, the number of distinct colors on edges incident with $v$.
Let $\Delta^{c}(G)\colonequals \max_{v\in V(G)}d^{c}(v)$ be the {\it maximum color degree} of $G$.
Finally, we remark that for an edge $e=\{v_i, v_j, v_k\}$, we will also use $v_iv_jv_k$, $v_iv_kv_j$, $v_jv_iv_k$, $v_jv_kv_i$, $v_kv_iv_j$ or $v_kv_jv_i$ to denote this edge.

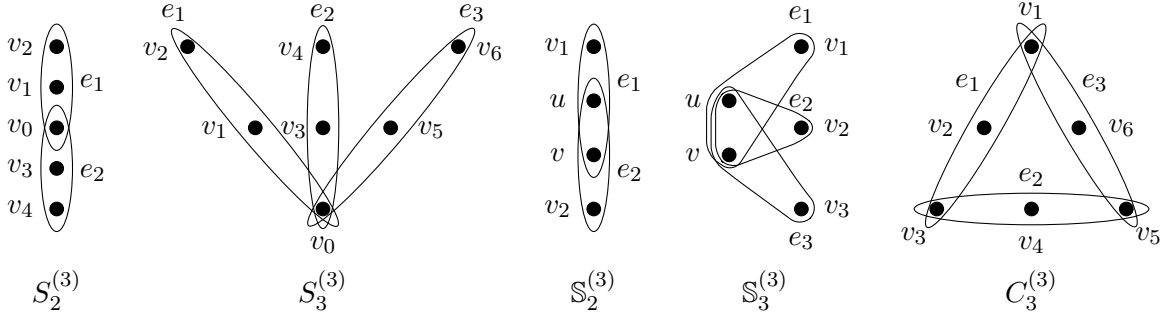
\begin{figure}[htbp]
\begin{center}
\begin{tikzpicture}[scale=0.06,auto,swap]
\tikzstyle{vertex}=[circle,draw=black,fill=black]
\draw (0,-36) node {$S^{(3)}_2$};
\node[vertex,scale=0.5] (Sv1) at (0,18) {}; \draw (-8,18) node {$v_2$};
\node[vertex,scale=0.5] (Sv2) at (0,9) {}; \draw (-8,9) node {$v_1$};
\node[vertex,scale=0.5] (Sv3) at (0,0) {}; \draw (-8,0) node {$v_0$};
\node[vertex,scale=0.5] (Sv4) at (0,-9) {}; \draw (-8,-9) node {$v_3$};
\node[vertex,scale=0.5] (Sv5) at (0,-18) {}; \draw (-8,-18) node {$v_4$};
\draw[rotate=0](0,9)ellipse(3.5 and 14); \draw (8,10) node {$e_1$};
\draw[rotate=0](0,-9)ellipse(3.5 and 14); \draw (8,-10) node {$e_2$};

\draw (59,-36) node {$S^{(3)}_3$};
\node[vertex,scale=0.5] (SSv0) at (59,-18) {}; \draw (59,-26) node {$v_0$};
\node[vertex,scale=0.5] (SSv2) at (29,18) {}; \draw (22,17) node {$v_2$};
\node[vertex,scale=0.5] (SSv1) at (44,0) {}; \draw (35,-0.5) node {$v_1$};
\node[vertex,scale=0.5] (SSv4) at (59,18) {}; \draw (52,17) node {$v_4$};
\node[vertex,scale=0.5] (SSv3) at (59,0) {}; \draw (52.5,-0.5) node {$v_3$};
\node[vertex,scale=0.5] (SSv6) at (89,18) {}; \draw (96,17) node {$v_6$};
\node[vertex,scale=0.5] (SSv5) at (74,0) {}; \draw (83,-0.5) node {$v_5$};
\coordinate (X3) at (44,0);
\draw[rotate around={130:(X3)}](X3)ellipse(28.5 and 3.5); \draw (26,25.5) node {$e_1$};
\coordinate (X4) at (59,0);
\draw[rotate around={90:(X4)}] (X4) ellipse (22.5 and 3.5); \draw (59,25.5) node {$e_2$};
\coordinate (X5) at (74,0);
\draw[rotate around={50:(X5)}] (X5) ellipse (28.5 and 3.5); \draw (92,25.5) node {$e_3$};

\draw (119,-36) node {$\mathbb{S}^{(3)}_{2}$};
\node[vertex,scale=0.5] (SSSv1) at (119,18) {}; \draw (111,18) node {$v_1$};
\node[vertex,scale=0.5] (SSSv2) at (119,6) {}; \draw (111,6) node {$u$};
\node[vertex,scale=0.5] (SSSv3) at (119,-6) {}; \draw (111,-6) node {$v$};
\node[vertex,scale=0.5] (SSSv4) at (119,-18) {}; \draw (111,-18) node {$v_2$};
\draw[rotate=0](119,6)ellipse(3.5 and 17); \draw (127,10) node {$e_1$};
\draw[rotate=0](119,-6)ellipse(3.5 and 17); \draw (127,-10) node {$e_2$};

\draw (157,-36) node {$\mathbb{S}^{(3)}_{3}$};
\node[vertex,scale=0.5] (SSSSv1) at (165,18) {}; \draw (173,18) node {$v_1$};
\node[vertex,scale=0.5] (SSSSv2) at (149,6) {}; \draw (141,6) node {$u$};
\node[vertex,scale=0.5] (SSSSv3) at (149,-6) {}; \draw (141,-6) node {$v$};
\node[vertex,scale=0.5] (SSSSv4) at (165,0) {}; \draw (173,0) node {$v_2$};
\node[vertex,scale=0.5] (SSSSv4) at (165,-18) {}; \draw (173,-18) node {$v_3$};
\draw[rounded corners=5pt] (144,7) -- (144,-6) -- (150,-10) -- (165,13) -- (169,18) -- (165,22) -- (149,11) -- cycle; \draw (165,25) node {$e_1$};
\draw[rounded corners=5pt] (145,5) -- (145,-7) -- (150,-11) -- (165,-22) -- (169,-18) -- (165,-13) -- (149,10.5) -- cycle; \draw (165,5.5) node {$e_2$};
\draw[rounded corners=4pt] (146,6) -- (146,-6) -- (149,-9) -- (165,-3) -- (168.5,0) -- (165,3) -- (149,9) -- cycle; \draw (165,-25) node {$e_3$};

\draw (216,-36) node {$C^{(3)}_3$};
\node[vertex,scale=0.5] (cv1) at (216,18) {}; \draw (216,26) node {$v_1$};
\node[vertex,scale=0.5] (cv4) at (216,-18) {}; \draw (216,-26) node {$v_4$};
\node[vertex,scale=0.5] (cv3) at (195,-18) {}; \draw (190,-24) node {$v_3$};
\node[vertex,scale=0.5] (cv5) at (237,-18) {}; \draw (242,-24) node {$v_5$};
\node[vertex,scale=0.5] (cv2) at (205.5,0) {}; \draw (196,0) node {$v_2$};
\node[vertex,scale=0.5] (cv6) at (226.5,0) {}; \draw (236,0) node {$v_6$};
\draw[rotate=0](216,-18)ellipse(26 and 3.5);\draw (216,-11) node {$e_2$}; 
\coordinate (X1) at (205,-1);
\draw[rotate around={60:(X1)}] (X1) ellipse (26 and 3.5);\draw (202,10) node {$e_1$}; 
\coordinate (X2) at (227,-1);
\draw[rotate around={120:(X2)}] (X2) ellipse (26 and 3.5); \draw (230,10) node {$e_3$}; 
\end{tikzpicture}
\caption{The 3-graphs $S^{(3)}_2$, $S^{(3)}_3$, $\mathbb{S}^{(3)}_{2}$, $\mathbb{S}^{(3)}_{3}$ and $C^{(3)}_3$.}
\label{fig:3-stars}
\end{center}
\end{figure}

\vspace{0.2cm}
The remainder of this paper is organized as follows.
In Sections~\ref{sec:tight}, \ref{sec:messy} and \ref{sec:loose}, we prove Theorems~\ref{th:tight}, \ref{th:messy} and \ref{th:loose}, respectively.
In Section~\ref{sec:Ramsey}, we present our proofs of Theorems~\ref{th:Ram-tight}, \ref{th:Ram-messy}, \ref{th:Ram-loose} and \ref{th:anti-Ram}.
In Section~\ref{sec:multipartite}, we prove Theorems~\ref{th:multi-tight}, \ref{th:multi-messy}, \ref{th:multi-loose}, \ref{th:Ram-tight-messy-multipar}, \ref{th:Ram-loose-multipar}, \ref{th:anti-Ram-multipar} and Proposition~\ref{prop:constrained-multipar}.
Finally, in Section~\ref{sec:conclu}, we conclude this paper by presenting several open problems and remarks.

\section{Tight path--Proof of Theorem~\ref{th:tight}}
\label{sec:tight}

In this section, we prove the structural result (Theorem~\ref{th:tight}) for the tight path $\mathcal{T}$.
We first state and prove a simple observation and two technical lemmas.

\begin{observation}\label{obs:S2}
For any integer $n\geq 5$, if $K_{n}^{(3)}$ is edge-colored with at least two colors, then it contains both a rainbow $S^{(3)}_2$ and a rainbow $\mathbb{S}^{(3)}_{2}$.
\end{observation}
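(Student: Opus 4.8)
The plan is to translate the two target configurations into statements about intersections of $3$-element subsets of $V(G)$. A rainbow $\mathbb{S}^{(3)}_{2}$ (recall $\mathbb{S}^{(3)}_{2}=\mathbb{P}^{(3)}_2$) is precisely a pair of edges $e,f$ of $G$ with $|e\cap f|=2$ and $c(e)\neq c(f)$; likewise a rainbow $S^{(3)}_{2}$ (recall $S^{(3)}_2=P^{(3)}_2$) is precisely a pair of edges $e,f$ with $|e\cap f|=1$ and $c(e)\neq c(f)$. So it suffices to exhibit two differently-coloured edges meeting in exactly $2$ vertices, and two differently-coloured edges meeting in exactly $1$ vertex.

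To do this I would introduce two auxiliary graphs on the vertex set $\binom{V(G)}{3}$: the \emph{tight graph} $\Gamma_2$, in which two triples are adjacent iff they share exactly $2$ elements, and the \emph{loose graph} $\Gamma_1$, in which two triples are adjacent iff they share exactly $1$ element. The key claims are that $\Gamma_2$ is connected for all $n\geq 3$ and that $\Gamma_1$ is connected for all $n\geq 5$. Granting these, the observation follows in one line: since $G$ uses at least two colours there are edges $e,f$ with $c(e)\neq c(f)$; any path from $e$ to $f$ in $\Gamma_2$ (resp.\ $\Gamma_1$) must contain two consecutive triples receiving distinct colours, and such a pair is by definition a rainbow $\mathbb{S}^{(3)}_{2}$ (resp.\ a rainbow $S^{(3)}_{2}$).

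The connectivity of $\Gamma_2$ is routine: to pass from a triple $A$ to a triple $B$, repeatedly discard one element of $A\setminus B$ and insert one element of $B\setminus A$; each such step alters a single coordinate, hence keeps the intersection with the previous triple equal to $2$, and after $|A\setminus B|$ steps we reach $B$. The main obstacle is the connectivity of $\Gamma_1$, and my plan there is to first show that any two triples differing in exactly one element are joined by a path of length $2$ in $\Gamma_1$: for $A=\{a,b,c\}$ and $A'=\{a,b,d\}$ with $d\neq c$, choose $w\in V(G)\setminus\{a,b,c,d\}$ (possible precisely because $n\geq 5$); then $A\sim\{c,d,w\}\sim A'$ is such a path, since $|\{a,b,c\}\cap\{c,d,w\}|=|\{a,b,d\}\cap\{c,d,w\}|=1$. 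Combining this with the connectivity of $\Gamma_2$ — each tight edge $T_iT_{i+1}$ joins two triples differing in exactly one element, so it can be replaced by the loose path above — yields the connectivity of $\Gamma_1$ and finishes the proof. (Alternatively, one can give a direct case analysis that starts from a pair of differently-coloured edges $e,f$ and, according to whether $|e\cap f|$ equals $0$, $1$, or $2$, locally produces the two required configurations; this avoids naming $\Gamma_1,\Gamma_2$ but is messier to write out.)
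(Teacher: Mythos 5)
Your proof is correct, but it takes a different route from the paper's. The paper argues locally: it first notes that two differently coloured edges must yield either a rainbow $S^{(3)}_2$ or a rainbow $\mathbb{S}^{(3)}_{2}$ (they intersect in $1$ or $2$ vertices, or, if disjoint, a triple meeting both can be inserted), and then converts each configuration into the other by examining a single auxiliary triple --- e.g.\ from the rainbow $S^{(3)}_2$ $\{v_1v_2v_3, v_1v_4v_5\}$ it observes that $v_1v_2v_4$ must disagree in colour with one of the two edges, producing a rainbow $\mathbb{S}^{(3)}_{2}$. Your argument instead establishes connectivity of the two auxiliary intersection graphs $\Gamma_1,\Gamma_2$ on $\binom{V(G)}{3}$ and extracts each configuration from a colour change along a path; your replacement of a $\Gamma_2$-edge by a $\Gamma_1$-path of length $2$ is essentially the paper's local move run in the opposite direction. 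Both proofs are sound and of comparable length; your version is more systematic (it makes explicit the connectivity fact that the paper's opening sentence silently relies on, and it isolates exactly where $n\geq 5$ is needed, namely for the connectivity of $\Gamma_1$), while the paper's is shorter and avoids introducing auxiliary graphs. All the individual claims in your write-up (the swap argument for $\Gamma_2$, the length-$2$ detour $A\sim\{c,d,w\}\sim A'$, and the ``colour must change along the path'' step) check out.
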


\begin{proof} Let $G$ be an edge-colored $K_{n}^{(3)}$ with $|C(G)|\geq 2$ and $V(G)=\{v_1, v_2, \ldots, v_n\}$, where $n\geq 5$.
Since $|C(G)|\geq 2$, there is either a rainbow $S^{(3)}_2$ or a rainbow $\mathbb{S}^{(3)}_{2}$ in $G$.
If $G$ contains a rainbow $S^{(3)}_2$, say $\{v_1v_2v_3, v_1v_4v_5\}$, then at least one of $\{v_1v_2v_3, v_1v_2v_4\}$ and $\{v_1v_4v_2, v_1v_4v_5\}$ is a rainbow $\mathbb{S}^{(3)}_{2}$,
so $G$ contains both a rainbow $S^{(3)}_2$ and a rainbow $\mathbb{S}^{(3)}_{2}$ in this case.
If $G$ contains a rainbow $\mathbb{S}^{(3)}_{2}$, say $\{v_1v_2v_3, v_1v_2v_4\}$, then at least one of $\{v_3v_1v_2, v_3v_4v_5\}$ and $\{v_4v_1v_2, v_4v_3v_5\}$ is a rainbow $S^{(3)}_2$,
so $G$ contains both a rainbow $S^{(3)}_2$ and a rainbow $\mathbb{S}^{(3)}_{2}$.
\end{proof}

\begin{lemma}\label{le:tight-1}
Let $G$ be a rainbow $\mathcal{T}$-free edge-coloring of $K_{n}^{(3)}$.
Then $G$ contains no rainbow $C^{(3)}_3$, $S^{(3)}_3$ or $\mathbb{S}^{(3)}_{3}$.
\end{lemma}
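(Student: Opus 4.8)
The plan is to treat the three forbidden configurations one at a time. In each case I assume $G$ contains a rainbow copy, with the three edges coloured by pairwise distinct colours $c_1,c_2,c_3$, and I build a rainbow $\mathcal{T}$, contradicting the hypothesis. (When $n\le 4$ there is nothing to prove, since none of the three configurations fits in $K_n^{(3)}$.) The only tool is immediate: if a five-vertex sequence forms a copy of $\mathcal{T}$ and two of its three edges have distinct known colours, then rainbow $\mathcal{T}$-freeness forces the third edge to repeat one of those two colours.

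For the loose cycle $C^{(3)}_3=\{v_1v_2v_3,\,v_3v_4v_5,\,v_5v_6v_1\}$ with these edges coloured $c_1,c_2,c_3$, I would single out the edge $f=v_1v_3v_5$ through the three link vertices. Since $f$ meets each cyclic edge in exactly two vertices, for each pair of cyclic edges one can insert $f$ between them to form a copy of $\mathcal{T}$; for instance $v_2,v_1,v_3,v_5,v_4$ has edge set $\{v_1v_2v_3,\,v_1v_3v_5,\,v_3v_4v_5\}$, forcing $c(f)\in\{c_1,c_2\}$, and the two analogous sequences force $c(f)\in\{c_2,c_3\}$ and $c(f)\in\{c_1,c_3\}$. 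These three conditions have empty intersection.

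For the double star $\mathbb{S}^{(3)}_{3}$ on $\{u,v,w_1,w_2,w_3\}$ with $c(uvw_k)=c_k$, no edge is over-determined by the given edges alone, so the argument has two steps. First, for $\{i,j,k\}=\{1,2,3\}$ I would establish $c(uw_iw_j)=c(vw_iw_j)=c_k$, each from two copies of $\mathcal{T}$: the sequences $w_k,u,v,w_i,w_j$ and $w_k,u,v,w_j,w_i$ have edge sets $\{uvw_k,uvw_i,vw_iw_j\}$ and $\{uvw_k,uvw_j,vw_iw_j\}$, so $c(vw_iw_j)\in\{c_i,c_k\}\cap\{c_j,c_k\}=\{c_k\}$, and symmetrically (swapping $u,v$) for $uw_iw_j$. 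Then $w_2,u,w_1,v,w_3$ has edge set $\{uw_1w_2,\,uvw_1,\,vw_1w_3\}$ with colours $c_3,c_1,c_2$, a rainbow $\mathcal{T}$. For the loose star $S^{(3)}_3$ centred at $v_0$ with spokes $\{v_0,v_{2i-1},v_{2i}\}$ coloured $c_i$, I would work with the three cross edges $A=v_0v_1v_3$, $B=v_0v_1v_5$, $C=v_0v_3v_5$: inserting a cross edge between two spokes (e.g. $v_2,v_1,v_0,v_3,v_4$ has edge set $\{v_0v_1v_2,\,v_0v_1v_3,\,v_0v_3v_4\}$) gives $A\in\{c_1,c_2\}$, $B\in\{c_1,c_3\}$, $C\in\{c_2,c_3\}$, and placing a cross edge after a spoke and before another cross edge yields a forbidden colour pattern (e.g. $v_2,v_1,v_0,v_3,v_5$ has edge set $\{v_0v_1v_2,A,C\}$, forbidding $(A,C)=(c_2,c_3)$, and there are five more such conditions). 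A short case split on whether $A=c_1$ or $A=c_2$ then propagates through these conditions and reaches a contradiction in each case.

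The fiddly part will be the $S^{(3)}_3$ case, where one must carefully list the six forbidden-pattern conditions coming from the appropriate copies of $\mathcal{T}$ and verify that the two-way case split closes; the $\mathbb{S}^{(3)}_{3}$ case needs the extra care of first pinning down the colours of the off-star edges, since—unlike for the cycle—the naturally over-determined edge $w_1w_2w_3$ is not adjacent in any copy of $\mathcal{T}$ to a given edge. Throughout, the routine verification is that each listed five-vertex sequence really is a copy of $\mathcal{T}$, i.e. that its three consecutive triples are precisely the stated edges.
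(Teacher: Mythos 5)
Your proposal is correct, and two of its three parts coincide with the paper's argument: the loose-cycle case (adding the edge $v_1v_3v_5$ and intersecting three two-element colour sets) and the $\mathbb{S}^{(3)}_{3}$ case (first forcing the colours of edges of the form $uw_iw_j$, $vw_iw_j$, then exhibiting a rainbow tight path alternating through $u$ and $v$) are exactly what the paper does, except that you pin down all six off-star edges where the paper only needs two of them. The one genuinely different step is $S^{(3)}_3$: the paper handles it by a two-line reduction to the already-settled $C^{(3)}_3$ case, observing that whatever colour the single auxiliary edge $v_2v_4v_6$ receives, it closes a rainbow loose cycle with two of the three spokes. You instead argue directly inside the link of the centre $v_0$, deriving the membership constraints $A\in\{c_1,c_2\}$, $B\in\{c_1,c_3\}$, $C\in\{c_2,c_3\}$ together with six forbidden ordered patterns, and I have checked that all eight admissible assignments violate one of the six conditions (your two-way split on $c(A)$ does close: $A=c_1$ forces $C=c_3$, then $B=c_1$, contradicting the $(B,C)\neq(c_1,c_3)$ condition; $A=c_2$ forces $C=c_2$, then $B=c_3$, contradicting $(B,C)\neq(c_3,c_2)$). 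The paper's reduction is shorter and reuses the cycle case as a lemma; your version is self-contained for that configuration but requires writing out and verifying the six tight paths explicitly. Either way the lemma is proved.
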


\begin{proof} Let $V(G)=\{v_1, v_2, \ldots, v_n\}$.
We first show that $G$ contains no rainbow $C^{(3)}_3$.
Suppose for a contradiction that $G$ contains a rainbow $C^{(3)}_3$, say $\{v_1v_2v_3, v_3v_4v_5, v_5v_6v_1\}$.
Then no matter what color is assigned on the edge $v_1v_3v_5$, at least one of $\{v_4v_3v_5, v_3v_5v_1, v_5v_1v_6\}$, $\{v_2v_3v_1, v_3v_1v_5, v_1v_5v_6\}$ and $\{v_2v_1v_3, v_1v_3v_5, v_3v_5v_4\}$ is a rainbow $\mathcal{T}$, a contradiction.
Thus $G$ contains no rainbow $C^{(3)}_3$.

We next show that $G$ contains no rainbow $S^{(3)}_3$.
Suppose for a contradiction that $G$ contains a rainbow $S^{(3)}_3$, say $\{v_1v_2v_3, v_1v_4v_5, v_1v_6v_7\}$.
Then no matter what color is assigned on the edge $v_2v_4v_6$, at least one of $\{v_4v_2v_6, v_6v_7v_1, v_1v_5v_4\}$, $\{v_2v_4v_6, v_6v_7v_1, v_1v_3v_2\}$ and $\{v_2v_6v_4, v_4v_5v_1, v_1v_3v_2\}$ is a rainbow $C^{(3)}_3$.
But $G$ contains no rainbow $C^{(3)}_3$ by the above argument.
Thus $G$ contains no rainbow $S^{(3)}_3$.

Finally, we show that $G$ contains no rainbow $\mathbb{S}^{(3)}_{3}$.
Suppose for a contradiction that $G$ contains a rainbow $\mathbb{S}^{(3)}_{3}$, say $\{v_1v_2v_3, v_1v_2v_4, v_1v_2v_5\}$, where $c(v_1v_2v_3)=1$, $c(v_1v_2v_4)=2$ and $c(v_1v_2v_5)=3$.
Then $c(v_1v_3v_4)=3$, since otherwise at least one of $\{v_5v_2v_1, v_2v_1v_4, v_1v_4v_3\}$ and $\{v_5v_2v_1, v_2v_1v_3, v_1v_3v_4\}$ is a rainbow $\mathcal{T}$.
By symmetry, we also have $c(v_2v_5v_4)=1$.
But now $\{v_3v_1v_4, v_1v_4v_2, v_4v_2v_5\}$ is a rainbow $\mathcal{T}$, a contradiction.
Thus $G$ contains no rainbow $\mathbb{S}^{(3)}_{3}$.
\end{proof}

\begin{lemma}\label{le:tight-2}
Let $G$ be a rainbow $\mathcal{T}$-free edge-coloring of $K_{n}^{(3)}$.
Let $\{e_1, e_2\}$ be a rainbow $S^{(3)}_2$ in $G$.
Then for every edge $f\in E(G)$ with $f\cap e_1\neq \emptyset$ and $f\cap e_2\neq \emptyset$, we have $c(f)\in \{c(e_1), c(e_2)\}$.
\end{lemma}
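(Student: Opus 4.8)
The plan is to first normalize: since $\{e_1,e_2\}$ is a rainbow $S^{(3)}_2$ we may write $e_1=v_0v_1v_2$, $e_2=v_0v_3v_4$ with $c(e_1)=1$ and $c(e_2)=2$ (these are distinct), and we must show $c(f)\in\{1,2\}$ for every $f$ meeting both $e_1$ and $e_2$. The only inputs I would use are that $G$ has no rainbow $\mathcal{T}$ and, by Lemma~\ref{le:tight-1}, no rainbow $\mathbb{S}^{(3)}_3$. Both give the same propagation principle: if three edges either form a tight path or all contain a common pair of vertices, and two of them already carry colors $1$ and $2$, then the third edge has color $1$ or $2$. So the whole argument is just to produce, for each $f$, a tight path or an $\mathbb{S}^{(3)}_3$ through $f$ whose other two edges are already known to use only the colors $1$ and $2$.

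First I would settle the edges $f=\{v_0,x,y\}$ through $v_0$, by induction on $|\{x,y\}\cap\{v_1,v_2,v_3,v_4\}|$. If $\{x,y\}\in\{\{v_1,v_2\},\{v_3,v_4\}\}$ then $f\in\{e_1,e_2\}$. If $\{x,y\}$ meets both pairs $\{v_1,v_2\}$ and $\{v_3,v_4\}$, say $f=v_0v_1v_3$, then $v_2v_1v_0$, $v_1v_0v_3$, $v_0v_3v_4$ is a tight path with end edges $e_1,e_2$, so $c(f)\in\{1,2\}$. If $\{x,y\}$ meets only one pair, say $f=v_0v_1z$ with $z\notin\{v_1,\dots,v_4\}$: the color of $v_0v_1v_3$ already lies in $\{1,2\}$ by the previous case, and if it equals $1$ the tight path $zv_1v_0$, $v_1v_0v_3$, $v_0v_3v_4$ finishes, while if it equals $2$ then $\{e_1,v_0v_1v_3,f\}$ is an $\mathbb{S}^{(3)}_3$ on the pair $\{v_0,v_1\}$ two of whose edges carry colors $1,2$. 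Finally, if $\{x,y\}\cap\{v_1,\dots,v_4\}=\emptyset$, suppose for contradiction $c(f)\notin\{1,2\}$; the tight path $xyv_0$, $yv_0v_1$, $v_0v_1v_2$ forces $c(v_0v_1y)=1$ and the tight path $xyv_0$, $yv_0v_3$, $v_0v_3v_4$ forces $c(v_0v_3y)=2$ (their middle edges being $\{1,2\}$-colored by the previous case), and then $\{f,v_0v_1y,v_0v_3y\}$ is a rainbow $\mathbb{S}^{(3)}_3$ on the pair $\{v_0,y\}$, contradicting Lemma~\ref{le:tight-1}.

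Next I would treat $f$ with $v_0\notin f$; such an $f$ contains a vertex $a\in\{v_1,v_2\}$ and a vertex $b\in\{v_3,v_4\}$. If the third vertex of $f$ lies outside $\{v_0,\dots,v_4\}$, then the edge $v_0ab$ has color $1$ or $2$ by the first part, and according as this color is $1$ or $2$, concatenating the edges $f$, $v_0ab$, $e_2$ along $b$, or $f$, $v_0ab$, $e_1$ along $a$, gives a tight path that forces $c(f)\in\{1,2\}$. Otherwise $f$ is one of the four edges $v_1v_2v_3$, $v_1v_2v_4$, $v_1v_3v_4$, $v_2v_3v_4$, and by symmetry it suffices to take $f=v_1v_3v_4$: assuming $c(f)\notin\{1,2\}$, the tight path $v_1v_3v_4$, $v_3v_4v_0$, $v_3v_0v_2$ (together with the first part) forces $c(v_0v_2v_3)=2$, the tight path $v_4v_3v_1$, $v_3v_1v_0$, $v_1v_0v_2$ forces $c(v_0v_1v_3)=1$, and then $v_4v_1v_3$, $v_1v_3v_0$, $v_3v_0v_2$ is a tight path colored $c(f),1,2$, i.e.\ a rainbow $\mathcal{T}$ --- a contradiction.

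The routine part is verifying that each displayed triple really is a tight path --- five distinct vertices, consecutive triples overlapping in exactly two vertices --- which takes some care, since the obvious choice of the last vertex often repeats an earlier one; this is exactly why the four special edges $v_1v_2v_3,\dots$ need a separate treatment from the generic case. I expect the genuine obstacle to be the last subcase of the first part, $f=\{v_0,x,y\}$ with $x,y\notin\{v_1,\dots,v_4\}$: here no single tight path through $f$ works, and one is forced to pin down $c(v_0v_1y)$ and $c(v_0v_3y)$ exactly and then invoke the non-existence of a rainbow $\mathbb{S}^{(3)}_3$ from Lemma~\ref{le:tight-1}.
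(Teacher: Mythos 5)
Your proof is correct, and it follows essentially the same route as the paper: a case analysis on how $f$ meets $e_1\cup e_2$ (whether it contains the common vertex, and how many of the other four vertices it hits), with each case killed by exhibiting a rainbow tight path or invoking Lemma~\ref{le:tight-1}; I checked each of your displayed triples and they are all genuine tight paths (resp.\ copies of $\mathbb{S}^{(3)}_{3}$) with the claimed colors. The only real difference is that where the paper dispatches the cases $|f\cap(e_1\cup e_2)|=1$ and ``$|f\cap(e_1\cup e_2)|=2$ avoiding the center'' in one line each by citing the $S^{(3)}_3$- and $C^{(3)}_3$-freeness from Lemma~\ref{le:tight-1}, you rebuild those cases from explicit tight paths, so your argument uses only the $\mathbb{S}^{(3)}_{3}$ part of that lemma at the cost of some extra path-chasing.
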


\begin{proof}
Let $e_1=v_1v_2v_3$, $e_2=v_1v_4v_5$, $c(e_1)=1$ and $c(e_2)=2$.
Suppose for a contradiction that there exists an edge $f\in E(G)$ with $f\cap e_1\neq \emptyset$ and $f\cap e_2\neq \emptyset$ but $c(f)\notin \{1,2\}$, say $c(f)=3$.
In order to avoid a rainbow $\mathcal{T}$, we have $c(v_1v_iv_j)\in \{1,2\}$ for all $i\in \{2,3\}$ and $j\in \{4,5\}$.
Let $X=f\cap \{v_1, v_2, \ldots, v_5\}$.
We first show that $|X|\leq 2$.
Suppose not, that is, $f\subseteq \{v_1, v_2, \ldots, v_5\}$.
Without loss of generality, we may assume that $f=v_2v_3v_4$.
Then $c(v_2v_4v_5)=3$, since otherwise at least one of $\{v_3v_2v_4, v_2v_4v_5, v_4v_5v_1\}$ and $\{v_5v_4v_2, v_4v_2v_3, v_2v_3v_1\}$ is a rainbow $\mathcal{T}$, a contradiction.
Recall that $c(v_1v_2v_4)\in \{1,2\}$.
Then at least one of $\{v_3v_2v_4, v_2v_4v_1, v_4v_1v_5\}$ and $\{v_5v_4v_2, v_4v_2v_1, v_2v_1v_3\}$ is a rainbow $\mathcal{T}$.
This contradiction implies that $|X|\leq 2$.

If $|X|=1$, then since $f\cap e_1\neq \emptyset$ and $f\cap e_2\neq \emptyset$, we have $X=\{v_1\}$.
But then $\{e_1, e_2, f\}$ is a rainbow $S^{(3)}_3$, contradicting Lemma~\ref{le:tight-1}.
If $|X|=2$ and $v_1\notin X$, then $\{e_1, e_2, f\}$ is a rainbow $C^{(3)}_3$, also contradicting Lemma~\ref{le:tight-1}.
Hence, we have $|X|=2$ and $v_1\in X$, say $f=v_1v_2v_6$.
By Lemma~\ref{le:tight-1}, $G$ contains no rainbow $\mathbb{S}^{(3)}_{3}$, so $c(v_1v_2v_4)\in \{1,3\}$.
Combining with $c(v_1v_2v_4)\in \{1,2\}$, we have $c(v_1v_2v_4)=1$.
But then $\{v_6v_2v_1, v_2v_1v_4, v_1v_4v_5\}$ is a rainbow $\mathcal{T}$, a contradiction.
This completes the proof of Lemma~\ref{le:tight-2}.
\end{proof}

Now we have all ingredients to present our proof of Theorem~\ref{th:tight}.

\vspace{0.3cm}
\noindent{\bf Proof of Theorem~\ref{th:tight}.}~
Let $G$ be a rainbow $\mathcal{T}$-free edge-colored $K^{(3)}_n$ with $|C(G)|\geq 3$.
We first prove a claim related to the maximum color degree $\Delta^{c}(G)$ of $G$.

\begin{claim}\label{cl:tight-1}
$\Delta^{c}(G)\leq 2.$
\end{claim}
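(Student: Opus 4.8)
The plan is to argue by contradiction: suppose some vertex $v$ has color degree $d^c(v)\geq 3$, so there exist three edges $e_1, e_2, e_3$ through $v$ with three distinct colors, say $1, 2, 3$. I would first reduce to a convenient configuration. Since the color degree is at least $3$, a short averaging/pigeonhole argument on the link of $v$ (whose edges are pairs, $2$-colored-or-more) lets me find two edges through $v$ sharing a vertex other than $v$, i.e.\ a rainbow $\mathbb{S}^{(3)}_2$ centered along a pair $\{v,w\}$ with two colors, say $c(vwx)=1$ and $c(vwy)=2$; alternatively, after relabeling, I can instead extract a rainbow $S^{(3)}_2$ through $v$ (two edges meeting only at $v$) using a third color. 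The key tools are Lemma~\ref{le:tight-1} (no rainbow $C_3^{(3)}$, $S_3^{(3)}$, $\mathbb{S}_3^{(3)}$) and Lemma~\ref{le:tight-2} (an edge meeting both legs of a rainbow $S_2^{(3)}$ must reuse one of the two leg colors).

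The main step is then a finite case analysis on how the three distinctly-colored edges through $v$ overlap pairwise outside $v$. If two of them, say those of colors $1$ and $2$, form a rainbow $S_2^{(3)}$ (meet only at $v$), then the third edge of color $3$ through $v$ meets $v$; I consider how it intersects the five vertices of the $S_2^{(3)}$. If it meets both legs, Lemma~\ref{le:tight-2} forces its color into $\{1,2\}$, a contradiction; if it meets only one leg or is vertex-disjoint from one leg, I get a rainbow $S_3^{(3)}$ or can complete (using a new vertex, available since $n\geq 5$, actually $n\geq 7$ is not needed here) a rainbow $\mathbb{S}_3^{(3)}$ or $C_3^{(3)}$, each contradicting Lemma~\ref{le:tight-1}. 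If instead no two of the three edges meet only at $v$, then each pair shares an extra common vertex; examining whether all three share a common pair $\{v,w\}$ gives a rainbow $\mathbb{S}_3^{(3)}$ outright, while the remaining overlap patterns either produce a rainbow $C_3^{(3)}$ or allow me to route a rainbow tight path $\mathcal{T}$ directly by choosing an auxiliary vertex and checking the (at most constantly many) colorings of the one or two free edges, exactly as in the proofs of Lemmas~\ref{le:tight-1} and~\ref{le:tight-2}.

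The hard part will be organizing the case analysis cleanly so it does not balloon: the intersection pattern of three $3$-sets all containing $v$ has several sub-cases, and in the borderline ones I must still verify that whatever color is forced on an auxiliary edge, one of two or three explicitly written tight paths becomes rainbow. I expect the cleanest route is to first dispose of all cases where a forbidden rainbow sub-configuration from Lemma~\ref{le:tight-1} appears combinatorially (independent of the coloring), and only afterwards handle the residual ``all three edges pairwise share an extra vertex'' situation, where I will lean on Lemma~\ref{le:tight-2} applied to a well-chosen rainbow $S_2^{(3)}$ among the three edges together with one additional edge, mirroring the endgame of Lemma~\ref{le:tight-2}'s proof. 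Throughout, the bound $n\geq 5$ suffices to supply any extra vertex needed to close a path or a cycle.
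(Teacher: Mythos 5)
Your proposal is correct and follows essentially the same route as the paper: assume a vertex $v$ with three distinctly colored incident edges, kill the common-pair case with the no-rainbow-$\mathbb{S}^{(3)}_3$ part of Lemma~\ref{le:tight-1}, kill the case where two of the edges meet only at $v$ with Lemma~\ref{le:tight-2} (note this applies immediately, since the third edge meets both others at $v$, so your sub-cases on how it hits the ``legs'' are unnecessary), and resolve the residual four-vertex configuration $e_1=v_1v_2v_3$, $e_2=v_1v_2v_4$, $e_3=v_1v_3v_4$ by adjoining a fifth vertex and exhibiting an explicit rainbow $\mathcal{T}$. The paper's endgame is exactly the ``one free edge'' check you describe: $c(v_1v_2v_5)\in\{1,2\}$ forces a rainbow tight path, and $n\geq 5$ suffices as you note.
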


\begin{proof}
Suppose for a contradiction that there exists a vertex $v_1$ and three edges $e_1, e_2, e_3$ with $v_1\in e_1\cap e_2\cap e_3$ and $c(e_i)=i$ for each $i\in [3]$.
By Lemma~\ref{le:tight-1}, $G$ contains no rainbow $\mathbb{S}^{(3)}_{3}$, so $|e_1\cap e_2\cap e_3|=1$, that is, $e_1\cap e_2\cap e_3=\{v_1\}$.
If $(e_1\cap e_2)\setminus \{v_1\}= \emptyset$, then $\{e_1, e_2\}$ is a rainbow $S^{(3)}_2$.
Since $e_3\cap e_1\neq \emptyset$, $e_3\cap e_2\neq \emptyset$ and $c(e_3)\notin \{c(e_1), c(e_2)\}$, we can deduce a contradiction by Lemma~\ref{le:tight-2}.
Thus $(e_1\cap e_2)\setminus \{v_1\}\neq \emptyset$, and by symmetry, we have $(e_1\cap e_3)\setminus \{v_1\}\neq \emptyset$ and $(e_2\cap e_3)\setminus \{v_1\}\neq \emptyset$.
Combining with these properties, we have $|e_1\cup e_2\cup e_3|=4$, and we may assume that $e_1=v_1v_2v_3$, $e_2=v_1v_2v_4$ and $e_3=v_1v_3v_4$.
Let $v_5\in V(G)\setminus \{v_1, v_2, v_3, v_4\}$.
Since $G$ contains no rainbow $\mathbb{S}^{(3)}_{3}$, we have $c(v_1v_2v_5)\in \{1,2\}$, say $c(v_1v_2v_5)=1$.
Then $\{v_5v_2v_1, v_2v_1v_4, v_1v_4v_3\}$ is a rainbow $\mathcal{T}$, a contradiction.
\end{proof}

By Observation~\ref{obs:S2}, there exists a vertex $v_1$ of color degree at least 2 in $G$.
Combining with Claim~\ref{cl:tight-1}, we have $d^c(v_1)=2$.
Without loss of generality, we may assume that $v_1$ is incident with edges of colors 1 and 2.
Then by $|C(G)|\geq 3$, there exists a vertex $v_2\in V(G)\setminus \{v_1\}$ and a color $i\in C(G)\setminus \{1,2\}$ such that $v_2$ is incident with edges of either colors 1 and $i$ or colors 2 and $i$, say colors 1 and $i$.
By Claim~\ref{cl:tight-1}, we have $c(v_1v_2v)=1$ for every vertex $v\in V(G)\setminus \{v_1, v_2\}$.
Hence, every vertex in $G$ is incident with an edge of color 1.
Combining with Claim~\ref{cl:tight-1}, we can partition $V(G)$ into $|C(G)|$ parts $U_1, U_2, \ldots, U_{|C(G)|}$ such that
\begin{itemize}
\item for each vertex $v\in U_1$, $v$ is only incident with edges of color 1; and

\item for each $i\in \{2, 3, \ldots, |C(G)|\}$ and each vertex $v\in U_i$, $v$ is only incident with edges of colors 1 and $i$.
\end{itemize}
Let $V_2=U_1\cup U_2$, and $V_i=U_i$ for each $i\in \{2, 3, \ldots, |C(G)|\}$.
Then $\{i\}\subseteq C(V_i)\subseteq \{1,i\}$ for every $i\in \{2, 3, \ldots, |C(G)|\}$, and for every edge $e$ consisting of vertices from at least two parts, we have $c(e)=1$.
This completes the proof of Theorem~\ref{th:tight}.
\hfill$\square$

\section{Messy path--Proof of Theorem~\ref{th:messy}}
\label{sec:messy}

In this section, we prove the structural result (Theorem~\ref{th:messy}) for the messy path $\mathcal{M}$.

\vspace{0.3cm}
\noindent{\bf Proof of Theorem~\ref{th:messy}.}~
Let $G$ be a rainbow $\mathcal{M}$-free edge-colored $K^{(3)}_n$, where $V(G)=\{v_1, v_2, \ldots, v_n\}$ and $n\geq 7$.
For a contradiction, suppose that $|C(G)|\geq 3$.

\begin{claim}\label{cl:messy-1}
There is no rainbow $\mathbb{S}^{(3)}_{2}\cup \mathbb{S}^{(3)}_{1}$ or $\mathbb{S}^{(3)}_{3}$ in $G$.
\end{claim}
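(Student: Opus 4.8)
The plan is to prove the contrapositive of each half of the claim: if $G$ contains a rainbow $\mathbb{S}^{(3)}_{3}$, or a rainbow $\mathbb{S}^{(3)}_{2}\cup\mathbb{S}^{(3)}_{1}$, then $G$ contains a rainbow $\mathcal{M}$, contradicting the hypothesis. The single tool I will use throughout is a \emph{cherry-extension} step, the $\mathcal{M}$-analogue of Lemma~\ref{le:tight-2}: if $\{P\cup\{p\},\,P\cup\{q\}\}$ is a rainbow $\mathbb{S}^{(3)}_{2}$ with center pair $P$, and $f$ is any edge with $p\in f$ and $f\cap(P\cup\{q\})=\emptyset$, then $c(f)\in\{c(P\cup\{p\}),\,c(P\cup\{q\})\}$, since otherwise $\{P\cup\{q\},\,P\cup\{p\},\,f\}$ is exactly a rainbow $\mathcal{M}$. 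Since $\mathbb{S}^{(3)}_{2}\cup\mathbb{S}^{(3)}_{1}$ has $7$ vertices and $\mathbb{S}^{(3)}_{3}$ has $5$, the hypothesis $n\geq 7$ always supplies the two spare vertices used below.

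\emph{The $\mathbb{S}^{(3)}_{3}$ case.} Suppose $\{uvw_{1},uvw_{2},uvw_{3}\}$ is rainbow with $c(uvw_{i})=i$, and fix $x,y\notin\{u,v,w_{1},w_{2},w_{3}\}$. For each $i$ and each $j\neq i$, cherry-extension applied to $\{uvw_{j},uvw_{i}\}$ with the edge $w_{i}xy$ yields $c(w_{i}xy)\in\{i,j\}$; intersecting over the two choices of $j$ forces $c(w_{i}xy)=i$. Hence $\{w_{1}xy,w_{2}xy,w_{3}xy\}$ is again a rainbow $\mathbb{S}^{(3)}_{3}$, now with center pair $\{x,y\}$. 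Running the extension once more from this copy, using $\{w_{j}xy,\,w_{i}xy\}$ with the edge $w_{i}w_{k}u$ where $\{i,j,k\}=\{1,2,3\}$, gives $c(w_{i}w_{k}u)\in\{i,j\}$ and, by the symmetric choice, $\in\{k,j\}$, whence $c(w_{i}w_{k}u)=j$; that is, $c(w_{1}w_{2}u)=3$, $c(w_{1}w_{3}u)=2$ and $c(w_{2}w_{3}u)=1$. Finally $\{uvw_{1},\,uw_{1}w_{2},\,w_{2}xy\}$ has colors $1,3,2$: its first two edges share the pair $\{u,w_{1}\}$, its third meets $uw_{1}w_{2}$ only in $w_{2}$ and is disjoint from $uvw_{1}$, so it is a rainbow $\mathcal{M}$, a contradiction.

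\emph{The $\mathbb{S}^{(3)}_{2}\cup\mathbb{S}^{(3)}_{1}$ case.} Suppose $\{uvw_{1},uvw_{2},xyz\}$ is rainbow with $c(uvw_{1})=1$, $c(uvw_{2})=2$, $c(xyz)=3$, and $\{u,v,w_{1},w_{2}\}$ disjoint from $\{x,y,z\}$. First I use the isolated edge $xyz$ as a pendant: for each $a\in\{x,y,z\}$, if $c(uva)\notin\{1,3\}$ then $\{uvw_{1},uva,xyz\}$ is a rainbow $\mathcal{M}$, and if $c(uva)\notin\{2,3\}$ then $\{uvw_{2},uva,xyz\}$ is, so $c(uva)=3$. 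This produces the rainbow $\mathbb{S}^{(3)}_{2}$'s $\{uva,uvw_{1}\}$ and $\{uva,uvw_{2}\}$ with color pairs $\{3,1\}$ and $\{3,2\}$. Cherry-extension from these, together with cherry-extension from the original $\{uvw_{1},uvw_{2}\}$ (which gives $c(w_{\ell}bb')\in\{1,2\}$ for $\ell\in\{1,2\}$ and all $b,b'\in\{x,y,z\}$), pins down the colors of the ``mixed'' triples: $c(w_{1}w_{2}b)=3$ for every $b\in\{x,y,z\}$ (color in $\{1,3\}$ over $w_{1}$ and in $\{2,3\}$ over $w_{2}$), and $c(w_{1}bb')=1$ and $c(w_{2}bb')=2$ for all distinct $b,b'\in\{x,y,z\}$ (color in $\{1,3\}$ over $w_{1}$ and in $\{2,3\}$ over $w_{2}$ from the color-$3$ cherries, and in $\{1,2\}$ from the original one). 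Finally $\{w_{1}xy,\,w_{1}w_{2}x,\,uvw_{2}\}$ has colors $1,3,2$: $w_{1}xy$ and $w_{1}w_{2}x$ share the pair $\{w_{1},x\}$, while $uvw_{2}$ meets $w_{1}w_{2}x$ only in $w_{2}$ and is disjoint from $w_{1}xy$, so this is a rainbow $\mathcal{M}$, a contradiction.

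I expect the $\mathbb{S}^{(3)}_{2}\cup\mathbb{S}^{(3)}_{1}$ case to be the main obstacle. In the $\mathbb{S}^{(3)}_{3}$ case the three given rainbow edges form a tight common structure, so two applications of the same extension step already produce a forbidden path; for $\mathbb{S}^{(3)}_{2}\cup\mathbb{S}^{(3)}_{1}$ the three given edges barely interact, and one must first manufacture a large color-$3$ family on the triples through $\{u,v\}$ and then run two further propagation rounds before a rainbow $\mathcal{M}$ emerges. The delicate point is the bookkeeping --- checking at each step that the triple being used as a pendant really is disjoint from the relevant edge --- and the bound $n\geq 7$ is precisely what guarantees that all the pendants used above exist.
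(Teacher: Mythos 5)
Your proof is correct: the cherry-extension observation is a valid consequence of rainbow $\mathcal{M}$-freeness, each application of it respects the required disjointness conditions, and each of your two terminal configurations ($\{uvw_1, uw_1w_2, w_2xy\}$ and $\{w_1xy, w_1w_2x, uvw_2\}$) is genuinely a rainbow copy of $\mathcal{M}$. The underlying strategy --- repeatedly forcing colors of auxiliary edges until a forbidden rainbow $\mathcal{M}$ appears --- is essentially the same as the paper's, though the paper is more economical: it dispatches the $\mathbb{S}^{(3)}_{2}\cup \mathbb{S}^{(3)}_{1}$ case in three lines and then deduces the $\mathbb{S}^{(3)}_{3}$ case directly from it, whereas you treat the two cases independently and with considerably more propagation.
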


\begin{proof}
Suppose for a contradiction that $\{v_1v_2v_3, v_1v_2v_4, v_5v_6v_7\}$ is a rainbow $\mathbb{S}^{(3)}_{2}\cup \mathbb{S}^{(3)}_{1}$, where $c(v_1v_2v_3)=1$, $v(v_1v_2v_4)=2$ and $c(v_5v_6v_7)=3$.
In order to avoid a rainbow $\mathcal{M}$, we have $c(v_3v_5v_6)= 1$ and $c(v_1v_5v_6)=3$.
But then $\{v_3v_5v_6, v_5v_6v_1, v_1v_2v_4\}$ is a rainbow $\mathcal{M}$, a contradiction.
Thus $G$ contains no rainbow $\mathbb{S}^{(3)}_{2}\cup \mathbb{S}^{(3)}_{1}$.

Suppose now $\{v_1v_2v_3, v_1v_2v_4, v_1v_2v_5\}$ is a rainbow $\mathbb{S}^{(3)}_{3}$, where $c(v_1v_2v_3)=1$, $v(v_1v_2v_4)=2$ and $c(v_1v_2v_5)=3$.
Since $G$ contains no rainbow $\mathbb{S}^{(3)}_{2}\cup \mathbb{S}^{(3)}_{1}$, we have $c(v_5v_6v_7)\in \{1,2\}$.
Then one of $\{v_4v_2v_1, v_2v_1v_5, v_5v_6v_7\}$ and $\{v_3v_2v_1, v_2v_1v_5, v_5v_6v_7\}$ is a rainbow $\mathcal{M}$, a contradiction.
Thus $G$ contains no rainbow $\mathbb{S}^{(3)}_{3}$.
\end{proof}

By Observation~\ref{obs:S2}, we may assume that $\{v_1v_2v_3, v_1v_2v_4\}$ is a rainbow $\mathbb{S}^{(3)}_{2}$ in $G$, where $c(v_1v_2v_3)=1$ and $v(v_1v_2v_4)=2$.
By Claim~\ref{cl:messy-1}, $G$ contains no rainbow $\mathbb{S}^{(3)}_{2}\cup \mathbb{S}^{(3)}_{1}$, so $C(V(G)\setminus \{v_1, v_2, v_3, v_4\})\subseteq \{1,2\}$.
Hence, every edge of a color in $\{3, \ldots, |C(G)|\}$ must contain a vertex in $\{v_1, v_2, v_3, v_4\}$.

\begin{claim}\label{cl:messy-2}
For any edge $e$ with $c(e)\geq 3$, we have $|e\cap \{v_1, v_2, v_3, v_4\}|=2$.
\end{claim}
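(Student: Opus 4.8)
The plan is to fix the set $W=\{v_1,v_2,v_3,v_4\}$ together with an arbitrary edge $e$ of color $i\geq 3$. Since it is already established that every edge of color at least $3$ meets $W$, we have $1\leq|e\cap W|\leq 3$, so it suffices to rule out $|e\cap W|\in\{1,3\}$. Before that I would record two elementary ``a rainbow tight pair propagates its colors'' consequences of the hypotheses: whenever $f_1,f_2$ are edges with $c(f_1)\neq c(f_2)$ and $|f_1\cap f_2|=2$, then (a) every edge disjoint from $f_1\cup f_2$ has a color in $\{c(f_1),c(f_2)\}$, since otherwise those three edges form a rainbow $\mathbb{S}^{(3)}_2\cup\mathbb{S}^{(3)}_1$, contradicting Claim~\ref{cl:messy-1}; and (b) if $w$ denotes the vertex of $f_2$ outside $f_1$, then every edge containing $w$ and disjoint from $f_1$ has a color in $\{c(f_1),c(f_2)\}$, since otherwise those three edges form a rainbow $\mathcal{M}$. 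I would also note that $n\geq 7$ gives at least three vertices outside $W$, hence at least one edge inside $V(G)\setminus W$, and that applying (a) to the tight pair $\{v_1v_2v_3,v_1v_2v_4\}$ shows any such edge is colored $1$ or $2$.

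For $|e\cap W|=3$: since $c(v_1v_2v_3)=1$ and $c(v_1v_2v_4)=2$, the edge $e$ must be $v_1v_3v_4$ or $v_2v_3v_4$, and these are interchanged by the automorphism swapping $v_1$ and $v_2$, which fixes the pair $\{v_1v_2v_3,v_1v_2v_4\}$ and its colors; so I may assume $e=v_1v_3v_4$, of color $i$. Then $\{v_1v_2v_3,v_1v_3v_4\}$, $\{v_1v_2v_4,v_1v_3v_4\}$ and $\{v_1v_2v_3,v_1v_2v_4\}$ are three rainbow tight pairs, each with union $W$, and applying (a) to all three forces every edge disjoint from $W$ to have color in $\{1,i\}\cap\{2,i\}\cap\{1,2\}=\emptyset$ (using $i\geq 3$) --- contradicting the existence of such an edge.

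For $|e\cap W|=1$: write $e=v_jxy$ with $v_j\in W$ and $x,y\notin W$, pick $z\in V(G)\setminus(W\cup\{x,y\})$, and note $c(xyz)\in\{1,2\}$, so $c(xyz)\neq i$. If $j\in\{1,2\}$, the $v_1\leftrightarrow v_2$ symmetry lets me assume $j=1$; applying (b) to $f_1=xyz$ and $f_2=v_1xy$ (so $w=v_1$) with the edges $v_1v_2v_3$ and $v_1v_2v_4$ in turn forces $c(xyz)=1$ and $c(xyz)=2$, a contradiction. If $j\in\{3,4\}$, the $v_3\leftrightarrow v_4$ symmetry (with colors $1$ and $2$ also interchanged) lets me assume $j=3$; applying (b) to $f_1=xyz$ and $f_2=v_3xy$ with the edge $v_1v_2v_3$ forces $c(xyz)=1$, and then applying (a) to the tight pair $\{xyz,v_3xy\}$ (now of colors $1$ and $i$) forces the disjoint edge $v_1v_2v_4$ to have color in $\{1,i\}$, contradicting $c(v_1v_2v_4)=2$. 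Since both extremes are impossible, $|e\cap W|=2$, as claimed.

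Most of the work is bookkeeping --- verifying the disjointness and distinctness conditions so that the exhibited configurations are genuine copies of $\mathbb{S}^{(3)}_2\cup\mathbb{S}^{(3)}_1$ or $\mathcal{M}$, and confirming that the needed vertices outside $W$ exist (this is exactly where $n\geq 7$ enters). The one step that needs genuine care, and the main obstacle, is the $|e\cap W|=3$ case: one must select precisely the triple of rainbow tight pairs whose color sets $\{1,i\}$, $\{2,i\}$, $\{1,2\}$ have empty common intersection, since a careless choice leaves a color available and the argument stalls. The symmetry reductions in the $|e\cap W|=1$ case are also worth being explicit about, since without them one faces four nearly identical subcases instead of two.
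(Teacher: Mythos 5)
Your proof is correct. Both of your preliminary facts are legitimate consequences of the hypotheses: (a) is exactly Claim~\ref{cl:messy-1} applied to the $\mathbb{S}^{(3)}_{2}$ formed by $f_1,f_2$ together with a disjoint third edge, and in (b) the third edge meets $f_2$ only in $w$ and is disjoint from $f_1$, so the three edges genuinely form a copy of $\mathcal{M}$. The case analysis then goes through, and the symmetry reductions are sound. Your route differs from the paper's in two places. For $|e\cap W|=3$, the paper shows that the auxiliary edge $v_2v_5v_6$ must inherit the color of $v_1v_3v_4$ and thereby reduces to the $|e\cap W|=1$ case, whereas you obtain a direct contradiction by intersecting the three color constraints $\{1,i\}$, $\{2,i\}$, $\{1,2\}$ arising from the three rainbow tight pairs with union $W$; this is cleaner and self-contained, avoiding the reduction between cases. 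For $|e\cap W|=1$ with $e$ through $v_3$ or $v_4$, the paper simply observes that $\{v_1v_2v_4,\,v_1v_2v_3,\,v_3xy\}$ is already a rainbow $\mathcal{M}$, so that subcase dies in one line; your two-step argument via (b) and then (a) is valid but longer than necessary. The remaining subcase ($e$ through $v_1$ or $v_2$) coincides with the paper's argument, with $xyz$ playing the role of $v_5v_6v_7$.
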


\begin{proof}
Let $X=e\cap \{v_1, v_2, v_3, v_4\}$.
Then $X\neq \emptyset$ by the above argument.
If $|X|=1$, then $X=\{v_1\}$ or $X=\{v_2\}$ for avoiding a rainbow $\mathcal{M}$.
Without loss of generality, we may assume that $e=v_1v_5v_6$.
Since $c(v_5v_6v_7)\in \{1,2\}$, one of $\{v_7v_6v_5, v_6v_5v_1, v_1v_2v_4\}$ and $\{v_7v_6v_5, v_6v_5v_1, v_1v_2v_3\}$ is a rainbow $\mathcal{M}$, a contradiction.

If $|X|=3$, then $e=v_1v_3v_4$ or $e=v_2v_3v_4$, say $e=v_1v_3v_4$.
We now consider the edge $v_2v_5v_6$.
If $c(v_2v_5v_6)\neq c(v_1v_3v_4)$, then one of $\{v_3v_1v_4, v_1v_4v_2, v_2v_5v_6\}$ and $\{v_4v_3v_1, v_3v_1v_2, v_2v_5v_6\}$ is a rainbow $\mathcal{M}$, a contradiction.
Thus $c(v_2v_5v_6)= c(v_1v_3v_4)$, but this reduces to the case $|X|=1$, which is impossible by the above argument.
Therefore, we have $|X|=2$.
\end{proof}

By Claim~\ref{cl:messy-1}, $G$ contains no rainbow $\mathbb{S}^{(3)}_{3}$, so $c(v_1v_2v_i)\in \{1,2\}$ for any $i\in \{5, 6, \ldots, n\}$.
Then we further have $c(v_3v_4v_i)\in \{1,2\}$ for any $i\in \{5, 6, \ldots, n\}$;
otherwise if $c(v_3v_4v_i)\geq 3$ for some $i\in \{5, 6, \ldots, n\}$, then for any $j\in \{5, 6, \ldots, n\}\setminus \{i\}$, at least one of $\{v_jv_1v_2, v_1v_2v_4, v_4v_iv_3\}$ and $\{v_jv_1v_2, v_1v_2v_3, v_3v_iv_4\}$ is a rainbow $\mathcal{M}$.
Combining with Claim~\ref{cl:messy-2}, we can deduce that for every $e$ with $c(e)\geq 3$, $e$ is of the form $v_iv_jv_{\ell}$ with $i\in \{1,2\}$, $j\in \{3,4\}$ and $\ell \in \{5, 6, \ldots, n\}$.
Without loss of generality, we may assume that $v_1v_3v_5$ is an edge of color 3.
By Claim~\ref{cl:messy-2}, we have $c(v_3v_5v_6)\in \{1,2\}$ and $c(v_4v_6v_7)\in \{1,2\}$.
Since $G$ contains no rainbow $\mathcal{T}$, we further have $c(v_3v_5v_6)=2$, and then $c(v_4v_6v_7)=2$.
But now $\{v_1v_3v_5, v_1v_3v_2, v_4v_6v_7\}$ is a rainbow $\mathbb{S}^{(3)}_{2}\cup \mathbb{S}^{(3)}_{1}$, contradicting Claim~\ref{cl:messy-1}.
This completes the proof of Theorem~\ref{th:messy}.
\hfill$\square$

\section{Loose path--Proof of Theorem~\ref{th:loose}}
\label{sec:loose}

In this section, we prove the structural result (Theorem~\ref{th:loose}) for the loose path $\mathcal{L}$.
In fact, we will prove a stronger form (see Theorem~\ref{th:loose+}).
We first prove the following result on rainbow $\mathcal{L}$-free edge-colorings.

\begin{lemma}\label{le:loose-1}
For $n\geq 7$, let $G$ be a rainbow $\mathcal{L}$-free edge-coloring of $K_{n}^{(3)}$.
Then $G$ contains no rainbow $C^{(3)}_3$, $S^{(3)}_3$ or $S^{(3)}_2\cup S^{(3)}_1$.
\end{lemma}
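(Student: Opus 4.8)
The plan is to argue by contradiction: suppose $G$ is a rainbow $\mathcal{L}$-free edge-colored $K_n^{(3)}$ with $n\ge 7$ that contains a rainbow copy of $C^{(3)}_3$, $S^{(3)}_3$, or $S^{(3)}_2\cup S^{(3)}_1$, and exhibit a rainbow $\mathcal{L}$ in $G$. I would handle the three configurations one at a time and in this order, so that when dealing with $S^{(3)}_3$ one may assume $G$ has no rainbow $C^{(3)}_3$, and when dealing with $S^{(3)}_2\cup S^{(3)}_1$ no rainbow $C^{(3)}_3$ or $S^{(3)}_3$. The engine of every case is the following remark: if $\{f_1,f_2\}$ is a rainbow $S^{(3)}_2$ with $f_1\cap f_2=\{w\}$ and $g\in E(G)$ satisfies $c(g)\notin\{c(f_1),c(f_2)\}$, $|g\cap(f_2\setminus\{w\})|=1$, and $g\cap f_1=\emptyset$ (or the mirror statement interchanging $f_1,f_2$), then $\{f_1,f_2,g\}$ is a rainbow $\mathcal{L}$. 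Hence every edge of this shape must use a color from $\{c(f_1),c(f_2)\}$; since the configurations span at most $7$ vertices (at most $8$ for $S^{(3)}_2\cup S^{(3)}_1$) while $n\ge 7$, there are always free vertices with which to build many such edges $g$, so these forced color restrictions accumulate until two become incompatible or a rainbow $\mathcal{L}$ appears.

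For a rainbow $C^{(3)}_3$, write it as $\{v_1v_2v_3,\,v_3v_4v_5,\,v_5v_6v_1\}$ with colors $1,2,3$ in this order, and fix $v\notin\{v_1,\dots,v_6\}$. The key structural feature is the automorphism $\rho$ of this colored configuration that rotates $v_i\mapsto v_{i+2}$ (indices mod $6$), fixes $v$, and relabels colors $1\mapsto 2\mapsto 3\mapsto 1$; every color restriction we deduce may be transported by $\rho$. Applying the remark to the three rainbow $S^{(3)}_2$'s formed by the cycle edges, and then using loose paths through the edges $v_iv_jv$ that get pinned along the way, I would obtain a short chain: $c(v_4v_6v)=1$; then $c(v_2v_3v)=3$ (hence $c(v_4v_5v)=1$ by $\rho$); then $c(v_1v_2v)=2$ (hence $c(v_3v_4v)=3$ by $\rho$). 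From the last of these, the loose path $\{v_1v_2v,\,v_3v_4v,\,v_4v_5v_6\}$ forces $c(v_4v_5v_6)\in\{2,3\}$, while the loose paths $\{v_5v_6v_1,\,v_1v_2v,\,v_2v_3v_4\}$ and $\{v_5v_6v_1,\,v_4v_5v,\,v_2v_3v_4\}$ (color patterns $3,2,\cdot$ and $3,1,\cdot$) force $c(v_2v_3v_4)=3$. But $\rho$ transports $c(v_2v_3v_4)=3$ to $c(v_4v_5v_6)=1$, contradicting $c(v_4v_5v_6)\in\{2,3\}$. So $G$ has no rainbow $C^{(3)}_3$.

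For $S^{(3)}_3$ and $S^{(3)}_2\cup S^{(3)}_1$ I would run the same type of argument, now exploiting the full symmetry group of the configuration — the permutations of its three edges, matched with the corresponding relabelings of $\{1,2,3\}$, together with the symmetries permuting the vertices inside each edge — and the already-established absence of a rainbow $C^{(3)}_3$, which forbids rainbow triples in the shape of a loose cycle and thereby strengthens the basic remark. In the $S^{(3)}_3$ case, writing the star edges as $v_0a_ib_i$ with colors $i$, this quickly forces every ``transversal'' edge $\{x_1,x_2,x_3\}$ (with $x_i\in\{a_i,b_i\}$) to use no color from $\{1,2,3\}$ — each such edge together with two of the $v_0a_ib_i$ is a loose cycle — which already contradicts $|C(G)|=3$; for $|C(G)|\ge 4$ one notes that a transversal and its vertex-complement are disjoint, so the loose path through them and $v_0a_1b_1$ would be rainbow unless the two transversals share a color, and a few further applications of the remark finish the job. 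The configuration $S^{(3)}_2\cup S^{(3)}_1$ has $8$ vertices, so the statement is vacuous for $n=7$; for $n\ge 8$ one argues within the configuration (together with any further vertices when $n\ge 9$), with the disjoint $S^{(3)}_1$-edge playing the role of a fixed third color.

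\smallskip
I expect the main difficulty to be controlling the case analysis near the boundary $n=7$ (and $n=8$ for the last configuration): there the configuration uses essentially all available vertices, so every auxiliary edge in the argument is confined to a very small vertex set, and one must verify that enough of them survive to generate all the color restrictions needed to close the contradiction. A recurring bookkeeping hazard is that the vertex triples one picks must genuinely span loose paths rather than loose cycles, which look deceptively similar but differ in exactly one intersection pattern.
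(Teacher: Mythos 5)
Your overall architecture (settle $C^{(3)}_3$ first, then use its absence when treating $S^{(3)}_3$ and $S^{(3)}_2\cup S^{(3)}_1$) is exactly the paper's, and your $C^{(3)}_3$ case is correct: each link of the chain $c(v_4v_6v)=1$, $c(v_2v_3v)=3$, $c(v_1v_2v)=2$ is indeed forced by two loose paths of the shape in your basic remark, and the final clash between $c(v_4v_5v_6)\in\{2,3\}$ and the $\rho$-transported value $c(v_4v_5v_6)=1$ is sound. (The paper instead pins the edges $v_1v_2v_6$, $v_2v_3v_4$, $v_4v_5v_6$ inside the six cycle vertices and finishes on $v_1v_3v_5$; the two routes are comparable in length.)

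The $S^{(3)}_3$ case, however, rests on a reversed implication. Writing the star edges as $v_0a_ib_i$ with colors $i$, a transversal $t=\{x_1,x_2,x_3\}$ together with $v_0a_ib_i$ and $v_0a_jb_j$ is a loose cycle whose two star edges already carry distinct colors $i\neq j$; for this cycle \emph{not} to be rainbow you need $c(t)\in\{i,j\}$, not $c(t)\notin\{1,2,3\}$ as you assert. Running over the three pairs forces $c(t)\in\{1,2\}\cap\{1,3\}\cap\{2,3\}=\emptyset$, an immediate contradiction with no case split on $|C(G)|$ whatsoever --- this is precisely the paper's one-line argument using the single transversal $v_2v_4v_6$. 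As written, your intermediate conclusion is a false deduction, and the whole $|C(G)|\ge 4$ branch you erect on it (complementary transversals sharing a color, ``a few further applications of the remark'') is both unnecessary and left unfinished. Separately, for $S^{(3)}_2\cup S^{(3)}_1$ you give no argument at all beyond ``one argues within the configuration''; the missing step is short (with $\{v_1v_2v_3,v_1v_4v_5,v_6v_7v_8\}$ colored $1,2,3$, the loose cycle through $v_3v_5v_6$ and the two star edges forces $c(v_3v_5v_6)\in\{1,2\}$, and either value completes a rainbow $\mathcal{L}$ ending at the disjoint edge $v_6v_7v_8$), but it is a step you still have to supply rather than one that follows from your remark by routine symmetry.
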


\begin{proof}
We first show that $G$ contains no rainbow $C^{(3)}_3$.
Suppose for a contradiction that $\{v_1v_2v_3, v_3v_4v_5, v_5v_6v_1\}$ is a rainbow $C^{(3)}_3$, where $c(v_1v_2v_3)=1$, $c(v_3v_4v_5)=2$ and $c(v_5v_6v_1)=3$.
Let $v_7\in V(G)\setminus \{v_1, v_2, \ldots, v_6\}$.
In order to avoid a rainbow $\mathcal{L}$, we have $c(v_2v_7v_4)=3$, $c(v_4v_7v_6)=1$ and $c(v_6v_7v_2)=2$.
Then $c(v_1v_2v_6)=2$, since otherwise at least one of $\{v_6v_1v_2, v_2v_7v_4, v_4v_3v_5\}$ and $\{v_2v_1v_6, v_6v_7v_4, v_4v_3v_5\}$ is a rainbow $\mathcal{L}$.
By symmetry, we also have $c(v_3v_2v_4)=3$ and $c(v_5v_4v_6)=1$.
Then no matter what color is assigned on the edge $v_1v_3v_5$, at least one of $\{v_1v_5v_3, v_3v_4v_2, v_2v_7v_6\}$, $\{v_1v_3v_5, v_5v_6v_4, v_4v_7v_2\}$ and $\{v_3v_5v_1, v_1v_2v_6, v_6v_7v_4\}$ is a rainbow $\mathcal{L}$, a contradiction.
Hence, $G$ contains no rainbow $C^{(3)}_3$.

We next show that $G$ contains no rainbow $S^{(3)}_3$.
Suppose that $G$ contains a rainbow $S^{(3)}_3$, say $\{v_1v_2v_3, v_1v_4v_5, v_1v_6v_7\}$.
Then no matter what color is assigned on $v_2v_4v_6$, there is a rainbow $C^{(3)}_3$, which is impossible by the above arguments.
Hence, $G$ contains no rainbow $S^{(3)}_3$.

Finally, we show that $G$ contains no rainbow $S^{(3)}_2\cup S^{(3)}_1$.
This is trivial when $n=7$, so we may assume that $n\geq 8$.
Suppose for a contradiction that $\{v_1v_2v_3, v_1v_4v_5, v_6v_7v_8\}$ is a rainbow $S^{(3)}_2\cup S^{(3)}_1$, where $c(v_1v_2v_3)=1$, $v(v_1v_4v_5)=2$ and $c(v_6v_7v_8)=3$.
Since $G$ contains no rainbow $C^{(3)}_3$, we have $c(v_3v_6v_5)\in \{1,2\}$.
Then at least one of $\{v_1v_4v_5, v_5v_3v_6, v_6v_7v_8\}$ and $\{v_1v_2v_3, v_3v_5v_6, v_6v_7v_8\}$ is a rainbow $\mathcal{L}$, a contradiction.
Hence, $G$ contains no rainbow $S^{(3)}_2\cup S^{(3)}_1$.
\end{proof}

Applying our structural result (Theorem~\ref{th:messy}) for the messy path $\mathcal{T}$, we can prove the following result, which already improves Theorem~\ref{th:Liu-stru}.

\begin{lemma}\label{le:loose-2}
For any integer $n\geq 7$, let $G$ be a rainbow $\mathcal{L}$-free edge-colored $K^{(3)}_n$ with $|C(G)|\geq 3$.
Then there exist two vertices $u, v$ such that $G- \{u,v\}$ is monochromatic.
\end{lemma}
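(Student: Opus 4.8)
The plan is to bootstrap off Theorem~\ref{th:messy}. Since $n\ge 7$ and $|C(G)|\ge 3$, the contrapositive of Theorem~\ref{th:messy} tells us that $G$ is \emph{not} rainbow $\mathcal{M}$-free, so we may fix a rainbow copy of $\mathcal{M}$ on vertices $w_1,\dots,w_6$ with edges $e_1=w_1w_2w_3$, $e_2=w_2w_3w_4$, $e_3=w_4w_5w_6$ and $c(e_i)=i$. The two vertices to be deleted will live inside this copy, and the core task is to show that all but one ``dominant'' colour is carried only by edges passing through a fixed pair among $w_1,\dots,w_6$.

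\textbf{Step 1 (crossing edges see only two colours).} First I would exploit that $e_1$ and $e_3$ are disjoint with colours $1$ and $3$: for every $a\in e_1$, $b\in e_3$ and $x\notin\{w_1,\dots,w_6\}$ the triple $\{e_1,\{a,b,x\},e_3\}$ is a loose path of length $3$, so rainbow $\mathcal{L}$-freeness forces $c(\{a,b,x\})\in\{1,3\}$. Whenever one of these edges actually receives colour $1$ (resp.\ $3$), it serves as a new disjoint partner for $e_3$ (resp.\ $e_1$) and the argument can be iterated. The upshot is that every edge whose colour lies outside $\{1,3\}$ is essentially disjoint from one of the two triples $\{w_1,w_2,w_3\}$, $\{w_4,w_5,w_6\}$: it meets that triple in at most one vertex and has no third vertex to spare.

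\textbf{Step 2 (minority colours cannot scatter).} Next I would take an edge $f$ with $c(f)\notin\{1,3\}$; by Step~1 it avoids, say, $e_1$ in all but at most one vertex. If $f$ is disjoint from $e_1$, then $f$ together with $e_1$ and a suitable colour-$3$ edge near $\{w_1,\dots,w_6\}$ (either $e_3$ itself, or an edge produced in Step~1) gives a rainbow $S^{(3)}_2\cup S^{(3)}_1$, which Lemma~\ref{le:loose-1} forbids; if $f$ meets $e_1$ in exactly one vertex, the analogous construction yields a rainbow $C^{(3)}_3$ or $S^{(3)}_3$, again forbidden by Lemma~\ref{le:loose-1}. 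Pushing this through every admissible position of $f$ relative to $w_1,\dots,w_6$ should force a single pair $\{u,v\}\subseteq\{w_1,\dots,w_6\}$ such that every edge whose colour is not the dominant colour contains both $u$ and $v$; hence $G-\{u,v\}$ is monochromatic. For $n=7$ (and, to a lesser extent, $n=8$) there are too few vertices outside $\{w_1,\dots,w_6\}$ to run these extension moves, and a short direct inspection, aided by Observation~\ref{obs:S2}, would handle these boundary cases.

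\textbf{Main obstacle.} The delicate part is Step~2: upgrading ``no colour outside $\{1,3\}$ escapes the two sides of the $\mathcal{M}$-copy'' to ``all non-dominant edges pass through one fixed pair''. A non-dominant edge can sit in several genuinely different positions with respect to $w_1,\dots,w_6$ (contained in them; meeting one side in two vertices; meeting one side in one vertex with its third vertex outside; and so on), and for each position one must exhibit a forbidden rainbow $\mathcal{L}$, $C^{(3)}_3$, $S^{(3)}_3$, or $S^{(3)}_2\cup S^{(3)}_1$. Moreover, one must not presuppose which colour is dominant, and must separately rule out the scenario in which two distinct vertex pairs are each responsible for carrying some minority colour.
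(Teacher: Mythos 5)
Your opening move coincides with the paper's: invoke Theorem~\ref{th:messy} to extract a rainbow $\mathcal{M}=\{e_1,e_2,e_3\}$ with $e_1\cap e_3=\emptyset$, and then use rainbow $\mathcal{L}$-freeness together with the configurations forbidden by Lemma~\ref{le:loose-1} to control the colours of all other edges. Step~1 is sound as far as it goes (each triple $\{a,b,x\}$ with $a\in e_1$, $b\in e_3$, $x$ outside is the middle edge of a loose path with ends $e_1,e_3$, so its colour lies in $\{1,3\}$), although the ``upshot'' you draw from it is already contradicted by $e_2$ itself, which has colour $2\notin\{1,3\}$ and meets $\{w_1,w_2,w_3\}$ in two vertices; and the worry about $n=7,8$ is unnecessary, since in the paper's argument the corresponding claims simply become vacuous for small $n$ rather than failing.

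The genuine gap is in Step~2, and it is not only that the case analysis is deferred: the target you set for it is false. You aim to exhibit a pair $\{u,v\}$ such that every edge of non-dominant colour contains \emph{both} $u$ and $v$. Consider the colouring of $K^{(3)}_n$ in which $c(v_1v_2v_3)=2$, $c(v_1v_2v_4)=3$, $c(v_1v_3v_4)=4$, $c(v_2v_3v_4)=5$, and every other edge has colour $1$. Any two of the four special edges share two vertices, so no loose path contains more than one of them; hence this colouring is rainbow $\mathcal{L}$-free with $|C(G)|=5$, yet no pair of vertices lies in all four special edges. The lemma still holds for it (deleting any two of $v_1,\dots,v_4$ works), which shows that the invariant you must establish is the weaker one: every non-dominant edge \emph{meets} a fixed pair, i.e.\ contains $u$ \emph{or} $v$. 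This is exactly what the paper proves, taking $\{u,v\}=e_1\cap e_2$ (the two vertices shared by the first two edges of the rainbow $\mathcal{M}$) and the dominant colour to be $c(e_3)$: it first shows that all edges inside $U=V(G)\setminus(e_1\cup e_2\cup e_3)$, then all edges inside $U\cup(e_3\setminus e_2)$, are of colour $c(e_3)$, and finally that any edge of another colour must hit $e_1\cap e_2$. That chain of deductions is the substance of the lemma, and it is precisely the part your sketch leaves open --- and, as stated, aims at the wrong conclusion.
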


\begin{proof}
Since $|C(G)|\geq 3$, $G$ contains a rainbow $\mathcal{M}$ by Theorem~\ref{th:messy}.
Without loss of generality, we may assume that $\{v_1v_2v_3, v_2v_3v_4, v_4v_5v_6\}$ is a rainbow $\mathcal{M}$, where $c(v_1v_2v_3)=1$, $v(v_2v_3v_4)=2$ and $c(v_4v_5v_6)=3$.
Let $U=V(G)\setminus \{v_1, v_2, \ldots, v_6\}$ and $W=U\cup \{v_5, v_6\}$.

\begin{claim}\label{cl:loose-le-1}
If $|U|\geq 3$, then $C(U)=\{3\}$.
\end{claim}

\begin{proof}
By Lemma~\ref{le:loose-1}, $G$ contains no rainbow $S^{(3)}_2\cup S^{(3)}_1$, so $C(U)\subseteq \{2,3\}$ when $|U|\geq 3$.
Suppose that $c(v_iv_jv_{\ell})=2$ for some $v_i, v_j, v_{\ell}\in U$.
Then no matter what color is assigned on $v_3v_4v_i$, at least one of $\{v_jv_{\ell}v_i, v_iv_3v_4, v_4v_5v_6\}$, $\{v_1v_2v_3, v_3v_iv_4, v_4v_5v_6\}$ and $\{v_1v_2v_3, v_3v_4v_i, v_iv_jv_{\ell}\}$ is a rainbow $\mathcal{L}$, a contradiction.
\end{proof}

\begin{claim}\label{cl:loose-le-2}
$C(W)=\{3\}$.
\end{claim}

\begin{proof}
By Claim~\ref{cl:loose-le-1}, it suffices to consider edges containing $v_5$ or $v_6$.
By symmetry, we only consider $v_5v_iv_j$ (when $|U|\geq 2$) and $v_5v_6v_i$, where $v_i, v_j\in U$.
In order to avoid a rainbow $\mathcal{L}$, we have $c(v_5v_iv_j)\in \{2,3\}$.
If $c(v_5v_iv_j)=2$, then $\{v_5v_iv_j, v_5v_4v_6, v_1v_2v_3\}$ is a rainbow $S^{(3)}_2\cup S^{(3)}_1$, contradicting Lemma~\ref{le:loose-1}.
Thus $c(v_5v_iv_j)=3$.
Next, we consider $v_5v_6v_i$.
Note that $c(v_1v_iv_4)=3$ for avoiding a rainbow $\mathcal{L}$ or $S^{(3)}_3$ (note that $G$ contains no $S^{(3)}_3$ by Lemma~\ref{le:loose-1}).
Then $c(v_5v_6v_i)=3$, since otherwise at least one of $\{v_5v_6v_i, v_iv_1v_4, v_4v_3v_2\}$ and $\{v_5v_6v_i, v_iv_4v_1, v_1v_2v_3\}$ is a rainbow $\mathcal{L}$, a contradiction.
\end{proof}

Finally, we show that every edge of a color in $\left[C(G)\right]\setminus \{3\}$ must contain at least one of $v_2$ and $v_3$.
This implies that $G-\{v_2, v_3\}$ is a monochromatic subgraph of color 3, and thus completes the proof.
Suppose that there exists an edge $e$ with $c(e)\neq 3$ and $e\cap \{v_2, v_3\}=\emptyset$.
By Claim~\ref{cl:loose-le-2}, we have $e\cap \{v_1, v_4\}\neq \emptyset$.
By symmetry, it suffices to consider $v_1v_4v_i$ and $v_1v_iv_j$, where $v_i, v_j\in W$.
Since $C(W)=\{3\}$ and $|W|=n-4\geq 3$, we have $c(v_1v_4v_i)=3$ for all $v_i\in W$ for avoiding a rainbow $\mathcal{L}$.
Then we further have $c(v_1v_iv_j)=3$.
Indeed, if $c(v_1v_iv_j)\notin \{2,3\}$, then $\{v_2v_3v_4, v_4v_{\ell}v_1, v_1v_iv_j\}$ is a rainbow $\mathcal{L}$ for $v_{\ell}\in W\setminus \{v_i, v_j\}$,
and if $c(v_1v_iv_j)=2$, then $\{v_1v_2v_3, v_1v_{\ell}v_4, v_1v_iv_j\}$ is a rainbow $S^{(3)}_3$ which contradicts Lemma~\ref{le:loose-1}.
Hence, we have $c(v_1v_iv_j)=3$ for any $v_i, v_j\in W$.
This implies that $G-\{v_2, v_3\}$ is monochromatic, and thus completes the proof of Lemma~\ref{le:loose-2}.
\end{proof}

We next state and prove two technical lemmas.

\begin{lemma}\label{le:loose-3}
For any integer $n\geq 7$, let $G$ be a rainbow $\mathcal{L}$-free edge-colored $K^{(3)}_n$ with $|C(G)|\geq 3$.
Suppose that there exists a subset $U\subseteq V(G)$ with $|U|\leq 2$ such that $G- U$ is monochromatic, say $C(G- U)=\{1\}$.
Then for any two edges $e_1, e_2 \in E(G)$ with $c(e_1)\geq 2$, $c(e_2)\geq 2$ and $c(e_1)\neq c(e_2)$, we have $|e_1\cap e_2|=2$.
\end{lemma}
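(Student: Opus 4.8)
The plan is to argue by contradiction: suppose $e_1, e_2$ are edges with $c(e_1), c(e_2)\geq 2$, $c(e_1)\neq c(e_2)$, and $|e_1\cap e_2|\neq 2$. Since $G-U$ is monochromatic of color $1$, every non-color-$1$ edge meets $U$; in particular both $e_1$ and $e_2$ intersect $U$. As $|U|\leq 2$, this already forces a lot of structure. I would first handle the case $|e_1\cap e_2|\leq 1$ by producing a rainbow $C^{(3)}_3$, $S^{(3)}_3$, or $S^{(3)}_2\cup S^{(3)}_1$, all of which are forbidden by Lemma~\ref{le:loose-1}, or else directly a rainbow $\mathcal{L}$. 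The point is that with $|e_1\cap e_2|\in\{0,1\}$ the two edges $e_1,e_2$ together with a suitably chosen color-$1$ edge disjoint from them (which exists because $n\geq 7$ and $G-U$ is a large monochromatic clique) should form one of these forbidden rainbow configurations, since $c(e_1),c(e_2),1$ are three distinct colors.

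More concretely: if $|e_1\cap e_2|=0$, then since each of $e_1,e_2$ must contain a vertex of $U$ and $|U|\le 2$, we get $|U|=2$, say $U=\{u,v\}$, with $u\in e_1$, $v\in e_2$ (up to relabeling). Now $V(G)\setminus(e_1\cup e_2\cup U)$ is nonempty for $n\geq 7$, and one can pick vertices outside $U$ to build a color-$1$ edge $f$ linking or avoiding $e_1,e_2$ appropriately; depending on whether $f$ can be chosen disjoint from both or sharing one vertex with each, $\{e_1,e_2,f\}$ becomes a rainbow $S^{(3)}_2\cup S^{(3)}_1$ or a rainbow $C^{(3)}_3$, contradicting Lemma~\ref{le:loose-1}. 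The case $|e_1\cap e_2|=1$ is similar but needs a little care about whether the common vertex lies in $U$; if it does not, then together with the color-$1$ structure we again reach a rainbow $S^{(3)}_3$ or $\mathcal{L}$, and if it does, we use that each $e_i$ has another vertex in $U$ — impossible unless the shared vertex and those other vertices collapse, again giving a short rainbow configuration.

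Finally, the remaining possibility is $|e_1\cap e_2|=3$, i.e. $e_1=e_2$, which is excluded since $c(e_1)\neq c(e_2)$. Thus the only surviving case is $|e_1\cap e_2|=2$, as claimed. The main obstacle I anticipate is the careful case analysis on \emph{where} the vertices of $U$ sit relative to $e_1,e_2$: because $|U|\le 2$ is so small, several subcases force $|e_1\cup e_2\cup U|$ to be tiny, and in those degenerate subcases one must exhibit the forbidden rainbow subgraph by hand rather than by a uniform argument. I expect to lean heavily on the fact that $n\geq 7$ guarantees enough color-$1$ edges of the right "shape" (disjoint from, or touching in exactly one vertex, a prescribed pair of small edge sets), and on Lemma~\ref{le:loose-1} to rule out rainbow $C^{(3)}_3$, $S^{(3)}_3$ and $S^{(3)}_2\cup S^{(3)}_1$.
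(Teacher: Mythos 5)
Your overall strategy (split on $|e_1\cap e_2|\in\{0,1\}$, use the monochromatic core $G-U$ to manufacture a color-$1$ edge, and invoke Lemma~\ref{le:loose-1} or rainbow-$\mathcal{L}$-freeness) is the same as the paper's, but two of your case dispositions do not actually work as written. In the case $|e_1\cap e_2|=0$, an $f$ of color $1$ \emph{disjoint} from both $e_1$ and $e_2$ gives a rainbow $3$-matching, which is not $S^{(3)}_2\cup S^{(3)}_1$ and is not forbidden by Lemma~\ref{le:loose-1}; and an $f$ meeting each $e_i$ in one vertex gives a rainbow loose path $\mathcal{L}$, not a $C^{(3)}_3$. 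The second option is the one that closes the case (and it is always available: each $e_i$ has two vertices outside $U$, and $|W\setminus(e_1\cup e_2)|\geq 1$ since $n\geq 7$), so this is only a misidentification, but you should state the correct configuration.

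The genuine gap is the subcase $|e_1\cap e_2|=\{x\}$ with $x\in U$. Your justification --- ``each $e_i$ has another vertex in $U$ --- impossible unless \ldots collapse'' --- is based on a false premise: the single shared vertex $x\in U$ already explains why both $e_1$ and $e_2$ may carry colors $\geq 2$, so neither edge needs a second vertex of $U$, and nothing collapses. This configuration is perfectly consistent with all the constraints you have used so far, and ruling it out is the substantive content of the lemma. The paper's argument here is a two-step one: take $y,z\in V(G)\setminus(e_1\cup e_2)$ and use the absence of a rainbow $S^{(3)}_3$ to force $c(xyz)\in\{c(e_1),c(e_2)\}$ (note you cannot simply assert $c(xyz)=1$, since $y$ or $z$ might be the other vertex of $U$); then, choosing $a_1\in e_1\setminus(\{x\}\cup U)$, $a_2\in e_2\setminus(\{x\}\cup U)$ and $y\notin U$, the edge $a_1a_2y$ lies in $W$ and has color $1$, and $\{e_2,\,a_2a_1y,\,yzx\}$ is a rainbow $C^{(3)}_3$, contradicting Lemma~\ref{le:loose-1}. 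Without some argument of this kind your proof does not go through.
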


\begin{proof}
By the assumption, no matter $|U|=1$ or $|U|=2$, we can find two vertices $u,v\in V(G)$ such that $C(G-\{u,v\})=\{1\}$.
Since $n\geq 7$, we have $|V(G)\setminus \{u,v\}|\geq 5$.
Since $c(e_1)\geq 2$ and $c(e_2)\geq 2$, we have $e_1\cap \{u,v\}\neq \emptyset$ and $e_2\cap \{u,v\}\neq \emptyset$.
If $e_1\cap e_2 =\emptyset$, then we may assume that $e_1=uv_1v_2$ and $e_2=vv_3v_4$, where $v_1, v_2, v_3, v_4 \in V(G)\setminus \{u,v\}$.
Since $C(G- \{u,v\})=\{1\}$ and $c(e_1)\neq c(e_2)$, we have that $\{uv_1v_2, v_2v_5v_3, v_3v_4v\}$ is a rainbow $\mathcal{L}$ for some $v_5\in V(G)\setminus \{u, v, v_1, v_2, v_3, v_4\}$, a contradiction.

If $|e_1\cap e_2| =1$, then $\{e_1, e_2\}$ is a rainbow $S^{(3)}_2$.
Denote the vertex in $e_1\cap e_2$ by $x$, and let $y,z$ be two distinct vertices in $V(G)\setminus (e_1\cup e_2)$.
By Lemma~\ref{le:loose-1}, $G$ contains no rainbow $S^{(3)}_3$, so $c(xyz)\in \{c(e_1), c(e_2)\}$, say $c(xyz)=c(e_1)$.
Then $c(xyz)\geq 2$, and thus $\{x,y,z\}\cap \{u,v\}\neq \emptyset$.
Combining with $e_1\cap \{u,v\}\neq \emptyset$, $e_2\cap \{u,v\}\neq \emptyset$ and $\{x,y,z\}\cap \{u,v\}\neq \emptyset$, we must have $x\in \{u,v\}$.
Now $(e_1\setminus \{x\})\setminus \{u,v\}\neq \emptyset$, $(e_2\setminus \{x\})\setminus \{u,v\}\neq \emptyset$ and $\{y,z\}\setminus \{u,v\}\neq \emptyset$, say $a_1\in (e_1\setminus \{x\})\setminus \{u,v\}$, $a_2\in (e_2\setminus \{x\})\setminus \{u,v\}$ and $y\in \{y,z\}\setminus \{u,v\}$.
Then $a_1, a_2, y\in V(G)\setminus \{u,v\}$, so $c(a_1a_2y)=1$.
Now $\{e_2, a_2a_1y, yzx\}$ is a rainbow $C^{(3)}_3$, contradicting Lemma~\ref{le:loose-1}.
Therefore, we have $|e_1\cap e_2|=2$.
\end{proof}

\begin{lemma}\label{le:loose-4}
For any integer $n\geq 7$, let $G$ be a rainbow $\mathcal{L}$-free edge-colored $K^{(3)}_n$ with $|C(G)|\geq 3$.
Suppose that Theorem~\ref{th:loose}~{\rm(i)} and {\rm(ii)} do not hold, and there exist two vertices $u, v$ such that $G- \{u,v\}$ is monochromatic, say $C(G- \{u,v\})=\{1\}$.
Then for any two edges $e_1, e_2 \in E(G)$ with $c(e_1)\geq 2$, $c(e_2)\geq 2$ and $c(e_1)\neq c(e_2)$, at least one of $e_1$ and $e_2$ contains both $u$ and $v$.
\end{lemma}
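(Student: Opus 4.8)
Lemma~\ref{le:loose-3} already gives $|e_1\cap e_2|=2$, so it remains only to show that one of $e_1,e_2$ contains $\{u,v\}$. Suppose not. Since $c(e_1),c(e_2)\ge 2$ and $C(G-\{u,v\})=\{1\}$, each of $e_1,e_2$ meets $\{u,v\}$, and by assumption meets it in exactly one vertex; combined with $|e_1\cap e_2|=2$, a short inspection shows that, after possibly interchanging $u$ and $v$, we are in one of two configurations (write $W\colonequals V(G)\setminus\{u,v\}$, so $|W|=n-2\ge 5$):
\begin{itemize}
\item[{\rm (A)}] $e_1=\{u,b,x\}$, $e_2=\{u,b,y\}$ with $b,x,y\in W$ distinct; or
\item[{\rm (B)}] $e_1=\{u,a,b\}$, $e_2=\{v,a,b\}$ with $a,b\in W$ distinct.
\end{itemize}
Put $\alpha\colonequals c(e_1)$ and $\beta\colonequals c(e_2)$.

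In each configuration the plan is to first run through all edges $f$ of $G$ and apply Lemma~\ref{le:loose-3} to the pairs $(e_1,f)$ and $(e_2,f)$ (with Lemma~\ref{le:loose-1} used along the way to discard stray rainbow $C^{(3)}_3$, $S^{(3)}_3$ or $S^{(3)}_2\cup S^{(3)}_1$). This pins down the colouring almost completely: every edge gets colour $1$ apart from $e_1$, $e_2$ and a short explicit list of exceptional edges, each of which can carry only a colour in $\{\alpha,\beta\}$, except that in configuration (B) the two edges of the form $\{u,v,a\}$, $\{u,v,b\}$ may also carry a new colour. In particular every non-colour-$1$ edge then passes through one of a few fixed pairs or vertices (e.g.\ through $\{u,b\}$ in (A), or through $a$ or $b$ in (B)).

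To conclude I would apply the following dichotomy. If all non-colour-$1$ edges pass through a single vertex $w$ --- which is forced, e.g., in (A) when $\{v,b,x\}$ and $\{v,b,y\}$ are both colour $1$ (then $w=u$), in (A) when $\{v,b,y\}$ has colour $\alpha$ (then $w=b$), or in (B) when one of the ``side classes'' of exceptional edges is empty (then $w\in\{a,b\}$) --- then $G-w$ is monochromatic, so Theorem~\ref{th:loose}(i) holds, contradicting the hypothesis. Otherwise there are non-colour-$1$ edges on ``both sides'', and I split once more. Either some colour-$\alpha$ edge $f_1$ and some colour-$\beta$ edge $f_3$ among the exceptional edges satisfy $|f_1\cap f_3|\le 1$ in a way that permits extension; then --- this is the only place the hypothesis $n\ge 7$ is used, since $|W|\ge 5$ guarantees enough colour-$1$ triples inside $W$ with room to avoid all used vertices --- one routes a colour-$1$ edge $f_2\subseteq W$ between them so that $\{f_1,f_2,f_3\}$ is a loose path of colours $\alpha,1,\beta$, a rainbow $\mathcal{L}$, contradicting rainbow $\mathcal{L}$-freeness. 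Or no such pair exists, in which case the colour constraints force every non-colour-$1$ edge other than $e_1$ (respectively $e_2$) to meet $e_1$ (respectively $e_2$) in exactly two vertices, i.e.\ Theorem~\ref{th:loose}(ii) holds with $i=1$, again a contradiction.

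I expect the main obstacle to be purely organisational: one must enumerate the finitely many ``non-colour-$1$ patterns'' allowed by Lemma~\ref{le:loose-3} in configurations (A) and (B) --- including the extra subcases in (B) where $\{u,v,a\}$ or $\{u,v,b\}$ carries a new colour --- and check that each pattern either visibly matches Theorem~\ref{th:loose}(i) or (ii) or admits one of the rainbow loose paths above. Each single check is short (a one-line exhibition of a rainbow $\mathcal{L}$, or a one-line verification that all relevant edges meet a fixed edge in two vertices or pass through a fixed vertex), but arranging the case split so that no pattern is missed is where the care lies.
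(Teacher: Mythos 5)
Your reduction to configurations (A) and (B) is exactly the case split the paper uses (its internal claim treating $|(e_1\cup e_2)\cap\{u,v\}|=1$ is your (A), and the remaining case is your (B)), and the tools you name --- Lemmas~\ref{le:loose-1} and \ref{le:loose-3} together with the negations of Theorem~\ref{th:loose}(i) and (ii) --- are the right ones. However, the concluding dichotomy you propose does not close the argument. First, the branch in which you would exhibit a rainbow $\mathcal{L}$ by routing a colour-$1$ edge between a colour-$\alpha$ edge $f_1$ and a colour-$\beta$ edge $f_3$ with $|f_1\cap f_3|\le 1$ is vacuous: Lemma~\ref{le:loose-3} already forces $|f_1\cap f_3|=2$ for \emph{every} pair of differently-coloured non-colour-$1$ edges, so the precondition of that branch can never be met and no new contradiction is available from it. Second, the complementary claim --- that if no such pair exists then every non-colour-$1$ edge other than $e_1$ (or $e_2$) meets $e_1$ (or $e_2$) in two vertices, so Theorem~\ref{th:loose}(ii) holds with $i=1$ --- is a non sequitur: Lemma~\ref{le:loose-3} is silent about two edges of the \emph{same} colour, whereas Theorem~\ref{th:loose}(ii) requires the intersection condition for all edges $f$ with $c(f)\neq i$, including those of colour $c(e)$. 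For instance, in (A) with $e_1=ubx$ of colour $\alpha$ and $e_2=uby$ of colour $\beta$, an extra edge $vby$ of colour $\alpha$ is compatible with every constraint Lemma~\ref{le:loose-3} imposes yet meets $e_1$ in only one vertex; such same-coloured edges are precisely what makes the lemma nontrivial, and your plan has no mechanism for them. (A smaller slip: in (A) the edges through $\{u,b\}$, and the edge $uxy$, can a priori carry a colour outside $\{\alpha,\beta\}$, so third colours are not confined to configuration (B) as you assert.)

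The paper supplies the missing mechanism: it uses the failure of (i) to \emph{produce} a non-colour-$1$ edge avoiding a prescribed vertex, pins that edge down with Lemma~\ref{le:loose-3}, then uses the failure of (ii) to produce a further non-colour-$1$ edge meeting $e_2$ in at most one vertex, and finally plays the forced edges off against one another until two \emph{differently}-coloured non-colour-$1$ edges intersect in at most one vertex, contradicting Lemma~\ref{le:loose-3}. Your sketch invokes the negations of (i) and (ii) only passively at the very end, and defers the entire case enumeration as ``organisational''; but that enumeration is where the proof lives, and the organising principle you give for it is the part that is broken. As written, the proposal is a plausible plan with a gap at its decisive step rather than a proof.
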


\begin{proof}
Let $V(G)=\{u, v\}\cup U$, where $U=\{v_1, v_2, \ldots, v_{n-2}\}$.
Since $n\geq 7$, we have $|U|\geq 5$.
By the assumption, we have $e_1\cap \{u, v\}\neq \emptyset$ and $e_2\cap \{u, v\}\neq \emptyset$.
We first show that $e_1\cup e_2$ contains both $u$ and $v$.

\begin{claim}\label{cl:loose-le-4}
For any two edges $e_1, e_2 \in E(G)$ with $c(e_1)\geq 2$, $c(e_2)\geq 2$ and $c(e_1)\neq c(e_2)$, we have $\{u,v\}\subseteq e_1\cup e_2$.
\end{claim}

\begin{proof}
Since $e_1\cap \{u, v\}\neq \emptyset$ and $e_2\cap \{u, v\}\neq \emptyset$, we have $|(e_1\cup e_2)\cap \{u, v\}|\geq 1$.
Suppose that $|(e_1\cup e_2)\cap \{u, v\}|=1$.
Combining with $|e_1\cap e_2|=2$ (which follows from Lemma~\ref{le:loose-3}), we may assume that $e_1=uv_1v_2$, $e_2=uv_1v_3$, $c(e_1)=2$ and $c(e_2)=3$.
Since Theorem~\ref{th:loose}~$($i$)$ does not hold, there exists an edge $f\in E(G-u)$ with $c(f)\neq 1$.
Then $v\in f$, and by Lemma~\ref{le:loose-3}, we have that either $c(f)=2$ and $f=vv_1v_3$ or $c(f)=3$ and $f=vv_1v_2$.
Without loss of generality, we may assume that $c(f)=2$ and $f=vv_1v_3$.

By Lemma~\ref{le:loose-3}, every edge with a color in $\left[|C(G)|\right]\setminus \{1,3\}$ must contain two vertices of $e_2$,
and every edge of color 3 must contain two vertices of $e_1$ and two vertices of $f$.
Combining with $C(G- \{u,v\})=\{1\}$, only edges $v_1uv$ and $v_1vv_2$ can be of color 3 in $E(G)\setminus \{e_2\}$.
Moreover, since Theorem~\ref{th:loose}~$($ii$)$ does not hold, there exists an edge with a color in $\left[|C(G)|\right]\setminus \{1\}$ that contains at most one vertex of $e_2$.
From the above arguments, such an edge can only be $v_1vv_2$, and the edge $v_1vv_2$ must be of color 3.
Furthermore, since Theorem~\ref{th:loose}~$($i$)$ does not hold, there exists an edge $g\in E(G-v_1)$ with $c(g)\neq 1$.
In particular, we have $g\notin \{v_1uv, v_1vv_2\}$.
Combining with the above arguments, we have $c(g)\neq 3$ and $g\cap e_2=\{u, v_3\}$.
Now $|g\cap \{v_1, v, v_2\}|\leq 1$.
Since $c(v_1vv_2)=3$ and $c(g)\notin \{1,3\}$, we can derive a contradiction by Lemma~\ref{le:loose-3}.
The proof of Claim~\ref{cl:loose-le-4} is complete.
\end{proof}

We now show that at least one of $e_1$ and $e_2$ contains both $u$ and $v$.
For a contradiction, suppose that $|e_1\cap \{u,v\}|=1$ and $|e_2\cap \{u,v\}|=1$.
By Claim~\ref{cl:loose-le-4}, we may assume that $u\in e_1$ and $v\in e_2$.
By Lemma~\ref{le:loose-3}, we have $|e_1\cap e_2|=2$, say $e_1=uv_1v_2$ and $e_2=vv_1v_2$.
Moreover, we may further assume that $c(e_1)=2$ and $c(e_2)=3$ without loss of generality.

Since Theorem~\ref{th:loose}~$($ii$)$ does not hold, there exists an edge $f$ with $c(f)\neq 1$ and $f\setminus \{u, v, v_1, v_2\}\neq \emptyset$.
Since $c(f)\neq 1$, we have $f\cap \{u, v\}\neq \emptyset$, say $u\in f$.
By Lemma~\ref{le:loose-3}, at least one of $|f\cap e_1|=2$ and $|f\cap e_2|=2$ holds.
Thus $f\cap \{v_1, v_2\}\neq \emptyset$, say $v_1\in f$.
Now $f\cap \{u, v, v_1, v_2\}=\{u, v_1\}$.
In particular, $|f\cap e_2|=1$.
Thus $c(f)=c(e_2)=3$ by Lemma~\ref{le:loose-3}.
But then $c(f)\neq c(e_1)$ and $(f\cup e_1)\cap \{u,v\}=\{u\}$, contradicting Claim~\ref{cl:loose-le-4}.
This completes the proof of Lemma~\ref{le:loose-4}.
\end{proof}

Now we have all ingredients to present our proof of Theorem~\ref{th:loose}.

\vspace{0.3cm}
\noindent{\bf Proof of Theorem~\ref{th:loose}.}~
Let $G$ be a rainbow $\mathcal{L}$-free edge-colored $K^{(3)}_n$ with $|C(G)|\geq 3$, and suppose for a contradiction that Theorem~\ref{th:loose}~$($i$)$ and $($ii$)$ do not hold.
By Lemma~\ref{le:loose-2}, there exist two vertices $u, v$ such that $G- \{u,v\}$ is monochromatic, say $C(G-\{u,v\})=\{1\}$.
Since (i) does not hold, there exists an edge $e\in E(G-u)$ with $c(e)\neq 1$.
Then $v\in e$, say $e=vv_1v_2$ and $c(e)=2$.
Let $f\in E(G)$ be an edge of color 3.
By Lemmas~\ref{le:loose-3} and \ref{le:loose-4}, we have $|f\cap e|=2$ and $\{u,v\}\subseteq f$.
Thus $f$ is $uvv_1$ or $uvv_2$, say $f=uvv_1$.
This also implies that every edge of color 3 must contain at least two vertices (i.e., $u, v$) of $f$.
By Lemma~\ref{le:loose-3}, we can can also deduce that every edge $g\in E(G)$ with $c(g)\notin \{1,3\}$ satisfies $|g\cap f|=2$.
Therefore, every edge $h\in E(G)\setminus \{f\}$ with $c(h)\neq 1$ satisfies $|h\cap f|=2$, so Theorem~\ref{th:loose}~(ii) holds (here we take $f$ to be the edge $e$ and color $1$ to be the color $i$ in the statement of Theorem~\ref{th:loose}~(ii)).
This contradiction completes the proof of Theorem~\ref{th:loose}.
\hfill$\square$
\vspace{0.3cm}

Finally, we prove the following stronger result.
This result will be used in our proof of Theorem~\ref{th:anti-Ram}~(iii).

\begin{theorem}\label{th:loose+}
For any integer $n\geq 7$, let $G$ be a rainbow $\mathcal{L}$-free edge-colored $K^{(3)}_n$ with $|C(G)|\geq 3$.
Then at least one of the following statements holds $($after renumbering the colors if necessary$)$:
\begin{itemize}
\item[{\rm (i)}] there exist two vertices $u,v$ such that every edge $e\in E(G)$ with $c(e)\neq 1$ satisfies $\{u,v\}\subseteq e$;

\item[{\rm (ii)}] $|C(G)|\in \{3, 4, 5\}$, and there exists a vertex $v$ and at most one edge $e\in E(G-v)$ such that all edges in $E(G-v)\setminus \{e\}$ are of the same color;

\item[{\rm (iii)}] $|C(G)|=3$, and there exists an edge $e$ with $c(e)\neq 1$ such that every edge $f\in E(G)\setminus \{e\}$ with $c(f)\neq 1$ satisfies $|e\cap f|=2$.
\end{itemize}
\end{theorem}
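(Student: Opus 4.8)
The plan is to obtain Theorem~\ref{th:loose+} by refining the dichotomy already established in Theorem~\ref{th:loose}, using Lemmas~\ref{le:loose-1}, \ref{le:loose-2} and~\ref{le:loose-3}.

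Suppose first that Theorem~\ref{th:loose}(i) holds, so some vertex $w$ has $G-w$ monochromatic; renumber the colors so that $C(G-w)=\{1\}$. Then every non-$1$ edge contains $w$, and by Lemma~\ref{le:loose-3} (applied with $U=\{w\}$) any two non-$1$ edges of distinct colors meet in exactly two vertices, i.e.\ share $w$ and one other vertex. If $|C(G)|\le 5$ this is already conclusion~(ii), with the vertex $w$ and no exceptional edge. If $|C(G)|\ge 6$, pass to the link of $w$, an edge-colored $K_{n-1}$ that carries all of the colors $2,\dots,|C(G)|$; there the non-$1$ edges of distinct colors pairwise intersect, so choosing one edge of each of five distinct non-$1$ colors gives five pairwise-intersecting edges of a graph. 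A pairwise-intersecting family of at least four graph edges has a common vertex (otherwise it would be contained in a triangle, hence have at most three edges), so these five edges share a vertex $w'$; a short argument then shows that every non-$1$ link-edge contains $w'$ (a non-$1$ link-edge of color $j$ meets at least four of the five chosen edges, and if it avoided $w'$ it would have to contain the four distinct vertices obtained by deleting $w'$ from those four edges, which is impossible). Hence every non-$1$ edge of $G$ contains the pair $\{w,w'\}$, which is conclusion~(i).

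Suppose instead that Theorem~\ref{th:loose}(ii) holds while Theorem~\ref{th:loose}(i) fails; renumber so that the distinguished color in Theorem~\ref{th:loose}(ii) is $1$, and write $e=\{p,q,r\}$ for the associated edge, so $c(e)\ne 1$ and every non-$1$ edge other than $e$ has exactly two vertices in $\{p,q,r\}$, i.e.\ has the form $\{p,q,x\}$, $\{p,r,x\}$ or $\{q,r,x\}$ with $x\notin\{p,q,r\}$. If $|C(G)|=3$ this is literally conclusion~(iii), so assume $|C(G)|\ge 4$ and partition the non-$1$ edges other than $e$ into three \emph{hub classes}, according to which of $\{p,q\}$, $\{p,r\}$, $\{q,r\}$ the edge contains. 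Since Theorem~\ref{th:loose}(i) fails, no vertex lies in all non-$1$ edges, so at least two hub classes are nonempty; since at least two colors other than $c(e)$ occur (here $|C(G)|\ge 4$ enters) and must appear on hub-class edges, a short argument produces two non-$1$ edges of distinct colors lying in distinct hub classes, and by Lemma~\ref{le:loose-3} these two edges, meeting in two vertices, must share a common fourth vertex $x\notin\{p,q,r\}$. I claim that then every non-$1$ edge lies inside the $4$-set $\{p,q,r,x\}$. Granting the claim, $\{p,q,r,x\}$ has only $\binom{4}{3}=4$ triples, so $|C(G)|\le 5$; moreover the only triple of $\{p,q,r,x\}$ not containing $x$ is $e$, so $G-x$ is monochromatic of color $1$ except possibly for the edge $e$, which is conclusion~(ii) with vertex $x$ and exceptional edge $e$.

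The main obstacle is the claim just stated. For an arbitrary non-$1$ edge $k=\{a,b,z\}$ with $\{a,b\}$ a hub pair and $z\notin\{p,q,r\}$, comparing $k$ with the two anchor edges via Lemma~\ref{le:loose-3} yields $z=x$ whenever $k$ differs from an anchor in both its hub pair and its color. The remaining cases are exactly those in which $k$ shares a hub with one anchor and a color with the other, and since Lemma~\ref{le:loose-3} says nothing about equal-colored edges these must be handled separately: when $|C(G)|\ge 5$ one exploits a third non-$1$ color to create the missing distinct-color comparison, while when $|C(G)|=4$ one instead exhibits an explicit rainbow $\mathcal{L}$ (which occupies $7$ vertices, whence the hypothesis $n\ge 7$) or one of the rainbow subgraphs $C^{(3)}_3$, $S^{(3)}_3$, $S^{(3)}_2\cup S^{(3)}_1$ ruled out by Lemma~\ref{le:loose-1}, a contradiction. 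One further bookkeeping case when $|C(G)|=4$ is that the color $c(e)$ might be carried only by $e$ itself; this is again dispatched by the same rainbow-$\mathcal{L}$ constructions together with Lemma~\ref{le:loose-1}. Apart from this enumeration the argument is routine.
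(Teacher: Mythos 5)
Your Case A (when Theorem~\ref{th:loose}(i) holds) is correct, and the link-graph argument for $|C(G)|\geq 6$ (five pairwise-intersecting link edges forming a star, hence a second common vertex $w'$) is a clean alternative to what the paper does. The problem is Case B. The central claim there --- that once two anchors $f_1,f_2$ of distinct colors in distinct hub classes are found sharing an external vertex $x$, every non-$1$ edge lies inside $\{p,q,r,x\}$ --- is false, and the sub-case you defer to ``an explicit rainbow $\mathcal{L}$ or a configuration ruled out by Lemma~\ref{le:loose-1}'' cannot be closed that way, because the offending configuration is realizable. Concretely, on $V=\{v_1,\dots,v_7\}$ set $c(v_1v_2v_3)=4$, $c(v_1v_2v_4)=2$, $c(v_2v_3v_4)=c(v_1v_3v_4)=c(v_1v_2v_5)=3$, and give every other edge color $1$. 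Any two non-$1$ edges of distinct colors here meet in exactly two vertices, whereas any two edges of a loose path meet in at most one vertex and a rainbow $\mathcal{L}$ must contain two edges of distinct non-$1$ colors; so this coloring is rainbow $\mathcal{L}$-free. The five non-$1$ edges have no common vertex, so Theorem~\ref{th:loose}(i) fails, and Theorem~\ref{th:loose}(ii) holds with $e=v_1v_2v_3$ and $i=1$. Your anchors are forced to be $v_1v_2v_4$ (color $2$) and one of $v_2v_3v_4$, $v_1v_3v_4$ (color $3$), giving $x=v_4$; but $v_1v_2v_5$ is a non-$1$ edge not contained in $\{v_1,v_2,v_3,v_4\}$. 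This is exactly your ``$|C(G)|=4$ and $c(e)$ carried only by $e$'' case, and no contradiction is available: the conclusion of the theorem does hold for this coloring, but only via statement (ii) with the witness vertex $v_2$ (or $v_1$) and exceptional edge $v_1v_3v_4$ (resp.\ $v_2v_3v_4$) --- a witness inside $e$, which your argument never produces.

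To repair this you would need, in the residual sub-case, to switch targets: instead of forcing all non-$1$ edges into a $4$-set around $x$, show that the three pairwise $2$-intersecting edges $e$, $f_1$, $f_2$ of three distinct non-$1$ colors have a common vertex $w\in e$, and that every non-$1$ edge other than the one remaining triple of $e\cup f_1\cup f_2$ must contain $w$ (this is essentially the paper's Claim~\ref{cl:loose-le-5}, applied to the triple $\{e,f_1,f_2\}$ spanning four vertices). A secondary, fixable point: before invoking Lemma~\ref{le:loose-3} in Case B you should check that the monochromatic color of $G-U$ supplied by Lemma~\ref{le:loose-2} coincides with the distinguished color $i$ of Theorem~\ref{th:loose}(ii); this is true (a short counting argument shows $G-U$ cannot be monochromatic in a color whose edges are all forced to meet $e$ in two vertices), but it is not automatic from the statements you cite.
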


\begin{proof}
Let $V(G)=\{v_1, v_2, \ldots, v_n\}$.
If $|C(G)|=3$, then by Theorem~\ref{th:loose}, at least one of Theorem~\ref{th:loose+}~(ii) and (iii) holds.
Hence, we may assume that $|C(G)|\geq 4$ in the following arguments.
By Lemma~\ref{le:loose-2}, there exists a subset $U\subseteq V(G)$ with $|U|\leq 2$ such that $G- U$ is monochromatic, say $C(G- U)=\{1\}$.

\begin{claim}\label{cl:loose-le-5}
If there exist three edges $e_1, e_2, e_3$ with $|e_1\cup e_2 \cup e_3|=4$ such that $c(e_1)$, $c(e_2)$, $c(e_3)$ are three distinct colors in $\{2, 3, \ldots, |C(G)|\}$, then Theorem~\ref{th:loose+}~{\rm(ii)} holds.
\end{claim}

\begin{proof}
Suppose that $c(v_1v_2v_3)=2$, $c(v_1v_2v_4)=3$ and $c(v_1v_3v_4)=4$.
By Lemma~\ref{le:loose-3}, only the edge $v_2v_3v_4$ can be assigned a color in $\{5, \ldots, |C(G)|\}$, so $|C(G)|\leq 5$.
Moreover, by Lemma~\ref{le:loose-3}, every edge $g\in E(G)\setminus \{v_1v_2v_3, v_1v_2v_4, v_1v_3v_4, v_2v_3v_4\}$ with $c(g)\in \{2, 3, \ldots, |C(G)|\}$ must contain $v_1$.
This implies that Theorem~\ref{th:loose+}~(ii) holds (in which we take $v_1$ to be $v$ and $v_2v_3v_4$ to be $e$).
\end{proof}

Let $e$ and $f$ be two edges of colors 2 and 3, respectively.
By Lemma~\ref{le:loose-3}, we have $|e\cap f|=2$, so we may assume that $e=v_1v_2v_3$ and $f=v_1v_2v_4$.
We first assume that $c(v_1v_2v_i)\in \{1, 2, 3\}$ for all $i\in \{5, 6, \ldots, n\}$.
Now every edge of a color in $\{4, \ldots, |C(G)|\}$ contains at most one of $v_1$ and $v_2$.
Combining with Lemma~\ref{le:loose-3}, the edges of a color in $\{4, \ldots, |C(G)|\}$ can only be $v_1v_3v_4$ or $v_2v_3v_4$.
Since $|C(G)|\geq 4$, the edges $e$, $f$ and one of $v_1v_3v_4$ and $v_2v_3v_4$ form three edges satisfying the condition of Claim~\ref{cl:loose-le-5}, so Theorem~\ref{th:loose+}~$($ii$)$ holds.

Now we assume that $c(v_1v_2v_i)=4$ for some $i\in \{5, 6, \ldots, n\}$, say $i=5$.
For any $j\in [2]$, if one of $c(v_jv_3v_4)=4$, $c(v_jv_3v_5)=3$ or $c(v_jv_4v_5)=2$ holds, then there exist three edges satisfying the condition of Claim~\ref{cl:loose-le-5}, so Theorem~\ref{th:loose+}~$($ii$)$ holds.
Thus we may assume that for each $j\in [2]$, none of $c(v_jv_3v_4)=4$, $c(v_jv_3v_5)=3$ and $c(v_jv_4v_5)=2$ hold.
Then by Lemma~\ref{le:loose-3}, every edge of a color in $\{2, 3, \ldots, |C(G)|\}$ must contain both $v_1$ and $v_2$.
This implies that Theorem~\ref{th:loose+}~(i) holds.
The proof of Theorem~\ref{th:loose+} is complete.
\end{proof}

\section{Proofs of Ramsey-type results}
\label{sec:Ramsey}

In this section, we prove the Ramsey-type results (Theorems~\ref{th:Ram-tight}, \ref{th:Ram-messy}, \ref{th:Ram-loose} and \ref{th:anti-Ram}) by applying our structural results (Theorems~\ref{th:tight}, \ref{th:messy}, \ref{th:loose} and \ref{th:loose+}).

\vspace{0.3cm}
\noindent{\bf Proof of Theorem~\ref{th:Ram-tight}.}~
Since $\mathcal{T}$ has three edges, there is no rainbow $\mathcal{T}$ in a 2-edge-colored $K^{(3)}_{n}$.
Hence, we have $f(H, \mathcal{T})\geq R_2(H)$.
Next, we shall show that $f(H, \mathcal{T})\leq R_2(H)$.
Let $G$ be an edge-colored $K^{(3)}_{n}$ with $n=R_2(H)$.
Suppose that $G$ contains no rainbow $\mathcal{T}$, and we shall show that $G$ contains a monochromatic $H$.
If $|C(G)|\leq 2$, then $G$ contains a monochromatic $H$ clearly since $n= R_2(H)$.
Thus we may assume that $|C(G)|\geq 3$.
By Theorem~\ref{th:tight}, we can partition $V(G)$ into $|C(G)|-1$ parts $V_2, V_3, \ldots, V_{|C(G)|}$ such that: $\{i\}\subseteq C(V_i)\subseteq \{1,i\}$ for every $i\in \{2, 3, \ldots, k\}$, and all the remaining edges are of color 1.
Let $G'$ be an auxiliary 2-edge-colored $K^{(3)}_n$ obtained from $G$ by recoloring all edges of colors in $\{3, \ldots, |C(G)|\}$ with color 2.
Then $G'$ contains a monochromatic $H$.
If such a monochromatic $H$ is of color 1, then $G$ also contains a monochromatic $H$ of color 1.
If such a monochromatic $H$ is of color 2, then since $H$ is connected, $G$ also contains a monochromatic $H$ of some color in $\{2, \ldots, |C(G)|\}$.
Thus $G$ contains a monochromatic $H$, which implies that $f(H, \mathcal{T})\leq R_2(H)$.
Therefore, we have $f(H, \mathcal{T})= R_2(H)$.
\hfill$\square$

\vspace{0.3cm}
\noindent{\bf Proof of Theorem~\ref{th:Ram-messy}.}~
Since $\mathcal{M}$ has three edges, there is no rainbow $\mathcal{M}$ in a 2-edge-colored $K^{(3)}_{n}$.
Hence, we have $f(H, \mathcal{M})\geq R_2(H)$.
On the other hand, if $G$ is an edge-colored $K^{(3)}_n$ with $n=R_2(H)\geq 7$, then we can deduce by Theorem~\ref{th:messy} that either $G$ contains a rainbow $\mathcal{M}$ or $|C(G)|\leq 2$.
In the latter case, $G$ contains a monochromatic $H$ since $n= R_2(H)$.
Thus $f(H, \mathcal{M})\leq R_2(H)$, and therefore, $f(H, \mathcal{M})= R_2(H)$.
\hfill$\square$

\vspace{0.3cm}
\noindent{\bf Proof of Theorem~\ref{th:Ram-loose}.}~
Since $\mathcal{L}$ has three edges, there is no rainbow $\mathcal{L}$ in a 2-edge-colored $K^{(3)}_{n}$.
Hence, we have $f(H, \mathcal{L})\geq R_2(H)$.
Next, we shall show that $f(H, \mathcal{L})\leq R_2(H)$.
Let $G$ be an edge-colored $K^{(3)}_{n}$ with $n=R_2(H)\geq \max\{|V(H)|+1, 7\}$.
Suppose that $G$ contains no rainbow $\mathcal{L}$, and we shall show that $G$ contains a monochromatic $H$.
If $|C(G)|\leq 2$, then $G$ contains a monochromatic $H$ clearly since $n= R_2(H)$.
Thus we may assume that $|C(G)|\geq 3$.
By Theorem~\ref{th:loose}, at least one of the following statements holds:
\begin{itemize}
\item[{\rm (i)}] there exists a vertex $u\in V(G)$ such that $G-u$ is monochromatic;

\item[{\rm (ii)}] there exists an edge $e\in E(G)$ and a color $i\in C(G)$ with $c(e)\neq i$ such that every edge $f\in E(G)\setminus \{e\}$ with $c(f)\neq i$ satisfies $|f\cap e|= 2$.
\end{itemize}

If (i) holds, then $G$ contains a monochromatic $K^{(3)}_{n-1}$, which contains a monochromatic $H$ since $n\geq |V(H)|+1$.
In the following argument, we assume that (ii) holds.
Let $V(G)=\{v_1, v_2, \ldots, v_n\}$.
Without loss of generality, we may assume that $v_1v_2v_3$ is an edge of color 2, and every edge $f\in E(G)\setminus \{v_1v_2v_3\}$ with $c(f)\neq 1$ satisfies $|f\cap \{v_1, v_2, v_3\}|= 2$.
Let $F_1$ be the spanning subgraph of $G$ consisting of all edges of colors in $\{2, \ldots, |C(G)|\}$,
and let $F_2$ be the spanning subgraph of $G$ consisting of all edges of color 1.
We next show that $F_2$ contains a subgraph that is isomorphic to $F_1$ (here $F_1$ and $F_2$ are uncolored subgraphs).
To see this, let $F_1'$ be the spanning subgraph of $G$ with edge set $\{v_iv_jv_{\ell}\colon\, i,j\in [3], \ell\in [n]\setminus [3]\}\cup \{v_1v_2v_3\}$,
and let $F_2'$ be the spanning subgraph of $G$ with edge set $\{v_iv_jv_{\ell}\colon\, i,j\in \{4,5,6\}, \ell\in [n]\setminus \{4,5,6\}\}\cup \{v_4v_5v_6\}$.
Then $F_1\subseteq F_1'$, $F_2'\subseteq F_2$, and $F_1'$ is isomorphic to $F_2'$.
Thus $F_2$ contains a subgraph that is isomorphic to $F_1$.
Let $G'$ be an auxiliary 2-edge-colored $K^{(3)}_n$ obtained from $G$ by recoloring all edges in $F_1$ with color 2.
Since $n= R_2(H)$, $G'$ contains a monochromatic $H$.
If such a monochromatic $H$ is of color 1, then $G$ also contains a monochromatic $H$ of color 1.
If such a monochromatic $H$ is of color 2, then since $F_2$ contains a subgraph that is isomorphic to $F_1$, $G$ also contains a monochromatic $H$ of color 1.
Thus $G$ contains a monochromatic $H$, which implies that $f(H, \mathcal{L})\leq R_2(H)$.
Therefore, we have $f(H, \mathcal{L})= R_2(H)$.
\hfill$\square$

\vspace{0.3cm}
\noindent{\bf Proof of Theorem~\ref{th:anti-Ram}.}~
(i) For the lower bound, we construct an edge-colored $K^{(3)}_n$ as follows.
Let $\{v_1, v_2, \ldots, v_n\}$ be the vertex set, and we color the edges such that
\begin{itemize}
\item $c(v_{3i-2}v_{3i-1}v_{3i})=i$ for each $1\leq i\leq \left\lfloor\frac{n}{3}\right\rfloor$; and

\item all the remaining edges are of color $\left\lfloor\frac{n}{3}\right\rfloor+1$.
\end{itemize}
Since $|V(\mathcal{T})|=5$, every subgraph $\mathcal{T}$ in this edge-colored $K^{(3)}_n$ contains at most one edge of a color in $\{1, 2, \ldots, \left\lfloor\frac{n}{3}\right\rfloor\}$.
Thus this is a rainbow $\mathcal{T}$-free edge-coloring, so $ar(n,\mathcal{T})>\left\lfloor\frac{n}{3}\right\rfloor+1$.

For the upper bound, let $G$ be an edge-colored $K^{(3)}_n$ without rainbow $\mathcal{T}$.
By Theorem~\ref{th:tight}, we can partition $V(G)$ into $|C(G)|-1$ parts $V_2, V_3, \ldots, V_{|C(G)|}$ such that: $\{i\}\subseteq C(V_i)\subseteq \{1,i\}$ for every $i\in \{2, 3, \ldots, |C(G)|\}$, and all the remaining edges are of color 1.
Since $|V_i|\geq 3$ for each $i\in \{2, 3, \ldots, |C(G)|\}$, we have $|C(G)|-1\leq \left\lfloor\frac{n}{3}\right\rfloor$.
Thus $|C(G)|\leq \left\lfloor\frac{n}{3}\right\rfloor+1$.
Therefore, we have $ar(n,\mathcal{T})\leq \left\lfloor\frac{n}{3}\right\rfloor+2$, and thus $ar(n,\mathcal{T})= \left\lfloor\frac{n}{3}\right\rfloor+2$.

(ii) We first prove the case $n=6$.
For the lower bounds, consider the edge-colored $K_6^{(3)}$ defined in Remark~\ref{re:theorems-1}~(i),
that is, an edge-colored $K_6^{(3)}$ using 10 distinct colors such that each color induces a monochromatic perfect matching.
Since the messy path $M$ contains a perfect matching, such an edge-colored $K_6^{(3)}$ is rainbow $\mathcal{M}$-free.
Thus we have $ar(6, \mathcal{M})>10$.
For the upper bound, let $G$ be an edge-colored $K^{(3)}_6$ with at least 11 colors.
Since $E(K^{(3)}_6)$ consists of $\frac{1}{2}{6\choose 3}=10$ copies of perfect matchings, there must be a rainbow perfect matching in $G$, say $\{v_1v_2v_3, v_4v_5v_6\}$.
Let $e$ be an edge with $c(e)\notin \{c(v_1v_2v_3), c(v_4v_5v_6)\}$.
Then either $|e\cap \{v_1,v_2,v_3\}|=2$ and $|e\cap \{v_4,v_5,v_6\}|=1$, or $|e\cap \{v_4,v_5,v_6\}|=2$ and $|e\cap \{v_1,v_2,v_3\}|=1$.
In both cases, there is a rainbow $\mathcal{M}$.
Therefore, we have $ar(6, \mathcal{M})\leq 11$, and thus $ar(6, \mathcal{M})= 11$.
For the case $n\geq 7$, since a 2-edge-colored hypergraph certainly contains no rainbow $\mathcal{M}$, we have $ar(n, \mathcal{M})>2$.
Moreover, if an edge-colored $K^{(3)}_n$ contains no rainbow $\mathcal{M}$, then it is colored by at most two colors by Theorem~\ref{th:messy}.
Hence, we have $ar(n, \mathcal{M})\leq 3$ for $n\geq 7$.
Therefore, we have $ar(n, \mathcal{M})= 3$ for $n\geq 7$.

(iii) For the lower bound, we construct an edge-colored $K^{(3)}_n$ as follows.
Let $\{v_1, v_2, \ldots, v_n\}$ be the vertex set, and we color the edges such that
\begin{itemize}
\item $c(v_{i}v_{n-1}v_{n})=i$ for each $i\in [n-2]$; and

\item all the remaining edges are of color $n-1$.
\end{itemize}
Note that every subgraph $\mathcal{L}$ in this edge-colored $K^{(3)}_n$ contains at most one edge of a color in $[n-1]$.
Thus this is a rainbow $\mathcal{L}$-free edge-coloring, so $ar(n,\mathcal{L})>n-1$.

For the upper bound, let $G$ be an edge-colored $K^{(3)}_n$ without rainbow $\mathcal{L}$.
We shall show that $|C(G)|\leq n-1$, which implies that $ar(n, \mathcal{L})\leq n$.
Since $G$ is rainbow $\mathcal{L}$-free, one of Theorem~\ref{th:loose+}~(i), (ii) and (iii) holds.
If Theorem~\ref{th:loose+}~(ii) or (iii) holds, then $|C(G)|\leq 5 \leq n-1$ since $n\geq 7$.
If Theorem~\ref{th:loose+}~(i) holds, then $|C(G)|\leq |V(G)\setminus \{u,v\}|+1=n-2+1=n-1$.
Hence, we have $ar(n, \mathcal{L})\leq n$ for $n\geq 7$.
Therefore, we have $ar(n, \mathcal{L})= n$ for $n\geq 7$.
\hfill$\square$

\section{Proofs of the multipartite version of the results}
\label{sec:multipartite}

In this section, we prove the multipartite version of the structural results and the Ramsey-type results introduced in Section~\ref{subsec:multipartite}.
We begin with the following observation.

\begin{observation}\label{obs:S2-multipar}
For any integer $n\geq 3$, if $K^{(3)}_{n,n,n}$ is edge-colored with at least two colors, then it contains both a rainbow $S^{(3)}_2$ and a rainbow $\mathbb{S}^{(3)}_{2}$.
\end{observation}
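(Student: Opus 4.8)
The plan is to prove the two assertions separately, each by the following simple principle: a non-monochromatic edge-coloring of a hypergraph must assign distinct colors to some pair of edges that are adjacent in a suitable auxiliary graph on $E(G)$, and one such adjacent pair already realizes the desired configuration. Write $V_1, V_2, V_3$ for the partite sets of $K^{(3)}_{n,n,n}$, and recall that $S^{(3)}_2 = P^{(3)}_2$ is a pair of edges meeting in exactly one vertex while $\mathbb{S}^{(3)}_2 = \mathbb{P}^{(3)}_2$ is a pair of edges meeting in exactly two vertices; so a rainbow $S^{(3)}_2$ (resp.\ rainbow $\mathbb{S}^{(3)}_2$) is just two differently colored edges $e,f$ with $|e\cap f|=1$ (resp.\ $|e\cap f|=2$). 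Note that, unlike in the non-partite Observation~\ref{obs:S2}, one cannot simply take two differently colored edges and read off a configuration, since in $K^{(3)}_{n,n,n}$ two edges may be vertex-disjoint; a short detour is needed.

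First I would produce a rainbow $\mathbb{S}^{(3)}_2$. Consider the auxiliary graph $\mathcal{H}$ with vertex set $E(G)$ in which two edges are adjacent precisely when they share exactly two vertices. I claim $\mathcal{H}$ is connected: given edges $a_1a_2a_3$ and $b_1b_2b_3$ with $a_i,b_i\in V_i$, deleting repetitions from the sequence $a_1a_2a_3,\ b_1a_2a_3,\ b_1b_2a_3,\ b_1b_2b_3$ gives a walk in $\mathcal{H}$, since any two consecutive distinct terms differ in exactly one coordinate (hence share exactly two vertices) and every term is a legitimate edge of $K^{(3)}_{n,n,n}$. As $|C(G)|\geq 2$, the coloring $c$ is not constant on the connected graph $\mathcal{H}$, so some edge of $\mathcal{H}$ joins two edges of $G$ of distinct colors; that pair is the required rainbow $\mathbb{S}^{(3)}_2$.

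Next I would upgrade to a rainbow $S^{(3)}_2$, using the rainbow $\mathbb{S}^{(3)}_2$ just found. Write it as $\{e_1,e_2\}$ with $e_1\cap e_2=\{x,y\}$, where (after relabelling the partite sets) $x\in V_1$, $y\in V_2$, so $e_1=xyz_1$, $e_2=xyz_2$ with $z_1\neq z_2$ in $V_3$ and $c(e_1)\neq c(e_2)$. This is the one place the hypothesis $n\geq 3$ is used: pick $y'\in V_2\setminus\{y\}$ and $z\in V_3\setminus\{z_1,z_2\}$ and set $a=xy'z$. Then $a\cap e_1=a\cap e_2=\{x\}$, so $|a\cap e_k|=1$ for $k=1,2$. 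Since $c(e_1)\neq c(e_2)$, the color $c(a)$ differs from $c(e_k)$ for at least one $k\in\{1,2\}$, and then $\{a,e_k\}$ is a rainbow $S^{(3)}_2$, completing the plan.

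The routine work consists only of the intersection checks inside the walk in $\mathcal{H}$ and for the edge $a$, which present no difficulty. The one point requiring care — the main ``obstacle'' such as it is — is the multipartite bookkeeping: every auxiliary edge must be of the form ``one vertex per part,'' and one part must have at least three vertices so that $z$ can be chosen outside $\{z_1,z_2\}$. An alternative would be to handle the second half symmetrically, by showing directly that the graph on $E(G)$ with adjacency ``share exactly one vertex'' is connected for $n\geq 3$; this also works but needs length-three detours between edges differing in a single coordinate, so the route via $\mathbb{S}^{(3)}_2$ above is shorter.
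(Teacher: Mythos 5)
Your proof is correct, and the second half (adjoining the edge $a=xy'z$ through the shared vertex $x$ of a rainbow $\mathbb{S}^{(3)}_2$ and observing that $c(a)$ must differ from at least one of the two colors) is exactly the paper's argument for that direction. Where you diverge is in how the initial rainbow pair of intersecting edges is obtained: the paper simply asserts that $|C(G)|\geq 2$ forces either a rainbow $S^{(3)}_2$ or a rainbow $\mathbb{S}^{(3)}_2$ to exist, and then handles both cases symmetrically, converting each configuration into the other. As you correctly point out, that assertion is not immediate in the multipartite setting, since two differently colored edges of $K^{(3)}_{n,n,n}$ may be disjoint; your connectivity argument for the auxiliary graph whose adjacency is ``share exactly two vertices'' (via the coordinate-by-coordinate walk $a_1a_2a_3,\ b_1a_2a_3,\ b_1b_2a_3,\ b_1b_2b_3$) supplies the missing justification and, as a bonus, lets you skip the paper's first case entirely, since you always land on a rainbow $\mathbb{S}^{(3)}_2$ and only need the one upgrade direction. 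The trade-off is that your write-up is slightly longer, but it is the more airtight of the two.
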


\begin{proof} Let $G$ be an edge-colored $K^{(3)}_{n,n,n}$ with $|C(G)|\geq 2$ and $n\geq 3$.
Let $V_1, V_2, V_3$ be the partite sets of $G$, where $V_1=\{x_1, x_2, \ldots, x_n\}$, $V_2=\{y_1, y_2, \ldots, y_n\}$ and $V_3=\{z_1, z_2, \ldots, z_n\}$.
Since $|C(G)|\geq 2$, there is either a rainbow $S^{(3)}_2$ or a rainbow $\mathbb{S}^{(3)}_{2}$ in $G$.
If $G$ contains a rainbow $S^{(3)}_2$, say $\{x_1y_1z_1, x_1y_2z_2\}$, then at least one of $\{x_1y_1z_1, x_1y_1z_2\}$ and $\{x_1z_2y_2, x_1z_2y_1\}$ is a rainbow $\mathbb{S}^{(3)}_{2}$,
so $G$ contains both a rainbow $S^{(3)}_2$ and a rainbow $\mathbb{S}^{(3)}_{2}$ in this case.
If $G$ contains a rainbow $\mathbb{S}^{(3)}_{2}$, say $\{x_1y_1z_1, x_1y_1z_2\}$, then at least one of $\{x_1y_1z_1, x_1y_2z_3\}$ and $\{x_1y_1z_2, x_1y_2z_3\}$ is a rainbow $S^{(3)}_2$,
so $G$ contains both a rainbow $S^{(3)}_2$ and a rainbow $\mathbb{S}^{(3)}_{2}$.
\end{proof}

We first prove Theorem~\ref{th:multi-messy}, which will also be used in our proof of Theorem~\ref{th:multi-tight}.

\vspace{0.3cm}
\noindent{\bf Proof of Theorem~\ref{th:multi-messy}.}~
Let $G$ be a rainbow $\mathcal{M}$-free edge-colored $K^{(3)}_{n,n,n}$ with $|C(G)|\geq 3$ and $n\geq 3$.
Let $V_1, V_2, V_3$ be the partite sets of $G$, where $V_1=\{x_1, x_2, \ldots, x_n\}$, $V_2=\{y_1, y_2, \ldots, y_n\}$ and $V_3=\{z_1, z_2, \ldots, z_n\}$.

\begin{claim}\label{cl:multi-messy-1}
There exists a rainbow $\mathbb{S}^{(3)}_3$ in $G$.
\end{claim}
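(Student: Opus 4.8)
The plan is to begin from a rainbow $\mathbb{S}^{(3)}_2$, which exists by Observation~\ref{obs:S2-multipar}. After relabelling the three partite sets and their vertices, I may assume this rainbow $\mathbb{S}^{(3)}_2$ is $\{x_1y_1z_1, x_1y_1z_2\}$ with $c(x_1y_1z_1)=1$ and $c(x_1y_1z_2)=2$. If some edge $x_1y_1z_j$ has a colour outside $\{1,2\}$, then $\{x_1y_1z_1, x_1y_1z_2, x_1y_1z_j\}$ is already a rainbow $\mathbb{S}^{(3)}_3$ and we are done; so I assume $c(x_1y_1z_j)\in\{1,2\}$ for every $j$, and note that both colours $1$ and $2$ occur among these edges. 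Since $|C(G)|\ge 3$, there is an edge $e=x_ay_bz_k$ with $c(e)=\gamma\notin\{1,2\}$, and since $c(x_1y_1z_k)\in\{1,2\}$ we cannot have $a=b=1$.

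The next step is to show that $e$ must contain exactly one of $x_1$ and $y_1$. The only alternative is $a\ne 1$ and $b\ne 1$, and I claim this produces a rainbow $\mathcal{M}$. First, $k\notin\{1,2\}$: if, say, $k=1$, then $\{x_1y_1z_2,\,x_1y_1z_1,\,x_ay_bz_1\}$ is an $\mathcal{M}$ (the first two edges meet in $\{x_1,y_1\}$; the second and third meet only in $z_1$, using $a,b\ne1$; the first and third are disjoint) with the three distinct colours $2,1,\gamma$, and the case $k=2$ is symmetric. Now choose an index $m$ with $c(x_1y_1z_m)\ne c(x_1y_1z_k)$, possible because both colours $1,2$ occur on the edges $x_1y_1z_j$ while $c(x_1y_1z_k)\in\{1,2\}$; then $m\ne k$, and $\{x_1y_1z_m,\,x_1y_1z_k,\,x_ay_bz_k\}$ is an $\mathcal{M}$ with the three distinct colours $c(x_1y_1z_m),\,c(x_1y_1z_k),\,\gamma$ — a contradiction. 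Hence $e$ contains exactly one of $x_1,y_1$, and after swapping the first two partite sets if necessary I may assume $e=x_1y_bz_k$ with $b\ne 1$.

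It remains to rule out this last configuration, which is where the real work lies; here I argue by contradiction, assuming in addition that $G$ has no rainbow $\mathbb{S}^{(3)}_3$ — equivalently, that for every pair $\{p,q\}$ contained in two of the three partite sets, the colours on the edges through $\{p,q\}$ form a set of size at most $2$. Applying the previous paragraph to an arbitrary edge of colour outside $\{1,2\}$ shows that every such edge contains exactly one of $x_1,y_1$; together with rainbow $\mathcal{M}$-freeness this pins down the colours near $e$. For instance, all edges $x_jy_bz_k$ with $j\ne1$ must share a common colour $\mu\in\{1,2\}$, so the edges through $\{y_b,z_k\}$ use exactly the colours $\{\gamma,\mu\}$; similarly the edges through $\{x_1,z_k\}$ use exactly $\{\gamma,\epsilon\}$ with $\epsilon=c(x_1y_1z_k)\in\{1,2\}$. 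Feeding these two-coloured pairs back into the rainbow $\mathcal{M}$ condition (now taking the shared pair of the $\mathcal{M}$ in the other two choices of two partite sets) propagates the colour restrictions to the edges $x_{a'}y_bz_{k'}$, then to the edges $x_{a'}y_1z_{k'}$, and finally back to the edges $x_1y_1z_j$, forcing all but at most one of the latter to receive a common colour; chasing this through one further round of the same argument then contradicts the presence of both colours $1$ and $2$ on the edges $x_1y_1z_j$ (or else exposes a third colour on some pair along the way, i.e.\ a rainbow $\mathbb{S}^{(3)}_3$). Either way we reach a contradiction, and the claim follows.

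I expect the last step to be the main obstacle: because the offending edge $e=x_1y_bz_k$ shares only the single vertex $x_1$ with the base rainbow $\mathbb{S}^{(3)}_2$ (rather than a full pair, as in the easy case $a,b\ne1$), there is no immediate rainbow $\mathcal{M}$, and one must instead chase colour constraints through several auxiliary pairs while keeping track of a few sub-cases — according to whether $\mu=\epsilon$, and according to the position of $z_k$ relative to $z_1,z_2$. By contrast, the earlier steps are short and rely only on rainbow $\mathcal{M}$-freeness together with the elementary remark that three edges through a common pair bearing three distinct colours already constitute a rainbow $\mathbb{S}^{(3)}_3$.
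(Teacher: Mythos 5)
Your first two paragraphs are correct and essentially reproduce the paper's opening moves: starting from the rainbow $\mathbb{S}^{(3)}_2=\{x_1y_1z_1,x_1y_1z_2\}$, reducing to the case where all edges through $\{x_1,y_1\}$ have colors in $\{1,2\}$, and showing (via explicit rainbow copies of $\mathcal{M}$) that any edge of a third color must contain exactly one of $x_1,y_1$. The rainbow $\mathcal{M}$'s you exhibit there are verified correctly.

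The problem is your third paragraph, which you yourself identify as ``where the real work lies'': it is not a proof but a description of a hoped-for proof. For the configuration $e=x_1y_bz_k$ with $b\neq 1$ and $c(e)=\gamma\notin\{1,2\}$ you never exhibit a single concrete rainbow $\mathcal{M}$ or rainbow $\mathbb{S}^{(3)}_3$; you only assert that ``feeding these two-coloured pairs back into the rainbow $\mathcal{M}$ condition \dots propagates the colour restrictions \dots and finally \dots forces all but at most one of the latter to receive a common colour,'' followed by ``one further round'' that is not described at all. Note that ``all but at most one of the edges $x_1y_1z_j$ share a colour'' is perfectly consistent with both colours $1$ and $2$ appearing on them (one edge of colour $1$, the rest of colour $2$), so even the stated intermediate conclusion does not yield the contradiction you claim without the unspecified extra round. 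The genuinely hard subcase is $k\in\{1,2\}$ (say $e=x_1y_bz_1$): here the natural $\mathcal{M}$ built from $x_jy_bz_1$, $x_1y_bz_1$ and a base edge fails for $x_1y_1z_1$ because of the shared vertex $z_1$, so one only learns $c(x_jy_bz_1)=2$ for $j\neq 1$, and further auxiliary edges are needed. The paper resolves this concretely: it first pins down $c(x_2y_2z_3)=c(x_2y_3z_3)=1$ using edges disjoint from $\{x_1,y_1,z_1,z_2\}$, and then shows that whatever colour $x_2y_2z_1$ receives, one of three explicitly listed triples is a rainbow $\mathcal{M}$. (The easier subcase $k\notin\{1,2\}$ is also dispatched by an explicit $\mathcal{M}$ through the pair $\{y_b,z_k\}$ and one base edge.) Until you supply such explicit constructions, the claim is not proved.
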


\begin{proof}
For a contradiction, suppose that $G$ contains no rainbow $\mathbb{S}^{(3)}_3$.
By Observation~\ref{obs:S2-multipar}, $G$ contains a rainbow $\mathbb{S}^{(3)}_{2}$, say $\{x_1y_1z_1, x_1y_1z_2\}$, where $c(x_1y_1z_1)=1$ and $c(x_1y_1z_2)=2$.
Note that for any edge $e$ with $e\cap \{x_1, y_1, z_1, z_2\}=\emptyset$ we have $c(e)\in \{1,2\}$; otherwise if $c(e)\geq 3$, say $e=x_2y_2z_3$, then $c(x_1y_1z_3)=c(e)$ for avoiding a rainbow $\mathcal{M}$, but then there is a rainbow $\mathbb{S}^{(3)}_3$, a contradiction.

Let $f$ be an edge of color 3.
Then $1\leq |f\cap \{x_1, y_1, z_1, z_2\}|\leq 2$.
If $|f\cap \{x_1, y_1, z_1, z_2\}|=1$, then $f\cap \{x_1, y_1, z_1, z_2\}\in \big\{\{x_1\}, \{y_1\}\big\}$ for avoiding a rainbow $\mathcal{M}$.
Without loss of generality, we may assume that $f=x_1y_2z_3$.
Since $c(x_2y_2z_3)\in \{1,2\}$ (note that $\{x_2, y_2, z_3\} \cap \{x_1, y_1, z_1, z_2\}=\emptyset$),
one of $\{x_2y_2z_3, y_2z_3x_1, x_1y_1z_2\}$ and $\{x_2y_2z_3, y_2z_3x_1, x_1y_1z_1\}$ is a rainbow $\mathcal{M}$, a contradiction.
If $|f\cap \{x_1, y_1, z_1, z_2\}|=2$, then $f\cap \{x_1, y_1, z_1, z_2\}\in \big\{\{x_1, z_1\}, \{x_1, z_2\}, \{y_1, z_1\}, \{y_1, z_2\}\big\}$ for avoiding a rainbow $\mathbb{S}^{(3)}_3$.
By symmetry, we may assume that $f\cap \{x_1, y_1, z_1, z_2\}=\{x_1, z_1\}$ and $f=x_1y_2z_1$.
Since $c(x_2y_2z_3)\in \{1,2\}$, we have $c(x_2y_2z_3)=1$ for avoiding a rainbow $\mathcal{M}$, and we further have $c(x_2y_3z_3)=1$ for avoiding a rainbow $\mathcal{M}$.
We now consider the edge $x_2y_2z_1$.
Note at least one of $\{x_2y_2z_1, y_2z_1x_1, x_1y_1z_2\}$, $\{x_1y_2z_1, y_2z_1x_2, x_2y_3z_3\}$ and $\{z_2x_1y_1, x_1y_1z_1, z_1y_2x_2\}$ is a rainbow $\mathcal{M}$.
This contradiction completes the proof of Claim~\ref{cl:multi-messy-1}.
\end{proof}

By Claim~\ref{cl:multi-messy-1}, $G$ contains a rainbow $\mathbb{S}^{(3)}_3$, say $\{x_1y_1z_1, x_1y_1z_2, x_1y_1z_3\}$, where $c(x_1y_1z_i)=i$ for $i\in [3]$.
In order to avoid a rainbow $\mathcal{M}$, we have $c(z_ix_jy_{\ell})=i$ for any $i\in [3]$ and $j, \ell \in \{2, 3, \ldots, n\}$.
This implies that for any $j, \ell \in \{2, 3, \ldots, n\}$, $\{x_jy_{\ell}z_1, x_jy_{\ell}z_2, x_jy_{\ell}z_3\}$ is a rainbow $\mathbb{S}^{(3)}_3$.
Then we can further deduce that $c(z_ix_jy_{\ell})=i$ for any $i\in [3]$ and $j, \ell \in [n]$.
Furthermore, for any $i\in [n]\setminus \{1,2,3\}$, $x_1y_1z_i$ and two edges in $\{x_1y_1z_1, x_1y_1z_2, x_1y_1z_3\}$ form a rainbow $\mathbb{S}^{(3)}_3$.
Now for any $i\in [n]\setminus \{1,2,3\}$ and $j, \ell \in [n]$, we have $c(z_ix_jy_{\ell})=c(z_ix_1y_1)$ by an analogous argument as above.
This implies that we can partition $V_3$ into $|C(G)|$ parts $V_{3,1}, V_{3,2}, \ldots, V_{3,|C(G)|}$ such that for each $i\in [|C(G)|]$, all edges containing a vertex in $V_{3,i}$ are of color $i$.
The proof of Theorem~\ref{th:multi-messy} is complete.
\hfill$\square$
\vspace{0.3cm}

We next prove Theorem~\ref{th:multi-tight}.

\vspace{0.3cm}
\noindent{\bf Proof of Theorem~\ref{th:multi-tight}.}~
Let $G$ be a rainbow $\mathcal{T}$-free edge-colored $K^{(3)}_{n,n,n}$ with $|C(G)|\geq 3$ and $n\geq 3$.
Let $V_1, V_2, V_3$ be the partite sets of $G$, where $V_1=\{x_1, x_2, \ldots, x_n\}$, $V_2=\{y_1, y_2, \ldots, y_n\}$ and $V_3=\{z_1, z_2, \ldots, z_n\}$.
If $G$ contains no rainbow $\mathcal{M}$, then (i) holds by Theorem~\ref{th:multi-messy}.
Thus we may assume that $G$ contains a rainbow $\mathcal{M}$.
In Claims~\ref{cl:multi-tight-1}, \ref{cl:multi-tight-2} and \ref{cl:multi-tight-3} below, we shall assume that $e_1, e_2, e_3$ form a rainbow $\mathcal{M}$ in which $e_1$ is the middle edge, so $e_2\cap e_3=\emptyset$.

\begin{claim}\label{cl:multi-tight-1}
If $e_1, e_2, e_3$ form a rainbow $\mathcal{M}$ in $G$ with $e_2\cap e_3=\emptyset$, then for every edge $e\in E(G)$ with $e\cap e_2\neq \emptyset$ and $e\cap e_3\neq \emptyset$, we have $c(e)=c(e_1)$.
\end{claim}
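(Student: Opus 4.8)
The plan is to fix a rainbow $\mathcal{M}$ in $G$ and normalise it. Write its edges as $e_2=v_1v_2v_3$, $e_1=v_2v_3v_4$, $e_3=v_4v_5v_6$, so that $e_1$ is the middle edge, $|e_1\cap e_2|=2$, $|e_1\cap e_3|=1$, $e_2\cap e_3=\emptyset$; the claim is symmetric in $e_2$ and $e_3$ once $e_1$ is fixed as the middle edge, so this entails no loss. Renumber colours so that $c(e_2)=1$, $c(e_1)=2$, $c(e_3)=3$; we must show that every edge meeting both $e_2$ and $e_3$ has colour $2$. Since each $e_i$ is a genuine edge of $K^{(3)}_{n,n,n}$, one checks that, up to swapping $v_5\leftrightarrow v_6$, one partite class contains $\{v_2,v_5\}$, a second $\{v_3,v_6\}$, and the third $\{v_1,v_4\}$. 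Two tools survive from the complete case: $G$ has no rainbow $C^{(3)}_3$ (the $C^{(3)}_3$-part of the proof of Lemma~\ref{le:tight-1} uses only the triple $v_1v_3v_5$, which is still a legal edge of $K^{(3)}_{n,n,n}$); and, equivalently to rainbow-$\mathcal{T}$-freeness, the \emph{link} of each vertex $x$ (a copy of $K_{n,n}$ whose edges inherit the colours of the triples through $x$) is rainbow-$P_4$-free, since a rainbow path $w_1w_2w_3w_4$ in the link of $x$ gives the rainbow tight path $\{xw_1w_2,xw_2w_3,xw_3w_4\}$. I would stress at the outset that, unlike in $K^{(3)}_n$, $G$ can contain rainbow $S^{(3)}_3$ and $\mathbb{S}^{(3)}_3$ (these occur inside $J$-canonical colourings with $|J|=1$), so the remaining parts of Lemma~\ref{le:tight-1} are unavailable and every deduction must go through a rainbow $C^{(3)}_3$ or an explicit rainbow tight path.

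The next step is to force a handful of colours from the rainbow $\mathcal{M}$ alone. The tight paths $\{v_1v_2v_3,v_2v_3v_4,v_3v_4v_5\}$ and $\{v_2v_3v_4,v_3v_4v_5,v_4v_5v_6\}$ give $c(v_3v_4v_5)\in\{1,2\}\cap\{2,3\}=\{2\}$; then $\{v_1v_3v_5,v_1v_2v_3,v_2v_3v_4\}$ and $\{v_1v_3v_5,v_3v_4v_5,v_4v_5v_6\}$ give $c(v_1v_3v_5)=2$; and $\{v_1v_2v_3,v_1v_3v_5,v_1v_5v_6\}$ together with $\{v_3v_4v_5,v_4v_5v_6,v_1v_5v_6\}$ give $c(v_1v_5v_6)=2$. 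So the rainbow $\mathcal{M}$ automatically ``thickens'' into a rigid block of colour-$2$ triples on $\{v_1,\ldots,v_6\}$, and a few further colour-$2$ identities of the same sort follow.

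Now suppose for contradiction that the bridging edge $e$ has $c(e)\neq 2$, and split on $(|e\cap e_2|,|e\cap e_3|)$, which (as $e_2\cap e_3=\emptyset$ and $e$ has one vertex per class) is $(2,1)$, $(1,2)$ or $(1,1)$. If $|e\cap(e_2\cup e_3)|=3$ (the first two cases), $e$ is tight-adjacent to $e_2$ or to $e_3$, and I would either exhibit a rainbow $\mathcal{T}$ of the form $\{e,e_i,e_j\}$ directly, or first force the colour of one more triple inside $\{v_1,\ldots,v_6\}$ and then do so. The substantive case is $(1,1)$: here $e=xyw$ with $x\in e_2$, $y\in e_3$, $w\notin e_2\cup e_3$, and $e$ is tight-adjacent to no edge of the $\mathcal{M}$. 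When $c(e)\notin\{1,2\}$ one can already win inside a link: for instance, if $x=v_1$ and $y=v_5$ then $\{v_1v_2v_3,v_1v_3v_5,e\}$ is a rainbow tight path (colours $1,2,c(e)$), and the other placements of $x,y$ are handled by symmetric tight paths through a vertex of $e$. The remaining colour, $c(e)=1$, needs a detour: two tight paths through $w$ and $y$, played against the colour-$2$ block, force the companion triple obtained from $e$ by replacing $x$ with the vertex of $e_1$ in $x$'s class (e.g.\ $v_4v_5w$ when $e=v_1v_5w$) to have colour $1$; but this companion triple, together with $e_3$ and the colour-$2$ triple $v_1v_5v_6$, then forces $c(v_1v_5v_6)$ into a two-element set disjoint from $\{2\}$ — a contradiction. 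Running this through the finitely many sub-cases of $(1,1)$ — which differ by the $V_1/V_2/V_3$ labelling and by which vertices of $e_2,e_3$ lie on $e$ — finishes the proof.

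I expect the main obstacle to be precisely case $(1,1)$. Because $G$ genuinely contains rainbow stars $S^{(3)}_3$ and $\mathbb{S}^{(3)}_3$, none of the ``two triples sharing a pair'' configurations that repeatedly arise can be discarded immediately, so one is pushed through several rounds of colour-forcing before a rainbow $C^{(3)}_3$ or a rainbow $P_4$ in some link finally appears; sequencing these rounds (and their symmetric variants) so that the case analysis stays finite and does not loop is the delicate point. The cases $(2,1)$ and $(1,2)$ are routine by comparison.
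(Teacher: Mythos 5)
Your proposal is correct and takes essentially the same route as the paper: both first use rainbow-$\mathcal{T}$-freeness to force the colour $c(e_1)$ onto the remaining transversal triples inside $e_2\cup e_3$, and then kill each bridging edge with a vertex outside $e_2\cup e_3$ by sandwiching it between two tight paths built from that colour block (your worked sub-case, including the $c(e)=1$ detour via the companion triple $v_4v_5w$, checks out, and the remaining sub-cases go through the same way). The only difference is cosmetic: the paper avoids your six $(1,1)$ sub-cases by observing that once all six non-$e_2$, non-$e_3$ triples inside $e_2\cup e_3$ carry colour $c(e_1)$, the six vertices become symmetric and a single representative suffices.
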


\begin{proof}
Without loss of generality, we may assume that $e_1=x_1y_1z_2$, $e_2=x_1y_1z_1$ and $e_3=x_2y_2z_2$, where $c(e_i)=i$ for each $i\in [3]$.
We first show that for the remaining five edges $e\in E(G)$ with $e\subseteq e_2\cup e_3$, we have $c(e)=c(e_1)=1$.
Note that the two edges $x_1y_2z_2$ and $x_2y_1z_2$ containing $z_2$ must be of color 1.
To see this, we only consider $x_1y_2z_2$ by symmetry, and if $c(x_1y_2z_2)\neq 1$, then at least one of $\{y_1x_1z_2, x_1z_2y_2, z_2y_2x_2\}$ and $\{z_1y_1x_1, y_1x_1z_2, x_1z_2y_2\}$ is a rainbow $\mathcal{T}$, a contradiction.
Now applying a similar argument to the rainbow copy $\{x_2y_2z_2, y_2z_2x_1, x_1y_1z_1\}$ or $\{y_2x_2z_2, x_2z_2y_1, y_1x_1z_1\}$ of $\mathcal{M}$, we can also derive that the remaining edges within $e_1\cup e_2$ containing $x_1$ or $y_1$ are of color 1.
For the remaining one edge $x_2y_2z_1$, we have $c(x_2y_2z_1)=1$, since otherwise at least one of $\{y_1z_1x_2, z_1x_2y_2, x_2y_2z_2\}$ and $\{x_1y_1z_1, y_1z_1x_2, z_1x_2y_2\}$ is a rainbow $\mathcal{T}$, a contradiction.

We next consider edges $e\in E(G)$ with $e\cap e_2\neq \emptyset$, $e\cap e_3\neq \emptyset$ and $e\setminus (e_1\cup e_2)\neq \emptyset$.
Note that all the six vertices within $e_2\cup e_3$ are symmetrical now.
Thus we may assume that $e=x_1y_2z_3$ without loss of generality.
If $c(x_1y_2z_3)\neq 1$, then at least one of $\{x_2z_2y_2, z_2y_2x_1, y_2x_1z_3\}$ and $\{y_1z_1x_1, z_1x_1y_2, x_1y_2z_3\}$ is a rainbow $\mathcal{T}$, a contradiction.
Thus $c(x_1y_2z_3)= 1$, and by symmetry, all edges $e\in E(G)$ with $e\cap e_2\neq \emptyset$, $e\cap e_3\neq \emptyset$ and $e\setminus (e_1\cup e_2)\neq \emptyset$ are of color 1.
This completes the proof of Claim~\ref{cl:multi-tight-1}.
\end{proof}

\begin{claim}\label{cl:multi-tight-2}
If $e_1, e_2, e_3$ form a rainbow $\mathcal{M}$ in $G$ with $e_2\cap e_3=\emptyset$, then for each $i\in \{2,3\}$, every edge $f\in E(G)$ with $f\cap e_i \neq \emptyset$ satisfies $c(f)\in \{c(e_1), c(e_i)\}$.
\end{claim}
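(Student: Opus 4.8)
The plan is to argue directly from Claim~\ref{cl:multi-tight-1}, exhibiting a rainbow copy of $\mathcal{T}$ in every case. Keep the normalisation of Claim~\ref{cl:multi-tight-1}: $c(e_i)=i$, and $e_1=x_1y_1z_2$ (the middle edge), $e_2=x_1y_1z_1$, $e_3=x_2y_2z_2$, so that $e_2\cap e_3=\emptyset$, $e_1\cap e_2=\{x_1,y_1\}$ and $e_1\cap e_3=\{z_2\}$. The only external input is Claim~\ref{cl:multi-tight-1}: every edge meeting both $e_2$ and $e_3$ has colour $c(e_1)=1$. In particular $x_1y_2z_1$ and $x_2y_1z_1$ (each running through a pair of $e_2$ and a vertex of $e_3$), as well as $x_2y_2z_1$, $x_1y_2z_2$, $x_2y_1z_2$ (each through a pair of $e_3$ and a vertex of $e_2$), all get colour $1$. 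So through each of the three pairs inside $e_2$ there is a colour-$1$ edge other than $e_2$ — namely $e_1$ for $\{x_1,y_1\}$, and $x_1y_2z_1$, $x_2y_1z_1$ for the other two pairs — and Claim~\ref{cl:multi-tight-1} likewise supplies a colour-$1$ edge through each pair of $e_3$. I will prove the statement for $i=2$; the case $i=3$ is entirely analogous, with $e_3$ and its three colour-$1$ pair-edges taking over the roles played below by $e_2$, $e_1$, and the auxiliary colour-$1$ edges.

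Fix $f\in E(G)$ with $f\cap e_2\neq\emptyset$ and suppose for contradiction that $c(f)=c\notin\{1,2\}$. If $f$ also meets $e_3$, then Claim~\ref{cl:multi-tight-1} forces $c(f)=1$, a contradiction; hence $f$ is disjoint from $e_3$, so each vertex of $f$ outside $e_2$ lies outside $\{x_2,y_2,z_2\}$ as well. Suppose first that $|f\cap e_2|=2$, so $f$ contains one of $\{x_1,y_1\},\{x_1,z_1\},\{y_1,z_1\}$. In each case $f$, the edge $e_2$ (as the middle edge), and a colour-$1$ edge through a \emph{different} pair of $e_2$ form a copy of $\mathcal{T}$ on five distinct vertices with the pairwise distinct colours $c,2,1$: for instance, if $f=x_1y_1z_k$ with $z_k\notin\{z_1,z_2\}$ then $\{x_1y_1z_k,\,x_1y_1z_1,\,x_2y_1z_1\}$ is such a copy, and if $f=x_1y_jz_1$ or $f=x_jy_1z_1$ then $\{f,\,e_2,\,e_1\}$ is one. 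Since $G$ is rainbow $\mathcal{T}$-free, this is impossible; hence every edge meeting $e_2$ in two vertices has colour in $\{1,2\}$.

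Now suppose $|f\cap e_2|=1$; we may take $f=x_1y_jz_k$ with $x_1\in e_2$ and $y_j,z_k$ outside $\{x_1,y_1,z_1,x_2,y_2,z_2\}$ (the other two choices for which vertex of $e_2$ lies in $f$ are treated identically). Consider the bridge edge $b=x_1y_jz_1$, which meets $e_2$ exactly in $\{x_1,z_1\}$ and is disjoint from $e_3$; by the case just settled, $c(b)\in\{1,2\}$. If $c(b)=1$, then $\{f,\,b,\,e_2\}$ — with $b$ as the middle edge — is a rainbow $\mathcal{T}$ with colours $c,1,2$. If $c(b)=2$, then $\{f,\,b,\,x_1y_2z_1\}$ is a rainbow $\mathcal{T}$ with colours $c,2,1$, since $c(x_1y_2z_1)=1$ by Claim~\ref{cl:multi-tight-1}. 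Either way we contradict rainbow $\mathcal{T}$-freeness, which completes $i=2$, and hence also $i=3$. The only real labour is the bookkeeping in these few configurations — checking that each displayed triple is genuinely a tight path on five distinct vertices and that its three colours are pairwise distinct — which is routine once the coordinates are spelled out; beyond Claim~\ref{cl:multi-tight-1} there is no conceptual obstacle, and the main (minor) difficulty is merely keeping the vertex labels straight across the subcases.
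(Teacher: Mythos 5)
Your proof is correct and takes essentially the same route as the paper's: invoke Claim~\ref{cl:multi-tight-1} to obtain colour-$c(e_1)$ bridge edges through the pairs of $e_i$, rule out $|f\cap e_i|=2$ by exhibiting a rainbow tight path with $e_i$ as the middle edge, and then reduce $|f\cap e_i|=1$ to that case via an auxiliary edge meeting $e_i$ in two vertices, splitting on its two possible colours. The only (immaterial) differences are your choice of bridge edge in the $|f\cap e_2|=1$ case and your more explicit verification of the symmetric subcases.
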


\begin{proof}
Without loss of generality, we may assume that $e_2=x_1y_1z_1$, $e_3=x_2y_2z_2$ and $c(e_i)=i$ for each $i\in [3]$.
By Claim~\ref{cl:multi-tight-1}, for every edge $e\in E(G)$ with $e\cap e_2\neq \emptyset$ and $e\cap e_3\neq \emptyset$, we have $c(e)=c(e_1)=1$.
Suppose for a contradiction that for some $i\in \{2,3\}$, say $i=2$, there exists an edge $f\in E(G)$ with $f\cap e_2 \neq \emptyset$ and $c(f)\notin \{c(e_1), c(e_2)\}=\{1,2\}$.
Then $1\leq |f\cap e_2|\leq 2$ and $f\cap e_3=\emptyset$.

If $|f\cap e_2|=2$, say $f=z_3x_1y_1$, then $\{z_3x_1y_1, x_1y_1z_1, y_1z_1x_2\}$ is a rainbow $\mathcal{T}$, a contradiction.
Thus we have $c(f)\in \{1,2\}$ whenever $|f\cap e_2|=2$.
If $|f\cap e_2|=1$, say $f=y_3z_3x_1$, then since $c(z_3x_1y_1)\in \{1,2\}$, at least one of $\{y_3z_3x_1, z_3x_1y_1, x_1y_1z_1\}$ and $\{y_3z_3x_1, z_3x_1y_1, x_1y_1z_2\}$ is a rainbow $\mathcal{T}$, a contradiction.
Thus we have $c(f)\in \{1,2\}$ whenever $|f\cap e_2|=1$.
This completes the proof of Claim~\ref{cl:multi-tight-2}.
\end{proof}

We now prove a claim which can be intuitively interpreted as follows:
Let $\{e_1, e_2, e_3\}$ be a rainbow $\mathcal{M}$ with $e_2\cap e_3=\emptyset$.
For each $i\in \{2,3\}$, consider all edges $h$ with $c(h)\notin \{c(e_1), c(e_i)\}$.
We can show that each such $h$ together with $e_i$ and some edge of color $c(e_1)$ forms a rainbow $\mathcal{M}$ with $h\cap e_i=\emptyset$.
Then the conclusion in Claim~\ref{cl:multi-tight-1} also holds for these rainbow copies of $\mathcal{M}$.

\begin{claim}\label{cl:multi-tight-3}
If $e_1, e_2, e_3$ form a rainbow $\mathcal{M}$ in $G$ with $e_2\cap e_3=\emptyset$, then for each $i\in \{2,3\}$ and each edge $h\in E(G)$ with $c(h)\notin \{c(e_1), c(e_i)\}$, the following statements hold:
\begin{itemize}
\item[{\rm (a)}] $h\cap e_i=\emptyset$;

\item[{\rm (b)}] for all edges $e$ with $e\cap e_i\neq \emptyset$ and $e\cap h\neq \emptyset$, we have $c(e)=c(e_1)$.
\end{itemize}
\end{claim}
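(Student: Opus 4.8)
The plan is to deduce (a) directly from Claim~\ref{cl:multi-tight-2}, and to deduce (b) by applying Claim~\ref{cl:multi-tight-1} to a \emph{second} rainbow copy of $\mathcal{M}$ that I build from $e_i$, $h$ and a carefully chosen third edge. For (a): if $h\cap e_i\neq\emptyset$, then Claim~\ref{cl:multi-tight-2}, applied to the rainbow copy $\{e_1,e_2,e_3\}$, gives $c(h)\in\{c(e_1),c(e_i)\}$, contradicting the hypothesis $c(h)\notin\{c(e_1),c(e_i)\}$; hence $h\cap e_i=\emptyset$, which is exactly (a).

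For (b): by (a) the edges $e_i$ and $h$ are disjoint, and since each of them is a transversal of the three parts there are exactly six edges $g$ for which $\{e_i,g,h\}$ is a copy of $\mathcal{M}$ with $g$ the middle edge and $e_i,h$ the two ends — three of them share two vertices with $e_i$ and one with $h$, the other three the other way round. Each of these six edges meets $e_i$, so by Claim~\ref{cl:multi-tight-2} each has a colour in $\{c(e_1),c(e_i)\}$. If one of them, say $g$, has colour $c(e_1)$, then $\{e_i,g,h\}$ is a rainbow $\mathcal{M}$ (its three edges carry the distinct colours $c(e_i)$, $c(e_1)$, $c(h)$) with $g$ the middle, and Claim~\ref{cl:multi-tight-1} applied to it says precisely that every edge meeting both $e_i$ and $h$ has colour $c(g)=c(e_1)$, which is statement (b). So the remaining task is to rule out the case that all six candidate middles have colour $c(e_i)$.

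Assume that bad case. I would first show it forces $e_1\cap h=\emptyset$. Since $e_1$ is the middle of the original messy path, every vertex of $e_1$ lies in $e_2\cup e_3$, and $e_1$ meets $e_i$ in one or two vertices; by (a) the vertices of $e_1$ lying in $e_i$ are not in $h$, so the vertices of $e_1\cap h$, if any, all lie in the end of the original path other than $e_i$. Hence either $e_1$ is itself one of the six candidate middles between $e_i$ and $h$ — impossible, since $c(e_1)\neq c(e_i)$ — or $h$ meets $e_2$, and then the candidate obtained by pairing a vertex of $h\cap e_2$ with the two vertices of $e_i$ in the other two parts also meets $e_2$, so Claim~\ref{cl:multi-tight-2} applied to $\{e_1,e_2,e_3\}$ forbids it from having colour $c(e_i)$. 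With $e_1\cap h=\emptyset$ in hand, I would take the candidate $g_0$ determined by $g_0\cap e_i=e_1\cap e_i$, its remaining vertices being taken from $h$. Then $\{e_1,g_0,h\}$ is again a copy of $\mathcal{M}$ with $g_0$ the middle, and it is \emph{rainbow} because its colours $c(e_1)$, $c(g_0)=c(e_i)$ and $c(h)$ are distinct; so Claim~\ref{cl:multi-tight-1} applied to this new copy shows that every edge meeting both $e_1$ and $h$ has colour $c(g_0)=c(e_i)$. Finally I would exhibit a single edge $u$ meeting all four of $e_1,e_2,e_3,h$ — for instance the edge whose vertices are one vertex of $e_1\cap e_2$, one vertex of $e_1\cap e_3$, and the vertex of $h$ in the remaining part. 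Then Claim~\ref{cl:multi-tight-1} applied to the original $\mathcal{M}$ forces $c(u)=c(e_1)$, whereas the new copy forces $c(u)=c(e_i)$, a contradiction that completes (b).

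The hard part will be this last case: carrying it out in full means naming the vertices of $e_1,e_2,e_3$ and $h$ in coordinates for $K^{(3)}_{n,n,n}$, checking that $g_0$ really is one of the six candidate middles and that $\{e_1,g_0,h\}$ and $u$ behave exactly as described, and treating the short remaining bookkeeping according to whether $i$ labels the end of the original messy path that $e_1$ meets in two vertices or the one it meets in one vertex.
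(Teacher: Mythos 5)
Your proposal is correct, and for part (b) it takes a genuinely different route from the paper. Part (a) is identical (immediate from Claim~\ref{cl:multi-tight-2}). For part (b), the paper singles out \emph{one} specific connecting edge $g$ (two vertices of $h$ plus one vertex of $e_i$), notes $c(g)\in\{c(e_1),c(e_i)\}$ by Claim~\ref{cl:multi-tight-2}, and rules out $c(g)=c(e_i)$ by exhibiting a rainbow tight path $\mathcal{T}$ built from $h$, $g$ and an auxiliary edge whose color is pinned down by Claim~\ref{cl:multi-tight-1} --- i.e., it re-invokes the theorem's hypothesis that $G$ is rainbow $\mathcal{T}$-free. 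You instead survey all six transversals of $e_i\cup h$, and in the ``bad case'' where all of them get color $c(e_i)$ you manufacture a second rainbow $\mathcal{M}$ on $\{e_1,g_0,h\}$ and an edge $u$ meeting all of $e_1,e_2,e_3,h$, to which two applications of Claim~\ref{cl:multi-tight-1} assign conflicting colors. I checked the details of your sketch (the count of six middles, that $g_0$ and $u$ are genuine transversal edges, that $\{e_1,g_0,h\}$ is a rainbow $\mathcal{M}$ once $e_1\cap h=\emptyset$ is established, and that the preliminary step forcing $e_1\cap h=\emptyset$ goes through via Claim~\ref{cl:multi-tight-2}), and everything works. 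What your approach buys is modularity: Claim~\ref{cl:multi-tight-3} becomes a formal consequence of Claims~\ref{cl:multi-tight-1} and \ref{cl:multi-tight-2} alone, with no further appeal to rainbow $\mathcal{T}$-freeness; the cost is the extra case analysis around the bad case. The paper's argument is shorter but less self-contained. One small slip: in your dichotomy the phrase ``or $h$ meets $e_2$'' should read ``$h$ meets the end $e_{5-i}$ other than $e_i$'' --- as written it is only literally correct when $i=3$ --- but since $e_1\cap h\subseteq e_{5-i}$ this branch is the one that always applies when $e_1\cap h\neq\emptyset$, and the intended argument is sound.
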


\begin{proof}
Without loss of generality, we may assume that $e_2=x_1y_1z_1$, $e_3=x_2y_2z_2$ and $c(e_i)=i$ for each $i\in [3]$.
Let $h\in E(G)$ be an edge with $c(h)\notin \{c(e_1), c(e_i)\}=\{1, i\}$ for some $i\in \{2,3\}$, say $i=2$.
By Claim~\ref{cl:multi-tight-2}, we have $h\cap e_2 = \emptyset$, so (a) holds.
If $h=e_3$, then we are done by Claim~\ref{cl:multi-tight-1}, so we may assume that $h\neq e_3$.
In particular, $\{x_2, y_2, z_2\}\setminus h\neq \emptyset$.
Without loss of generality, we may assume that $x_2\notin h$ and $h=x_{j_1}y_{j_2}z_{j_3}$, where $j_1\neq 1,2$, $j_2\neq 1$ and $j_3\neq 1$.
By Claims~\ref{cl:multi-tight-1} and \ref{cl:multi-tight-2}, we have $c(y_{j_2}z_1x_2)=1$ and $c(x_{j_1}y_{j_2}z_1)\in \{1,2\}$.
If $c(x_{j_1}y_{j_2}z_1)=2$, then $\{z_{j_3}x_{j_1}y_{j_2}, x_{j_1}y_{j_2}z_1, y_{j_2}z_1x_2\}$ is a rainbow $\mathcal{T}$, a contradiction.
Thus $c(x_{j_1}y_{j_2}z_1)=1=c(e_1)$, so $\{z_{j_3}x_{j_1}y_{j_2}, x_{j_1}y_{j_2}z_1, z_1y_1x_1\}$ (i.e., $\{h, x_{j_1}y_{j_2}z_1, e_2\}$) is a rainbow $\mathcal{M}$.
Applying Claim~\ref{cl:multi-tight-1} to this rainbow $\mathcal{M}$, we can derive that (b) holds.
\end{proof}

Recall that $G$ contains a rainbow $\mathcal{M}$.
Without loss of generality, we may assume that $e_1=x_1y_1z_2$, $e_2=x_1y_1z_1$ and $e_3=x_2y_2z_2$ form a rainbow $\mathcal{M}$, where $c(e_i)=i$ for each $i\in [3]$.
We next prove a claim which can be intuitively interpreted as follows:
For each $2\leq i<j\leq |C(G)|$, consider all $($unordered$)$ pairs $(e'_i,e'_j)$ of edges with $c(e'_i)=i$ and $c(e'_j)=j$.
We can show that $e'_i\cap e'_j=\emptyset$, and for each such pair $(e'_i,e'_j)$, there exists an edge of color $c(e_1)$ which together with $e'_i, e'_j$ forms a rainbow $\mathcal{M}$.
Then the conclusion in Claim~\ref{cl:multi-tight-1} also holds for these rainbow copies of $\mathcal{M}$.

\begin{claim}\label{cl:multi-tight-4}
For all $2\leq i<j\leq |C(G)|$ and all pairs $(e'_i,e'_j)$ of edges with $c(e'_i)=i$ and $c(e'_j)=j$, the following statements hold:
\begin{itemize}
\item[{\rm (a)}] $e'_i\cap e'_j=\emptyset$;

\item[{\rm (b)}] for all edges $e$ with $e\cap e'_i\neq \emptyset$ and $e\cap e'_j\neq \emptyset$, we have $c(e)=c(e_1)$.
\end{itemize}
\end{claim}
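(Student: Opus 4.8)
The plan is to bootstrap from the rainbow $\mathcal{M}$ given by $e_1=x_1y_1z_2$, $e_2=x_1y_1z_1$, $e_3=x_2y_2z_2$ (with colors $1,2,3$) and use Claims~\ref{cl:multi-tight-1}--\ref{cl:multi-tight-3} repeatedly to ``move'' the colors. First I would treat the case $i=2$ separately: given any edge $e'_j$ of color $j\geq 3$, by Claim~\ref{cl:multi-tight-3} applied to $\{e_1,e_2,e_3\}$ with the pair $(i,h)=(2,e'_j)$ we immediately get $e'_j\cap e_2=\emptyset$ and, moreover, that $e'_j$ together with $e_2$ and some edge $e_1''$ of color $1$ forms a rainbow $\mathcal{M}$ in which $e_2\cap e'_j=\emptyset$ (the two outer edges). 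Now given also an arbitrary edge $e'_2$ of color $2$, apply Claim~\ref{cl:multi-tight-3} once more, this time to the rainbow $\mathcal{M}$ $\{e_1'',e_2,e'_j\}$ with $(i,h)=(2,e'_2)$ — wait, here $c(e'_2)=2=c(e_i)$, so instead I apply Claim~\ref{cl:multi-tight-2}: every edge meeting $e'_j$ has color in $\{1,j\}$, hence $e'_2\cap e'_j=\emptyset$, giving (a). For (b), having produced a rainbow $\mathcal{M}$ with $e'_2,e'_j$ as its two disjoint outer edges and an edge of color $1$ as the middle one, Claim~\ref{cl:multi-tight-1} directly yields that every edge meeting both $e'_2$ and $e'_j$ has color $1=c(e_1)$.

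For the general case $3\leq i<j$, the strategy is the same in two steps. Start from $\{e_1,e_2,e_3\}$; since $c(e_3)=3$, either $i=3$ or $i\geq 4$. In the subcase $i=3$: given $e'_i$ of color $3$ and $e'_j$ of color $j\geq 4$, first use Claim~\ref{cl:multi-tight-3} with $(i,h)=(3,e'_j)$ to get $e'_j\cap e_3=\emptyset$ and a rainbow $\mathcal{M}$ with outer edges $e_3,e'_j$; then Claim~\ref{cl:multi-tight-2} applied along $e'_j$ shows $e'_3\cap e'_j=\emptyset$ for any color-$3$ edge $e'_3$; finally one more appeal to Claim~\ref{cl:multi-tight-3} (applied to the new rainbow $\mathcal{M}$, with $h=e'_3$, which has color $3$, so via Claim~\ref{cl:multi-tight-2}/\ref{cl:multi-tight-1}) plus Claim~\ref{cl:multi-tight-1} gives (a) and (b). In the subcase $i\geq 4$: I would first manufacture a rainbow $\mathcal{M}$ whose outer-edge colors are exactly $i$ and $j$. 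Take any edge $e'_i$ of color $i$; by Claim~\ref{cl:multi-tight-3} applied to $\{e_1,e_2,e_3\}$ (using $h=e'_i$, color $i\notin\{1,2\}$ for the pair with $e_2$, or color $i\notin\{1,3\}$ for the pair with $e_3$) we get a rainbow $\mathcal{M}$ with $e_2$ (or $e_3$) and $e'_i$ as outer edges and a color-$1$ middle edge. Repeating, take any $e'_j$ of color $j$: applying Claim~\ref{cl:multi-tight-3} to this new rainbow $\mathcal{M}$ with $h=e'_j$ (color $j\notin\{1,i\}$) gives $e'_j\cap e'_i=\emptyset$ — this is (a) — together with a rainbow $\mathcal{M}$ whose outer edges are $e'_i,e'_j$ and whose middle edge has color $1$. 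Claim~\ref{cl:multi-tight-1} applied to that copy then yields (b).

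The one point that needs care is the \emph{chaining itself}: each application of Claim~\ref{cl:multi-tight-3} requires its input to be a rainbow $\mathcal{M}$ with the \emph{nonadjacent} pair being $(e_i,h)$ and with a distinguished middle edge, and it outputs a rainbow $\mathcal{M}$ of the same shape but with one outer edge replaced — I must check each time that the hypothesis ``$c(h)\notin\{c(e_1),c(e_i)\}$'' holds for the relevant $i$, and that the edge of color $c(e_1)=1$ produced really is the middle edge (this is exactly the content of statement (b) of Claim~\ref{cl:multi-tight-3}, so it is automatic). There is also a degenerate possibility in Claim~\ref{cl:multi-tight-3}(b) that the two outer edges could coincide with previously named edges; but since they carry distinct colors $i,j\geq 2$ and $1$, no collision occurs. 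I expect the main obstacle to be purely bookkeeping: organizing the subcases ($i=2$; $i=3$; $i\geq 4$) so that in each one the correct already-established rainbow $\mathcal{M}$ is available to feed into Claim~\ref{cl:multi-tight-3}, and verifying the color-disjointness conditions at every link of the chain. No genuinely new combinatorial idea beyond Claims~\ref{cl:multi-tight-1}--\ref{cl:multi-tight-3} should be required.
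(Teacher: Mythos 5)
Your overall strategy---iterating Claim~\ref{cl:multi-tight-3} starting from the fixed rainbow $\mathcal{M}=\{e_1,e_2,e_3\}$ so as to reach an arbitrary pair of colors in two hops---is essentially the paper's proof; the paper merely packages the chaining as an induction on $j$ (first the case $j=3$, then passing from $j$ to $j+1$ through an intermediate color $j'$), whereas you do it by explicit case analysis on $i$. Your subcase $i\geq 4$ is complete, and in fact that same two-hop argument handles every $i\geq 2$ uniformly: one application of Claim~\ref{cl:multi-tight-3} to $\{e_1,e_2,e_3\}$, with $e_2$ or $e_3$ as the distinguished outer edge (chosen so that its color differs from $i$) and $h=e'_i$, produces a rainbow $\mathcal{M}$ having $e'_i$ as an outer edge and a color-$1$ middle edge; a second application to that copy, with $e'_i$ as the distinguished outer edge and $h=e'_j$ (legal since $j\notin\{1,i\}$), gives both (a) and (b) at once, because conclusion (b) of Claim~\ref{cl:multi-tight-3} is literally conclusion (b) of Claim~\ref{cl:multi-tight-4}.

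The one genuine soft spot is in your $i=2$ branch (and, in garbled form, in your $i=3$ branch). When you hit the obstruction $c(e'_2)=c(e_2)$ you switch to Claim~\ref{cl:multi-tight-2}, which does give (a); but your justification of (b) then asserts that you have ``produced a rainbow $\mathcal{M}$ with $e'_2,e'_j$ as its two disjoint outer edges and an edge of color $1$ as the middle one.'' You have not: knowing only that $e'_2\cap e'_j=\emptyset$ tells you nothing about the color of a connecting edge---that is precisely statement (b), so as written the step is circular. The repair is immediate and stays inside your toolkit: Claim~\ref{cl:multi-tight-3} allows \emph{either} outer edge of the rainbow $\mathcal{M}$ $\{e_1'',e_2,e'_j\}$ to play the role of ``$e_i$,'' so take the distinguished edge to be $e'_j$ and $h=e'_2$ (legal since $c(e'_2)=2\notin\{1,j\}$); its conclusion (b) then says that every edge meeting both $e'_j$ and $e'_2$ has color $1$, which is exactly what is needed, and only afterwards does the rainbow $\mathcal{M}$ with outer edges $e'_2,e'_j$ and a color-$1$ middle edge exist. (Alternatively, intersect the two color restrictions from Claim~\ref{cl:multi-tight-2}: every edge meeting $e'_2$ has color in $\{1,2\}$ and every edge meeting $e'_j$ has color in $\{1,j\}$, so an edge meeting both has color $1$.) With that fix your plan is sound and matches the published proof in substance.
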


\begin{proof}
We will prove the result by induction on $j$.
Note that by Claim~\ref{cl:multi-tight-3}, for all edges $e'_2\in E(G)$ with $c(e'_2)=2$, we have
\begin{itemize}
\item[{\rm (a$'$)}] $e'_2\cap e_3=\emptyset$;

\item[{\rm (b$'$)}] for all edges $e$ with $e\cap e_3\neq \emptyset$ and $e\cap e'_2\neq \emptyset$, we have $c(e)=c(e_1)$.
\end{itemize}
In particular, for any edge $e'_2$ with $c(e'_2)=2$, there exists an edge of color $c(e_1)$ which together with $e'_2$ and $e_3$ forms a rainbow $\mathcal{M}$ with $e'_2\cap e_3=\emptyset$.
Applying Claim~\ref{cl:multi-tight-3} to such a rainbow $\mathcal{M}$, we can derive that for every pair $(e'_2,e'_3)$ of edges with $c(e'_2)=2$ and $c(e'_3)=3$, we have
\begin{itemize}
\item[{\rm (a$''$)}] $e'_3\cap e'_2=\emptyset$;

\item[{\rm (b$''$)}] for all edges $e$ with $e\cap e'_2\neq \emptyset$ and $e\cap e'_3\neq \emptyset$, we have $c(e)=c(e_1)$.
\end{itemize}
This implies that Claim~\ref{cl:multi-tight-4} holds for $j=3$.
If $|C(G)|=3$, then we are done, so we may assume that $|C(G)|\geq 4$.

Assume that the result holds for $3\leq j<|C(G)|$, and we shall prove it for $j+1$.
For any $2\leq i<j+1$, let $j'\in \{2, \ldots, j\}\setminus \{i\}$.
By the induction hypothesis, for any pair $(e'_{i}, e'_{j'})$ of edges with $c(e'_i)=i$ and $c(e'_{j'})=j'$, the conclusions (a) and (b) hold.
In particular, we have $e'_i\cap e'_{j'}=\emptyset$, and there exists an edge of color $c(e_1)$ which together with $e'_i$ and $e'_{j'}$ forms a rainbow $\mathcal{M}$.
Applying Claim~\ref{cl:multi-tight-3} to such a rainbow $\mathcal{M}$, we can derive that for every pair $(e'_i,e'_{j+1})$ of edges with $c(e'_i)=i$ and $c(e'_{j+1})=j+1\notin \{1, i\}$, we have
\begin{itemize}
\item[{\rm (a$'''$)}] $e'_{j+1}\cap e'_i=\emptyset$;

\item[{\rm (b$'''$)}] for all edges $e$ with $e\cap e'_i\neq \emptyset$ and $e\cap e'_{j+1}\neq \emptyset$, we have $c(e)=c(e_1)$.
\end{itemize}
This completes the proof.
\end{proof}

By Claim~\ref{cl:multi-tight-4}~(a), for all $2\leq i<j\leq |C(H)|$ and all pairs $(e'_i,e'_j)$ of edges with $c(e'_i)=i$ and $c(e'_j)=j$, we have $e'_i\cap e'_j=\emptyset$.
Thus for each $\ell\in [3]$, we can partition $V_{\ell}$ into $|C(G)|-1$ parts $V_{\ell,2}, V_{\ell,3}, \ldots, V_{\ell,|C(G)|}$ such that for every $i\in \{2, 3, \ldots, |C(G)|\}$, all edges within $V_{1,i}\cup V_{2,i}\cup V_{3,i}$ are of color 1 or $i$.
By Claim~\ref{cl:multi-tight-4}~(b), all the remaining edges are of color 1.
This implies that Theorem~\ref{th:multi-tight}~(ii) holds.
The proof of Theorem~\ref{th:multi-tight} is complete.
\hfill$\square$
\vspace{0.3cm}

We now prove Theorem~\ref{th:multi-loose}.

\vspace{0.3cm}
\noindent{\bf Proof of Theorem~\ref{th:multi-loose}.}~
Let $G$ be a rainbow $\mathcal{L}$-free edge-colored $K^{(3)}_{n,n,n}$ with $|C(G)|\geq 3$ and $n\geq 3$.
Let $V_1, V_2, V_3$ be the partite sets of $G$, where $V_1=\{x_1, x_2, \ldots, x_n\}$, $V_2=\{y_1, y_2, \ldots, y_n\}$ and $V_3=\{z_1, z_2, \ldots, z_n\}$.

\begin{claim}\label{cl:multi-loose-1}
There exists no rainbow $S^{(3)}_3$ or $S^{(3)}_2\cup S^{(3)}_1$ in $G$.
\end{claim}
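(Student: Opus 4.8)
The plan is to follow the template of Lemma~\ref{le:loose-1}: assume that $G$ contains a rainbow $S^{(3)}_3$ or a rainbow $S^{(3)}_2\cup S^{(3)}_1$, and in each case produce a rainbow $\mathcal{L}$, using the hypothesis $n\geq 3$ to supply a fresh vertex in each partite set for routing loose paths. By the symmetry among $V_1,V_2,V_3$ I may always place the unique ``apex'' (the centre of $S^{(3)}_3$, respectively the common vertex of the $S^{(3)}_2$-component) in $V_1$, so that after relabelling vertices and colours the configurations take a normalised form.

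For a rainbow $S^{(3)}_3$, write its edges as $A=x_1y_1z_1$, $B=x_1y_2z_2$, $C=x_1y_3z_3$ with $c(A)=1$, $c(B)=2$, $c(C)=3$ (the three edges meet only in $x_1$, so the $y_i$ are pairwise distinct and so are the $z_i$), and fix further vertices $x_2,x_3\in V_1\setminus\{x_1\}$. Consider the edges $f=x_2y_2z_1$ and $g=x_3y_1z_3$. If $c(f)\neq 3$, then, depending on $c(f)$, one of the loose paths $\{C,A,f\}$ (colours $3,1,c(f)$) and $\{C,B,f\}$ (colours $3,2,c(f)$) is rainbow; similarly, if $c(g)\neq 2$, one of $\{B,A,g\}$ (colours $2,1,c(g)$) and $\{B,C,g\}$ (colours $2,3,c(g)$) is a rainbow $\mathcal{L}$. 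Hence $c(f)=3$ and $c(g)=2$, and then $\{f,A,g\}=\{x_2y_2z_1,\,x_1y_1z_1,\,x_3y_1z_3\}$ is a loose path — its consecutive edges meet in $z_1$ and in $y_1$, and its end edges are disjoint — carrying the three distinct colours $3,1,2$, a rainbow $\mathcal{L}$. This rules out a rainbow $S^{(3)}_3$, and this short argument never passes through a rainbow $C^{(3)}_3$, which is why only these two configurations appear in the claim.

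For a rainbow $S^{(3)}_2\cup S^{(3)}_1$, write the $S^{(3)}_2$-component as $e_1=x_1y_1z_1$, $e_2=x_1y_2z_2$ with $c(e_1)=1$, $c(e_2)=2$, and the disjoint edge as $e_3=x_2y_3z_3$ with $c(e_3)=3$. The first step is that $w=x_2y_1z_2$ meets each of $e_1,e_2,e_3$ in exactly one vertex, so (using $e_1\cap e_3=\emptyset$) both $\{e_1,w,e_3\}$ and $\{e_2,w,e_3\}$ are loose paths, of colours $1,c(w),3$ and $2,c(w),3$; since neither may be rainbow, $c(w)=3$, and symmetrically $c(x_2y_2z_1)=3$. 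Applying the absence of a rainbow $S^{(3)}_3$ (just proved) to the star $\{e_1,e_2,x_1y_3z_3\}$ also forces $c(x_1y_3z_3)\in\{1,2\}$. From here I would repeatedly test auxiliary edges on $\{x_1,x_2,x_3\}\times\{y_1,y_2,y_3\}\times\{z_1,z_2,z_3\}$: most of them can be confined to a two-element colour set by placing them in a loose path of the form $\{e_1,e_2,\cdot\}$, $\{e_2,e_1,\cdot\}$, $\{e_2,x_1y_3z_3,\cdot\}$, or a link of $e_3$ to $e_1$ or $e_2$ through the fresh vertex $x_3$. Combining these constraints (and invoking $S^{(3)}_3$-freeness again, now centred at $x_2$) should eventually allow $e_3$ to be spliced together with two differently coloured edges that both avoid colour $3$, producing a rainbow $\mathcal{L}$ — the desired contradiction.

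The $S^{(3)}_3$ part is short and self-contained. The main obstacle is the $S^{(3)}_2\cup S^{(3)}_1$ part: unlike in Lemma~\ref{le:loose-1}, the edge $e_3$ is vertex-disjoint from $e_1\cup e_2$, so it cannot be inserted directly into a loose path, and the natural connecting edges have colours we do not control a priori; handling this requires the $S^{(3)}_3$-free reduction above followed by a somewhat lengthy bookkeeping of forced colours, and it is precisely here that the hypothesis $n\geq 3$ (a third vertex in every partite set) is indispensable.
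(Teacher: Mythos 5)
Your treatment of the rainbow $S^{(3)}_3$ is correct and complete: forcing $c(x_2y_2z_1)=3$ and $c(x_3y_1z_3)=2$ by two pairs of loose paths and then reading off the rainbow loose path $\{x_2y_2z_1,\,x_1y_1z_1,\,x_3y_1z_3\}$ is essentially the paper's own argument (the paper uses the auxiliary edges $x_2y_2z_1$ and $x_3y_3z_2$ and finishes with a loose path whose middle edge is $x_1y_2z_2$, but the two computations are interchangeable).

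The $S^{(3)}_2\cup S^{(3)}_1$ case, however, is not proved. The deductions you actually verify are correct --- $c(x_2y_1z_2)=c(x_2y_2z_1)=3$ from the loose paths $\{e_1,\cdot,e_3\}$ and $\{e_2,\cdot,e_3\}$, and $c(x_1y_3z_3)\in\{1,2\}$ from $S^{(3)}_3$-freeness --- but from there you only assert that further bookkeeping ``should eventually'' splice $e_3$ into a rainbow $\mathcal{L}$; that is a plan, not a proof, and it is precisely the step where the contradiction has to be produced. The missing idea is the connector edge you mention but never use: take $x_3\in V_1\setminus\{x_1,x_2\}$ (possible since $n\geq 3$) and set $g=x_3y_2z_3$, which meets $e_2$ only in $y_2$, meets $e_3$ only in $z_3$, and is disjoint from $e_1$. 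The loose paths $\{e_1,e_2,g\}$ and $\{e_2,g,e_3\}$ force $c(g)\in\{1,2\}\cap\{2,3\}=\{2\}$. Then $\{e_1,\,x_2y_2z_1,\,g\}$ is a loose path (its middle edge meets $e_1$ in $z_1$ and $g$ in $y_2$, and $e_1\cap g=\emptyset$) with colours $1,3,2$, a rainbow $\mathcal{L}$. Equivalently, the three constraints on $c(x_2y_2z_1)$ coming from $\{e_2,\cdot,e_3\}$, $\{e_1,\cdot,e_3\}$ and $\{e_1,\cdot,g\}$ are $\{2,3\}$, $\{1,3\}$ and $\{1,2\}$, whose intersection is empty; this is exactly how the paper closes the case. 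You need to supply this (or an equally explicit) finishing argument; as written, the second half of the claim remains open.
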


\begin{proof}
We first show that $G$ contains no rainbow $S^{(3)}_3$.
For a contradiction, suppose that $G$ contains a rainbow $S^{(3)}_3$, say $\{x_1y_1z_1, x_1y_2z_2, x_1y_3z_3\}$.
In order to avoid a rainbow $\mathcal{L}$, we have $c(x_2y_2z_1)=c(x_1y_3z_3)$ and $c(x_3y_3z_2)=c(x_1y_1z_1)$.
But then $\{x_3y_3z_2, z_2x_1y_2, y_2z_1x_2\}$ is a rainbow $\mathcal{L}$, a contradiction.

We next show that $G$ contains no rainbow $S^{(3)}_2\cup S^{(3)}_1$.
Suppose for a contradiction that $G$ contains a rainbow $S^{(3)}_2\cup S^{(3)}_1$, say $\{x_1y_1z_1, x_1y_2z_2, x_2y_3z_3\}$.
For avoiding a rainbow $\mathcal{L}$, we have $c(y_2x_3z_3)=c(x_1y_2z_2)$.
But then no matter what color is assigned on $x_2y_2z_1$, at least one of $\{x_1z_2y_2, y_2z_1x_2, x_2y_3z_3\}$, $\{x_1y_1z_1, z_1y_2x_2, x_2y_3z_3\}$ and $\{x_1y_1z_1, z_1x_2y_2, y_2x_3z_3\}$ is a rainbow $\mathcal{L}$, a contradiction.
\end{proof}

Recall that $\Delta^{c}(G)\colonequals \max_{v\in V(G)}d^{c}(v)$ is the maximum color degree of $G$.

\begin{claim}\label{cl:multi-loose-2}
$\Delta^{c}(G)\geq 3$.
\end{claim}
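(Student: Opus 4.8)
The plan is to argue by contradiction: assume $\Delta^{c}(G)\le 2$ and derive a contradiction from $|C(G)|\ge 3$ together with the rainbow $\mathcal{L}$-freeness and $n\ge 3$. First I would invoke Observation~\ref{obs:S2-multipar} to fix a rainbow $\mathbb{S}^{(3)}_{2}$; since the claim is symmetric under permuting the partite sets, after permuting the partite sets, relabelling vertices, and renaming colors if necessary we may assume it is $\{x_1y_1z_1,\,x_1y_1z_2\}$ with $c(x_1y_1z_1)=1$ and $c(x_1y_1z_2)=2$. Because $x_1$ and $y_1$ each lie on edges of both colors $1$ and $2$, the assumption $\Delta^{c}(G)\le 2$ forces every edge through $x_1$ and every edge through $y_1$ to receive a color in $\{1,2\}$. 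As $|C(G)|\ge 3$, some edge $g$ has color $3$, and by the previous remark $g$ avoids $x_1$ and $y_1$, so $g=x_ay_bz_c$ with $a,b\ge 2$.

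Next I would pin down the palettes at $x_a$ and $y_b$. The vertex $x_a$ lies on $g$ (color $3$) and on every edge $x_ay_1z_k$, each of which has a color in $\{1,2\}$ because it passes through $y_1$; with $\Delta^{c}(x_a)\le 2$ this shows $x_a$ sees exactly the two colors $3$ and some $\alpha\in\{1,2\}$, and moreover $c(x_ay_1z_k)=\alpha$ for all $k$. Symmetrically $y_b$ sees exactly $\{3,\beta\}$ with $\beta\in\{1,2\}$ and $c(x_1y_bz_k)=\beta$ for all $k$. Now look at $z_c$: it lies on $g$ (color $3$), on $x_ay_1z_c$ (color $\alpha$), and on $x_1y_bz_c$ (color $\beta$); if $\alpha\ne\beta$ this is already three colors at $z_c$, a contradiction, so $\alpha=\beta$, and after possibly interchanging the names of colors $1$ and $2$ we may take $\alpha=\beta=1$. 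Hence $c(x_ay_1z_k)=c(x_1y_bz_k)=1$ for every $k\in[n]$.

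Finally I would produce either a rainbow $\mathcal{L}$ or a vertex of color degree at least $3$. We now have the color-$2$ edge $x_1y_1z_2$, the color-$3$ edge $g=x_ay_bz_c$, and color-$1$ edges $x_ay_1z_d$ for every $d$. If $c\ne 2$, then $x_1y_1z_2$ and $g$ are vertex-disjoint; choosing $d\in[n]\setminus\{2,c\}$ (possible since $n\ge 3$), one checks that $\{x_1y_1z_2,\,x_ay_1z_d,\,x_ay_bz_c\}$ is a loose path whose three edges have colors $2,1,3$, hence a rainbow $\mathcal{L}$, a contradiction. If $c=2$, then $z_2$ lies on $x_1y_1z_2$ (color $2$), on $g=x_ay_bz_2$ (color $3$), and on $x_ay_1z_2$ (color $1$), so $d^{c}(z_2)\ge 3$, again a contradiction. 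Either way the assumption $\Delta^{c}(G)\le 2$ fails, which proves the claim.

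Most of this is routine bookkeeping once the starting $\mathbb{S}^{(3)}_{2}$ is fixed; the step I expect to need the most care is the last one --- checking that the triples I build are genuine loose paths (first and third edges disjoint, consecutive edges meeting in exactly one vertex) and that the indices $a,b,c$ cannot collide with $1$ or $2$ in a way that spoils the edges I invoke. I would also verify that nothing secretly needs $n\ge 4$: the only place a "fresh" index is required is the choice of $d$, which already exists for $n\ge 3$.
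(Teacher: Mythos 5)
Your proof is correct, and it takes a genuinely different route from the paper's. The paper starts from a rainbow loose star $S^{(3)}_2$ (two edges meeting in one vertex), invokes Claim~\ref{cl:multi-loose-1} (no rainbow $S^{(3)}_3$ or $S^{(3)}_2\cup S^{(3)}_1$) to force a color-$3$ edge $e_3$ to meet $e_1\cup e_2$ in exactly two vertices while avoiding the center, and then runs a four-way case analysis on which pair of vertices $e_3$ hits. You instead start from a rainbow tight star $\mathbb{S}^{(3)}_{2}=\{x_1y_1z_1,x_1y_1z_2\}$, which immediately pins the palettes of \emph{two} vertices ($x_1$ and $y_1$) to $\{1,2\}$, forces any color-$3$ edge $g=x_ay_bz_c$ to avoid both, and then propagates the degree bound through $x_a$, $y_b$ and $z_c$ to show all edges $x_ay_1z_k$ and $x_1y_bz_k$ carry a single common color; the contradiction is then a single explicit rainbow $\mathcal{L}$ (or a color-degree-$3$ vertex when $z_c=z_2$). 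Your argument is self-contained modulo Observation~\ref{obs:S2-multipar} and does not need Claim~\ref{cl:multi-loose-1} at all, which makes it arguably more elementary and with fewer cases; the paper's version has the mild advantage of reusing machinery ($S^{(3)}_3$- and $S^{(3)}_2\cup S^{(3)}_1$-freeness) that it needs elsewhere anyway. All the details you flagged as needing care do check out: with $a,b\geq 2$ and $d\in[n]\setminus\{2,c\}$ the triple $\{x_1y_1z_2,\,x_ay_1z_d,\,x_ay_bz_c\}$ really is a loose path (consecutive edges meet in exactly $y_1$, resp.\ $x_a$, and the end edges are disjoint since $z_c\neq z_2$), and $n\geq 3$ suffices throughout.
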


\begin{proof}
For a contradiction, suppose that $\Delta^{c}(G)\leq 2$.
By Observation~\ref{obs:S2-multipar}, $G$ contains a rainbow copy $\{e_1, e_2\}$ of $S^{(3)}_{2}$, say $e_1=x_1y_1z_1$, $e_2=x_1y_2z_2$, $c(e_1)=1$ and $c(e_2)=2$.
Let $e_3\in E(G)$ be an edge of color 3.
By Claim~\ref{cl:multi-loose-1}, $G$ contains no rainbow $S^{(3)}_2\cup S^{(3)}_1$.
Combining with $\Delta^{c}(G)\leq 2$ and the rainbow $\mathcal{L}$-freeness of $G$, we have $x_1\notin e_3$ and $|e_3\cap (e_1\cup e_2)|=2$,
so $e_3\cap (e_1\cup e_2)\in \big\{\{y_1, z_1\}, \{y_2, z_2\}, \{y_1, z_2\}, \{y_2, z_1\}\big\}$.

If $e_3\cap (e_1\cup e_2)=\{y_1, z_1\}$, say $e_3=x_2y_1z_1$, then $c(x_3y_3z_1)\in \{1,3\}$ since $d^{c}(z_1)\leq \Delta^{c}(G)\leq 2$, and $c(x_3y_3z_1)\in \{1,2\}$ since $G$ is rainbow $\mathcal{L}$-free.
Hence, $c(x_3y_3z_1)=1$, but then $\{z_1y_1x_2, z_1y_3x_3, x_1y_2z_2\}$ is a rainbow $S^{(3)}_2\cup S^{(3)}_1$, contradicting Claim~\ref{cl:multi-loose-1}.
Thus $e_3\cap (e_1\cup e_2)\neq \{y_1, z_1\}$, and by symmetry, we also have $e_3\cap (e_1\cup e_2)\neq \{y_2, z_2\}$.

Suppose now $e_3\cap (e_1\cup e_2)=\{y_1, z_2\}$, say $e_3=x_2y_1z_2$.
Since $d^{c}(x_1)\leq \Delta^{c}(G)\leq 2$, we have $c(x_1y_3z_3)\in \{1,2\}$.
But then one of $\{z_3y_3x_1, x_1y_2z_2, z_2y_1x_2\}$ and $\{z_3y_3x_1, x_1z_1y_1, y_1x_2z_2\}$ is a rainbow $\mathcal{L}$, a contradiction.
Thus $e_3\cap (e_1\cup e_2)\neq \{y_1, z_2\}$, and $e_3\cap (e_1\cup e_2)\neq \{y_2, z_1\}$ by symmetry.
This contradiction completes the proof of Claim~\ref{cl:multi-loose-2}.
\end{proof}

\begin{claim}\label{cl:multi-loose-3}
There exist three edges $e_1, e_2, e_3$ of distinct colors with $|e_1\cap e_2|=2$ and $|e_1\cap e_3|=|e_2\cap e_3|=|e_1\cap e_2\cap e_3|=1$.
\end{claim}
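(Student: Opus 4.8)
The plan is to find one vertex of large color degree and argue entirely inside its link. By Claim~\ref{cl:multi-loose-2} there is a vertex $v$ with $d^{c}(v)\ge 3$; relabel so that $v\in V_1$ and edges of colors $1,2,3$ all pass through $v$. View the edges through $v$ as the cells of an $n\times n$ array indexed by $V_2\times V_3$, where cell $(y,z)$ carries the color $c(vyz)$. Two cells lie in a common line of this array exactly when the two edges meet in two vertices, and they lie in distinct rows and distinct columns exactly when the two edges meet only in $v$. One checks that the configuration sought in Claim~\ref{cl:multi-loose-3} necessarily has all three edges through a common vertex; taking that vertex to be $v$, the statement to prove becomes: \emph{the array contains two cells of distinct colors in a common line together with a cell of a third color lying in a row and a column distinct from both}. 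Moreover, by Claim~\ref{cl:multi-loose-1} the array has no ``rainbow transversal'', i.e.\ no three cells in pairwise distinct rows and pairwise distinct columns with three distinct colors (such a triple of edges would be a rainbow $S^{(3)}_3$).

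\medskip
I would then prove the array statement by contradiction. Assume no such pattern exists. Since the array uses at least three colors it is not line-monochromatic (if every row and every column were monochromatic the whole array would be), so fix a line --- say a row --- carrying two distinct colors $\alpha,\beta$, at cells $a$ and $b$. The non-existence of the pattern forces every cell whose color is neither $\alpha$ nor $\beta$ to lie on that row or on the two columns through $a$ and $b$, so all remaining colors sit in a ``plus-shaped'' region. Picking a cell of a third color $\gamma$ in this region and re-running the same confinement for the pairs $\{\alpha,\gamma\}$ and $\{\beta,\gamma\}$ pins $\alpha,\beta,\gamma$ to three further such regions; for $n\ge 4$ these constraints overlap so severely that some cell lying off all the relevant rows and columns must carry a color outside $\{\alpha,\beta,\gamma\}$ --- but that cell together with $a$ and $b$ is exactly the desired pattern, a contradiction. (The variants where the chosen line already has three or more colors, and where no line has three or more colors, run on the same confinement idea --- in the latter one shows every line but one is monochromatic --- and again produce the pattern.) Equivalently, one can argue directly from three edges $f_1,f_2,f_3$ through $v$: two of them, say $f_1,f_2$, share a second vertex (else rainbow $S^{(3)}_3$); if $f_3$ meets each only in $v$ we are done, and otherwise, in the $3$-partite setting, only a ``book'' $\mathbb{S}^{(3)}_3$ on a common pair or an ``L''-shaped chain $f_2$--$f_1$--$f_3$ remains, and probing edges $vyz$ with $y,z$ off the current support shows that unless the probe color falls in a prescribed two-element set a fresh triple is the desired configuration --- and the three relevant two-element sets have empty common intersection, which for $n\ge 4$ leaves a cell that cannot be colored at all.

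\medskip
The delicate point, and the main obstacle, is the boundary case $n=3$: there the support of three edges through $v$ already exhausts $V_2$ or $V_3$, so the counting shortcut above collapses. Here I would fall back to a finite check over all $3\times 3$ color arrays using at least three colors and having no rainbow transversal, confirming that each of them contains the required ``two-in-a-line plus one-off-both'' pattern; whenever the bookkeeping threatens to escape, it produces either a rainbow transversal (already excluded) or, translating back to $G$, a rainbow $S^{(3)}_2\cup S^{(3)}_1$, which Claim~\ref{cl:multi-loose-1} forbids --- and this last step genuinely uses an edge of $G$ outside the link of $v$. So the residual difficulty is organizational: enumerating the handful of extremal $3\times3$ arrays and verifying none of them is a counterexample.
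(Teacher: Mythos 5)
Your framework is exactly the paper's: take a vertex of color degree at least $3$ (Claim~\ref{cl:multi-loose-2}), work inside its link viewed as an $n\times n$ array over $V_2\times V_3$, and use the absence of a rainbow $S^{(3)}_3$ (Claim~\ref{cl:multi-loose-1}) as ``no rainbow transversal''. For $n\geq 4$ your second sketch does essentially close the argument: in the ``L'' case the probe cell $x_1y_3z_3$ must have color in the two-element set $\{c(e_2),c(e_3)\}$ to avoid a rainbow transversal, and either value yields the configuration (this already works for $n=3$); in the ``book'' case any cell $x_1y_iz_j$ with $i\geq 2$ and $j\geq 4$ serves as the third cell for one of the three pairs, since $\{1,2\}\cap\{1,3\}\cap\{2,3\}=\emptyset$. (Your first sketch misstates its own punchline: a cell off all the relevant lines is forced into the \emph{empty} intersection of the allowed color sets, i.e.\ cannot be colored at all, rather than being forced to ``carry a color outside $\{\alpha,\beta,\gamma\}$''; that is repairable but as written the deduction does not follow.)

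The genuine gap is the book case at $n=3$, which you defer to an unperformed finite check and which you suspect may require edges of $G$ outside the link via the $S^{(3)}_2\cup S^{(3)}_1$ part of Claim~\ref{cl:multi-loose-1}. Neither the enumeration nor the extra hypothesis is needed: the paper closes this case inside the link for every $n\geq 3$. Say $e_1=x_1y_1z_1$, $e_2=x_1y_1z_2$, $e_3=x_1y_1z_3$ with $c(e_i)=i$. Since the L--case is already settled, you may assume every edge $x_1y_iz_1$ and $x_1y_iz_2$ has color in $\{1,2\}$ (any other color on such an edge recreates an L with $e_1,e_2$). The three edges $x_1y_2z_1$, $x_1y_3z_2$, $x_1y_1z_3$ pairwise meet only in $x_1$, so avoiding a rainbow $S^{(3)}_3$ forces $c(x_1y_2z_1)=c(x_1y_3z_2)$; if this common value is $1$ then $\{e_2,e_3,x_1y_2z_1\}$ is the desired triple, and if it is $2$ then $\{e_1,e_3,x_1y_3z_2\}$ is. So the pure array statement you isolate is true already for $3\times 3$ arrays with no rainbow transversal, and your proposal is missing precisely this step.
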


\begin{proof}
By Claim~\ref{cl:multi-loose-2}, there exists a vertex, say $x_1$, with color degree at least 3.
Without loss of generality, we may assume that $x_1$ is incident with three edges $e_1, e_2, e_3$ with $c(e_i)=i$ for $i\in [3]$.
By Claim~\ref{cl:multi-loose-1}, $G$ contains no rainbow $S^{(3)}_3$.
Hence, two of the three edges $e_1, e_2, e_3$, say $e_1$ and $e_2$, satisfy $|e_1\cap e_2|=2$.
Without loss of generality, we may further assume that $e_1=x_1y_1z_1$ and $e_2=x_1y_1z_2$.
If $e_1\cap e_3=e_2\cap e_3=\{x_1\}$, then $e_1\cap e_2\cap e_3=\{x_1\}$.
This implies that $|e_1\cap e_3|=|e_2\cap e_3|=|e_1\cap e_2\cap e_3|=1$, and we are done.
Thus we may assume that $e_1\cap e_3\neq \{x_1\}$ or $e_2\cap e_3\neq \{x_1\}$, say $e_1\cap e_3\neq \{x_1\}$.
Then $e_1\cap e_3=\{x_1, z_1\}$ or $e_1\cap e_3=\{x_1, y_1\}$.

If $e_1\cap e_3=\{x_1, z_1\}$, say $e_3=x_1y_2z_1$, then $c(x_1y_3z_3)\in \{2,3\}$ since otherwise $\{x_1y_1z_2, x_1y_2z_1,$ $x_1y_3z_3\}$ is a rainbow $S^{(3)}_3$, contradicting Claim~\ref{cl:multi-loose-1}.
If $c(x_1y_3z_3)=2$ (resp., $c(x_1y_3z_3)=3$), then $x_1y_1z_1, x_1y_2z_1, x_1y_3z_3$ (resp., $x_1y_1z_1, x_1y_1z_2, x_1y_3z_3$) are three edges satisfying the requirement of the claim, and we are done.
Thus we may assume that $e_1\cap e_3\neq \{x_1, z_1\}$, so $e_1\cap e_3=\{x_1, y_1\}$, say $e_3=x_1y_1z_3$.
Moreover, the above argument in fact implies that we may assume that $c(x_1y_iz_1)\in \{1,2\}$ for all $i\in [n]$.
By symmetry, we may also assume that $c(x_1y_iz_2)\in \{1,2\}$ for all $i\in [n]$.
Now we must have $c(x_1y_2z_1)= c(x_1y_3z_2)$, since otherwise $\{x_1y_2z_1, x_1y_3z_2, x_1y_1z_3\}$ is a rainbow $S^{(3)}_3$, contradicting Claim~\ref{cl:multi-loose-1}.
Without loss of generality, we may assume that $c(x_1y_2z_1)= c(x_1y_3z_2)=1$.
Now $x_1y_1z_2, x_1y_1z_3, x_1y_2z_1$ are three edges satisfying the requirement of the claim.
This completes the proof of Claim~\ref{cl:multi-loose-3}.
\end{proof}

\begin{claim}\label{cl:multi-loose-4}
There exist two edges $e_1, e_2$, three distinct colors $c_1, c_2, c_3$ and a vertex $v$ with $v\in e_1\cap e_2$, $|e_1\cap e_2|=2$, $c(e_1)=c_1$ and $c(e_2)=c_2$ such that, for all edges $e\in E(G)$ with $e\cap (e_1\cup e_2)=\{v\}$, we have $c(e)=c_3$.
\end{claim}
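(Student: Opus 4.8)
The plan is to let $v$, $e_1$, $e_2$ come from Claim~\ref{cl:multi-loose-3} and to set $c_i\colonequals c(e_i)$. Concretely, let $e_1,e_2,e_3$ be the three edges produced by Claim~\ref{cl:multi-loose-3}; after permuting the three partite sets, renaming the vertices, and renumbering the colors we may assume $e_1=x_1y_1z_1$, $e_2=x_1y_1z_2$ and $e_3=x_1y_2z_3$ with $c(e_i)=i$, and we put $v=x_1$, $c_1=1$, $c_2=2$, $c_3=3$. Call an edge $e$ \emph{exterior} if $e\cap(e_1\cup e_2)=\{v\}$; equivalently $e=x_1y_jz_k$ with $j\neq1$ and $k\notin\{1,2\}$, and in that case $e\cap e_1=e\cap e_2=\{v\}$. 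Since $e_3$ is itself exterior, once we prove that every exterior edge has color $3$, the condition $c_3\notin\{c_1,c_2\}$ follows from $c(e_3)=3$. So the whole task is: every exterior edge has color $3$.

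The workhorse is Claim~\ref{cl:multi-loose-1}: since $G$ has no rainbow $S^{(3)}_3$, any three edges that pairwise intersect in exactly $\{v\}$ together use at most two colors. I would first handle the exterior edges $e$ for which additionally $e\cap e_3=\{v\}$. For such $e$ the three edges $e_1,e_3,e$ pairwise meet in $\{v\}$ (recall $e_1\cap e_3=\{v\}$ from Claim~\ref{cl:multi-loose-3}), so $c(e)\in\{1,3\}$; applying the same to $e_2,e_3,e$ gives $c(e)\in\{2,3\}$, hence $c(e)=3$.

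It remains to treat an exterior edge $e$ with $|e\cap e_3|=2$, say $e\cap e_3=\{v,u\}$ with $u\in e_3\setminus(e_1\cup e_2)$. The idea is to build a color-$3$ edge $D$ that avoids $e_1\cup e_2$ and meets $e$ exactly in $\{u\}$, and then to sandwich $e$ in a loose path. By rainbow $\mathcal{L}$-freeness, whenever $B$ is disjoint from $e_1$ with $|B\cap e_3|=1$, the triple $\{e_1,e_3,B\}$ is a copy of $\mathcal{L}$, so $c(B)\in\{1,3\}$; likewise $c(B)\in\{2,3\}$ if $B$ is disjoint from $e_2$ with $|B\cap e_3|=1$. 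Hence every edge $D$ that avoids $e_1\cup e_2$ and meets $e_3$ in exactly one vertex (which is then $\neq v$) has $c(D)=3$. For all but the smallest values of $n$ one can choose such a $D$ through $u$ with $D\cap e=\{u\}$, since there is room in the relevant partite set to keep the third vertex of $D$ outside $e_1\cup e_2\cup e_3\cup e$; then $\{e_1,e,D\}$ is a copy of $\mathcal{L}$ (since $e_1\cap e=\{v\}$, $e\cap D=\{u\}$ and $e_1\cap D=\emptyset$), forcing $c(e)\in\{1,3\}$, while $\{e_2,e,D\}$ forces $c(e)\in\{2,3\}$, so $c(e)=3$.

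The step I expect to be the main obstacle is this last one for small $n$ (in particular $n=3$, and certain configurations when $n=4$), where the exterior edges overlapping $e_3$ in two vertices lie too close to the fixed configuration for a witness $D$ of the above shape to exist; these cases have to be handled by a more laborious direct argument. There one would first show, again via loose paths of the form $\{e_1,e_3,B\}$ and $\{e_2,e_3,B\}$, that every edge meeting $e_3$ in exactly one of its two vertices outside $e_1\cup e_2$ and disjoint from $e_1$ or from $e_2$ is forced to color $3$, and then chain such color-$3$ edges with $e_1$ and $e_2$ into loose paths through the remaining exterior edge to pin down its color. Apart from this small-$n$ bookkeeping, the argument is a routine interplay between the absence of rainbow $S^{(3)}_3$ and the absence of rainbow $\mathcal{L}$.
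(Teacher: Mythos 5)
Your reduction to showing that every edge $e$ with $e\cap(e_1\cup e_2)=\{x_1\}$ has color $3$, and your treatment of the case $e\cap e_3=\{x_1\}$ via rainbow $S^{(3)}_3$-freeness, match the paper. For the remaining exterior edges (those meeting $e_3$ in two vertices) your route differs: you look for a color-$3$ witness $D$ disjoint from $e_1\cup e_2$ with $|D\cap e_3|=1$ and $D\cap e=\{u\}$, and sandwich $e$ in the loose paths $\{e_1,e,D\}$ and $\{e_2,e,D\}$. That works, but only when such a $D$ exists, i.e.\ for $e=x_1y_jz_3$ when $n\geq 4$ and for $e=x_1y_2z_i$ when $n\geq 5$. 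Since the claim is asserted for all $n\geq 3$, the cases $n=3$ (the edge $x_1y_3z_3$) and $n=4$ (the edge $x_1y_2z_4$) are not covered, and these are exactly where the difficulty sits.

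The fallback you sketch for these cases does not go through. First, ``disjoint from $e_1$ or from $e_2$'' is too weak: the loose path $\{e_1,e_3,B\}$ only forces $c(B)\in\{1,3\}$ and $\{e_2,e_3,B\}$ only forces $c(B)\in\{2,3\}$, so you need $B$ disjoint from both $e_1$ and $e_2$ to conclude $c(B)=3$. Second, for $n=3$ the only color-$3$ edges this produces are $x_2y_3z_3$ and $x_3y_3z_3$, and each of these meets the remaining exterior edge $x_1y_3z_3$ in two vertices, so no loose path through $x_1y_3z_3$ can be chained with them; the candidates that would form a loose path (e.g.\ $x_2y_2z_3$) meet $e_3$ in two vertices, so their color is not determined by your machinery. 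The paper closes this gap with the rainbow $S^{(3)}_2\cup S^{(3)}_1$-freeness from Claim~\ref{cl:multi-loose-1}, which you never invoke: it first pins down $c(x_2y_jz_3)=3$ for $j\geq 3$ and $c(x_2y_2z_1)=c(x_2y_2z_2)=3$ by mixed loose paths, and then uses these edges as the disjoint third edge of an $S^{(3)}_2\cup S^{(3)}_1$ together with $e_1$ (resp.\ $e_2$) and the target edge. You should either import that tool or supply a genuinely different argument for $n\in\{3,4\}$; as written, the proof is incomplete precisely on the hard configurations.
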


\begin{proof}
By Claim~\ref{cl:multi-loose-3}, there exists three edges $e_1, e_2, e_3$ of distinct colors with $|e_1\cap e_2|=2$ and $|e_1\cap e_3|=|e_2\cap e_3|=|e_1\cap e_2\cap e_3|=1$, say $e_1=x_1y_1z_1$, $e_2=x_1y_1z_2$, $e_3=x_1y_2z_3$ and $c(e_i)=i$ for $i\in [3]$.
We shall show that every edge $e$ with $e\cap (e_1\cup e_2)=\{x_1\}$ satisfies $c(e)=3$, which implies that the claim is true (in which we choose $x_1$ to be $v$).
By Claim~\ref{cl:multi-loose-1}, $G$ contains no rainbow $S^{(3)}_3$.
Thus we have $c(e)=3$ for every edge $e$ with $e\cap (e_1\cup e_2\cup e_3)=\{x_1\}$.
It remains to consider edges $e$ of the form $x_1y_2z_i$ ($i\in [n]\setminus [3]$) or $x_1y_jz_3$ ($j\in [n]\setminus [2]$).

Note that $c(x_2y_jz_3)=3$ for any $j\in [n]\setminus [2]$, since otherwise at least one of $\{x_2y_jz_3, z_3y_2x_1,$ $x_1y_1z_1\}$ and $\{x_2y_jz_3, z_3y_2x_1, x_1y_1z_2\}$ is a rainbow $\mathcal{L}$.
Then $c(x_1y_2z_i)=3$ for every $i\in [n]\setminus [3]$, since otherwise at least one of $\{x_1y_1z_1, x_1y_2z_i, x_2y_3z_3\}$ and $\{x_1y_1z_2, x_1y_2z_i, x_2y_3z_3\}$ is a rainbow $S^{(3)}_2\cup S^{(3)}_1$, contradicting Claim~\ref{cl:multi-loose-1}.
Moreover, we have $c(x_2y_2z_1)=3$, since otherwise at least one of $\{z_1x_2y_2, y_2z_3x_1, x_1y_1z_2\}$ and $\{x_1y_1z_1, z_1y_2x_2, x_2y_3z_3\}$ is a rainbow $\mathcal{L}$.
By symmetry, we also have $c(x_2y_2z_2)=3$.
Then $c(x_1y_jz_3)=3$ for every $j\in [n]\setminus [2]$, since otherwise at least one of $\{x_1y_1z_2, x_1y_jz_3, x_2y_2z_1\}$ and $\{x_1y_1z_1, x_1y_jz_3, x_2y_2z_2\}$ is a rainbow $S^{(3)}_2\cup S^{(3)}_1$, contradicting Claim~\ref{cl:multi-loose-1}.
The result follows.
\end{proof}

\begin{claim}\label{cl:multi-loose-5}
There exist two edges $e_1, e_2$ and three distinct colors $c_1, c_2, c_3$ with $|e_1\cap e_2|=2$, $c(e_1)=c_1$ and $c(e_2)=c_2$ such that, for all edges $e\in E(G)$ with $|e\cap (e_1\cup e_2)|\leq 1$, we have $c(e)=c_3$.
\end{claim}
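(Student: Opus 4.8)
The plan is to keep the very edges $e_1,e_2$ produced by Claim~\ref{cl:multi-loose-4}, together with the colours $c_1,c_2,c_3$ and the vertex $v\in e_1\cap e_2$, and merely to strengthen the conclusion there from ``$c(e)=c_3$ whenever $e\cap(e_1\cup e_2)=\{v\}$'' to ``$c(e)=c_3$ whenever $|e\cap(e_1\cup e_2)|\le 1$''. After the usual normalization we may take $e_1=x_1y_1z_1$ with $c_1=1$, $e_2=x_1y_1z_2$ with $c_2=2$, $v=x_1$ and $c_3=3$, so that $e_1\cup e_2=\{x_1,y_1,z_1,z_2\}$ and $e_1\cap e_2=\{x_1,y_1\}$. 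The first step is to record explicitly the colour-$3$ edges already produced in the proof of Claim~\ref{cl:multi-loose-4}: besides every edge meeting $e_1\cup e_2$ only in $x_1$ (equivalently, every $x_1y_jz_k$ with $j\ge 2$ and $k\ge 3$), that proof also yields $x_2y_jz_3$ for $j\ge 3$ together with $x_2y_2z_1$ and $x_2y_2z_2$. These will play the role of \emph{bridge} edges.

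Next I would dispose of the remaining far edges $f$ — those with $|f\cap(e_1\cup e_2)|\le 1$ but $f\cap(e_1\cup e_2)\ne\{x_1\}$ — case by case, in the order: $f\cap(e_1\cup e_2)=\emptyset$; then $f\cap(e_1\cup e_2)\in\{\{z_1\},\{z_2\}\}$; finally $f\cap(e_1\cup e_2)=\{y_1\}$. The mechanism is uniform: pick a colour-$3$ bridge $g$ so that $\{e_1,g,f\}$ forms a copy of $\mathcal{L}$ — with $e_1$ an end edge when $e_1\cap f=\emptyset$ and $e_1$ the middle edge when $|e_1\cap f|=1$ — so that rainbow-$\mathcal{L}$-freeness (or, when $g$ is disjoint from $f$, the absence of a rainbow $S^{(3)}_2\cup S^{(3)}_1$ guaranteed by Claim~\ref{cl:multi-loose-1}) forces $c(f)\in\{c(e_1),c(g)\}=\{1,3\}$; running the symmetric argument with $e_2$ in place of $e_1$ forces $c(f)\in\{2,3\}$, and intersecting the two gives $c(f)=3$. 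Concretely, for $f$ disjoint from $e_1\cup e_2$ a bridge $x_1y_bz_3$ sharing the vertex $z_3$ with $f$ works; for $f=x_iy_jz_1$ with $i,j\ge 2$ one uses $e_2$ as an end edge of $\{e_2,g,f\}$ and $e_1$ as the middle edge of $\{g',e_1,f\}$, again with bridges of the form $x_1y_bz_3$; the $\{z_2\}$-case is symmetric. By the time the $\{y_1\}$-case is reached, we will already have shown that every $x_ay_bz_1$ and every $x_ay_bz_2$ with $a,b\ge 2$ has colour $3$, and those are exactly the bridges (now used as middle edges, since $e_1\cap f=e_2\cap f=\{y_1\}$) that finish it. Assembling the cases yields the claim, with the original $e_1,e_2$ and $c_1,c_2,c_3$.

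I expect the main obstacle to be the bookkeeping in the tight regime $n=3$, where every coordinate index lies in $\{1,2,3\}$ and there is essentially no slack in the choice of a bridge: one must check edge by edge that a colour-$3$ bridge of exactly the shape the argument requires actually lies in the stock available at that stage, and that the three edges assembled genuinely form a loose path (consecutive edges meeting in exactly one vertex, the two end edges disjoint, seven distinct vertices in all) rather than, say, a $C^{(3)}_3$ or an $S^{(3)}_3$. The one genuinely non-routine organizational point is the order of the cases: the empty, $\{z_1\}$ and $\{z_2\}$ cases must be handled before the $\{y_1\}$-case so that the bridges through $z_1$ and $z_2$ are available when the last case is treated.
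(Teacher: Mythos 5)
Your proposal is essentially the paper's own proof: the same case split on $f\cap(e_1\cup e_2)$ (empty, a singleton in $\{z_1,z_2\}$, the singleton $\{y_1\}$), and the same mechanism of forming two loose paths, one through $e_1$ and one through $e_2$, each completed by an already-known colour-$3$ ``bridge'', to pin $c(f)$ into $\{1,3\}\cap\{2,3\}=\{3\}$. The paper merely treats the $\{y_1\}$ case before the $\{z_1\},\{z_2\}$ cases (which is why it can use an edge disjoint from $e_1\cup e_2$ as the bridge there), whereas your order also works since each of your cases only calls on earlier ones; and the extra colour-$3$ edges you import from the \emph{proof} of Claim~\ref{cl:multi-loose-4} ($x_2y_jz_3$, $x_2y_2z_1$, $x_2y_2z_2$) are never actually needed --- the statement of that claim suffices. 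The one slip is in your description of the $\{y_1\}$ case: you say the bridges $x_ay_bz_1$, $x_ay_bz_2$ ($a,b\ge 2$) are ``used as middle edges.'' Taken literally, a configuration $\{e_1,\,x_ay_bz_1,\,f\}$ with $f=x_ay_1z_c$ has pairwise intersections $\{z_1\},\{x_a\},\{y_1\}$ and is a $C^{(3)}_3$, not an $\mathcal{L}$, and no rainbow-$C^{(3)}_3$-freeness has been established in the multipartite setting (Lemma~\ref{le:loose-1} is proved only for $K^{(3)}_n$, $n\ge 7$), so that version of the step would not close. The correct arrangement --- which your chosen bridges do support --- is to put $f$ in the middle: $\{e_1,\,f,\,x_ay_bz_2\}$ and $\{e_2,\,f,\,x_ay_bz_1\}$ are genuine loose paths (consecutive intersections $\{y_1\}$ and $\{x_a\}$, end edges disjoint, seven vertices), with colours $1,c(f),3$ and $2,c(f),3$ respectively, forcing $c(f)=3$. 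With that correction the argument is complete and coincides with the paper's.
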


\begin{proof}
By Claim~\ref{cl:multi-loose-4}, we may assume that $e_1=x_1y_1z_1$ and $e_2=x_1y_1z_2$ are two edges with $c(e_1)=1$ and $c(e_2)=2$ such that, for all edges $e\in E(G)$ with $e\cap (e_1\cup e_2)=\{x_1\}$, we have $c(e)=3$.
If $e'\cap (e_1\cup e_2)=\emptyset$, say $e'=x_2y_2z_3$, then combining with $c(x_1y_3z_3)=3$ (note that $\{x_1, y_3, z_3\}\cap (e_1\cup e_2)=\{x_1\}$), we have $c(e')=3$ since otherwise at least one of $\{z_1y_1x_1, x_1y_3z_3, z_3y_2x_2\}$ and $\{z_2y_1x_1, x_1y_3z_3, z_3y_2x_2\}$ is a rainbow $\mathcal{L}$.
If $e''\cap (e_1\cup e_2)=\{y_1\}$, say $e''=x_2y_1z_3$, then combining with $c(x_3y_2z_3)=3$ (note that $\{x_3, y_3, z_3\}\cap (e_1\cup e_2)=\emptyset$), we have $c(e'')=3$ since otherwise at least one of $\{x_1z_1y_1, y_1x_2z_3, z_3y_2x_3\}$ and $\{x_1z_2y_1, y_1x_2z_3, z_3y_2x_3\}$ is a rainbow $\mathcal{L}$.
If $e'''\cap (e_1\cup e_2)\in \{\{z_1\}, \{z_2\}\}$, say $e'''=x_2y_2z_1$, then combining with $c(x_1y_2z_3)=3$ (note that $\{x_1, y_2, z_3\}\cap (e_1\cup e_2)=\{x_1\}$) and $c(x_2y_3z_3)=3$ (note that $\{x_2, y_3, z_3\}\cap (e_1\cup e_2)=\emptyset$), we have $c(e''')=3$ since otherwise at least one of $\{z_2y_1x_1, x_1z_3y_2, y_2x_2z_1\}$ and $\{x_1y_1z_1, z_1y_2x_2, x_2y_3z_3\}$ is a rainbow $\mathcal{L}$.
Therefore, for all edges $e\in E(G)$ with $|e\cap (e_1\cup e_2)|\leq 1$, we have $c(e)=3$.
\end{proof}

By Claim~\ref{cl:multi-loose-5}, we may assume that $e_1=x_1y_1z_1$ and $e_2=x_1y_1z_2$ are two edges with $c(e_1)=1$ and $c(e_2)=2$ such that, for all edges $e\in E(G)$ with $|e\cap (e_1\cup e_2)|\leq 1$, we have $c(e)=3$.

\begin{claim}\label{cl:multi-loose-6}
For any edge $e\in E(G)$ with $c(e)\notin \{1,3\}$ $($resp., $c(e)\notin \{2,3\}$$)$, we have $|e\cap e_1|=2$ $($resp., $|e\cap e_2|=2$$)$.
In particular, if $|C(G)|\geq 4$, then for every edge $e\in E(G)$ with $c(e)\geq 4$, we have $\{x_1, y_1\}\subseteq e$.
\end{claim}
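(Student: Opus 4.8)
The plan is to combine the color-$3$ saturation normalized just before this claim (coming from Claim~\ref{cl:multi-loose-5}) with the absence of a rainbow $S^{(3)}_2\cup S^{(3)}_1$ (Claim~\ref{cl:multi-loose-1}). Recall that $e_1\cup e_2=\{x_1,y_1,z_1,z_2\}$ with $c(e_1)=1$, $c(e_2)=2$, and that every edge $e$ with $|e\cap(e_1\cup e_2)|\le 1$ has color $3$. First I would fix an edge $e$ with $c(e)\notin\{1,3\}$ and locate the possible bad shapes: since $c(e)\neq 3$, Claim~\ref{cl:multi-loose-5} forces $|e\cap(e_1\cup e_2)|\ge 2$; as $e$ meets $V_3$ in exactly one vertex it contains at most one of $z_1,z_2$, hence it contains at least one of $x_1,y_1$, so $|e\cap e_1|\ge 1$. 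Since $e_1\cup e_2=e_1\cup\{z_2\}$, the case $|e\cap e_1|=1$ forces $z_2\in e$, and then $z_1\notin e$, so the unique vertex of $e\cap e_1$ lies in $\{x_1,y_1\}$; that is, $e=x_1y_jz_2$ with $y_j\neq y_1$, or $e=x_jy_1z_2$ with $x_j\neq x_1$. It therefore suffices to exclude these two shapes.

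Both are excluded the same way. In the first, $\{e_1,e\}$ is a rainbow $S^{(3)}_2$ centered at $x_1$, its edges having colors $1$ and $c(e)\neq 1$. Using $n\ge 3$, I would pick $x_a\in V_1\setminus\{x_1\}$, $y_b\in V_2\setminus\{y_1,y_j\}$ and $z_c\in V_3\setminus\{z_1,z_2\}$, and set $f=x_ay_bz_c$; then $f$ is disjoint from $e_1\cup e$, so in particular $f\cap(e_1\cup e_2)=\emptyset$ and hence $c(f)=3$. Now $\{e_1,e,f\}$ is a rainbow $S^{(3)}_2\cup S^{(3)}_1$ with the three distinct colors $1$, $c(e)$, $3$, contradicting Claim~\ref{cl:multi-loose-1}; the shape $e=x_jy_1z_2$ is handled identically after exchanging the roles of $V_1$ and $V_2$. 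This proves $|e\cap e_1|=2$. Swapping the colors $1\leftrightarrow 2$ and the vertices $z_1\leftrightarrow z_2$ leaves the hypothesis of Claim~\ref{cl:multi-loose-5} unchanged, so the same argument yields $|e\cap e_2|=2$ for every edge $e$ with $c(e)\notin\{2,3\}$, which is the parenthetical assertion.

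For the last part, assume $|C(G)|\ge 4$ and take $e$ with $c(e)\ge 4$; then $c(e)\notin\{1,3\}$ and $c(e)\notin\{2,3\}$, so $|e\cap e_1|=|e\cap e_2|=2$. Writing $e=x_ay_bz_c$: if $z_c=z_1$, then $|e\cap e_2|=2$ together with $z_2\neq z_1$ forces $x_a=x_1$ and $y_b=y_1$, so $e=e_1$, which is impossible since $c(e_1)=1$; symmetrically $z_c=z_2$ gives $e=e_2$, again impossible; hence $z_c\notin\{z_1,z_2\}$, and then $|e\cap e_1|=2$ forces $\{x_1,y_1\}\subseteq e$, as required. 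I expect the only point needing care to be verifying that the color-$3$ edge $f$ always exists, which is precisely where the hypothesis $n\ge 3$ enters (it guarantees a spare vertex in each partite set); the remaining steps are routine case checks using the earlier claims.
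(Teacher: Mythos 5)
Your proof is correct and follows essentially the same strategy as the paper: use Claim~\ref{cl:multi-loose-5} to rule out $|e\cap e_1|\le 0$ and to manufacture a third edge of color $3$ that turns the remaining case $|e\cap e_1|=1$ into a rainbow forbidden configuration. The only (harmless) difference is that you first pin down $z_2\in e$ and then contradict Claim~\ref{cl:multi-loose-1} via a rainbow $S^{(3)}_2\cup S^{(3)}_1$, whereas the paper builds a rainbow $\mathcal{L}$ directly; your variant has the minor advantage that the auxiliary color-$3$ edge visibly exists for all $n\ge 3$ without having to choose the attachment vertex $u$ carefully.
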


\begin{proof}
By symmetry, we only show that if $c(e)\notin \{1,3\}$, then $|e\cap e_1|=2$.
If $e\cap e_1=\emptyset$, then $|e\cap (e_1\cup e_2)|\leq 1$, so $c(e)=3\in \{1,3\}$.
If $|e\cap e_1|=1$, then $e\setminus (e_1\cup e_2)\neq \emptyset$.
Let $u\in e\setminus (e_1\cup e_2)$ and $v,w\in V(G)\setminus (e_1\cup e_2\cup e)$ with $uvw\in E(G)$.
Note that $\{u,v,w\}\cap (e_1\cup e_2)=\emptyset$, so $c(uvw)=3$.
Then $c(e)\in \{1,3\}$ since otherwise $\{e_1, e, uvw\}$ is a rainbow $\mathcal{L}$.
Therefore, if $c(e)\notin \{1,3\}$, then $|e\cap e_1|=2$.
By symmetry, if $c(e)\notin \{2,3\}$, then $|e\cap e_2|=2$.
Moreover, if $c(e)\geq 4$, then $|e\cap e_1|=|e\cap e_2|=2$.
Since $|e\cap \{z_1, z_2\}|\leq 1$, we have $\{x_1, y_1\}\subseteq e$.
\end{proof}

In the following, we assume that Theorem~\ref{th:multi-loose}~(i) does not hold, and we shall show that (ii) or (iii) holds.
If Theorem~\ref{th:multi-loose}~(i) does not hold, then $G$ contains an edge $h$ with $|h\cap \{x_1,y_1\}|\leq 1$ and $c(h)\neq 3$ (here we choose color 3 to be the color $i$ in Theorem~\ref{th:multi-loose}~(i)).
By Claim~\ref{cl:multi-loose-6}, we have either $c(h)=1$ and $|h\cap e_2|=2$, or $c(h)=2$ and $|h\cap e_1|=2$.
Without loss of generality, we may assume that $c(h)=2$ and $|h\cap e_1|=2$, say $h=x_1y_2z_1$.
By an analogous argument as Claim~\ref{cl:multi-loose-6}, we can derive that if $e$ is an edge with $c(e)\notin \{2,3\}$, then $|e\cap e_2|=|e\cap h|=2$.
Hence, except for $e_1$, only $x_1y_2z_2$ is possible to have a color in $C(G)\setminus \{2,3\}$.
Since $|\{x_1, y_2, z_2\}\cap e_1|=1$, we have $c(x_1y_2z_2)\in \{1,3\}$ by Claim~\ref{cl:multi-loose-6}.
In particular, this implies that $|C(G)|=3$.

If $c(x_1y_2z_2)=3$, then $e_1$ is the unique edge of color 1 from the above argument, and by Claim~\ref{cl:multi-loose-6}, every edge $f$ of color 2 satisfies $|e_1\cap f|=2$.
This implies that Theorem~\ref{th:multi-loose}~(ii) holds.
If $c(x_1y_2z_2)=1$, then by an analogous argument as Claim~\ref{cl:multi-loose-6}, we can derive that if $e$ is an edge with $c(e)\notin \{1,3\}$, then $|e\cap e_1|=|e\cap \{x_1, y_2, z_2\}|=2$.
Thus $G$ contains no edge of color 2 except for $e_2$ and $h$.
This implies that Theorem~\ref{th:multi-loose}~(iii) holds.
\hfill$\square$
\vspace{0.3cm}

In the following, we shall prove the Ramsey-type results presented in Section~\ref{subsec:multipartite}.

\vspace{0.3cm}
\noindent{\bf Proof of Proposition~\ref{prop:constrained-multipar}.}~
If $H$ and $G$ satisfy the condition in the proposition, then by Theorem~\ref{th:canonical-multipar}, there exists an integer $n$ such that every edge-coloring of $K^{(r)}_{n, \ldots, n}$ contains a $J$-canonical edge-colored $K^{(r)}_{t, \ldots, t}$ for some $J\subseteq [r]$, and thus contains either a monochromatic copy of $H$ or a rainbow copy of $G$.
This implies that $f'(H,G)$ exists.
If for all positive integers $t$ and some subset $J\subseteq [r]$, $H$ is not isomorphic to any monochromatic subgraph in any $J$-canonical edge-coloring of $K^{(r)}_{t, \ldots, t}$ and $G$ is not isomorphic to any rainbow subgraph in any $J$-canonical edge-coloring of $K^{(r)}_{t, \ldots, t}$, then for every positive integer $n$, we can take the $J$-canonical edge-coloring of $K^{(r)}_{n, \ldots, n}$ which contains neither a monochromatic copy of $H$ nor a rainbow copy of $G$.
This implies that $f'(H,G)$ does not exist.
\hfill$\square$
\vspace{0.3cm}

\vspace{0.3cm}
\noindent{\bf Proof of Theorem~\ref{th:Ram-tight-messy-multipar}.}~
Let $H$ be a monochromatic subgraph in a $J$-canonical edge-coloring of $K^{(3)}_{t, t, t}$ with $|J|=1$ for some positive integer $t$.
We may assume that $H$ contains no isolated vertices.
Then $H$ is a subgraph of $K^{(3)}_{1,t,t}$ and $H$ is connected.
We may choose $t$ to be the smallest integer such that $H\subseteq K^{(3)}_{1,t,t}$.
Then $R'_2(H)\geq t$.

Since $|E(\mathcal{T})|=|E(\mathcal{M})|=3$, there is no rainbow $\mathcal{T}$ or $\mathcal{M}$ in a 2-edge-colored $K^{(3)}_{n,n,n}$.
Hence, we have $f'(H, \mathcal{T})\geq R'_2(H)$ and $f'(H, \mathcal{M})\geq R'_2(H)$.
Next, we shall show that $f'(H, \mathcal{T})\leq R'_2(H)$ and $f'(H, \mathcal{M})\leq R'_2(H)$.

Let $G$ be an edge-colored $K^{(3)}_{n,n,n}$ with partite sets $V_1$, $V_2$ and $V_3$, where $n=R'_2(H)$.
Suppose that $G$ contains no rainbow $\mathcal{T}$, and we shall show that $G$ contains a monochromatic $H$.
If $|C(G)|\leq 2$, then $G$ contains a monochromatic $H$ since $n= R'_2(H)$.
Thus we may assume that $|C(G)|\geq 3$.
If Theorem~\ref{th:multi-tight}~(i) holds, then $G$ contains a monochromatic $K^{(3)}_{1,n,n}$, which contains a copy of $H$.
If Theorem~\ref{th:multi-tight}~(ii) holds, then for every $\ell\in [3]$, we can partition $V_{\ell}$ into $|C(G)|-1$ parts $V_{\ell,2}, V_{\ell,3}, \ldots, V_{\ell,|C(G)|}$ such that all edges within $V_{1,i}\cup V_{2,i}\cup V_{3,i}$ are of color 1 or $i$ for every $i\in \{2, 3, \ldots, |C(G)|\}$, and all the remaining edges are of color 1.
Let $G'$ be an auxiliary 2-edge-colored $K^{(3)}_{n,n,n}$ obtained from $G$ by recoloring all edges of colors in $\{3, \ldots, |C(G)|\}$ with color 2.
Then $G'$ contains a monochromatic $H$ since $n=R'_2(H)$.
If such a monochromatic $H$ is of color 1, then $G$ also contains a monochromatic $H$ of color 1.
If such a monochromatic $H$ is of color 2, then since $H$ is connected, $G$ also contains a monochromatic $H$ of some color in $\{2, \ldots, |C(G)|\}$.
Thus $G$ contains a monochromatic $H$, which implies that $f'(H, \mathcal{T})\leq R'_2(H)$.

For $f'(H, \mathcal{M})$, note that by Theorem~\ref{th:multi-messy}, a rainbow $\mathcal{M}$-free edge-colored $K^{(3)}_{n,n,n}$ with at least three colors also has the structure as described in Theorem~\ref{th:multi-tight}~(i).
Thus we can also derive that $f'(H, \mathcal{M})\leq R'_2(H)$ by an analogous argument as above.
\hfill$\square$
\vspace{0.3cm}

\noindent{\bf Proof of Theorem~\ref{th:Ram-loose-multipar}.}~
Since $|E(\mathcal{L})|=3$, there is no rainbow $\mathcal{L}$ in any 2-edge-colored $K^{(3)}_{n,n,n}$.
Thus we have $f'(H, \mathcal{L})\geq R'_2(H)$.
We next show that $f'(H, \mathcal{L})\leq R'_2(H)$, and thus $f'(H, \mathcal{L})= R'_2(H)$.

Let $G$ be an edge-colored $K^{(3)}_{n,n,n}$ with partite sets $V_1$, $V_2$ and $V_3$, where $n=R'_2(H)\geq t(H)+1$.
Suppose that $G$ contains no rainbow $\mathcal{L}$, and we shall show that $G$ contains a monochromatic $H$.
If $|C(G)|\leq 2$, then $G$ contains a monochromatic $H$ since $n= R'_2(H)$.
Thus we may assume that $|C(G)|\geq 3$.
Note that in each of the three structures as described in Theorem~\ref{th:multi-loose}~(i), (ii) and (iii), there exists a monochromatic $K^{(3)}_{n-1,n-1,n-1}$.
Since $n=R'_2(H)\geq t(H)+1$, there is a monochromatic $H$ in $G$.
This completes the proof of Theorem~\ref{th:Ram-loose-multipar}.
\hfill$\square$
\vspace{0.3cm}

\noindent{\bf Proof of Theorem~\ref{th:anti-Ram-multipar}.}~
For the lower bounds, we construct three edge-colored copies $G_1$, $G_2$ and $G_3$ of $K^{(3)}_{n,n,n}$ as follows.
Let $V_1, V_2, V_3$ be the partite sets with $V_1=\{x_1, x_2, \ldots, x_n\}$, $V_2=\{y_1, y_2, \ldots, y_n\}$ and $V_3=\{z_1, z_2, \ldots, z_n\}$.
In $G_1$, we color the edges such that for each $i\in [n]$, all edges containing $x_i$ are of color $i$.
In $G_2$, we color the edges such that $c(x_iy_iz_i)=i$ for each $i\in [n]$, and all the remaining edges are of color $n+1$.
In $G_3$, we color the edges such that $c(x_1y_1z_i)=i$ for each $i\in [n]$, and all the remaining edges are of color $n+1$.
Note that every vertex in the middle edge of $\mathcal{M}$ has degree 2 in $\mathcal{M}$.
Since for every vertex $x\in V_1$, all edges containing $x$ in $G_1$ are of the same color, there is no rainbow $\mathcal{M}$ in $G_1$.
Thus $ar(K^{(3)}_{n, n, n},\mathcal{M})\geq n+1$.
Moreover, since $|V(\mathcal{T})|=5$, every subgraph $\mathcal{T}$ in $G_2$ contains at least two edges of color $n+1$, so there is no rainbow $\mathcal{T}$ in $G_2$.
Thus $ar(K^{(3)}_{n, n, n},\mathcal{T})\geq n+2$.
Furthermore, since every subgraph $\mathcal{L}$ in $G_3$ contains at least two edges of color $n+1$, there is no rainbow $\mathcal{L}$ in $G_3$.
Thus $ar(K^{(3)}_{n, n, n},\mathcal{L})\geq n+2$.

For the upper bounds, if $G$ is an edge-colored $K^{(3)}_{n,n,n}$ with $|C(G)|\geq n+1$ (resp., $|C(G)|\geq n+2$), then by Theorems~\ref{th:multi-tight}, \ref{th:multi-messy} and \ref{th:multi-loose}, $G$ contains a rainbow $\mathcal{M}$ (resp., $\mathcal{T}$ or $\mathcal{L}$).
This implies that $ar(K^{(3)}_{n, n, n},\mathcal{M})\leq n+1$, $ar(K^{(3)}_{n, n, n},\mathcal{T})\leq n+2$ and $ar(K^{(3)}_{n, n, n},\mathcal{L})\leq n+2$.
Therefore, we have $ar(K^{(3)}_{n, n, n},\mathcal{M})=n+1$ and $ar(K^{(3)}_{n, n, n},\mathcal{T})=ar(K^{(3)}_{n, n, n},\mathcal{L})=n+2.$
\hfill$\square$

\begin{remark}\label{re:multipar}
{\rm
One can check that the above proofs of Theorems~\ref{th:multi-tight}, \ref{th:multi-messy} and \ref{th:multi-loose} still work for edge-colorings of $K^{(3)}_{n_1, n_2, n_3}$ with $n_3\geq n_2\geq n_1\geq 3$.
Accordingly, Theorem~\ref{th:anti-Ram-multipar} can be generalized as follows: for integers $n_3\geq n_2\geq n_1\geq 3$, we have
$ar(K^{(3)}_{n_1, n_2, n_3},\mathcal{T})=\max\{n_3+1, n_1+2\}$,
$ar(K^{(3)}_{n_1, n_2, n_3},\mathcal{M})=n_3+1$ and
$ar(K^{(3)}_{n_1, n_2, n_3},\mathcal{L})=n_3+2$.
}
\end{remark}

\section{Concluding remarks}
\label{sec:conclu}

In this paper, we characterize the structures of edge-colored $K_n^{(3)}$ without rainbow $\mathcal{T}$, $\mathcal{M}$ and $\mathcal{L}$, respectively.
As applications, we obtain several results related to constrained Ramsey numbers and anti-Ramsey numbers.
In particular, we reduce $f(H, G)$ to $R_2(H)$ for $G\in \{\mathcal{T}, \mathcal{M}, \mathcal{L}\}$ and infinitely many 3-uniform hypergraphs $H$.
In light of our Theorems~\ref{th:Ram-tight} and \ref{th:Ram-loose}, we pose the following two problems for further research.

\begin{problem}\label{prob:constrain-tight}
Given a disconnected 3-graph $H$, does it hold that $f(H, \mathcal{T})=R_2(H)$?
\end{problem}

\begin{problem}\label{prob:constrain-loose}
Given a 3-graph $H$ with $R_2(H)=|V(H)|\geq 7$, does it hold that $f(H, \mathcal{L})=R_2(H)$?
\end{problem}

Our main results focus on 3-uniform paths of length three.
For higher uniformity or longer length, there are numerous different ways to define a path.
Even for a tight or loose path $P$, the structures of rainbow $P$-free edge-colorings are more complicated.
Therefore, we propose the following problem as the first step towards solving the problem for paths of higher uniformity or longer length.

\begin{problem}\label{prob:long} Let $n$ be a positive integer.
\begin{itemize}
\item[{\rm (i)}] For the 3-uniform tight path $\mathbb{P}^{(3)}_{4}$ or loose path $P^{(3)}_4$, characterize the structures of edge-colored $K^{(3)}_n$ without rainbow $\mathbb{P}^{(3)}_{4}$ and $P^{(3)}_4$, respectively.

\item[{\rm (ii)}] For $r\geq 4$, characterize the structures of edge-colored $K^{(r)}_n$ without rainbow $\mathbb{P}^{(r)}_{3}$ and $P^{(r)}_3$, respectively.
\end{itemize}
\end{problem}

For general hypergraphs $H$ and $G$, we can also study the constrained Ramsey numbers $f(H,G)$.
But we shall point out that the value $f(H,G)$ does not always exist.
This in fact follows from the Canonical Ramsey Theorem of Erd\H{o}s and Rado~\cite{ErRa}.
Let $H$ be an $r$-graph with an ordered vertex set $\{v_1, v_2, \ldots, v_t\}$, i.e., $v_1<v_2<\cdots <v_t$.
Given a set $J\subseteq [r]$ and an edge $e=\{v_{i_1}, v_{i_2}, \ldots, v_{i_r}\}\in E(H)$ with $i_1<i_2<\cdots <i_r$, we write $e : J =\{v_{i_j}: j\in J\}$.
We say an edge-coloring of $H$ is {\it $J$-Canonical} if for all edges $e,e'\in E(H)$, we have $c(e)=c(e')$ if and only if $e : J=e' : J$.
Note that for a fixed $J\subseteq [r]$, after renumbering the colors, all $J$-Canonical edge-colorings of $H$ are the same.

\begin{theorem}[Erd\H{o}s-Rado Canonical Ramsey Theorem~\cite{ErRa}]\label{th:canonical}
For any integers $r\geq 2$ and $t\geq 1$, there exists an integer $n$ such that in every edge-coloring of $K^{(r)}_{n}$, there is a $J$-Canonical edge-colored $K^{(r)}_{t}$ for some ordering of $V(K^{(r)}_{t})$ and some subset $J\subseteq [r]$.
\end{theorem}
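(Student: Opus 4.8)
The plan is to deduce Theorem~\ref{th:canonical} from the ordinary finite hypergraph Ramsey theorem applied to a suitable bounded coloring of $2r$-element sets; this is the classical argument of Erd\H{o}s and Rado. Fix $r\ge 2$ and $t\ge 1$, and let $m$ be a sufficiently large integer in terms of $r$ and $t$ (taking $m=\max\{t,2r\}$ suffices, with a little care in the routing steps below). View an edge-coloring of $K^{(r)}_n$ as a coloring $c$ of $\binom{[n]}{r}$. For a $2r$-set $S=\{s_1<\cdots<s_{2r}\}$ and an $r$-element set of positions $A\subseteq[2r]$, write $S_A=\{s_i:i\in A\}$, and define an auxiliary coloring $\chi$ of $\binom{[n]}{2r}$ by letting $\chi(S)$ be the equivalence relation on $\binom{[2r]}{r}$ in which $A$ and $B$ are related iff $c(S_A)=c(S_B)$. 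The number of equivalence relations on a set of size $\binom{2r}{r}$ is a finite number $N=N(r)$, so $\chi$ uses at most $N$ colors; by the finite hypergraph Ramsey theorem there is an integer $n$ such that every such $\chi$ has a monochromatic $m$-set $Y\subseteq[n]$. Fix this $n$, endow $Y$ with the order inherited from $[n]$, and let $\approx$ denote the common equivalence relation on $\binom{[2r]}{r}$.

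The point of the monochromaticity of $\chi$ on $Y$ is that, for any two $r$-subsets $e=\{e_1<\cdots<e_r\}$ and $f=\{f_1<\cdots<f_r\}$ of $Y$, whether $c(e)=c(f)$ holds is governed by the single fixed relation $\approx$: since $|e\cup f|\le 2r$, we may take a $2r$-window $S\subseteq Y$ containing $e\cup f$, and then $c(e)=c(f)$ holds iff the position-sets of $e$ and $f$ inside $S$ are $\approx$-related, an answer that does not vary with the choice of $S$. Now define $J\subseteq[r]$ to be the set of indices $j$ for which there exist $r$-subsets $e,f$ of $Y$ with $e_i=f_i$ for all $i\ne j$ but $c(e)\ne c(f)$. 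A pair of $r$-subsets differing in a single fixed coordinate has exactly one order type, so by the preceding observation $j\notin J$ means precisely that replacing the $j$-th entry of \emph{any} $r$-subset of $Y$ by any other admissible value \emph{always} leaves the color unchanged; thus $J$ is well defined. Since $|Y|=m\ge t$, it suffices to show that $c$ restricted to $\binom{Y}{r}$ is $J$-Canonical, that is, for all $e,f\in\binom{Y}{r}$ one has $c(e)=c(f)$ if and only if $e_j=f_j$ for every $j\in J$; any $t$-element subset of $Y$ then gives the required $J$-Canonical $K^{(r)}_t$.

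The ``if'' direction is straightforward: if $e_j=f_j$ for every $j\in J$, transform $e$ into $f$ by correcting its coordinates outside $J$ one at a time, routing the changes through admissible increasing tuples inside $Y$ (possible because $m$ is large); each such single move preserves the color by the definition of $J$, so $c(e)=c(f)$. For the ``only if'' direction we must show that if $e$ and $f$ differ in some coordinate of $J$, then $c(e)\ne c(f)$. Using ``if''-type moves we may first arrange that $e$ and $f$ agree in every coordinate outside $J$; the remaining issue is that the color-changes incurred while successively correcting the $J$-coordinates of $e$ into those of $f$ might a priori cancel out. This is exactly where the full strength of the monochromaticity of $\chi$ is used: it makes the relation ``$c(\cdot)=c(\cdot)$'' on $\binom{Y}{r}$ depend only on the order type of the pair for \emph{all} pairs of $r$-subsets, including disjoint or widely separated ones inside a $2r$-window, and this rigidity forces the assignment $e\mapsto(e_j)_{j\in J}$ to determine $c(e)$ \emph{injectively}, ruling out the cancellation and giving $c(e)\ne c(f)$.

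I expect the ``only if'' direction to be the genuine obstacle. It is the reason the auxiliary coloring is placed on $2r$-sets rather than on $(r+1)$-sets: comparisons between $r$-subsets of $Y$ that are far apart are precisely what is needed to promote the one-step invariance used to define $J$ into global injectivity of $e\mapsto(e_j)_{j\in J}$. The remaining ingredients — the finite hypergraph Ramsey theorem, the finiteness of $N(r)$, and the routing in the ``if'' direction — are standard.
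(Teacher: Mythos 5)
The paper does not actually prove Theorem~\ref{th:canonical}; it is quoted from Erd\H{o}s and Rado~\cite{ErRa} as a known classical result, so there is no in-paper argument to compare against. Your proposal follows the standard classical proof scheme: the auxiliary coloring of $2r$-sets by the induced equivalence relation on $\binom{[2r]}{r}$, the application of the finite hypergraph Ramsey theorem, the resulting order-type invariance of the relation ``$c(e)=c(f)$'' on $\binom{Y}{r}$, and the definition of $J$ via single-coordinate changes are all exactly right, and the ``if'' direction (routing through single-coordinate moves) is correct modulo routine care with the size of $m$ and the order of the moves.

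However, there is a genuine gap, and you have located it yourself: the ``only if'' direction is asserted rather than proved. The sentence claiming that order-type invariance ``forces the assignment $e\mapsto(e_j)_{j\in J}$ to determine $c(e)$ injectively, ruling out the cancellation'' is precisely the content of the canonical Ramsey theorem, not a consequence one can read off from monochromaticity of $\chi$. What must be shown is that every order-type-invariant equivalence relation on $\binom{Y}{r}$ (for $Y$ large) is of the canonical form $e\sim f\iff e_j=f_j$ for all $j\in J$; a priori, the relation obtained after Ramsey could identify two $r$-sets that disagree in several essential coordinates without any single essential coordinate being individually inessential, and one must rule this out. The standard way to do so is a propagation argument: assuming $c(e)=c(f)$ with $e_j\neq f_j$ for some $j\in J$, one uses order-type invariance to replace $e$ and $f$ by representatives in convenient relative position, and then exploits transitivity of ``$c(\cdot)=c(\cdot)$'' across a carefully chosen chain of auxiliary $r$-sets inside a $2r$-window to manufacture a pair differing in only the $j$-th coordinate yet having equal colors, contradicting $j\in J$. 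This requires a case analysis over the possible intersection/interleaving patterns of $e$ and $f$ and is the technical heart of Erd\H{o}s and Rado's proof; without it the argument is incomplete. The surrounding scaffolding is correct, but this step needs to be carried out, not just flagged as ``where the full strength of the monochromaticity is used.''
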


\begin{proposition}\label{prop:constrained}
The constrained Ramsey number $f(H,G)$ exists if and only if for some positive integer $t$ and each subset $J\subseteq [r]$,
$H$ is isomorphic to a monochromatic subgraph in a $J$-Canonical edge-coloring of $K^{(r)}_{t}$
or $G$ is isomorphic to a rainbow subgraph in a $J$-Canonical edge-coloring of $K^{(r)}_{t}$.
\end{proposition}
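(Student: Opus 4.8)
The plan is to mirror the proof of Proposition~\ref{prop:constrained-multipar} line for line, replacing the multipartite canonical theorem (Theorem~\ref{th:canonical-multipar}) by the Erd\H{o}s--Rado Canonical Ramsey Theorem (Theorem~\ref{th:canonical}) and $K^{(r)}_{t,\ldots,t}$ by $K^{(r)}_t$ throughout. For the ``if'' direction, assume the displayed condition holds for some fixed $t$. Applying Theorem~\ref{th:canonical} with parameters $r$ and this $t$, there is an integer $n$ so that every edge-coloring of $K^{(r)}_n$ contains a $J$-Canonical edge-colored $K^{(r)}_t$ for some ordering of its vertex set and some $J\subseteq[r]$. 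By hypothesis this $J$-Canonical $K^{(r)}_t$ contains a monochromatic copy of $H$ or a rainbow copy of $G$; hence every edge-coloring of $K^{(r)}_n$ contains one of the two, so $f(H,G)\le n$ exists.

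For the ``only if'' direction I would argue the contrapositive, with one extra observation not needed in the forward direction: the two relevant properties are monotone in $t$. Indeed, viewing $[t]$ as an initial segment of $[t']$ for $t\le t'$, if $H$ is isomorphic to a monochromatic subgraph of the $J$-Canonical coloring of $K^{(r)}_t$ then the same embedding works in $K^{(r)}_{t'}$, and likewise for ``$G$ is a rainbow subgraph''. Now suppose the displayed condition fails. Then for every positive integer $t$ there is a subset $J(t)\subseteq[r]$ such that $H$ is not isomorphic to any monochromatic subgraph of the $J(t)$-Canonical coloring of $K^{(r)}_t$ and $G$ is not isomorphic to any rainbow subgraph of it. Since $[r]$ has only $2^{r}$ subsets, some single $J_0$ occurs as $J(t)$ for infinitely many $t$, and by the monotonicity just noted $J_0$ then witnesses failure for every $t$. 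Consequently, for every $n$ the $J_0$-Canonical edge-coloring of $K^{(r)}_n$ (which is unique up to renumbering colors) contains neither a monochromatic $H$ nor a rainbow $G$, so $f(H,G)$ does not exist.

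The argument is routine once the quantifier over $J$ is handled; the only genuine subtlety is this uniformization step, that is, extracting a single $J_0$ which works for all $t$, and this is exactly where the finiteness of the set of subsets of $[r]$ together with monotonicity in $t$ is used. As a sanity check one should note that a monochromatic subgraph of an $\emptyset$-Canonical coloring is an arbitrary subgraph, so $H$ always embeds taking $t=|V(H)|$ and $J=\emptyset$, and similarly $G$ always embeds into a rainbow $[r]$-Canonical coloring; hence the displayed condition can only fail because of subsets $J$ with $1\le|J|\le r-1$, matching the earlier remark that $f(H,\mathcal{T})$ and $f(H,\mathcal{M})$ exist only for restricted $H$.
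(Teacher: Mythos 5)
Your proof is correct and follows essentially the same route as the paper: Theorem~\ref{th:canonical} gives the ``if'' direction, and exhibiting a single canonical coloring of $K^{(r)}_{n}$ for every $n$ gives the ``only if'' direction. The one point where you go beyond the paper is the uniformization step: the paper's phrasing of the negated condition quietly fixes one $J$ that works for all $t$, whereas the literal negation only provides a $J(t)$ for each $t$; your pigeonhole-plus-monotonicity argument extracting a single $J_0$ is exactly what justifies that step, so your write-up is, if anything, more careful than the paper's.
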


\begin{proof}
If $H$ and $G$ satisfy the condition in the proposition, then by Theorem~\ref{th:canonical}, there exists an integer $n$ such that every edge-coloring of $K^{(r)}_{n}$ contains a $J$-canonical edge-colored $K^{(r)}_{t}$ for some $J\subseteq [r]$, and thus contains either a monochromatic copy of $H$ or a rainbow copy of $G$.
This implies that $f(H,G)$ exists.
If for all positive integers $t$ and some subset $J\subseteq [r]$, $H$ is not isomorphic to any monochromatic subgraph in any $J$-Canonical edge-coloring of $K^{(r)}_{t}$ and $G$ is not isomorphic to any rainbow subgraph in any $J$-Canonical edge-coloring of $K^{(r)}_{t}$, then for every positive integer $n$, we can take the $J$-Canonical edge-coloring of $K^{(r)}_{n}$ which contains neither a monochromatic copy of $H$ nor a rainbow copy of $G$.
This implies that $f(H,G)$ does not exist.
\end{proof}

For $r$-graphs $H$ and $G$ that do not satisfy the condition in Proposition~\ref{prop:constrained}, we may instead study the following problem.

\begin{problem}\label{prob:Gallai}
Let $H$ and $G$ be two $r$-graphs, and $k$ be a positive integer.
\begin{itemize}
\item[{\rm (i)}] Determine the minimum integer $n$ such that, in every edge-coloring of $K^{(r)}_{n}$ with \textbf{at most} $k$ colors, there is either a monochromatic copy of $H$ or a rainbow copy of $G$.

\item[{\rm (ii)}] Determine the minimum integer $n$ such that, in every edge-coloring of $K^{(r)}_{n}$ with \textbf{exactly} $k$ colors, there is either a monochromatic copy of $H$ or a rainbow copy of $G$.
\end{itemize}
\end{problem}

For $r$-partite $r$-graphs $H$ and $G$ that do not satisfy the condition in Proposition~\ref{prop:constrained-multipar}, we may also study a similar problem.
Such problems for $2$-uniform paths (i.e., ordinary graph paths) was studied in~\cite{LiLiu,LiWL}.

\begin{problem}\label{prob:Gallai-multipar}
Let $H$ and $G$ be two $r$-partite $r$-graphs, and $k$ be a positive integer.
\begin{itemize}
\item[{\rm (i)}] Determine the minimum integer $n$ such that, in every edge-coloring of $K^{(r)}_{n,\ldots, n}$ with \textbf{at most} $k$ colors, there is either a monochromatic copy of $H$ or a rainbow copy of $G$.

\item[{\rm (ii)}] Determine the minimum integer $n$ such that, in every edge-coloring of $K^{(r)}_{n,\ldots, n}$ with \textbf{exactly} $k$ colors, there is either a monochromatic copy of $H$ or a rainbow copy of $G$.
\end{itemize}
\end{problem}

\section*{Acknowledgement}

This paper is supported by the National Natural Science Foundation of China (Grant No. 12501492),
Shaanxi Province Postdoctoral Science Foundation (Grant No. 2024BSHSDZZ155) and
the Fundamental Research Funds for the Central Universities (Grant No. GK202506024).

%
%
%

\begin{spacing}{0.8} 
\addcontentsline{toc}{section}{References}

\end{spacing}

\end{document}